\theoremstyle{plain}
\newtheorem*{thm*}{Theorem}
\newtheorem{thm}{Theorem}[section]
\newtheorem{prop}[thm]{Proposition}
\newtheorem{cor}[thm]{Corollary}
\newtheorem{lem}[thm]{Lemma}
\newtheorem{lemdef}[thm]{Lemma and Definition}
\newtheorem{que}[thm]{Question}
\newtheorem{ques}[thm]{Questions}
\theoremstyle{definition}
\newtheorem{de}[thm]{Definition}
\theoremstyle{example}
\newtheorem{ex}[thm]{Example}
\newtheorem{exs}[thm]{Examples}
\newtheorem{rem}[thm]{Remark}
\renewcommand{\phi}{\varphi}
\renewcommand{\rho}{\varrho}
\renewcommand{\epsilon}{\varepsilon}
\newcommand{\Hom}{\operatorname{Hom}}
\newcommand{\Ext}{\operatorname{Ext}}
\newcommand{\Tor}{\operatorname{Tor}}
\newcommand{\Gen}{\operatorname{Gen}}
\newcommand{\gen}{\operatorname{gen}}
\newcommand{\Cogen}{\operatorname{Cogen}}
\newcommand{\cogen}{\operatorname{cogen}}
\newcommand{\Prod}{\operatorname{Prod}}
\newcommand{\Add}{\operatorname{Add}}
\newcommand{\add}{\operatorname{add}}
\newcommand{\Ann}{\operatorname{Ann}}
\newcommand{\Alpha}{\operatorname{A}}
\newcommand{\Beta}{\operatorname{B}}
\newcommand{\lMod}[1]{#1\operatorname{-}\operatorname{Mod}}
\newcommand{\lmod}[1]{#1\operatorname{-}\operatorname{mod}}
\newcommand{\Cosilt}[1]{\mathbf{Cosilt}(#1)}
\newcommand{\Tpair}[2]{(\mathcal{#1}, \mathcal{#2 })}
\newcommand{\tpair}[2]{(\mathbf{#1}, \mathbf{#2 })}
\newcommand{\Acal}{\mathcal{A}}
\newcommand{\Ccal}{\mathcal{C}}
\newcommand{\Dcal}{\mathcal{D}}
\newcommand{\Fcal}{\mathcal{F}}
\newcommand{\Lcal}{\mathcal{L}}
\newcommand{\Mcal}{\mathcal{M}}
\newcommand{\Pcal}{\mathcal{P}}
\newcommand{\Qcal}{\mathcal{Q}}
\newcommand{\Scal}{\mathcal{S}}
\newcommand{\Tcal}{\mathcal{T}}
\newcommand{\Ucal}{\mathcal{U}}
\newcommand{\Vcal}{\mathcal{V}}
\newcommand{\Wcal}{\mathcal{W}}
\newcommand{\Xcal}{\mathcal{X}}
\newcommand{\Ycal}{\mathcal{Y}}
\begin{document}
\author{Lidia Angeleri H\"ugel}
\address{Lidia Angeleri H\"ugel, Dipartimento di Informatica - Settore di Matematica, Universit\`a degli Studi di Verona, Strada le Grazie 15 - Ca' Vignal, I-37134 Verona, Italy} \email{lidia.angeleri@univr.it}
\author{Francesco Sentieri}
\address{Francesco Sentieri, Dipartimento di Informatica - Settore di Matematica, Universit\`a degli Studi di Verona, Strada le Grazie 15 - Ca' Vignal, I-37134 Verona, Italy} \email{francesco.sentieri@univr.it}
\thanks{The authors  acknowledge support from the project \textit{REDCOM: Reducing complexity in algebra, logic, combinatorics}, financed by the programme  \textit{Ricerca Scientifica di Eccellenza 2018} of the Fondazione Cariverona.}
\date{\today}
\title{Wide coreflective subcategories and torsion pairs}
\maketitle
\begin{abstract}
We revisit
a construction of wide  subcategories  going back to work of Ingalls and Thomas. To a torsion pair in  the category $\lmod R$ of finitely presented modules over a left artinian ring $R$, we assign two wide subcategories in the category $\lMod R$ of all $R$-modules and describe them explicitly  in terms of an associated cosilting module.
It turns out that these subcategories are coreflective, and  we address the question of which wide coreflective subcategories can be obtained in this way.  Over a tame hereditary algebra, they are precisely the categories which are perpendicular to collections of pure-injective modules.
\end{abstract}


\section{Introduction}

A subcategory $\Xcal$ of the  module category $\lMod R$ over a ring $R$ is said to be reflective, respectively coreflective, if the inclusion functor $\Xcal\hookrightarrow\lMod R$ admits a left, respectively right, adjoint. A result of Gabriel and de la Pe\~na characterizes the subcategories which are both reflective and coreflective as those which arise as module categories  $\Xcal=\lMod S$ from some ring epimorphism $R\to S$. Much less is known when only one of the two conditions is satisfied, even when restricting to wide, i.e.~exact abelian, subcategories of  $\lMod R$. 

The aim of this paper is to revisit a construction of wide subcategories due to Ingalls and Thomas \cite{noncrossingPart}. We will see that these wide subcategories often turn out to be coreflective and, moreover, they are completely determined by their finitely presented modules.

Here is the construction. To a torsion pair $(\Tcal,\Fcal)$ in $\lMod R$, one associates the subcategories 
$$\alpha(\Tcal) = \left\{ X \in \mathcal{T}\,|\text{  all morphisms } f : T \to X \text{ with } T \in \mathcal{T} \text{ have } \ker(f) \in \mathcal{T}   \right\}$$
and $\beta(\Fcal)$ which is defined dually. 

When the ring $R$ is left noetherian, we  
focus on torsion pairs $(\Tcal,\Fcal)$ arising as direct limit closures of torsion pairs 
$\tpair{t}{f}$ inside the category $\lmod R$ of finitely presented modules. Such torsion pairs are parametrized by cosilting modules (Theorem~\ref{thm:CB-bij}), we denote their collection by $\mathbf{Cosilt}(R)$. We  give an explicit description of $\alpha(\Tcal)$ and $\beta(\Fcal)$ in terms of the associated cosilting module and  prove the following result.

\medskip

{\bf Theorem A} (Theorem~\ref{themapalpha}) 
{\it Let $ R $ be a left noetherian ring.
The construction $\alpha(\mathcal{T})$ above defines  a surjective map
$\alpha : \mathbf{Cosilt}(R) \longrightarrow \overline{\mathbf{wide}}(R)$, where  $\overline{\mathbf{wide}}(R)$ denotes the collection  of all subcategories of $\lMod R$  of the form $ \varinjlim\Wcal$ for some  wide subcategory $\Wcal$ of $\lmod R$.
}

\medskip

As a consequence, every subcategory $ \varinjlim\Wcal$ in $\overline{\mathbf{wide}}(R)$ is wide and coreflective and satisfies $ \varinjlim\Wcal\cap\lmod R=\Wcal$.
A parallel result for $ \beta$ holds true over  left artinian rings  (Theorem~\ref{prop:betaInj}).



\medskip

The properties of the maps $\alpha$ and $\beta$   lead to new insight on the lattice $\mathbf{tors}\Lambda$ of torsion classes in the category $\lmod\Lambda$ over a  finite dimensional algebra $\Lambda$. In particular, they lead to  new characterizations of $\tau$-tilting finite algebras. This class of finite dimensional algebras was introduced in \cite{g-vectors} and can be defined by a number of equivalent conditions which postulate finiteness  of certain classes of modules. For example, $\Lambda$ is   $\tau$-tilting finite if there are only finitely many isomorphism classes of finite dimensional bricks, or equivalently, only finitely many torsion pairs in $\lmod\Lambda$. We show that $\tau$-tilting finiteness can also be phrased in terms of properties of the class of wide subcategories  of $ \lMod{\Lambda} $. Here is a sample.

\medskip

{\bf Theorem B}
(Theorem~\ref{prop:tauFiniteWideCop})
The following statements are equivalent for an artin algebra $ \Lambda $.
\begin{itemize}
\item[(i)] $ \Lambda $ is $ \tau$-tilting finite.
\item[(ii)]  Every wide subcategory  of $ \lMod{\Lambda} $ closed under coproducts belongs to 
$\overline{\mathbf{wide}}(\Lambda) $.
\item[(iii)] There are only finitely many   wide subcategories of $ \lMod{\Lambda} $ closed under coproducts.
\end{itemize}

\medskip

 In the last part of the paper,
we address the question of which wide coreflective subcategories can be obtained via the maps $\alpha$ and $\beta$. 

When $\Lambda$ is the Kronecker algebra, i.e.~the path algebra $\Lambda$ of the Kronecker quiver 
{\small \begin{tikzcd}
0 \ar[r, bend left] \ar[r, bend right] & 1
\end{tikzcd}}, this  leads us to an open problem of Krause and Stevenson \cite{KSte} concerning the classification of localizing subcategories in the derived category of quasi-coherent sheaves on the projective line: are there more localizing subcategories beyond the ones constructed from our understanding of the compact objects? This question can be rephrased as follows.

\smallskip

{\bf Question:} 
Is it true that every wide coreflective subcategory $\Xcal$ of $\lMod\Lambda$ is the (left) perpendicular category ${}^{\perp_{0,1}} \Pcal$ of a collection of indecomposable pure-injective modules $\Pcal$?

\smallskip

We show  that $\Xcal$ has this shape if and only if it arises from a  wide subcategory $\Wcal$ of the category $\lmod\Lambda$ of finite dimensional $\Lambda$-modules by some standard constructions. Our result holds true over any tame hereditary algebra.

\medskip

{\bf Theorem C}
(Theorem~\ref{tamethm})
 The following statements are equivalent for a wide coreflective subcategory $\Xcal$ over a tame hereditary algebra $\Lambda$.\begin{enumerate}
\item There is a set of $\Pcal$ of indecomposable pure-injective $\Lambda$-modules such that $\Xcal={}^{\perp_{0,1}}\Pcal$.
\item There is a wide subcategory $\Wcal$ of $\lmod\Lambda$ such that  $\Xcal$ is either the (right) perpendicular category $\Wcal^{\perp_{0,1}}$ or the direct limit closure $\varinjlim \Wcal$ of $\Wcal$.
\end{enumerate}

\medskip 

We close the paper with a possible approach to the question above. The idea is to consider a family of  submodules of the generic  module $G$ over the Kronecker algebra that were constructed by  Ringel in \cite{tameWild}. They are indexed by subsets of $k$, and for infinite disjoint subsets they form large  semibricks, that is, collections of Hom-orthogonal infinite dimensional modules with endomorphism ring $k$. If $B$ is such a module, its perpendicular category ${}^{\perp_{0,1}} B$ is wide and has no indecomposable pure-injective modules.  This implies that ${}^{\perp_{0,1}} B$  contains a wide coreflective subcategory $\Xcal$ which cannot arise from a wide subcategory of $\lmod\Lambda$ as described above, unless it is zero. Unfortunately, however, we are not able to exclude the case ${}^{\perp_{0,1}} B=0$, and so the classification problem from \cite{KSte} remains unsolved.

\bigskip

{\bf Structure of the paper.} In Section 2 we collect some preliminaries on torsion pairs, purity, approximations, and cosilting theory. The constructions $\alpha(\Tcal)$ and $\beta(\Fcal)$ are introduced in Section 3. In  Section 4, we study the case when $\Tpair{T}{F}$ is in $\mathbf{Cosilt}(R)$, and we prove Theorem A together with further fundamental results on the maps $\alpha$ and $\beta$. Section 5 is devoted to applications to the lattice $\mathbf{tors}\Lambda$ over a  finite dimensional algebra $\Lambda$. We first show  that the maps $\alpha$ and $\beta$ control the shape of the Hasse quiver of $\mathbf{tors}(\Lambda)$, and more precisely, the existence of locally maximal or minimal elements in $\mathbf{tors}(\Lambda)$. The notion of a minimal cosilting module from \cite{paramTp} plays an important role in this context. Then we turn to some characterizations of $\tau$-tilting finiteness, including Theorem B, and we close the section with some open problems.
In Section 6 we focus on hereditary rings and revisit the notion of an Ext-orthognal pair from \cite{extPairs}. Section 7 is devoted to the proof of Theorem C. Finally, in Section 8, we discuss the classification problem explained above.

\bigskip

{\bf Notation.}
Given a class of objects $\Scal$ in an abelian category $\Acal$, we write
 $\Add(\mathcal{S})$  for the class of objects isomorphic to direct summands of direct sums of objects in $\Scal$, and $\Prod(\mathcal{S})$  for the class of objects isomorphic to direct summands of products of objects in $\Scal$. The class of objects isomorphic to direct summands of finite direct sums of objects in $\Scal$ is denoted by $\add(\mathcal{S})$. Moreover,
 $ \Cogen(\mathcal{S}) $ denotes the class of objects isomorphic to a subobject of a product of objects in $ \mathcal{ S } $,
and $ \Gen(\mathcal{S}) $ is defined dually.
Finally, $ {}^{\perp_{0,1}} \mathcal{S} $ is the subcategory consisting of the objects $ X \in \mathcal{A} $ such that $ \Ext^i_{\mathcal{A}}(X, S ) = 0 $ for  $  i\in\{0,1\} $ and  $ S \in \mathcal{S} $. Similarly one defines $ {}^{\perp_{0}} \mathcal{S} $, $ {}^{\perp_{1}} \mathcal{S} $, $ \mathcal{S} ^{\perp_{0,1}} $ etc.

\smallskip

Unless otherwise stated,  $R$ will denote an arbitary ring. We denote by   $ \lMod{R} $  the category of all left $R$-modules and by $ \lmod{R} $  the category of finitely presented left $R$-modules.
If  $ \mathcal{S} $ is a class of modules in $ \lMod{R} $, we denote by $ \varinjlim \mathcal{ S } $ the full subcategory of $ \lMod{R} $ whose objects are the colimits of directed systems of modules in $ \mathcal{S} $. 
When $ \mathcal{S} $ is a class of finitely presented modules closed under finite direct sums,  $ \varinjlim \mathcal{ S } $ is closed under directed colimits by \cite[Proposition 2.1]{Lenzing}. This is not true for a general $ \mathcal{S} $, see \cite[Example 1.1]{directLimitFinProj}.

\section{Torsion pairs}
\label{chp:Prelim}
In this section we fix the terminology and collect some fundamental concepts and results that we will use in the sequel. We start out reviewing the notion of a torsion pair. Then we focus on torsion pairs whose torsionfree class is closed under direct limits and describe them in terms of approximation theory and cosilting theory. This allows us to show that the torsion pairs  in  the category  of finitely presented modules over a left noetherian ring are parametrized by cosilting modules.

\begin{de}{\rm Let $ \mathcal{A} $ be an abelian category.

(1) Two subcategories $ \mathcal{T} $, $ \mathcal{F} $  of $ \mathcal{A} $ form   a \emph{torsion pair} $ ( \mathcal{T}, \mathcal{F} ) $  if:
\begin{itemize}
\item[(i)] For all $ F \in \mathcal{F} $, for all $ T \in \mathcal{T} $, $ \Hom_{\mathcal{A}}(T, F) = 0 $.
\item[(ii)] For all $ M \in \mathcal{A} $ there is a short exact sequence
\[
0 \to T \to M \to F \to 0
\]
with $ T \in \mathcal{T} $ and $ F \in \mathcal{F} $.
\end{itemize}
In this case, $ \mathcal{T}$ is a \emph{torsion class} and $ \mathcal{F} $ is a \emph{torsionfree class}.

(2) Let  $ \mathcal{A} $ be a complete and co-complete abelian category. Given a class of objects $ \mathcal{C} $  in $\Acal$, we can  form 
\begin{itemize}
\item[-]
the torsion pair  $ (  \mathbf{T}(\mathcal{C}), \mathcal{C}^{\perp_0}    ) $ \emph{generated} by $ \mathcal{C} $, and 
\item[-]
the torsion pair $ ( {}^{\perp_0}\mathcal{C}, \mathbf{F}(\mathcal{C})   ) $ \emph{cogenerated} by $ \mathcal{C} $.
\end{itemize}
Here $ \mathbf{T}(\mathcal{C}) ={}^{\perp_0}( \mathcal{C}^{\perp_0} )$
is the smallest torsion class containing $\Ccal$, and 
 $\mathbf{F}(\mathcal{C})= ({}^{\perp_0} \mathcal{C})^{\perp_0}  $  is the smallest  torsionfree class containing $ \mathcal{C} $.}
 \end{de}

In the case of module categories, there are  well-known explicit descriptions for the torsion resp. torsionfree class generated by $\Ccal$. We first need the following easy observation.

\begin{lem}\label{cor:torsionNoetherianCat}
If $ \mathcal{A} $ is an abelian category such that all the objects of $ \mathcal{A} $ are noetherian, then a subcategory of $  \mathcal{A} $ is a torsion class if and only if it is closed under extensions and quotients. 

Dually, if $ \mathcal{A} $ is an abelian category such that all the objects of $ \mathcal{A} $ are artinian, then a subcategory of $  \mathcal{A} $ is a torsionfree class if and only if it is closed under extensions and submodules. \end{lem}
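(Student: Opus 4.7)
The plan is to prove only the noetherian statement, since the artinian version follows by passage to the opposite category $\Acal^{\mathrm{op}}$, under which artinian objects become noetherian and torsionfree classes are interchanged with torsion classes. The forward implication requires no finiteness hypothesis: if $(\Tcal, \Fcal)$ is any torsion pair, then given an extension $0 \to T_1 \to X \to T_2 \to 0$ with $T_1, T_2 \in \Tcal$, or a surjection $X \twoheadrightarrow Y$ with $X \in \Tcal$, the torsionfree part of $X$ respectively $Y$ receives only zero morphisms from $\Tcal$, hence vanishes, so $X \in \Tcal$ respectively $Y \in \Tcal$. I would dispose of this in a couple of lines.

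For the converse, suppose $\Tcal$ is closed under extensions and quotients. I would first observe that $\Tcal$ is then automatically closed under finite direct sums (since $T_1 \oplus T_2$ is an extension of $T_2$ by $T_1$) and consequently under finite sums of subobjects of a fixed object (as $T_1 + T_2$ is a quotient of $T_1 \oplus T_2$). Fix $M \in \Acal$. The family $\{T \subseteq M \mid T \in \Tcal\}$ is then directed under inclusion, and by the noetherian hypothesis on $M$ it admits a maximal element, which by directedness is the largest such subobject $t(M) \in \Tcal$.

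The core step is to verify that $M/t(M) \in \Tcal^{\perp_0}$, which will produce the short exact sequence $0 \to t(M) \to M \to M/t(M) \to 0$ required to exhibit $(\Tcal, \Tcal^{\perp_0})$ as a torsion pair. Given $f: T \to M/t(M)$ with $T \in \Tcal$, the image $\operatorname{im}(f)$ is a quotient of $T$, hence in $\Tcal$; its preimage $N \subseteq M$ under $M \twoheadrightarrow M/t(M)$ fits into $0 \to t(M) \to N \to \operatorname{im}(f) \to 0$, so $N \in \Tcal$ by closure under extensions. Maximality of $t(M)$ forces $N = t(M)$ and thus $\operatorname{im}(f) = 0$.

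I do not foresee a genuine obstacle: this is the classical argument of Dickson, and the noetherian hypothesis enters in exactly one place, namely to guarantee that the directed family of $\Tcal$-subobjects of $M$ admits a maximum. The only point requiring minor care is to confirm that subobject sums and preimages behave as expected in an abstract abelian category, but these manipulations are routine.
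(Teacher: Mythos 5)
Your argument is correct: the forward direction is the standard fact that a torsion class is closed under quotients and extensions, and your converse is the classical Dickson-style construction of the maximal torsion subobject $t(M)$, with the noetherian hypothesis entering exactly where you say it does (existence of a maximal, hence largest, element in the directed family of $\Tcal$-subobjects), and the artinian case following by duality. The paper states this lemma as an easy observation without proof, and your write-up supplies precisely the standard argument it implicitly relies on, so there is nothing to object to.
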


Given a left coherent ring $R$, we consider the abelian category $\lmod R$ and denote by $ \mathbf{tors}(R) $  the collection of all torsion pairs in $ \lmod{R} $.
We use the symbols $ \widetilde{\mathbf{T}}(\mathcal{C}) $ and $ \widetilde{\mathbf{F}}(\mathcal{C}) $ for the  torsion resp.~torsionfree class in $\lmod R$ generated by some subcategory $ \mathcal{C} $.
We further denote by $ \gen(\mathcal{C}) $  the class of objects isomorphic to a quotient of a finite direct  sum of objects in $ \mathcal{ C } $, define dually $\cogen\Ccal$, and write $ \operatorname{filt}(\mathcal{C}) $ for the extension closure of $ \mathcal{C} $.

\begin{prop}{\cite[Lemma 3.1]{wideLoc}} \label{prop:torsionTorsionfreeClassSmall}
Let $ R $ be a ring and $ \mathcal{C} $ a  subcategory of $ \lmod{R} $.

(1) If $R$ is left noetherian, then 
$\widetilde{\mathbf{T}}(\mathcal{C}) = \operatorname{filt}\gen(\mathcal{C}).$

(2) If $R$ is left artinian, then 
$\widetilde{\mathbf{F}}(\mathcal{C}) = \operatorname{filt}\cogen(\mathcal{C})$.
\end{prop}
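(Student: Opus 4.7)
The plan is to prove both parts by showing that the proposed extension closure is itself a torsion class (respectively torsionfree class) in $\lmod R$, and then invoking the minimality of $\widetilde{\mathbf{T}}(\mathcal{C})$ (respectively $\widetilde{\mathbf{F}}(\mathcal{C})$). The two statements are formally dual, and the hypotheses on $R$ are chosen precisely so that Lemma~\ref{cor:torsionNoetherianCat} applies: left noetherian in (1) guarantees that all objects of $\lmod R$ are noetherian and that quotients of finitely presented modules remain in $\lmod R$, and dually for (2) the left artinian hypothesis ensures all objects of $\lmod R$ are artinian and that submodules remain finitely presented.

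For part (1), the inclusion $\operatorname{filt}\gen(\mathcal{C}) \subseteq \widetilde{\mathbf{T}}(\mathcal{C})$ is immediate: any torsion class containing $\mathcal{C}$ is closed under quotients and finite direct sums, hence contains $\gen(\mathcal{C})$, and is closed under extensions, hence contains $\operatorname{filt}\gen(\mathcal{C})$. For the reverse inclusion, I would verify that $\operatorname{filt}\gen(\mathcal{C})$ is itself a torsion class in $\lmod R$. By Lemma~\ref{cor:torsionNoetherianCat} it suffices to show closure under extensions (immediate from the definition of $\operatorname{filt}$) and under quotients. For the latter, given $M \in \operatorname{filt}\gen(\mathcal{C})$ with filtration $0 = M_0 \subseteq M_1 \subseteq \cdots \subseteq M_n = M$ whose factors lie in $\gen(\mathcal{C})$, and a surjection $\pi\colon M \twoheadrightarrow N$, the images $N_i := \pi(M_i)$ yield a filtration of $N$ whose successive quotients $N_i/N_{i-1}$ are epimorphic images of $M_i/M_{i-1}$, and therefore remain in $\gen(\mathcal{C})$ since this class is evidently closed under quotients.

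Part (2) follows by the dual argument. Using the artinian hypothesis, the dual form of Lemma~\ref{cor:torsionNoetherianCat} reduces matters to showing that $\operatorname{filt}\cogen(\mathcal{C})$ is closed under extensions and submodules. Given $N \hookrightarrow M$ with $M$ filtered as above with factors in $\cogen(\mathcal{C})$, the intersections $N_i := N \cap M_i$ filter $N$, and each $N_i/N_{i-1}$ embeds into $M_i/M_{i-1}$, hence lies in $\cogen(\mathcal{C})$ since a submodule of a submodule of a finite direct sum is again such a submodule.

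The main obstacle, modest as it is, amounts to checking that $\gen(\mathcal{C})$ (respectively $\cogen(\mathcal{C})$) is closed under quotients (respectively submodules) and that the induced filtration of $N$ genuinely lives in $\lmod R$. Both points follow directly from the noetherian or artinian hypothesis on $R$. Once this is in place, the filtration argument is routine, with Lemma~\ref{cor:torsionNoetherianCat} providing the bridge from being an extension closure to being a genuine torsion or torsionfree class.
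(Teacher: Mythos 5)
Your proof is correct and is exactly the standard filtration argument behind the cited result: the paper itself gives no proof of this proposition, referring instead to \cite[Lemma 3.1]{wideLoc}, whose proof proceeds precisely as you do, by checking that $\operatorname{filt}\gen(\mathcal{C})$ (resp.\ $\operatorname{filt}\cogen(\mathcal{C})$) is closed under quotients and extensions (resp.\ submodules and extensions) and invoking Lemma~\ref{cor:torsionNoetherianCat}. Your attention to the finiteness hypotheses (quotients, resp.\ submodules, of finitely presented modules staying in $\lmod R$) is the right place to use noetherian/artinian, so nothing is missing.
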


Using transfinite extensions, one can obtain an analogous description of the torsion class $\mathbf{T}(\mathcal{C})$ in the module category $\lMod{R} $, see
{\cite[Lemma 3.2]{tau-tilt}}.

\medskip

Next, we collect some well known facts about definable classes and purity. A comprehensive reference can be found in \cite{purity}. 

\begin{de}{\rm
(1) A short exact sequence $ 0 \to L \to M \to N \to 0 $ in $ \lMod{R} $ is \emph{pure-exact} if for every $ U \in \lmod{R} $ the sequence
\[
\begin{tikzcd}
0 \arrow[r] & \mathrm{Hom}(U, L) \arrow[r] & \mathrm{Hom}(U, M) \arrow[r] & \mathrm{Hom}(U, N) \arrow[r] & 0
\end{tikzcd}
\]
is an exact sequence of abelian groups. In this case, we say that $ L $ is a \emph{pure submodule} of $ M $ or that the map $ L \to M $ is a \emph{pure monomorphism}. }

(2) A module $ E $ is \emph{pure-injective} if every pure exact sequence starting at $ E $ is split exact.

(3)
A subcategory of $\lMod{R} $ is \emph{definable} if it is closed under products, pure submodules and direct limits.
\end{de}

Notice that a torsionfree class in $ \lMod{R} $ is definable if and only if it is closed under direct limits, as closure under products and (pure) submodules is granted.
Moreover, definable torsionfree classes can be described in terms of approximation theory and cosilting theory. Let us recall the relevant notions.

\begin{de}{\rm Let $\Acal$ be an abelian category with a subcategory  $ \mathcal{S} \subseteq \mathcal{A} $. Let $ M \in \mathcal{A} $. A morphism $ g : S \to M $ is a \emph{$ \mathcal{S}$-precover} if $ S \in \mathcal{S} $ and  every morphism $ g' : S' \to M $ with  $ S' \in \mathcal{S} $ factors through $g$. The map
$ g $ is a \emph{$ \mathcal{S}$-cover} if in addition it is right minimal, i.e.~every endomorphism $s$ of $S$ such that $gs=g$ is an isomorphism. 
Finally, the subcategory $ \mathcal{S} $ is called \emph{(pre)covering} if every object in $ \mathcal{A} $ admits an $ \mathcal{S}$-(pre)cover. 
Dually, we define \emph{$ \mathcal{S}$-(pre)envelopes} and \emph{(pre)enveloping} subcategories.}

When  $\Acal=\lmod \Lambda$ for an artin algebra $\Lambda$, then subcategories which are both precovering and preenveloping (and therefore covering and enveloping)
are called \emph{functorially finite}. \end{de}
\begin{de}{\rm
(1) We say that an $R$-module $ C $ is \emph{cosilting} if there exists an injective copresentation $ \omega : I_0 \to I_1 $ such that:
\[
\Cogen(C) = \mathcal{C}_\omega:= \Bigl\{ X \in \lMod{R}\,\Big|\, \Hom_R(X, \omega)\ \text{is surjective}  \Bigr\}
\]

(2) Two cosilting modules $ C_1, C_2 $ are \emph{equivalent} if $ \Cogen(C_1) = \Cogen(C_2) $. 

(3) A module $ C $ is \emph{cotilting} (of injective dimension at most one) if $ \Cogen(C) = {}^{\perp_1}C $, or equivalently, if it is cosilting with respect to an injective copresentation which is surjective. 

\smallskip

 \emph{Silting modules} and \emph{tilting modules} of projective dimension at most one are defined dually.}
\end{de}

We collect some important properties of cosilting modules.
 
\begin{thm}
\label{thm:cosiltCover}
(1) {\cite[Theorem 4.7]{cosiltMod}}
Every cosilting module is pure-injective.

(2) \cite{torsClassGenSilt,cosilt}, \cite[Theorem 3.8 and Corollary 3.9]{abundance}
A torsionfree class $ \mathcal{F} \subseteq R-\mathrm{Mod} $ is definable if and only if it is covering, if and only if $ \mathcal{F} = \Cogen(C) $ for some cosilting module $ C $.
\end{thm}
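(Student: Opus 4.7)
The plan is to tackle (1) first and then use it to drive (2). I would begin by exploiting the rigid form of $\mathcal{C}_\omega$ imposed by the injective copresentation $\omega: I_0\to I_1$. Since $\Hom_R(-,I_0)$ and $\Hom_R(-,I_1)$ are exact and behave well with respect to direct limits and pure monomorphisms (injectives being fp-injective), the condition ``$\Hom_R(-,\omega)$ is surjective'' is preserved under direct limits, direct products, and pure submodules. This shows that $\Cogen(C)=\mathcal{C}_\omega$ is a definable subcategory of $\lMod R$. In particular, the pure-injective envelope $\mathrm{PE}(C)$ lies again in $\Cogen(C)$, so the canonical pure monomorphism $\iota: C \hookrightarrow \mathrm{PE}(C)$ has codomain in $\mathcal{C}_\omega$. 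Applying the cosilting condition, the induced map $\Hom_R(\mathrm{PE}(C),\omega)\to \Hom_R(C,\omega)$ is surjective, and combined with the universal property of $\omega$ as a cosyzygy this produces a retraction of $\iota$. Hence $\iota$ splits, forcing $C$ to be a direct summand of $\mathrm{PE}(C)$ and therefore pure-injective.

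For (2), I would establish the chain of implications cosilting $\Rightarrow$ definable $\Rightarrow$ covering $\Rightarrow$ cosilting. The first implication is exactly the definability of $\mathcal{C}_\omega$ extracted above, together with the fact that $\mathcal{C}_\omega$ is a torsionfree class (pure-injectivity of $C$ makes $\Cogen(C)$ closed under extensions, completing the torsionfree axioms already granted by closure under products and submodules). The implication ``definable $\Rightarrow$ covering'' is a specialisation of a theorem of El Bashir: any class of modules closed under direct limits and pure submodules is precovering, and for a torsionfree class (hence closed under direct summands and containing the zero object) precovers automatically upgrade to covers. For the remaining direction, start from a covering torsionfree class $\mathcal{F}$, fix an injective cogenerator $W$ of $\lMod R$, and form its $\mathcal{F}$-cover $f:C\to W$. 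A Wakamatsu-type argument shows that $\ker f$ lies in $\mathcal{F}$; embedding $\ker f$ into an injective envelope $I_0$ and setting $I_1:=W$ produces a two-step injective copresentation $\omega:I_0\to I_1$. Right minimality of $f$, together with closure of $\mathcal{F}$ under subobjects, then forces $\mathcal{F}=\Cogen(C)=\mathcal{C}_\omega$, exhibiting $C$ as a cosilting module representing $\mathcal{F}$.

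The main obstacle, as I see it, is the splitting step in (1): one must leverage not only pure-injectivity of $\mathrm{PE}(C)$ but also the universal role of $\omega$ with respect to $\mathcal{C}_\omega$, and ensure that the retraction produced really splits $\iota$ rather than a mere composite. A parallel subtlety arises in the final implication of (2): verifying that the right minimality of the $\mathcal{F}$-cover $f$, combined with Wakamatsu-style $\Ext^1$-vanishing, genuinely forces equality $\mathcal{F}=\mathcal{C}_\omega$ rather than merely an inclusion. Both points are where the cosilting axioms must be used in their sharp form, and I would expect these to consume the bulk of a detailed write-up; the remaining verifications (definability, closure properties, construction of the copresentation) are essentially formal consequences of the standing hypotheses.
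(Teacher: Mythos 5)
This statement is not proved in the paper at all: it is imported from the literature (Breaz--Pop and Wei--Zhang for the pure-injectivity in (1), Breaz--\v{Z}emli\v{c}ka and \cite[Theorem 3.8, Corollary 3.9]{abundance} for (2)), so your attempt has to stand on its own, and it has a genuine gap at its central step. You claim that $\Cogen(C)=\mathcal{C}_\omega$ is definable ``formally'', because $\Hom_R(-,I_0)$ and $\Hom_R(-,I_1)$ ``behave well with respect to direct limits''. They do not: a contravariant Hom into an injective sends $\varinjlim X_i$ to $\varprojlim \Hom(X_i,I_j)$, and surjectivity of a map of inverse limits does not follow from levelwise surjectivity. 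Closure of $\mathcal{C}_\omega$ under direct limits (and under pure-epimorphic images, which you implicitly need to place $\mathrm{PE}(C)$ and $\mathrm{PE}(C)/C$ in $\mathcal{C}_\omega$ for the splitting argument) is precisely the hard content here, and in the standard development the logical order is the reverse of yours: pure-injectivity of cosilting modules is proved first, by a substantial argument in the style of Bazzoni's criterion (every $C^{(I)}\to C$ extends to $C^I$) as in \cite{cosiltMod,cosilt}, and definability of the cosilting class is then deduced from it. Compare Lemma~\ref{lem:CsigmaClosedDirLim}(2) in this paper, where pure-injectivity of $M$ is exactly the ingredient used to get closure of $\mathcal{C}_\sigma$ under direct limits via ${}^{\perp_1}M$. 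So your proof of (1) is essentially circular: granted definability, the retraction $\mathrm{PE}(C)\to C$ can indeed be produced (factor an extension $\mathrm{PE}(C)\to I_0$ of $C\hookrightarrow I_0$ and correct it by a factorization of the induced map on $\mathrm{PE}(C)/C$ through $\omega$), but the definability itself is not available by the formal reasons you give.

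There are also inaccuracies in (2), though they are more repairable. Wakamatsu's lemma applied to the $\mathcal{F}$-cover $f\colon C_0\to W$ of an injective cogenerator gives $\Ext^1(\mathcal{F},\ker f)=0$; that $\ker f\in\mathcal{F}$ is trivial from closure under submodules, and it is the Ext-injectivity that carries the weight. Moreover, the copresentation you propose ($I_0=E(\ker f)$, $I_1=W$) does not copresent the cover $C_0$: the cosilting module representing $\mathcal{F}$ must be taken as $C_0\oplus C_1$ with $C_1=\ker f$, with a copresentation assembled from $E(C_1)$ and $W$, and one must then verify $\mathcal{F}=\mathcal{C}_\omega$ using both minimality of the cover and the Ext-vanishing; this is the route of the cited sources. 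Finally, ``definable $\Rightarrow$ covering'' needs the correct citation (definable classes are covering by Bazzoni/El Bashir-type results, as used elsewhere in the paper), not just ``closed under direct limits and pure submodules is precovering''. In short: the skeleton cosilting $\Rightarrow$ definable $\Rightarrow$ covering $\Rightarrow$ cosilting is the right one for (2), but the first arrow and all of (1) rest on the unproved definability claim, which is a theorem equivalent in difficulty to the pure-injectivity you are trying to establish.
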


\medskip

In light of the theorem above, we will denote by $ \mathbf{Cosilt}(R) $ the collection  of all torsion pairs with definable torsionfree class, and refer to such pairs as \emph{cosilting torsion pairs}. 
The interplay between torsion pairs in $\lmod R$ and cosilting torsion pairs is based on the following fundamental result which goes back to \cite{locallyFp}.


\begin{thm}
\label{thm:CB-bij}
When $ R $ is a left noetherian ring, there is a bijection \[
\mathbf{tors}(R) \leftrightarrow \mathbf{Cosilt}(R).
\]
It associates to a torsion pair $ ( \mathbf{t}, \mathbf{f} ) $ in $ \lmod{R} $ the direct limit closure $ ( \mathcal{T},\Fcal) := (\varinjlim \mathbf{t}, \varinjlim \mathbf{f}) $, which coincides with  the torsion pair $(\Gen\mathbf t,  \mathbf{t}^{\perp_0})$ generated by $\mathbf t$.
The inverse of this map sends a cosilting torsion pair $ ( \mathcal{T} , \mathcal{F} ) $ to its restriction $ ( \mathcal{T} \cap \lmod{R}, \mathcal{F} \cap \lmod{R} ) $.
\end{thm}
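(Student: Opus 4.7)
My plan is to construct the two maps explicitly and verify they are mutually inverse, with the analytic core being the identification $\varinjlim\mathbf{f}=\mathbf{t}^{\perp_0}$ in $\lMod R$; once this is established, Theorem~\ref{thm:cosiltCover}(2) supplies the cosilting module witnessing membership in $\mathbf{Cosilt}(R)$. Given $(\mathbf{t},\mathbf{f})\in\mathbf{tors}(R)$, I would first show $\varinjlim\mathbf{t}=\Gen(\mathbf{t})$: any $M\in\Gen(\mathbf{t})$ is the directed union of its finitely generated submodules, and each such submodule, being a finitely generated quotient of a direct sum of objects in $\mathbf{t}$, factors through a finite subsum and hence lies in $\mathbf{t}$ by Lemma~\ref{cor:torsionNoetherianCat}; the reverse inclusion is clear. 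For $\varinjlim\mathbf{f}=\mathbf{t}^{\perp_0}$, the inclusion $\subseteq$ is immediate since every $T\in\mathbf{t}$ is finitely presented, so $\Hom_R(T,-)$ commutes with directed colimits. For $\supseteq$, I would take $M\in\mathbf{t}^{\perp_0}$, write $M=\varinjlim M_i$ with $M_i\in\lmod R$, apply the torsion radical of $(\mathbf{t},\mathbf{f})$ functorially to obtain a directed system of sequences $0\to T_i\to M_i\to F_i\to 0$, and pass to the colimit (using exactness of directed colimits) to get $0\to\varinjlim T_i\to M\to\varinjlim F_i\to 0$. Now $\varinjlim T_i$ is a quotient of $\bigoplus_i T_i\in\Gen(\mathbf{t})$, hence itself in $\Gen(\mathbf{t})\subseteq{}^{\perp_0}(\mathbf{t}^{\perp_0})$, so the injection $\varinjlim T_i\hookrightarrow M$ vanishes and $M\cong\varinjlim F_i\in\varinjlim\mathbf{f}$. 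Consequently $(\varinjlim\mathbf{t},\varinjlim\mathbf{f})$ coincides with $(\Gen(\mathbf{t}),\mathbf{t}^{\perp_0})$, and its torsionfree class is closed under directed colimits by \cite[Proposition 2.1]{Lenzing} and, as any torsionfree class, under submodules and products; definability then invokes Theorem~\ref{thm:cosiltCover}(2) to yield a cosilting module.

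For the inverse map, I would send $(\Tcal,\Fcal)\in\mathbf{Cosilt}(R)$ to the restriction $(\Tcal\cap\lmod R,\Fcal\cap\lmod R)$. The approximation of any $M\in\lmod R$ yields $0\to T\to M\to F\to 0$ with $T$ a submodule of the noetherian module $M$ and $F$ a quotient of $M$, so both lie in $\lmod R$. The two round trips are then quick: starting from $(\mathbf{t},\mathbf{f})$, any finitely generated module in $\Gen(\mathbf{t})$ is a quotient of a finite subsum of objects in $\mathbf{t}$ and hence in $\mathbf{t}$, giving $\Gen(\mathbf{t})\cap\lmod R=\mathbf{t}$; starting from $(\Tcal,\Fcal)$, definability of $\Fcal$ implies closure under submodules, so every $M\in\Fcal$ is the directed union of its finitely generated submodules, all of which lie in $\Fcal\cap\lmod R$, yielding $\Fcal=\varinjlim(\Fcal\cap\lmod R)$.

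The main obstacle I anticipate is the colimit argument identifying $\varinjlim\mathbf{f}$ with $\mathbf{t}^{\perp_0}$: it requires viewing the torsion-pair approximation functorially so that the $T_i$'s and $F_i$'s form a directed system, invoking exactness of directed colimits in $\lMod R$, and combining this with the left orthogonality $\Gen(\mathbf{t})\subseteq{}^{\perp_0}(\mathbf{t}^{\perp_0})$. The remaining steps, while appealing to the noetherian hypothesis in several places, are formal once this identification and Theorem~\ref{thm:cosiltCover}(2) are in place.
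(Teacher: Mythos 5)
The paper does not prove this statement itself but cites \cite{locallyFp}; your reconstruction follows the standard route (functorial torsion radical plus exactness of directed colimits), and the core computation $\varinjlim\mathbf f=\mathbf t^{\perp_0}$, the restriction argument using noetherianness, and the two round trips are essentially fine. There are, however, two concrete problems. First, your justification of $\Gen(\mathbf t)\subseteq\varinjlim\mathbf t$ is false as stated: a finitely generated submodule of a module in $\Gen(\mathbf t)$ is \emph{not} ``a finitely generated quotient of a direct sum of objects in $\mathbf t$'' and need not lie in $\mathbf t$, nor even in $\Gen(\mathbf t)$ (take any torsion class $\mathbf t$ of $\lmod R$ not closed under submodules, e.g. $\mathbf t=\add(P_1\oplus S_1)=\gen(P_1)$ over the path algebra of $1\to 2$: the socle $S_2$ of $P_1\in\mathbf t$ is a finitely generated submodule lying in the torsionfree class). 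The correct directed system is not the family of all finitely generated submodules of $M$, but the images $\pi\bigl(\bigoplus_{j\in J}T_j\bigr)$ of the finite subsums under a defining epimorphism $\pi:\bigoplus_{i\in I}T_i\to M$; each such image is a finitely generated quotient of a module in $\mathbf t$, hence lies in $\mathbf t$, and their directed union is $M$.

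Second, you never verify that $(\Gen\mathbf t,\mathbf t^{\perp_0})$ is actually a torsion pair in $\lMod R$, i.e. that every module admits a short exact sequence with torsion part in $\Gen\mathbf t$ and cokernel in $\mathbf t^{\perp_0}$ --- equivalently that $\Gen\mathbf t$ is closed under extensions, so that it equals ${}^{\perp_0}(\mathbf t^{\perp_0})=\mathbf T(\mathbf t)$. This is precisely the content of the phrase ``coincides with the torsion pair generated by $\mathbf t$'' and cannot be taken for granted: a priori $\Gen\mathbf t$ could be strictly smaller than $\mathbf T(\mathbf t)$, and definability of $\mathbf t^{\perp_0}$ together with Theorem~\ref{thm:cosiltCover}(2) only identifies the torsionfree class. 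The good news is that your own technique closes this gap: run the colimit argument for an \emph{arbitrary} module $M=\varinjlim M_i$ with $M_i\in\lmod R$, not just for $M\in\mathbf t^{\perp_0}$; it produces $0\to\varinjlim t(M_i)\to M\to\varinjlim\bigl(M_i/t(M_i)\bigr)\to 0$ with $\varinjlim t(M_i)\in\Gen\mathbf t$ and $\varinjlim\bigl(M_i/t(M_i)\bigr)\in\varinjlim\mathbf f=\mathbf t^{\perp_0}$, which gives the required decomposition and, applied to $M\in{}^{\perp_0}(\mathbf t^{\perp_0})$, yields $\mathbf T(\mathbf t)\subseteq\Gen\mathbf t$. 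Making this step explicit (and noting at the end that a torsion pair is determined by its torsionfree class, so $\Fcal=\varinjlim\mathbf f=\mathbf t^{\perp_0}$ indeed recovers $(\Tcal,\Fcal)$) would complete the proof.
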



In other words, the torsion pairs in $\mathbf{tors}(R)$ over a left noetherian ring are parametrized by cosilting modules. Over an artin algebra, the finitely generated cosilting modules are precisely the  $ \tau^{-1}-$tilting modules. So, we can use the results from \cite{tau-orig} to observe the following. 
\begin{rem}\label{ff} {\rm Assume that $\Lambda$ is an artin algebra and let $\tpair{t}{f}$ be a torsion pair in  $\mathbf{tors}(\Lambda)$. Then $\mathbf{t}$ is functorially finite if and only if so is $\mathbf{f}$, and this happens precisely when the associated cosilting module is finitely generated.}
\end{rem}

\begin{rem}
{\rm In the literature, $ \tau-$tilting theory is usually applied in the case of finite-dimensional algebras $ \Lambda $.
 However, all the results which we will use from \cite{tau-orig} , \cite{g-vectors} and \cite{nagoya} are valid in the more general setting of artin algebras.
 Two crucial points to ensure the validity of such results are the fact that $ \lmod{\Lambda} $ is an abelian length category and that for every torsion pair $ \tpair{t}{f} $ in $ \lmod{\Lambda} $ the torsion class $ \mathbf{ t } $ is functorially finite if and only if the torsionfree  class $ \mathbf{f} $ is functorially finite. 
 }
\end{rem}

\section{Wide subcategories}

We now introduce the construction of wide subcategories from torsion pairs due to Ingalls and Thomas \cite{noncrossingPart}. From the interplay between small and large torsion pairs over a noetherian ring $R$ we derive  some compatibility results between the constructions in $\lmod R$ and $\lMod R$. Furthermore, we show that the simple objects in the wide subcategories we obtain from a torsion pair are precisely the objects studied in \cite{simpleCotilting, mutation} and \cite{minimIncl}.

 \begin{de}{\rm
 Let $ \mathcal{ A } $ be an abelian category. A subcategory $ \mathcal{W} \subseteq \mathcal{A} $ is \emph{wide} if it is closed under kernels, cokernels and extensions.}
 \end{de}

 \begin{de}{\rm
 Let $ \Tpair{T}{F} $ be a torsion pair in some abelian category $ \mathcal{A} $. We define:
\begin{align*}
\Alpha(\mathcal{T}) & = \left\{ X \in \mathcal{A}\,|\, \text{ for all } T \in \mathcal{T}, f : T \to X, \ker(f) \in \mathcal{T}   \right\} \\
\Beta(\mathcal{F}) & = \left\{ X \in \mathcal{A} \,|\, \text{ for all } F \in \mathcal{F}, f : X \to F, \operatorname{coker}(f) \in \mathcal{F}   \right\} \\
\alpha(\mathcal{T}) & = \mathcal{T} \cap \Alpha(\mathcal{T}) \\
\beta(\mathcal{F}) & = \mathcal{F} \cap \Beta(\mathcal{F})
\end{align*}
}
 \end{de}
 
 
 \begin{lem}
 \label{lem:basicAlphaBeta}
Let  $ \Tpair{T}{F} $ be a torsion pair in $ \mathcal{A} $. The following statements hold true.
\begin{itemize}
\item[(i)] The subcategory $ \Alpha(\mathcal{T}) $ is closed under subobjects and extensions. Moreover, $ \mathcal{F} \subseteq \Alpha(\mathcal{T}) $.
\item[(ii)] $\alpha(\mathcal{T}) $ is a wide subcategory of $ \mathcal{A} $. It is closed under torsion subobjects.
\item[(iii)]  $ \Alpha(\mathcal{ T }) $ consists of the objects $M$ of $\Acal$ appearing in  short exact sequences $0 \to C \to M \to D \to 0$ with $ C \in \alpha(\mathcal{ T }) $ and $ D \in \mathcal{F} $.
In particular, $ \alpha(\mathcal{T}) = 0 $ if and only if $ \Alpha(\mathcal{T}) = \mathcal{F} $.
\end{itemize}
The dual statements hold true for $ \Beta(\mathcal{F}) $ and $\beta(\mathcal{F}) $.
 \end{lem}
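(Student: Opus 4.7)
My plan is to verify the three parts in order by direct diagram chases, using only the defining properties of a torsion pair together with the definition of $\Alpha(\Tcal)$. For (i), closure of $\Alpha(\Tcal)$ under subobjects reduces to the observation that if $N \hookrightarrow M$ is a monomorphism and $M \in \Alpha(\Tcal)$, then for any $f : T \to N$ with $T \in \Tcal$ the kernel of the composition $T \to M$ coincides with $\ker(f)$ and therefore lies in $\Tcal$. For closure under extensions, given $0 \to M' \to M \to M'' \to 0$ with outer terms in $\Alpha(\Tcal)$ and a map $f : T \to M$ with $T \in \Tcal$, I would first apply $M'' \in \Alpha(\Tcal)$ to the composition $T \to M \to M''$, obtaining a subobject $K \in \Tcal$ of $T$ on which $f$ factors through $M'$, and then apply $M' \in \Alpha(\Tcal)$ to the restriction $f|_K$; since $\ker(f|_K) = \ker(f)$, the result follows. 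The containment $\Fcal \subseteq \Alpha(\Tcal)$ is immediate because $\Hom_\Acal(T, F) = 0$ for $T \in \Tcal$, $F \in \Fcal$ forces any test map to be zero.

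For (ii), closure of $\alpha(\Tcal)$ under extensions follows from (i) combined with the analogous property of $\Tcal$, and closure under kernels follows from closure of $\Alpha(\Tcal)$ under subobjects together with the defining property of $\Alpha(\Tcal)$ applied to a map $f : X \to Y$ with $X \in \Tcal$ and $Y \in \Alpha(\Tcal)$. The technical heart of the lemma is closure under cokernels. Given $f : X \to Y$ in $\alpha(\Tcal)$, the cokernel $C$ already lies in $\Tcal$ as a quotient of $Y$; to prove $C \in \Alpha(\Tcal)$, I would fix a test map $g : T \to C$ with $T \in \Tcal$ and pull the canonical sequence $0 \to \operatorname{im}(f) \to Y \to C \to 0$ back along $g$, producing a short exact sequence $0 \to \operatorname{im}(f) \to P \to T \to 0$ together with a map $P \to Y$. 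Since $\operatorname{im}(f)$ is a quotient of $X \in \Tcal$, the object $P$ is an extension of two objects of $\Tcal$ and therefore lies in $\Tcal$; applying $Y \in \Alpha(\Tcal)$ to the map $P \to Y$ gives $\ker(P \to Y) \in \Tcal$, and a standard diagram chase identifies this kernel with $\ker(g)$. Closure of $\alpha(\Tcal)$ under torsion subobjects is then an immediate corollary of (i), since a subobject of some $X \in \alpha(\Tcal)$ which lies in $\Tcal$ automatically inherits membership in $\Alpha(\Tcal)$.

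For (iii), the canonical torsion decomposition $0 \to t(M) \to M \to M/t(M) \to 0$ of any $M \in \Alpha(\Tcal)$ already satisfies $t(M) \in \Tcal$ and $M/t(M) \in \Fcal$; since $t(M) \hookrightarrow M$, part (i) forces $t(M) \in \Alpha(\Tcal)$, so $t(M) \in \alpha(\Tcal)$. The reverse direction uses closure of $\Alpha(\Tcal)$ under extensions together with the inclusions $\alpha(\Tcal), \Fcal \subseteq \Alpha(\Tcal)$ from (i). The final equivalence is then formal: if $\alpha(\Tcal) = 0$ the torsion part in the canonical decomposition must vanish, so $\Alpha(\Tcal) \subseteq \Fcal$; conversely $\Alpha(\Tcal) = \Fcal$ gives $\alpha(\Tcal) \subseteq \Tcal \cap \Fcal = 0$. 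The only non-routine step is the pullback argument for closure of $\alpha(\Tcal)$ under cokernels in (ii); everything else is an unwinding of definitions, and the statements for $\Beta(\Fcal)$ and $\beta(\Fcal)$ then follow by formal dualisation.
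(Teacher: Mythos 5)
Your proposal is correct and follows essentially the same route as the paper: your subobject and extension arguments for $\Alpha(\Tcal)$ coincide with the paper's proof of (i) (you replace its snake-lemma step by factoring the restriction to $\ker(T\to M'')$ through $M'$, which is the same diagram), and your canonical torsion sequence argument for (iii) is exactly the argument the paper leaves to the reader. For (ii) the paper simply cites Ingalls--Thomas, and your pullback argument for closure under cokernels is the standard proof behind that citation, so nothing is genuinely different in approach.
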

 
 \begin{proof}
 (i) First, notice that if $ F \in \mathcal{F} $, then $ \Hom_R(T, F) = 0 $ for every $ T \in \mathcal{T} $. Thus, every such object $ F $ is trivially an element of $ \Alpha(\mathcal{T}) $. 
 
 Let $ X \in \Alpha(\mathcal{T}) $, consider $ Y \le X $. Then for every torsion object $ T $ and every map $ f : T \to Y $, the kernel of $ f $ is equal to the kernel of the composition of $ f $ with an embedding of $ Y $ into $ X $. Thus, $ Y $ is in $ \Alpha(\mathcal{T}) $.
 
 Let $\begin{tikzcd}
 0 \arrow[r] & X' \arrow[r] & Y \arrow[r, "g"] & X'' \arrow[r] & 0  
 \end{tikzcd}$ be a short exact sequence with $ X', X'' \in \Alpha(\mathcal{ T }) $. Let $ f : T \to Y $ be some map. 
 Consider the following commutative diagram:
 \[
 \begin{tikzcd}
  0 \arrow[r] & \ker(g \circ f) \arrow[r] \arrow[d, dashed, "h"] & T \arrow[r] \arrow[d,"f"] & \mathrm{Im}(g \circ f) \arrow[r] \arrow[d, hook] & 0 \\  
  0 \arrow[r] & X' \arrow[r] & Y \arrow[r, "g"] & X'' \arrow[r] & 0  
  \end{tikzcd}
 \]
 By the previous point, $ \mathrm{Im}(g \circ f) \in \Alpha(\mathcal{ T }) $, thus $  \ker(g \circ f) \in \mathcal{T} $. An application of the snake lemma yields that $ \ker(f) = \ker(h) $ and this is a torsion module since $ X' \in \Alpha(\mathcal{T}) $. Thus, $ \Alpha(\mathcal{ T }) $ is closed under extensions.
 
(ii) is \cite[Proposition 2.12]{noncrossingPart}, and
(iii) 
is left to the reader. \qedhere

 
 
 \end{proof}
 We want to study these costructions when $\Acal$ is a module category.
 Over a left coherent ring $R$, besides  $\lMod R$, we can also consider the abelian category $\lmod R$. We will use the symbols $ \widetilde{\Alpha} $, $ \widetilde{\Beta} $, $ \widetilde{\alpha} $ and $ \widetilde{\beta} $ for the operators in $\lmod R$.
If $R$ is left noetherian, we can use the interplay between torsion pairs in $\lmod R$ and cosilting torsion pairs in $\lMod R$ to obtain the following compatibility result:

\begin{lem}
\label{prop:cosiltRestriction}
Let $ R $ be a left noetherian ring. Let $ \Tpair{T}{F} $ be a cosilting torsion pair in $ \lMod{R} $ with restriction $(\mathbf{t},\mathbf{f})$ to $ \lmod{R} $. The following statements hold true.
\begin{itemize}\item[(i)] $ \Alpha(\mathcal{T}) \cap \lmod{R} = \widetilde{\Alpha}(\mathbf{t}) $ and thus  $ \alpha(\mathcal{T}) \cap \lmod{R} = \widetilde{\alpha}(\mathbf{t}) $\item[(ii)]
 $ \Beta(\mathcal{F}) \cap \lmod{R} = \widetilde{\Beta}(\mathbf{f}) $ and thus $ \beta(\mathcal{F}) \cap \lmod{R} = \widetilde{\beta}(\mathbf{f}) $.
\end{itemize} 
\end{lem}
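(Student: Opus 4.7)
The plan is to treat the two statements as duals: I will give the argument for (i) and indicate the modifications needed for (ii). Throughout, I use Theorem~\ref{thm:CB-bij}, which tells me that $\mathcal{T}=\varinjlim\mathbf{t}$ and $\mathcal{F}=\varinjlim\mathbf{f}$, and that $\mathbf{t}=\mathcal{T}\cap\lmod R$, $\mathbf{f}=\mathcal{F}\cap\lmod R$. The identities $\alpha(\mathcal{T})\cap\lmod R=\widetilde{\alpha}(\mathbf{t})$ and $\beta(\mathcal{F})\cap\lmod R=\widetilde{\beta}(\mathbf{f})$ follow automatically from the statements about $\Alpha$ and $\Beta$ by intersecting with $\mathbf{t}$, respectively $\mathbf{f}$, so I focus on those.

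For the inclusion $\Alpha(\mathcal{T})\cap\lmod R\subseteq\widetilde{\Alpha}(\mathbf{t})$, I take $X\in\Alpha(\mathcal{T})\cap\lmod R$, a module $T'\in\mathbf{t}\subseteq\mathcal{T}$, and a morphism $f:T'\to X$. By definition $\ker f\in\mathcal{T}$, and since $R$ is left noetherian, $\ker f$ is finitely generated, hence finitely presented; therefore $\ker f\in\mathcal{T}\cap\lmod R=\mathbf{t}$, as required.

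For the reverse inclusion, I take $X\in\widetilde{\Alpha}(\mathbf{t})$ (so in particular $X\in\lmod R$), pick $T\in\mathcal{T}$ and a morphism $f:T\to X$, and write $T=\varinjlim_{i}T_i$ with $T_i\in\mathbf{t}$ using Theorem~\ref{thm:CB-bij}. Each composite $f_i:T_i\to T\xrightarrow{f}X$ has $T_i\in\mathbf{t}$ and $X\in\widetilde{\Alpha}(\mathbf{t})$, so $\ker f_i\in\mathbf{t}$. Since direct limits are exact in $\lMod R$, applying $\varinjlim$ to the short exact sequences $0\to\ker f_i\to T_i\to\operatorname{Im} f_i\to 0$ identifies $\ker f$ with $\varinjlim_i\ker f_i$, which lies in $\varinjlim\mathbf{t}=\mathcal{T}$. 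Hence $X\in\Alpha(\mathcal{T})$.

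For (ii), the inclusion $\Beta(\mathcal{F})\cap\lmod R\subseteq\widetilde{\Beta}(\mathbf{f})$ is analogous, using that a quotient of a finitely presented module over a noetherian ring is finitely presented. The reverse inclusion is dual but requires one extra step: given $X\in\widetilde{\Beta}(\mathbf{f})$, $F\in\mathcal{F}$ and $g:X\to F$, I write $F=\varinjlim_i F_i$ with $F_i\in\mathbf{f}$, and now \emph{because $X$ is finitely presented}, the map $g$ factors through some structure map $\phi_j:F_j\to F$, say $g=\phi_j\circ g_j$ with $g_j:X\to F_j$. Setting $g_i=\phi_{ji}\circ g_j:X\to F_i$ for $i\ge j$ yields a compatible family with $\operatorname{coker}(g_i)\in\mathbf{f}$ and $\varinjlim_{i\ge j}g_i=g$; exactness of $\varinjlim$ then gives $\operatorname{coker} g=\varinjlim_{i\ge j}\operatorname{coker}(g_i)\in\varinjlim\mathbf{f}=\mathcal{F}$, proving $X\in\Beta(\mathcal{F})$. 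The only genuinely asymmetric point in the whole argument is precisely this use of finite presentation to factor $g$ through a level of the colimit, which is needed in (ii) but not in (i) because in (i) it is the source $T$ that is a colimit rather than the target.
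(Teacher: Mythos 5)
Your proof is correct, and your reduction of the statements about $\alpha$ and $\beta$ to those about $\Alpha$ and $\Beta$ by intersecting with $\mathbf{t}$, resp.\ $\mathbf{f}$, is exactly how they are meant to follow. However, your route to the two nontrivial inclusions differs from the paper's. For (i), the paper does not write $T$ as a direct limit: it uses $\mathcal{T}=\Gen(\mathbf{t})$ (also part of Theorem~\ref{thm:CB-bij}) to pick an epimorphism $\bigoplus_I t_i\to T$ with $t_i\in\mathbf{t}$, reduces to the case where $f$ is surjective (using that $\widetilde{\Alpha}(\mathbf{t})$ is closed under submodules), passes to a finite subsum $J$ with $f\circ (p|_J)$ still surjective because the target is finitely generated, and then a snake-lemma diagram exhibits $\ker f$ as an extension of a torsion quotient of $T$ by $K_J\in\mathbf{t}$. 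Your argument instead uses $\mathcal{T}=\varinjlim\mathbf{t}$ and exactness of filtered colimits to get $\ker f=\varinjlim\ker f_i$ (you tacitly also use $\varinjlim\operatorname{Im}f_i=\operatorname{Im}f$, which is immediate), avoiding the reduction to epimorphisms. For (ii), the paper never decomposes $F$: it uses $\mathcal{F}=\mathbf{t}^{\perp_0}$ and, given a monomorphism $T\to\operatorname{coker}(f)$ with $T\in\mathbf{t}$, forms the pullback $P$ of $F\to\operatorname{coker}(f)\leftarrow T$, a finitely generated submodule of $F$, hence in $\mathbf{f}$, whose quotient by the image of $X$ is $T$; the hypothesis $X\in\widetilde{\Beta}(\mathbf{f})$ then forces $T=0$. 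You instead exploit compactness of the finitely presented module $X$ to factor the map through a stage of a direct limit presentation of $F$ and then take colimits of cokernels. Both mechanisms are sound and rest on the same input, Theorem~\ref{thm:CB-bij}; yours treats (i) and (ii) in a visibly dual and uniform way, while the paper's pullback argument in (ii) is slightly more economical in that it only needs $\mathcal{F}=\mathbf{t}^{\perp_0}$ rather than the full direct limit description of $\mathcal{F}$.
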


\begin{proof}
(i) The inclusion $ \Alpha(\mathcal{T}) \cap \lmod{R} \subseteq \widetilde{\Alpha}(\mathbf{t}) $ is immediate. 
Assume $ A \in \widetilde{\Alpha}(\mathbf{ t }) $. Let $ f : T \to A $ be a morphism with $ T \in \mathcal{T} $. We need to show that $ K := \ker f \in \mathcal{T} $. As $ \widetilde{\Alpha}(\mathbf{t}) $ is closed under submodules, we may assume that $ f $ is an epimorphism.
Since the torsion pair is cosilting, we can find a family of finitely-generated torsion modules $ \{t_i\}_I $ with an epimorphism $ p : \bigoplus_I t_i \to T $.  
At this point, since $ A $ is finitely-generated, we can find a finite subset $ J \subseteq I $, such that the map $ f_J := f \circ (p|_J) $ is an epimorphism. Denote by $ K_J $ the kernel of this map and consider  the following commutative diagram:
\[
\begin{tikzcd}
0 \ar[r] & K_J \ar[d] \ar[r] & \bigoplus_J t_i \ar[d, "p|J" ] \ar[r, "f_J"] & A \ar[d, equals] \ar[r] & 0 \\
0 \ar[r] & K \ar[d, two heads] \ar[r] & T \ar[r, "f"] \ar[d, two heads] & A \ar[r] & 0 \\
& L \ar[r,equals] & L 
\end{tikzcd}
\]
We have that $ L \in \mathcal{T} $, being the quotient of a torsion module. Moreover, $ K_J \in \mathbf{t} $ as $  \bigoplus_J t_i \in \mathbf{t} $ and $ A \in \widetilde{\Alpha}(\mathbf{t}) $. Therefore, $ K \in \mathcal{T} $ as required.

(ii)
 Again $ \Beta(\mathcal{F}) \cap \lmod{R} \subseteq \widetilde{\Beta}(\mathbf{f}) $ by definition. So assume $ X \in \widetilde{\Beta}(\mathbf{f}) $, let $ F \in \mathcal{F} $ and $ f : X \to F $ with cokernel $ C $. To show that $ X \in \Beta(\mathcal{F}) $ we need to prove that $ C $ is torsionfree. As $ \mathcal{F} = \mathbf{t}^{\perp_0} $, assume we have an injection $ T \to C $, with $ T \in \mathbf{ t } $, and consider the following pull-back diagram:
\[
\begin{tikzcd}
X \arrow[d, equals] \arrow[r] & P \arrow[r] \arrow[d,hook] & T \arrow[d, hook] \arrow[r] & 0 \\
X \arrow[r, "f"] & F \arrow[r] & C \arrow[r] & 0
\end{tikzcd}
\]
By construction, $ P $ is a finitely generated torsionfree module, thus, using that $ X \in  \widetilde{\Beta}(\mathbf{f}) $ we must have that $ T \in \mathbf{f} $. Thus $ T = 0 $ being both torsion and torsionfree. 
\end{proof}

Given a left coherent ring $R$, we denote by $ \mathbf{wide}(R) $  the collection of wide subcategories of $ \lmod{R} $. We then have maps $\widetilde{\alpha}, \widetilde{\beta}: \mathbf{tors} R\to  \mathbf{wide}(R)$. 
In \cite{wideTorsion,wideLoc}, these maps are shown to be surjective over artin algebras. In fact, one just needs the description of the operators $ \widetilde{\mathbf{T}} $ and $ \widetilde{\mathbf{F}} $ in Proposition~\ref{prop:torsionTorsionfreeClassSmall}.
\begin{thm}[\cite{wideTorsion}, \cite{wideLoc}]
\label{thm:wideGeneratedCogen}
Let $ R $ be a ring and $ \mathcal{W} $  a  subcategory of $ \lmod{R} $. 
\begin{itemize}
\item[(i)] If $ R $ is a left noetherian ring, then $\widetilde{\alpha}( \widetilde{\mathbf{T}}(\mathcal{W}) ) = \mathcal{W} $ if and only if $ \mathcal{W} $ is a wide subcategory.
\item[(ii)] If $ R $ is a left artinian ring, then $\widetilde{\beta}( \widetilde{\mathbf{F}}(\mathcal{W}) ) = \mathcal{W} $ if and only if $ \mathcal{W} $ is a wide subcategory.
\end{itemize}
\end{thm}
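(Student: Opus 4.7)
The ``only if'' direction of both parts is immediate from Lemma~\ref{lem:basicAlphaBeta}(ii): the operators $\widetilde\alpha$ and $\widetilde\beta$ always take values in $\mathbf{wide}(R)$, so $\mathcal{W}$ must be wide whenever it appears as $\widetilde\alpha(\widetilde{\mathbf{T}}(\mathcal{W}))$ or $\widetilde\beta(\widetilde{\mathbf{F}}(\mathcal{W}))$. The plan is to concentrate on the ``if'' direction of part (i); part (ii) will follow by a verbatim dualisation based on the description $\widetilde{\mathbf{F}}(\mathcal{W}) = \operatorname{filt}\cogen(\mathcal{W})$ from Proposition~\ref{prop:torsionTorsionfreeClassSmall}(2), replacing ascending chains of subobjects by descending chains of quotients.

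Assume $\mathcal{W}$ is wide. For the inclusion $\mathcal{W} \subseteq \widetilde\alpha(\widetilde{\mathbf{T}}(\mathcal{W}))$ I would fix $W \in \mathcal{W}$ and verify the $\widetilde\alpha$-condition for any $f \colon T \to W$ with $T \in \widetilde{\mathbf{T}}(\mathcal{W}) = \operatorname{filt}\gen(\mathcal{W})$ by induction on the filtration length of $T$. In the base case $T = W'/K$ with $W' \in \operatorname{add}(\mathcal{W})$, the precomposed morphism $\tilde f \colon W' \to W$ lives inside the abelian category $\mathcal{W}$, so $\ker\tilde f$ and $\operatorname{Im}(\tilde f) = \operatorname{Im}(f)$ are in $\mathcal{W}$ by wide closure, and $\ker f$ appears as the quotient $\ker(\tilde f)/K$, which lies in $\gen(\mathcal{W}) \subseteq \widetilde{\mathbf{T}}(\mathcal{W})$. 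The inductive step decomposes $T$ as $0 \to T' \to T \to T'' \to 0$ with $T''$ of filtration length one and applies the snake lemma to $f$ together with the induced morphism $\bar f \colon T'' \to W/\operatorname{Im}(f|_{T'})$. This yields a short exact sequence $0 \to \ker(f|_{T'}) \to \ker f \to \ker \bar f \to 0$ whose outer terms lie in $\widetilde{\mathbf{T}}(\mathcal{W})$ by induction, while a parallel computation shows $\operatorname{Im}(f) \in \mathcal{W}$, a fact to be reused below.

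The opposite inclusion $\widetilde\alpha(\widetilde{\mathbf{T}}(\mathcal{W})) \subseteq \mathcal{W}$ is the crux. The plan is to exhibit every $X$ in the left-hand side as a cokernel of a morphism internal to $\mathcal{W}$, whence $X \in \mathcal{W}$ by wide closure under cokernels. Fix a filtration $0 = X_0 \subset X_1 \subset \cdots \subset X_n = X$ with $X_i / X_{i-1} \in \gen(\mathcal{W})$; each $X_i$ inherits the $\widetilde\alpha$-property from $X$ via the inclusion $X_i \hookrightarrow X$, because a test morphism $T \to X_i$ composes with this inclusion to give a test morphism for $X$ with the same kernel. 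Arguing by induction on $n$, one may assume $X_{n-1} \in \mathcal{W}$. Picking a surjection $V \twoheadrightarrow X / X_{n-1}$ with $V \in \operatorname{add}(\mathcal{W})$ and forming the pullback $P = X \times_{X/X_{n-1}} V$ gives a short exact sequence $0 \to X_{n-1} \to P \to V \to 0$ that places $P$ in $\mathcal{W}$ by extension closure. The induced epimorphism $P \twoheadrightarrow X$ has kernel $L = \ker(V \to X/X_{n-1}) \subseteq V$, and the $\widetilde\alpha$-condition on $X$ forces $L \in \widetilde{\mathbf{T}}(\mathcal{W})$. Once $L$ is known to be in $\mathcal{W}$, the identification $X = P/L$ closes the induction.

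The hard part is therefore to show that any $L \in \widetilde{\mathbf{T}}(\mathcal{W})$ embedded in a wide module $V$ is itself in $\mathcal{W}$. Since $V$ lies in $\mathcal{W} \subseteq \widetilde\alpha(\widetilde{\mathbf{T}}(\mathcal{W}))$ by the inclusion already proved, the submodule $L$ inherits the $\widetilde\alpha$-property from $V$, and the entire construction of the previous paragraph reapplies to $L$, at each step producing a strictly larger wide subobject of $V$ unless the process has already terminated with $L$ itself in $\mathcal{W}$. Noetherianity of $V$ then guarantees termination via the ascending chain condition on wide subobjects. This is precisely the inductive scheme worked out in \cite{wideTorsion,wideLoc} for artin algebras; the only extra input needed to pass from artin to noetherian is the filtration from Proposition~\ref{prop:torsionTorsionfreeClassSmall}(1), and the entirely dual strategy, with pushouts in place of pullbacks and descending chains in place of ascending ones, covers part (ii) under the artinian hypothesis.
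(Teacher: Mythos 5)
Your setup (the only-if direction via Lemma~\ref{lem:basicAlphaBeta}, the inclusion $\Wcal\subseteq\widetilde\alpha(\widetilde{\mathbf{T}}(\Wcal))$ by induction on filtration length, and the reduction of the reverse inclusion via the pullback to the claim that a submodule $L\in\widetilde{\mathbf{T}}(\Wcal)$ of some $V\in\Wcal$ lies in $\Wcal$) is sound, and you are right that Proposition~\ref{prop:torsionTorsionfreeClassSmall} is the only input needed beyond the artin algebra case treated in \cite{wideTorsion,wideLoc} — this is exactly why the paper merely cites those references. But the final paragraph, which is the crux of the whole theorem, has a genuine gap. ``Reapplying the construction of the previous paragraph to $L$'' does not produce a strictly larger wide subobject of $V$: that construction peels off the \emph{top} layer of a filtration of $L$, invokes the very statement being proved for $L_{m-1}$ (whose filtration length is unrelated to the outer induction on $X$), and then outputs a new kernel $L'$ sitting inside a \emph{new} module $V'\in\Wcal$, with no control on the filtration length of $L'$. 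So neither the double induction nor the appeal to the ascending chain condition is well-founded as stated; no ascending chain of wide submodules of $V$ is ever constructed, and the phrase ``producing a strictly larger wide subobject'' is precisely the assertion that needs a mechanism.

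The missing idea is to exploit the ambient module $V$ and peel from the \emph{bottom}: any submodule of a module of $\Wcal$ which lies in $\gen(\Wcal)$ is the image of a morphism between objects of $\Wcal$, hence lies in $\Wcal$. With this observation one can argue either by induction on the filtration length of $L$ (the bottom layer $L_1\in\gen(\Wcal)$ is in $\Wcal$, then $L/L_1\subseteq V/L_1\in\Wcal$ has shorter filtration, and extension closure finishes), or by the chain argument you gesture at, now made precise: if $N\subsetneq L$ is a submodule with $N\in\Wcal$, then $V/N\in\Wcal$, the nonzero module $L/N\in\widetilde{\mathbf{T}}(\Wcal)$ contains a nonzero $\gen(\Wcal)$-submodule $U$, which lies in $\Wcal$ as a subobject of $V/N$, so the preimage of $U$ in $L$ is a strictly larger wide submodule; noetherianity then forces $L\in\Wcal$. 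Until one of these mechanisms is supplied, the hard inclusion is not proved, and since part (ii) is obtained by dualising this argument, the same gap propagates there.
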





\begin{de}[\cite{wideTorsion}]{\rm
Let $ R $ be a left noetherian ring. A torsion pair $ \Tpair{T}{F} $  in $ \lMod{R} $, respectively its restriction $ \tpair{t}{f} $ in $ \lmod{R} $, is said to be \emph{widely generated} if there exists a wide subcategory $ \mathcal{W} \in\mathbf{wide} (R)$  such that $ \mathbf{T}(\mathcal{W}) = \mathcal{T} $, or equivalently, $ \widetilde{\mathbf{T}}(\mathcal{W}) = \mathbf{t} $.}
\end{de}

The next result computes $\beta$ for widely generated torsion pairs over noetherian  hereditary rings.

\begin{lem}
\label{lem:betaCompHereditary}
Let $ R $ be a left noetherian ring. If $ \mathcal{W} $ is a wide subcategory of $ \lmod{R} $ consisting of modules of projective dimension less than one, then
\[
\beta(\mathcal{W}^{\perp_0}) = \mathcal{W}^{\perp_{0,1}}.
\]  
\end{lem}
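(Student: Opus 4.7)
The plan is to prove the two inclusions separately. I write $\Fcal=\Wcal^{\perp_0}$ and let $\Tcal=\mathbf{T}(\Wcal)={}^{\perp_0}\Fcal$; every module then admits a torsion decomposition $0\to t(M)\to M\to M/t(M)\to 0$ with $t(M)\in\Tcal$ and $M/t(M)\in\Fcal$. For the inclusion $\Wcal^{\perp_{0,1}}\subseteq \beta(\Fcal)$, I fix $X\in\Wcal^{\perp_{0,1}}$ and a morphism $f:X\to F$ with $F\in\Fcal$. Factoring $f$ through its image $I$ yields two short exact sequences $0\to K\to X\to I\to 0$ and $0\to I\to F\to C\to 0$. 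Applying $\Hom_R(W,-)$ for $W\in\Wcal$ to the second sequence and using $F\in\Fcal$ reduces the desired vanishing $\Hom_R(W,C)=0$ to $\Ext_R^1(W,I)=0$. Applying $\Hom_R(W,-)$ to the first sequence and using $X\in\Wcal^{\perp_{0,1}}$ reduces this in turn to $\Ext_R^2(W,K)=0$, which is exactly what the projective dimension hypothesis on $W$ provides. Hence $C\in\Fcal$, and since $X\in\Fcal$ is automatic, $X$ lies in $\beta(\Fcal)$.

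For the reverse inclusion $\beta(\Fcal)\subseteq \Wcal^{\perp_{0,1}}$, I fix $X\in\beta(\Fcal)$ and $W\in\Wcal$, and show that every extension $0\to X\to E\to W\to 0$ splits. Taking the torsion decomposition of the middle term $E$, the composite $f\colon X\hookrightarrow E\twoheadrightarrow E/t(E)$ is injective because $X\in\Fcal$ forces $X\cap t(E)=0$, and its target lies in $\Fcal$. By the $\beta$-condition, the cokernel $E/(X+t(E))$ of $f$ belongs to $\Fcal$; but this cokernel is also a quotient of $E/X=W\in\Wcal\subseteq\Tcal$ and hence torsion. Therefore it vanishes, so $E=X+t(E)$, and combined with $X\cap t(E)=0$ this gives the internal decomposition $E=X\oplus t(E)$. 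The composite $t(E)\hookrightarrow E\twoheadrightarrow W$ is then a section, and the extension splits, yielding $\Ext_R^1(W,X)=0$.

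No step is particularly hard: the projective dimension hypothesis enters only in the first inclusion, in the appeal to $\Ext_R^2(W,-)=0$, and the second inclusion uses only that $\Fcal$ is a torsionfree class. The main technical point requiring care is the identification, in the second inclusion, of the cokernel $E/(X+t(E))$ as simultaneously torsion (as a quotient of $W$) and torsionfree (by the $\beta$-condition applied to $f\colon X\to E/t(E)$), which forces it to vanish and produces the splitting by a routine direct-sum argument.
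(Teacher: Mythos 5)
Your first inclusion $\Wcal^{\perp_{0,1}}\subseteq\beta(\Wcal^{\perp_0})$ is correct and is essentially the paper's own argument: reducing to the image of $f$ and invoking $\Ext^2_R(W,-)=0$ is exactly the observation that $\Wcal^{\perp_1}$ is closed under quotients, so that it suffices to treat injective maps into $\Wcal^{\perp_0}$.

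The second inclusion, however, contains a genuine gap. You assert that the composite $X\hookrightarrow E\twoheadrightarrow E/t(E)$ is injective ``because $X\in\Fcal$ forces $X\cap t(E)=0$''. This implication is false in general: $X\cap t(E)$ is a submodule of $X$ and hence torsionfree, but it is only a \emph{submodule} of the torsion module $t(E)$, and torsion classes such as $\mathbf{T}(\Wcal)$ need not be closed under submodules; so there is no reason for $X\cap t(E)$ to be torsion, hence no reason for it to vanish. (Over $\mathbb{Z}$ with the torsion pair of divisible and reduced groups, $\mathbb{Z}\subset\mathbb{Q}$ is a torsionfree submodule lying inside the torsion part; over the Kronecker algebra a preprojective submodule of a regular module behaves the same way for the torsion pair generated by the regular modules.) Your remaining steps are sound but, without the injectivity claim, only yield $E=X+t(E)$, i.e.\ an epimorphism $t(E)\twoheadrightarrow W$ with kernel $K=X\cap t(E)$; to split the sequence you still have to prove $K=0$. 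This is precisely the point the paper settles using the nontrivial input $\Wcal\subseteq\alpha(\mathbf{T}(\Wcal))$, obtained from Lemma~\ref{prop:cosiltRestriction} together with Theorem~\ref{thm:wideGeneratedCogen}(i): since $W$ lies in $\operatorname{A}(\mathbf{T}(\Wcal))$, the kernel of the epimorphism from the torsion module $t(E)$ onto $W$ is again torsion, hence zero, being also a submodule of the torsionfree module $X$. Note that this is exactly where the left noetherian hypothesis enters; the fact that your argument for the second inclusion never uses it (you state it ``uses only that $\Fcal$ is a torsionfree class'') is a symptom of the missing step.
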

\begin{proof}
Notice that $ \mathbf{T}(\mathcal{W}) = \varinjlim \widetilde{\mathbf{T}}(\mathcal{W}) $, thus $ \mathcal{W} \subseteq \alpha(\mathbf{T}(\mathcal{W})) $ by Lemma \ref{prop:cosiltRestriction} and Theorem~\ref{thm:wideGeneratedCogen}(i).

We can immediately verify that a module $ M $ in $ \mathcal{W}^{\perp_1} $ is in $ \Beta(\mathcal{W}^{\perp_0}) $ once we have noticed that it is enough to check the condition for injective maps $ 0 \to M \to F $ with $ F \in \mathcal{W}^{\perp_0} $ (here we need that $ \mathcal{W}^{\perp_1} $ is closed under quotients).

For the other inclusion, take $ M \in \beta(\mathcal{W}^{\perp_0}) = \mathcal{W}^{\perp_0} \cap \Beta(\mathcal{W}^{\perp_0}) $.
Consider a short exact sequence:
\[
0 \to M \to N \to W \to 0
\]
with $ W \in \mathcal{W} $. Then, taking the torsion part of $ N $ we obtain the following commutative diagram:
\[
\begin{tikzcd}
0 \arrow[r] & K \arrow[r] \arrow[d] & \operatorname{t}N \arrow[d, hook] \arrow[r] & I \arrow[d, hook] \arrow[r] & 0 \\
0 \arrow[r] & M \arrow[d, two heads] \arrow[r] & N \arrow[r] \arrow[d, two heads] & W \arrow[r] \arrow[d, two heads] & 0 \\
0 \arrow[r] & L \arrow[r] & \overline{N} \arrow[r] & \widetilde{W} \arrow[r] & 0
\end{tikzcd}
\]
Then since $ M \in \Beta(\mathcal{W}^{\perp_0}) $ and this class is closed under quotients, $ L \in \Beta(\mathcal{W}^{\perp_0}) $. Therefore, $ \widetilde{W} $ is in $ \mathcal{W}^{\perp_0} $ as $ \overline{N} \in \mathcal{W}^{\perp_0} $. But $ \widetilde{W} $ is a quotient of $ W $, thus it must be zero. 

Thus $ I = W $, and since $ \mathcal{W} \subseteq \alpha(\mathbf{T}(\mathcal{W})) $ we have that $ K \in \mathbf{T}(\mathcal{W}) $. However, $ K $ is also a submodule of $ M $ which is in $ \mathcal{W}^{\perp_0} $, therefore, $ K = 0 $ as it is both torsion and torsionfree. This shows that the middle sequence splits.
\end{proof}

Finally, we determine the simple objects in the wide subcategories given by $\alpha$ and $\beta$.

\begin{de}
\label{de:TtF}
{\rm
Let $ (\mathcal{ T }, \mathcal{F} ) $ be a torsion pair in $ \lMod{R} $. A non-zero module $ B \in\Tcal $ is \emph{torsion, almost torsionfree}  with respect to $ (\mathcal{T}, \mathcal{F} ) $ if it satisfies the following conditions.
\begin{itemize}
\item[(1)] Every proper submodule of $ B $ is contained in $ \mathcal{ F } $. 
\item[(2)] For every short exact sequence $ 0 \to K \to T \to B\to 0 $, if $ T \in \mathcal{T} $, then $ K \in \mathcal{T} $.
\end{itemize} 
Dually, we  define  \emph{torsionfree, almost torsion} modules.}
\end{de}
These concepts were introduced in \cite{slides} and studied in \cite{simpleCotilting,mutation}. They are closely related to the notions of  minimal extending, resp.~coextending, modules appearing in  \cite{minimIncl}.

\begin{rem}\cite[Proposition 2.11]{nagoya} 
\label{prop:finInf}
{\rm Let $ R $ be a left noetherian ring.
Let $ (\mathbf{ t }, \mathbf{ f }) $ be a torsion pair in $\lmod{R}$ and $ ( \mathcal{T}, \Fcal) $ the corresponding cosilting torsion pair in $ \lMod{R} $.
Then  all torsion, almost torsionfree modules with respect to $ (\mathcal{T}, \mathcal{F}) $ are finitely generated and coincide with the minimal co-extending modules with respect to $ (\mathbf{ t }, \mathbf{ f }) $ in the sense of  \cite{minimIncl}.
Moreover, the finitely generated torsionfree, almost torsion modules with respect to $ (\mathcal{T}, \mathcal{F}) $ are precisely the minimal extending  modules with respect to $ (\mathbf{ t }, \mathbf{ f }) $}. 
\end{rem}

\begin{prop}
\label{prop:simpInalphabeta}
Let $ R $ be a ring. Let $ \Tpair{T}{F} $ be a torsion pair in $ \lMod{R} $. Then:
\begin{itemize}
\item[(i)] The simple objects of $ \alpha(\mathcal{T}) $ are precisely the torsion, almost torsionfree modules in $ \mathcal{ T } $.
\item[(ii)] The simple objects of $ \beta(\mathcal{F}) $ are precisely the torsionfree, almost torsion modules in $ \mathcal{ F } $.
\end{itemize}
\end{prop}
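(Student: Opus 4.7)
The plan is to reformulate ``simple in $\alpha(\mathcal{T})$'' as a purely submodule-theoretic condition by exploiting the closure properties of $\Alpha(\mathcal{T})$ established in Lemma~\ref{lem:basicAlphaBeta}(i), and then to match that reformulation term by term with the two clauses of Definition~\ref{de:TtF}. Part~(ii) will follow by verbatim dualisation.

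First I would record the following observation. Since $\alpha(\mathcal{T})$ is a wide, hence exact abelian, subcategory of $\lMod R$, its subobjects are exactly the submodules in $\lMod R$ that lie in $\alpha(\mathcal{T})$. By Lemma~\ref{lem:basicAlphaBeta}(i), $\Alpha(\mathcal{T})$ is closed under submodules, so for any $B \in \alpha(\mathcal{T})$ every submodule $B' \subseteq B$ automatically belongs to $\Alpha(\mathcal{T})$. Hence the condition $B' \in \alpha(\mathcal{T})$ collapses to $B' \in \mathcal{T}$, and a non-zero $B \in \alpha(\mathcal{T})$ is simple in $\alpha(\mathcal{T})$ if and only if the only torsion submodules of $B$ in $\lMod R$ are $0$ and $B$.

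For the direction ``torsion, almost torsionfree $\Rightarrow$ simple in $\alpha(\mathcal{T})$'' of~(i), I would first check that such a $B$ belongs to $\Alpha(\mathcal{T})$: given $f : T \to B$ with $T \in \mathcal{T}$, the image $f(T)$ is torsion, so if $f(T) \subsetneq B$ then condition~(1) places it in $\mathcal{T} \cap \mathcal{F} = 0$ and hence $\ker f = T \in \mathcal{T}$, while if $f$ is surjective then condition~(2) yields $\ker f \in \mathcal{T}$ directly; simplicity then follows from the reformulation because any proper torsion submodule of $B$ lies in $\mathcal{T} \cap \mathcal{F} = 0$ by~(1). Conversely, let $B \in \alpha(\mathcal{T})$ be simple: condition~(1) holds because for any proper $B' \subsetneq B$ the torsion submodule $t(B') \subseteq B'$ is a proper torsion submodule of $B$, hence $t(B') = 0$ and $B' \in \mathcal{F}$; condition~(2) is immediate from $B \in \Alpha(\mathcal{T})$ applied to the surjection $T \twoheadrightarrow B$ in the given short exact sequence.

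For part~(ii) the same strategy dualises: by the dual of Lemma~\ref{lem:basicAlphaBeta}(i), $\Beta(\mathcal{F})$ is closed under quotients, so the quotients of $B \in \beta(\mathcal{F})$ inside $\beta(\mathcal{F})$ are precisely the torsionfree quotients of $B$. Using that simple objects in an abelian category are characterised equivalently by having no non-trivial quotients, a non-zero $B \in \beta(\mathcal{F})$ is simple in $\beta(\mathcal{F})$ iff its only torsionfree quotients are $0$ and $B$, which matches the dual clauses defining torsionfree, almost torsion modules. I do not foresee a real obstacle; the step that demands the most care is the initial reformulation, where one must correctly identify subobjects of $B$ inside the wide subcategory $\alpha(\mathcal{T})$ with torsion submodules of $B$ in $\lMod R$ by invoking Lemma~\ref{lem:basicAlphaBeta}(i) (and dually for $\beta(\mathcal{F})$ via quotients).
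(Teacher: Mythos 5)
Your proof is correct and follows essentially the same route as the paper: reformulate simplicity in $\alpha(\mathcal{T})$ via the closure of $\Alpha(\mathcal{T})$ under submodules (so that subobjects of $B$ in $\alpha(\mathcal{T})$ are exactly its torsion submodules), identify the almost-torsionfree conditions with membership in $\alpha(\mathcal{T})$ plus torsionfreeness of proper submodules, and use closure under torsion submodules for the converse, with (ii) by duality. In fact you are slightly more careful than the paper's terse argument, since you handle non-surjective maps $T\to B$ explicitly when verifying $B\in\Alpha(\mathcal{T})$, where the paper simply equates condition (2) with that membership.
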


\begin{proof}
We give a proof of (i). Notice that condition (2) in Definition~\ref{de:TtF} states that $B$ belongs to $\Alpha(\Tcal)$. Hence $B$  is torsion, almost torsionfree  if and only if it is an object of $\alpha(\Tcal)$ and all proper submodules of $ B $ are contained in $ \mathcal{ F } $. This clearly implies that $B$ is a simple object in  $\alpha(\Tcal)$. 
Also the reverse implication follows 
 immediately, as $ \alpha(\mathcal{T}) $ is closed under torsion submodules.\end{proof}




\section{Wide subcategories and cosilting modules}
\label{sec:wideFromCosilt}

For cosilting torsion pairs we can obtain an explicit description of the wide subcategories defined in the previous section. 
Let $C$ be a cosilting module and $ (\Tcal, \Fcal)=({}^{\perp_0}C, \Cogen C)$ its cosilting torsion pair. Recall that every module 
 admits a $ \Fcal-$cover with kernel in $ \Prod(C) $. Let us fix an injective cogenerator $ E(R) $ with a minimal approximation sequence
\begin{equation}\label{approxsequence}
\begin{tikzcd}
0 \arrow[r] & C_1 \arrow[r] & C_0 \arrow[r,"g"] & E(R)
\end{tikzcd}
\end{equation}

\begin{lem}
\label{lem:approxSeqInFactor}
Let $ C $ be a cosilting module with approximation sequence (\ref{approxsequence})  and $ \Fcal=\Cogen C$. The following statements hold true.

(1) 
$C_0$ is split-injective in $\Fcal$, i.e.~every monomorphism $ C_0\to F $ with $F$ in $ \Fcal$  is a split monomorphism. Moreover, $ C_0 \oplus C_1 $ is a cosilting module equivalent to $ C $. 

(2) $\operatorname{Im}(g) = \left\{ x \in E(R) \,|\, \Ann(C)\,x = 0 \right\}$ is an injective cogenerator of $ R/\Ann(C) $. 

(3) $ \Fcal = {}^{\perp_1} C_1 \cap \lMod{R/\Ann(C)} $.
\end{lem}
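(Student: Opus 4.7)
The plan is to prove the three items in order, since (2) is used in (3), and part (1) sets up the cover/approximation machinery we will rely on throughout. Throughout, I will use freely that $C_0, C_1 \in \Prod(C)$ and that $g$ is not just an $\Fcal$-precover but in fact right minimal.

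For the split-injectivity in (1), given a monomorphism $\iota : C_0 \hookrightarrow F$ with $F \in \Fcal$, I push $\iota$ out along $g : C_0 \to E(R)$ to obtain a commutative square with induced maps $h : F \to P$ and $j : E(R) \to P$. Since $\iota$ is mono and pushouts of monos in abelian categories are mono, $j$ is a monomorphism, and it splits because $E(R)$ is injective in $\lMod{R}$. Composing the resulting retraction $P \to E(R)$ with $h$ yields $h' : F \to E(R)$ satisfying $h'\iota = g$. Since $F \in \Fcal$ and $g$ is an $\Fcal$-cover, $h'$ factors as $h' = g s$ for some $s : F \to C_0$, so $g(s\iota) = g$, and right-minimality of $g$ forces $s\iota$ to be an automorphism of $C_0$, making $\iota$ split mono.

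For the equivalence claim in (1), $\Cogen(C_0 \oplus C_1) \subseteq \Cogen C = \Fcal$ is immediate from $C_0, C_1 \in \Prod C$. Conversely, any $X \in \Fcal$ embeds into $E(R)^I$ for some $I$; pulling back the $I$-fold product of the approximation sequence gives $0 \to C_1^I \to P \to X \to 0$ with $P \hookrightarrow C_0^I$. The cosilting copresentation $\omega$ of $C$ forces $\Fcal \subseteq {}^{\perp_1}C$ via the long exact sequence, hence $\Ext^1(X, C_1^I) = \prod \Ext^1(X, C_1) = 0$, the sequence splits, and $X \hookrightarrow C_0^I$. The fact that $C_0 \oplus C_1$ is then cosilting follows by combining injective copresentations of $C_0$ and $C_1$ individually and verifying that the kernel condition picks out exactly $\Fcal$.

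For (2), the inclusion $\operatorname{Im}(g) \subseteq N := \{x \in E(R) : \Ann(C)\,x = 0\}$ is clear since $\operatorname{Im}(g)$ is a quotient of $C_0 \in \Prod C$. The reverse inclusion hinges on the observation that $R/\Ann(C) \in \Fcal$: it embeds into $C^C$ via the family of all $R$-module maps to $C$. For $x \in N$, the map $R/\Ann(C) \to E(R)$ sending the class of $1$ to $x$ is a well-defined $\Fcal$-morphism, hence factors through the cover $g$, so $Rx \subseteq \operatorname{Im}(g)$. The injectivity of $N = \Hom_R(R/\Ann(C), E(R))$ over $R/\Ann(C)$ is the standard right-adjoint-preserves-injectives argument, and $N$ is a cogenerator because any $R/\Ann(C)$-module $X$, viewed as an $R$-module, embeds into $E(R)^J$ with image necessarily contained in $N^J$.

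For (3), the inclusion $\Fcal \subseteq {}^{\perp_1} C_1 \cap \lMod{R/\Ann(C)}$ combines $\Fcal \subseteq {}^{\perp_1} C$ (from the copresentation) with $C_1 \in \Prod C$, together with $\Cogen C \subseteq \lMod{R/\Ann(C)}$. For the reverse, given $X \in {}^{\perp_1}C_1 \cap \lMod{R/\Ann(C)}$, part (2) embeds $X$ into $N^J = \operatorname{Im}(g)^J$; pulling back the $J$-fold approximation sequence produces $0 \to C_1^J \to P \to X \to 0$ with $P \hookrightarrow C_0^J$, which splits by the $\Ext^1$-vanishing, placing $X$ inside $\Cogen C_0 \subseteq \Fcal$. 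The trickiest step is the pushout/minimality argument in (1), which interlaces injectivity of $E(R)$, the precover property and right-minimality; once this and the remark $R/\Ann(C) \in \Fcal$ are in place, the remaining parts unfold cleanly.
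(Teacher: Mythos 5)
Parts (2) and (3) of your proposal are correct and run parallel to the paper's argument, in places even more self-contained: where the paper invokes that $C$ is cotilting over $R/\Ann(C)$ (citing the literature) to get a surjective $\Fcal$-cover of $E=\{x\in E(R)\mid \Ann(C)x=0\}$, you instead observe $R/\Ann(C)\hookrightarrow C^C$, so $R/\Ann(C)\in\Fcal$, and factor each cyclic map $R/\Ann(C)\to E(R)$, $\bar 1\mapsto x$, through the precover $g$; this is a legitimate shortcut. Likewise your proof of (3), embedding $X$ into $\operatorname{Im}(g)^J$ and splitting the pulled-back sequence by $\Ext^1_R(X,C_1)=0$, is sound and avoids the change-of-rings comparison of $\Ext^1_R$ and $\Ext^1_{R/\Ann(C)}$ used in the paper. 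The split-injectivity argument in (1) (pushout along $g$, injectivity of $E(R)$, precover plus right minimality) is also correct; the paper simply cites Breaz--\v{Z}emli\v{c}ka for all of (1).

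There are, however, gaps in the rest of (1). The serious one is the final sentence: ``$C_0\oplus C_1$ is cosilting follows by combining injective copresentations of $C_0$ and $C_1$ and verifying that the kernel condition picks out exactly $\Fcal$'' is not a proof. Taking copresentations $\omega_0,\omega_1$ of $C_0,C_1$ gives $\Ccal_{\omega_0\oplus\omega_1}=\Ccal_{\omega_0}\cap\Ccal_{\omega_1}$, and showing that this intersection equals $\Fcal$ (for a suitable choice of copresentations, built from $\omega$ and the approximation sequence) is precisely the nontrivial content of the cited result; nothing in your text carries that verification out. Secondly, in the proof that $\Fcal\subseteq\Cogen(C_0)$ you pull back the $I$-fold approximation sequence along $X\hookrightarrow E(R)^I$ and assert exactness of $0\to C_1^I\to P\to X\to 0$; but $g$ need not be surjective, so surjectivity of $P\to X$ requires the extra remark that every component $X\to E(R)$ with $X\in\Fcal$ factors through the precover $g$ — and once you say this, you already have a monomorphism $X\to C_0^I$ lifting the embedding, making the $\Ext^1$-splitting step unnecessary. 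Finally, note that only $C_1\in\Prod(C)$ is part of the setup; $C_0\in\Prod(C)$ is a consequence of the equivalence statement you are proving, so using it ``freely'' risks circularity — fortunately, at the one place you invoke it (in (2)) you only need $C_0\in\Cogen(C)$, which holds since $C_0\in\Fcal$.
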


\begin{proof}
(1) is shown in \cite[Lemma 3.3 and Theorem 3.5]{torsClassGenSilt}.

(2)
Notice that $ C $ is a cotilting module over $ R/\Ann(C)$, see \cite[Theorem 3.6]{abundance}. Thus every $ R/\Ann(C)-$module admits a surjective $ \Fcal$-cover, and clearly all modules with a surjective cover are in $ \lMod{R/\Ann(C)} $ (being annihilated by $ \Ann(C) $).

Let $ E = \left\{ x \in E(R) \,|\, \Ann(C)\,x = 0 \right\} $. This is the largest submodule of $ E(R)$ belonging to $\lMod{ R/\Ann(C) } $. Thus $ \operatorname{Im}(g) \subseteq E $. 
On the other hand, as recalled above, $ E $ admits a surjective $ \Fcal$-cover $ C' \to E $. The induced map $ C' \to E \to E(R) $ must factor through $ g : C_0 \to E(R) $ showing that $ E \subseteq \operatorname{Im}(g) $.
Now, for any $ M \in \lMod{R/\Ann(C)} $ there is a set $ I $ such that $ M $ embeds in $ E(R)^I $, but since $ M $ is in $ \lMod{R/\Ann(C)} $ this embedding must factor through $ E^I $. This shows that $ E $ is a cogenerator. 
Injectivity over $ R/\Ann(C) $ is also immediate, using that $ E $ is a submodule of the injective $ E(R) $ and that all maps from a module in $ \lMod{R/\Ann(C)} $ to $ E(R) $ must factor through $ E $.

(3) Notice that the sequence 
\[
\begin{tikzcd}
0 \arrow[r] & C_1 \arrow[r] & C_0 \arrow[r] & \operatorname{Im}(g) \arrow[r] & 0
\end{tikzcd}
\]
is an approximation sequence as in (\ref{approxsequence}). In particular, a module $ M $ is cogenerated by $ C $ precisely if it is annihilated by $ \Ann(C) $ and $ \Ext^1_{R/\Ann(C)}(M, C_1) = 0 $. However $ \Cogen(C) \subseteq \ker \Ext^1_R(-,C_1) \subseteq \ker \Ext^1_{R/\Ann(C)}(-,C_1) $ thus we obtain the desired identity.
\end{proof}

\begin{lem}
\label{lem:CsigmaClosedDirLim}
Let $ M $ be a module with minimal injective copresentation $ 0 \to M \to I_0 \xrightarrow{\sigma} I_1 $. 
Then 

(1) $\Ccal_\sigma = \left\{ X \in \lMod{R}\, |\,  \Ext^{1}(Y, M) = 0 \text{ for all }Y \le X\right\}$

(2) If $R$ is left artinian and $M$ is pure-injective, then $ \mathcal{C}_\sigma $ is a cosilting class.
\end{lem}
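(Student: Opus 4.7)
For part~(1), set $N = \operatorname{Im}(\sigma) \subseteq I_1$. Minimality of the copresentation forces $I_0 = E(M)$ and $I_1 = E(N)$, so $N \subseteq I_1$ is essential. By definition, $X \in \Ccal_\sigma$ means every morphism $X \to I_1$ factors as $\sigma g$ for some $g\colon X \to I_0$. For the forward inclusion, given $Y \le X \in \Ccal_\sigma$ and any $h\colon Y \to N$, I would extend the composition $Y \to N \hookrightarrow I_1$ to some $\tilde h\colon X \to I_1$ by injectivity of $I_1$, factor $\tilde h = \sigma g$ using $X \in \Ccal_\sigma$, and restrict $g$ to $Y$ to produce a lift of $h$ to $I_0$; the long exact sequence for $0 \to M \to I_0 \to N \to 0$ then gives $\Ext^1(Y,M) = 0$. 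For the converse, given $f\colon X \to I_1$, set $Y := f^{-1}(N)$, use $\Ext^1(Y,M) = 0$ to lift $f|_Y\colon Y \to N$ to some $h\colon Y \to I_0$, and extend $h$ to $\tilde h\colon X \to I_0$ by injectivity of $I_0$. Then $f - \sigma\tilde h$ vanishes on $Y$; moreover its image meets $N$ only in zero (if $(f - \sigma\tilde h)(x) \in N$, then $f(x) \in N$, forcing $x \in Y$). Essentiality of $N$ in $I_1$ then gives $f = \sigma\tilde h$.

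For part~(2), the plan is to identify $\Ccal_\sigma$ with the direct limit closure of a torsionfree class in $\lmod R$ and invoke Theorem~\ref{thm:CB-bij}. From (1), $\Ccal_\sigma$ is closed under submodules (immediate) and extensions (long exact sequence of $\Ext$). Since $R$ is left artinian, $\lmod R$ is a length category, so $\mathbf{f} := \Ccal_\sigma \cap \lmod R$ is a torsionfree class there and determines a torsion pair $(\mathbf{t}, \mathbf{f})$. Theorem~\ref{thm:CB-bij} (applicable since left artinian implies left noetherian) guarantees that $(\varinjlim \mathbf{t}, \varinjlim \mathbf{f})$ is a cosilting torsion pair, so it suffices to show $\Ccal_\sigma = \varinjlim \mathbf{f}$.

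The inclusion $\Ccal_\sigma \subseteq \varinjlim \mathbf{f}$ is immediate, since every $X \in \Ccal_\sigma$ is the directed colimit of its finitely generated submodules, all of which inherit membership in $\Ccal_\sigma$ (and hence in $\mathbf f$) by closure under submodules. For the reverse inclusion, given $X = \varinjlim X_i$ with $X_i \in \mathbf{f}$ and any $Y \le X$, write $Y = \varinjlim_j Y_j$ along its finitely generated submodules; by finite presentation (using noetherianity) each $Y_j$ embeds into some $X_i$, whence $Y_j \in \mathbf{f}$ and $\Ext^1(Y_j, M) = 0$. Applying $\Hom(-, M)$ to the canonical pure exact sequence $0 \to K \to \bigoplus_j Y_j \to Y \to 0$ and using that $M$ is pure-injective keeps the sequence of Homs short exact, so the connecting map $\Hom(K, M) \to \Ext^1(Y, M)$ vanishes; combined with $\Ext^1(\bigoplus_j Y_j, M) = \prod_j \Ext^1(Y_j, M) = 0$, this forces $\Ext^1(Y, M) = 0$. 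Since $Y \le X$ was arbitrary, part~(1) yields $X \in \Ccal_\sigma$.

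The main obstacle is this last step: controlling $\Ext^1(Y, M)$ on an arbitrary submodule $Y$ of a direct limit. Pure-injectivity of $M$ is the essential ingredient, allowing one to propagate vanishing of $\Ext^1(-, M)$ from the finitely generated pieces $Y_j$ to the (possibly complicated) submodule $Y$ through the pure exact sequence realizing $Y$ as a direct limit of finitely presented modules.
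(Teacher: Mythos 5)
Your proposal is correct, and it departs from the paper's own proof in two respects. For part (1) the paper only argues the inclusion $\subseteq$ (closure of $\Ccal_\sigma$ under submodules together with $\Ccal_\sigma\subseteq{}^{\perp_1}M$) and refers to the proof of \cite[Lemma 4.17]{cosilt} for the reverse inclusion, whereas you give a self-contained argument, correctly exploiting minimality (essentiality of $N=\operatorname{Im}(\sigma)$ in $I_1=E(N)$) to upgrade the lift obtained from $\Ext^1(f^{-1}(N),M)=0$ to an actual factorization through $\sigma$. For part (2) the paper writes $\Ccal_\sigma={}^{\perp_1}M\cap\Ccal_\rho$ with $\rho:\operatorname{Im}(\sigma)\to I_1$, uses pure-injectivity of $M$ to get that ${}^{\perp_1}M$ is closed under direct limits and a direct factorization argument (valid for any monomorphism $\rho$) for $\Ccal_\rho$, and then observes that $\Ccal_\sigma$ equals the direct limit closure of $\Ccal_\sigma\cap\lmod{R}$, which is a torsionfree class in $\lmod{R}$ by Lemma~\ref{cor:torsionNoetherianCat}. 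You instead channel everything through the Ext-characterization of part (1): you identify $\Ccal_\sigma$ with $\varinjlim(\Ccal_\sigma\cap\lmod{R})$ by propagating the vanishing of $\Ext^1(-,M)$ from finitely generated submodules (each of which embeds into some $X_i$, since the factorization of an injective map through a directed colimit is itself injective) to an arbitrary submodule $Y$ of the limit, via the pure presentation $0\to K\to\bigoplus_j Y_j\to Y\to 0$ and pure-injectivity of $M$, and then conclude with Theorem~\ref{thm:CB-bij}. Both routes use pure-injectivity at exactly one point, namely closure under direct limits; yours avoids the separate treatment of $\Ccal_\rho$ at the cost of re-deriving the harder direction of (1), which the paper outsources, and it makes the role of the characterization in (1) more transparent.
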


\begin{proof}
(1) The inclusion $\subseteq$ follows from the fact that  $ \mathcal{C}_\sigma$ is  closed under submodules and contained in $^{\perp_1}M $. For the reverse inclusion we refer to  the proof of  \cite[Lemma 4.17]{cosilt}.

(2) We first show that $ \mathcal{C}_\sigma $ is closed under $ \varinjlim $.
Note that $ \mathcal{C}_\sigma = {}^{\perp_1}M \cap \mathcal{C}_\rho $, where $ \rho : \operatorname{Im}(\sigma) \to I_1 $. Since $ M $ is pure-injective $ {}^{\perp_1}M $ is closed under $ \varinjlim $, thus it is enough to show that $ \mathcal{C}_\rho $ is closed under $ \varinjlim $. 
This is true for any monomorphism $ \rho $. In fact, let $ (X_i, \{ \phi_{ij} \}) $ be a directed system in $ \mathcal{C}_\rho $, and let $ f : \varinjlim X_i = X \to I_1 $. 
Then for each $ i \in I $ we obtain a commutative diagram
\[
\begin{tikzcd}
X_i \arrow[r, "\phi_i" ] \arrow[d, dashed, "h_i"] & X \arrow[d, "f" ] \\
\operatorname{Im}(\sigma) \arrow[r, "\rho"] & I_1
\end{tikzcd}
\]
Moreover, we have 
\[
(\rho h_j ) \phi_{ij} = f \phi_{j} \phi_{ij} = f \phi_i = \rho h_i 
\]
and since $ \rho $ is a monomorphism, $ h_j\phi_{ij} = h_i $, thus $ \{h_i\} $ is compatible with the directed system and it induces, by the universal properties of colimits, a factorisation $ h : X \to \operatorname{Im}(\sigma) $.
Now $ \rho h \phi_i = \rho h_i = f \phi_i $, for all $ i \in I $, thus, by uniqueness of factorisation $ \rho h = f $ and $ X \in \mathcal{C}_\rho $.

Assuming now that $ R $ left artinian, it remains to show that $ \mathcal{C}_\sigma $ is a torsionfree class. But this is immediate as it coincides with the limit closure of $ \mathcal{C}_\sigma  \cap \lmod{R} $, 
which is a torsionfree class in $ \lmod{R} $ by Lemma~\ref{cor:torsionNoetherianCat} because $ \mathcal{C}_\sigma $ is closed under submodules and extensions.
\end{proof}

We can now describe the class $A({}^{\perp_0}C)$ associated to the cosilting module $C$.

\begin{prop}
\label{thm:AlphaCotilting}
If $ C $ is a cosilting module with approximation sequence (\ref{approxsequence})  and $ \Tcal={}^{\perp_0}C $ is the associated  torsion class, then 
\[
\Alpha(\Tcal) = \Ccal_\sigma
\]
where $\sigma$ is a minimal injective copresentation of $C_0$.

In particular, if $R$ is left artinian, then $\Alpha(\Tcal)$ is a cosilting class.
\end{prop}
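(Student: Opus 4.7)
The plan is to exploit the characterization in Lemma~\ref{lem:CsigmaClosedDirLim}(1), which identifies $\Ccal_\sigma$ with the class of modules $X$ such that $\Ext^1(Y,C_0)=0$ for every submodule $Y\le X$, and then prove equality by a double inclusion. First I would record the preliminary observation $\Fcal = \Cogen C_0$, which forces $\Tcal = {}^{\perp_0}C_0$: this follows from the approximation sequence~(\ref{approxsequence}), which realizes $C_1$ as a submodule of $C_0$, so $C_0\oplus C_1$ and $C_0$ cogenerate the same class.

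For the easy inclusion $\Ccal_\sigma\subseteq\Alpha(\Tcal)$, I would take $X\in\Ccal_\sigma$ with a morphism $f: T\to X$ from $T\in\Tcal$ and apply $\Hom(-,C_0)$ to $0\to\ker f\to T\to \operatorname{Im} f\to 0$. Three $\Hom$ terms vanish because $T,\operatorname{Im} f\in\Tcal$ and $C_0\in\Fcal$, while $\Ext^1(\operatorname{Im} f,C_0)=0$ since $\operatorname{Im} f\le X$. The resulting equation $\Hom(\ker f,C_0)=0$ places $\ker f$ in ${}^{\perp_0}C_0=\Tcal$, as required.

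The reverse inclusion is where the real work lies. By closure of $\Alpha(\Tcal)$ under subobjects (Lemma~\ref{lem:basicAlphaBeta}(i)) it suffices to show $\Ext^1(X,C_0)=0$ for $X\in\Alpha(\Tcal)$, so I would take an extension $0\to C_0\to E\xrightarrow{\pi}X\to 0$ and exhibit a splitting. The key point is that any morphism $f: T\to E$ from $T\in\Tcal$ must satisfy $f(T)\cap C_0=0$: the hypothesis forces $\ker(\pi f)=f^{-1}(C_0)\in\Tcal$, but the restriction of $f$ to this submodule is a map from $\Tcal$ into $C_0\in\Fcal$, hence zero. Applied to the inclusion $tE\hookrightarrow E$ this yields $tE\cap C_0=0$, which lets $\pi$ restrict to an isomorphism $tE\cong tX$ and produces a quotient sequence $0\to C_0\to E/tE\to X/tX\to 0$ lying entirely in $\Fcal$. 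Since $C_0$ is split-injective in $\Fcal$ by Lemma~\ref{lem:approxSeqInFactor}(1), this quotient sequence splits; I would then choose a splitting $s: X/tX\to E/tE$, let $N\subseteq E$ be the preimage under $q: E\to E/tE$ of $s(X/tX)$, and verify that $E=C_0\oplus N$. The induced map $\pi|_N: N\to X$ is then an isomorphism and provides the desired splitting of the original extension.

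The final assertion, that $\Alpha(\Tcal)$ is a cosilting class when $R$ is left artinian, is immediate from Lemma~\ref{lem:CsigmaClosedDirLim}(2), since $C_0$ is a direct summand of the cosilting module $C_0\oplus C_1$ and is therefore pure-injective by Theorem~\ref{thm:cosiltCover}(1). The step I expect to require most care is the last lifting: checking $C_0\cap N=0$ (any common element has zero image in $E/tE$ and so lies in $tE\cap C_0=0$) and $C_0+N=E$ (immediate from the decomposition $E/tE=q(C_0)\oplus s(X/tX)$). Once these are in hand, the splitting falls out cleanly.
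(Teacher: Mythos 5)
Your route is essentially the paper's: both inclusions hinge on Lemma~\ref{lem:CsigmaClosedDirLim}(1), your argument for $\Ccal_\sigma\subseteq\Alpha(\Tcal)$ is the same long-exact-sequence computation (the paper phrases it with $f$ epi instead of passing to $\operatorname{Im}f$, and concludes via ``$C_0$ cogenerates $\Cogen C$'' rather than your equivalent observation $\Tcal={}^{\perp_0}C_0$), the reverse inclusion reduces to $\Ext^1(X,C_0)=0$ using closure of $\Alpha(\Tcal)$ under subobjects and splits a given extension $0\to C_0\to E\xrightarrow{\pi}X\to 0$ by comparing it with the torsion part of $E$ and invoking split-injectivity of $C_0$ in $\Fcal$ (Lemma~\ref{lem:approxSeqInFactor}(1)), and the artinian statement via pure-injectivity of $C_0$ and Lemma~\ref{lem:CsigmaClosedDirLim}(2) is the intended one.

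There is, however, one inference you assert without proof: that $tE\cap C_0=0$ ``lets $\pi$ restrict to an isomorphism $tE\cong tX$'', hence that $0\to C_0\to E/tE\to X/tX\to 0$ is exact. Injectivity of $\pi|_{tE}$ does follow, but surjectivity onto $tX$ does not follow from $tE\cap C_0=0$ together with $X\in\Alpha(\Tcal)$: for a surjection with torsionfree kernel the torsion part of the middle term need not map onto the torsion part of the quotient (over $\mathbb{Z}$ with the classical torsion pair, $0\to 2\mathbb{Z}\to\mathbb{Z}\to\mathbb{Z}/2\to 0$ has $t\mathbb{Z}=0$, the kernel torsionfree, and $\mathbb{Z}/2\in\Alpha(\Tcal)$, yet $t(\mathbb{Z}/2)\neq 0$; what is missing in that example is precisely split-injectivity of the kernel). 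In your setting the claim is true, but it needs a further argument, e.g.\ apply split-injectivity to the exact sequence $0\to C_0\to V/tV\to tX/\pi(tV)\to 0$ with $V=\pi^{-1}(tX)$ and use $\Tcal\cap\Fcal=0$ to force $\pi(tV)=tX$. Better, the claim can be bypassed entirely: your splitting argument only uses that $C_0\to E/tE$ is a monomorphism into a module of $\Fcal$ (split-injectivity does not care what the cokernel is), so you may replace $X/tX$ by the actual cokernel $X/\pi(tE)$ and then your construction of the complement $N$ goes through verbatim. This is exactly what the paper's $3\times 3$ diagram does, working with the image $I=\pi(tE)$ (a submodule of $X$, hence in $\Alpha(\Tcal)$) rather than with $tX$. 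So: correct strategy, one step claimed more than is proved, and easily repaired.
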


\begin{proof}
" $ \supseteq $ " : Let $ X \in \Ccal_\sigma $. We  show that for every $ T \in {}^{\perp_0} C $ and every map $ f : T \to X $ we have  $ \ker(f) \in {}^{\perp_0} C $. 

Since $ \Ccal_\sigma $ is closed under submodules, we may assume, without loss of generality, that $ f $ is an epimorphism. 
Consider the short exact sequence $ 0 \to \ker(f) \to T \to X \to 0 $. Applying $ \Hom_R(-,C_0) $ to the sequence, we obtain that $ \Hom(\ker(f), C_0) = 0 $. However, since $ C_0 $ cogenerates $ \Cogen(C) $, by Lemma~\ref{lem:approxSeqInFactor}(1), it follows that $ \Hom(\ker(f), C) = 0 $ as desired.

" $ \subseteq $ " : Let $ X \in \Alpha({}^{\perp_0}C) $. This class is closed under submodules by Lemma \ref{lem:basicAlphaBeta}, so it is enough to show that $ \Ext^1(X,C_0) = 0 $. 

Let $ 0 \to C_0 \to M \xrightarrow{f} X \to 0 $ be a short exact sequence. 
Applying the snake lemma to the commutative diagram:
\[
\begin{tikzcd}
0 \arrow[r] & F \arrow[r] \arrow[d,hook] & \operatorname{t}M \arrow[r] \arrow[d, hook] & f(\operatorname{t}M) = I \arrow[d, hook] \arrow[r] & 0 \\
0 \arrow[r] & C_0 \arrow[r] & M \arrow[r, "f"] & X \arrow[r] & 0  \\
\end{tikzcd}
\]
we obtain
\[
\begin{tikzcd}
0 \arrow[r] & F \arrow[r] \arrow[d,hook] & \operatorname{t}M \arrow[r] \arrow[d, hook] &  I \arrow[d, hook] \arrow[r] & 0 \\
0 \arrow[r] & C_0 \arrow[r] \arrow[d,two heads] & M \arrow[r, "f"] \arrow[d, two heads, "h"] & X \arrow[d, two heads] \arrow[r] & 0 \\
0 \arrow[r] & L \arrow[r] & M/\operatorname{t}M \arrow[r] & \overline{X} \arrow[r] & 0
\end{tikzcd}
\]
Since $ I $ is a submodule of $ X $ it is in $ \Alpha({}^{\perp_0}C) $, thus $ F \in {}^{\perp_0} C $. But then $ F \in \Cogen(C) \cap {}^{\perp_0} C = {0} $. 

This forces $ L = C_0 $. Then, since $ C_0 $ is split-injective in $ \Cogen(C) $ and $ M/\operatorname{t}M $ is in $ \Cogen(C) $, the third short exact sequence splits. Therefore we get a map $ g : M/\operatorname{t}M \to L $. Then the map $ g \circ h $ is a splitting epimorphism for the middle sequence.
\end{proof}

Over a noetherian ring, we again have a compatibility result thanks to the interplay between cosilting torsion pairs and  torsion pairs in $\lmod R$.
\begin{prop}
\label{prop:alphaIsLimClosure}
Let $ R $ be a left noetherian ring. If $ \Tpair{T}{F} $ is a cosilting torsion pair in $ \lMod{R} $ with restriction $ \tpair{t}{f} $, then
\[
\alpha(\mathcal{T}) = \varinjlim [ \widetilde{\alpha}(\mathbf{t}) ].
\]
\end{prop}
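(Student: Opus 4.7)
The plan is to prove the two inclusions separately, relying on the compatibility $\widetilde{\alpha}(\mathbf{t}) = \alpha(\mathcal{T}) \cap \lmod R$ from Lemma~\ref{prop:cosiltRestriction}, the identification $\Alpha(\mathcal{T}) = \mathcal{C}_\sigma$ from Proposition~\ref{thm:AlphaCotilting} (with $\sigma$ a minimal injective copresentation of $C_0$), and Lenzing's characterization of $\mathcal{T} = \varinjlim \mathbf{t}$ from Theorem~\ref{thm:CB-bij}.

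For the inclusion ``$\supseteq$'', the first step is to observe $\widetilde{\alpha}(\mathbf{t}) \subseteq \alpha(\mathcal{T})$ via Lemma~\ref{prop:cosiltRestriction}. It then suffices to show that $\alpha(\mathcal{T}) = \mathcal{T} \cap \Alpha(\mathcal{T})$ is closed under direct limits. Closure of $\mathcal{T}$ under $\varinjlim$ is Theorem~\ref{thm:CB-bij}, while closure of $\Alpha(\mathcal{T}) = \mathcal{C}_\sigma$ under $\varinjlim$ is essentially the first half of the proof of Lemma~\ref{lem:CsigmaClosedDirLim}: since $C_0$ is a direct summand of the cosilting module $C_0 \oplus C_1$ (Lemma~\ref{lem:approxSeqInFactor}) and hence pure-injective (Theorem~\ref{thm:cosiltCover}(1)), the argument applies without invoking the artinian hypothesis used elsewhere in Lemma~\ref{lem:CsigmaClosedDirLim}(2).

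For the inclusion ``$\subseteq$'', let $X \in \alpha(\mathcal{T})$; the idea is to exhibit $X$ as a directed union of finitely generated submodules that lie in $\widetilde{\alpha}(\mathbf{t})$. First, I verify that any finitely generated submodule $Y \le X$ with $Y \in \mathbf{t}$ actually lies in $\widetilde{\alpha}(\mathbf{t})$: given $g: T \to Y$ with $T \in \mathbf{t}$, the composite $T \to Y \hookrightarrow X$ is a map from $T \in \mathcal{T}$ into $X \in \alpha(\mathcal{T}) \subseteq \Alpha(\mathcal{T})$, so its kernel --- which equals $\ker(g)$, since $Y \hookrightarrow X$ is mono --- lies in $\mathcal{T}$; as $R$ is noetherian and $T$ is finitely presented, $\ker(g) \in \lmod R$, hence $\ker(g) \in \mathcal{T} \cap \lmod R = \mathbf{t}$. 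Second, I show each $x \in X$ is contained in some such $Y$: because $Rx$ is finitely presented, Lenzing's theorem \cite[Proposition 2.1]{Lenzing} applied to $X \in \mathcal{T} = \varinjlim \mathbf{t}$ gives a factorization $Rx \to Y' \to X$ with $Y' \in \mathbf{t}$, and $Y := \operatorname{Im}(Y' \to X)$ is a finitely generated submodule of $X$ containing $x$ and, as a quotient of $Y'$, in $\mathbf{t}$. The collection of such submodules is directed under sums (sums stay finitely generated and in $\mathbf{t}$), and $X$ is their union, so $X \in \varinjlim \widetilde{\alpha}(\mathbf{t})$.

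The main obstacle is the non-obvious closure of $\Alpha(\mathcal{T})$ under direct limits: wideness of $\alpha(\mathcal{T})$ alone does not entail stability under filtered colimits. The route through Proposition~\ref{thm:AlphaCotilting} and the explicit description via $\mathcal{C}_\sigma$ is what makes this step tractable, together with the observation that the pure-injectivity of $C_0$ is enough to run the relevant part of the proof of Lemma~\ref{lem:CsigmaClosedDirLim} without the artinian assumption.
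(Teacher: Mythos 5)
Your argument is correct, and it follows the paper's skeleton (both inclusions, via Lemma~\ref{prop:cosiltRestriction} and the identification $\Alpha(\mathcal{T})=\mathcal{C}_\sigma$ of Proposition~\ref{thm:AlphaCotilting}), but the two crucial steps are handled differently. For ``$\supseteq$'', the paper does not invoke pure-injectivity at all: it observes that $\mathcal{C}_\sigma$ is closed under coproducts directly from its definition, so $\alpha(\mathcal{T})=\mathcal{T}\cap\mathcal{C}_\sigma$ is wide and closed under coproducts by Lemma~\ref{lem:basicAlphaBeta}, hence closed under direct limits (a direct limit being a cokernel of a map between coproducts); your route through the pure-injectivity of $C_0$ and the first half of the proof of Lemma~\ref{lem:CsigmaClosedDirLim} is also valid (the artinian hypothesis there is indeed only used to see that $\mathcal{C}_\sigma$ is a torsionfree class), but it is heavier machinery for the same conclusion. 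For ``$\subseteq$'', the paper writes $X$ as the direct limit of all its finitely generated submodules, which lie in $\widetilde{\Alpha}(\mathbf{t})$, and then uses definability of $\mathcal{F}$ to commute the torsion radical with the direct limit, so that $X=t(X)=\varinjlim t(X_i)$ with $t(X_i)\in\widetilde{\alpha}(\mathbf{t})$; you instead show that the finitely generated \emph{torsion} submodules of $X$ already exhaust $X$, using the factorization property of maps from finitely presented modules into $\mathcal{T}=\varinjlim\mathbf{t}$ (Theorem~\ref{thm:CB-bij}). Your version of this step is slightly more elementary in that it avoids the definability/torsion-radical argument, at the cost of the extra factorization argument; the paper's version is more uniform and generalizes readily whenever the torsionfree class is definable. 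Both proofs are complete and correct.
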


\begin{proof}
Observe that $\Ccal_\sigma$ is closed under coproducts by definition. By  Lemma \ref{lem:basicAlphaBeta} and Proposition~\ref{thm:AlphaCotilting}, we then have that $ \alpha(\mathcal{T}) =\Tcal\cap\,\Ccal_\sigma$ is wide and closed under coproducts, whence it is closed under direct limits. Since $ \widetilde{\alpha}(\mathbf{t}) \subseteq \alpha(\mathcal{T}) $ by Proposition \ref{prop:cosiltRestriction}, we obtain the  inclusion " $\supseteq$ ".

Conversely, if $ X \in \alpha(\mathcal{T}) $, then by Proposition \ref{prop:cosiltRestriction}, all its finitely generated submodules are in $ \widetilde{\Alpha}(\mathbf{t}) $. 
Hence, we can write $ X = \varinjlim (X_i) $ with $ X_i \in \widetilde{\Alpha}(\mathbf{t}) $. Since $ \mathcal{ F } $ is definable, the torsion radical of the torsion pair commutes with direct limits, in particular $ X = t(X) = t( \varinjlim X_i ) \cong \varinjlim  t(X_i) $. 
Now, each $ t(X_i) \in  \mathcal{T} \cap \widetilde{\Alpha}(\mathbf{t})  = \widetilde{\alpha}(\mathbf{t}) $. This proves the inclusion " $\subseteq$ ".
\end{proof}

\begin{lemdef}\cite[Corollary 3.2 and Remark 3.2]{reflective}\label{coref} A subcategory $\Ccal$ of $\lMod R$ is precovering and closed under cokernels if and only if the inclusion functor $\Ccal\hookrightarrow \lMod R$ admits a right adjoint. 

{\rm A subcategory with these properties is said to be \emph{coreflective}. We denote by $\mathbf{CWide}(R)$ the class 
of all wide coreflective subcategories of $\lMod R$.}
\end{lemdef}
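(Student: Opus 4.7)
The plan is to prove both directions of the equivalence. Write $\iota : \Ccal \hookrightarrow \lMod R$ for the inclusion functor.

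For the ``coreflective implies precovering and closed under cokernels'' direction, suppose $\iota$ admits a right adjoint $G$ with counit $\epsilon$. Each counit component $\epsilon_M : G(M) \to M$ is a precover, since the adjunction bijection $\Hom_\Ccal(C, G(M)) \cong \Hom_{\lMod R}(C, M)$ for $C \in \Ccal$ is exactly the universal property of factoring through $\epsilon_M$. Closure under cokernels follows by a representability argument: given $f : C_1 \to C_2$ in $\Ccal$ with cokernel $N$ in $\lMod R$, the functor $C \mapsto \{g : C_2 \to C \mid g f = 0\}$ from $\Ccal$ to sets coincides both with $\Hom_{\lMod R}(N, -)|_\Ccal$ and, via the adjunction, with $\Hom_\Ccal(G(N), -)$. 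Hence $G(N)$ represents the cokernel of $f$ in $\Ccal$; since $\iota$ is a left adjoint it preserves colimits, so $\iota G(N) = \operatorname{coker}_{\lMod R}(\iota f) = N$, and thus $N \in \Ccal$.

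For the converse, assume $\Ccal$ is precovering and closed under cokernels. Given $M \in \lMod R$, I would construct $G(M)$ via a two-stage precover: choose a precover $f_0 : C_0 \to M$, then a precover $f_1 : C_1 \to \ker(f_0)$, let $g$ denote the composite $C_1 \xrightarrow{f_1} \ker(f_0) \hookrightarrow C_0$, and set $G(M) := \operatorname{coker}(g)$, which lies in $\Ccal$ by hypothesis. Since $f_0 g = 0$, the map $f_0$ descends to $\epsilon_M : G(M) \to M$, and it suffices to verify that post-composition with $\epsilon_M$ defines a bijection $\Hom_\Ccal(C, G(M)) \to \Hom_{\lMod R}(C, M)$ for every $C \in \Ccal$; functoriality and naturality in $M$ follow formally.

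Surjectivity of this map is immediate from the precover property of $f_0$: any $f : C \to M$ lifts to $h : C \to C_0$, and composition with $C_0 \twoheadrightarrow G(M)$ provides a preimage. The main obstacle is the injectivity: if $\phi : C \to G(M)$ satisfies $\epsilon_M \phi = 0$, one must deduce $\phi = 0$. The strategy is to pull $\phi$ back along the quotient $\pi : C_0 \twoheadrightarrow G(M)$, obtaining a short exact sequence $0 \to \operatorname{Im}(g) \to P \to C \to 0$ in $\lMod R$ whose map $P \to C_0$ lands in $\ker(f_0)$. Applying a precover $\tilde P \to P$ in $\Ccal$, the precover property of $f_1$ forces $\tilde P \to C_0$ to factor through $C_1$, and the composite $\tilde P \to C_0 \twoheadrightarrow G(M)$ therefore vanishes; this shows that $\phi$ is zero on the image of $\tilde P \to C$. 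Reducing this image to all of $C$ is the technical crux and uses that $\Ccal$ is closed under direct summands --- a consequence of closure under cokernels, via the identification $\operatorname{coker}(e) \cong \ker(e)$ for any idempotent endomorphism $e$ --- which allows one to enlarge the precover of $P$ to one of $P$ together with a copy of $C$. I would complete this pullback-and-precover dance following the argument in \cite{reflective}.
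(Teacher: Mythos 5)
Your proposal has two problems, one in each direction. In the direction ``coreflective $\Rightarrow$ precovering and closed under cokernels'', the precover part is fine, but the cokernel part rests on a misuse of the adjunction: $\iota\dashv G$ gives $\Hom_{\lMod R}(C,M)\cong\Hom_{\Ccal}(C,G(M))$ for $C\in\Ccal$, i.e.\ it computes maps \emph{into} $N$, and it says nothing about $\Hom_{\lMod R}(N,-)|_{\Ccal}$ versus $\Hom_{\Ccal}(G(N),-)$; the formula you invoke is the defining property of a \emph{left} adjoint to the inclusion (the reflective case), it is false in general (for torsion abelian groups and $N=\mathbb{Z}$ one has $G(\mathbb{Z})=0$ while $\Hom(\mathbb{Z},\mathbb{Z}/p)\neq 0$), and in your situation it is essentially equivalent to the statement $N\in\Ccal$ you are trying to prove, so the step is circular. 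The correct argument is short: factor the projection $\pi\colon C_2\to N$ through the counit as $\pi=\epsilon_N\tilde\pi$; uniqueness of factorizations through $\epsilon_N$ gives $\tilde\pi f=0$, so $\tilde\pi$ induces $h\colon N\to G(N)$ with $\epsilon_N h=\mathrm{id}_N$, and a second application of uniqueness gives $h\epsilon_N=\mathrm{id}_{G(N)}$, whence $N\cong G(N)\in\Ccal$ (equivalently, a full coreflective subcategory is closed under all colimits that exist in the ambient category).

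The converse is where the actual content of the lemma lies, and there your proof stops exactly at the decisive point. Surjectivity of composition with $\epsilon_M$ is indeed immediate, but injectivity amounts to $\Hom(\Ccal,\ker f_0/T)=0$, where $T=\mathrm{Im}(f_1)$ is the trace of $\Ccal$ in $\ker f_0$, and nothing in your sketch establishes this: a map $C\to\ker f_0/T$ need not lift to $\ker f_0$, so the precover property of $f_1$ cannot be applied, and your pullback argument only shows that $\phi$ vanishes on the image of some precover $\tilde P\to P\to C$, which, precovers not being surjective, proves nothing; the hint about ``enlarging the precover by a copy of $C$'' is not carried out, and you explicitly defer the crux to \cite{reflective}. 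It is not even clear that your two-stage cokernel is couniversal. A complete argument runs differently: closure under cokernels gives closure under direct summands (your idempotent remark); together with precovering this gives closure under arbitrary coproducts, since a precover of $\bigoplus_i C_i$ is a split epimorphism (each injection $C_i\to\bigoplus_i C_i$ factors through it, and the factorizations assemble to a section); then, for a precover $f_0\colon C_0\to M$, one takes $G(M)$ to be the cokernel of $\bigoplus_{h\in H}C_0\to C_0$, $(x_h)\mapsto\sum_h (h-\mathrm{id})(x_h)$, where $H=\{h\in\mathrm{End}(C_0)\mid f_0h=f_0\}$, i.e.\ the joint coequalizer of all endomorphisms of the precover in the comma category $\Ccal\downarrow M$; terminality of this object, hence uniqueness of factorizations, follows by the dual of the classical initial-object lemma, using weak terminality of $(C_0,f_0)$ and the coequalizers provided by cokernel-closure. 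As it stands, your proposal establishes existence but not uniqueness of factorizations, which is precisely the difference between a precover and a coreflection; note also that the paper gives no internal proof (it cites Del Valle--Rada--Saor\'in), so deferring the key step to that same reference leaves the proof incomplete.
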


\begin{thm}
\label{themapalpha}
Let $ R $ be a noetherian ring. Then the assignment
$ \Tpair{T}{F} \mapsto \alpha(\mathcal{T})$ defines  a map
\[
\alpha : \mathbf{Cosilt}(R) \longrightarrow \mathbf{CWide}(R)
\]
whose image is the class  $\overline{\mathbf{wide}}(R)$  of subcategories of $\lMod R$ which are obtained as direct limit closures of wide subcategories of $\lmod R$. Taking  the restriction  ${\rm res}: \Xcal\mapsto \Xcal\cap\lmod R$ we obtain  a commutative diagram 
$$
\xymatrix{
\mathbf{Cosilt} (R)\,\ar@{->>}[r]^{\alpha}\ar[d]_-{{\rm res}}&\overline{\mathbf{wide}}(R)\ar[d]_-{{\rm res}}\\
\mathbf{tors} (R)\ar@{->>}[r]^{\tilde{\alpha}}\ar[u]_{\varinjlim}&\mathbf{wide}(R)\ar[u]_{\varinjlim}
}
$$
In particular, for any $ \mathcal{W} \in \mathbf{ wide }(R) $, the subcategory $ \varinjlim\Wcal$ is wide and $ \varinjlim\Wcal\cap\lmod R=\Wcal$.
\end{thm}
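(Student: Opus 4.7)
The plan is to check the four assertions of the statement---wideness of $\alpha(\mathcal{T})$, coreflectivity, identification of the image with $\overline{\mathbf{wide}}(R)$, and commutativity of the diagram together with the ``in particular'' clause---by combining Proposition~\ref{prop:alphaIsLimClosure}, which gives $\alpha(\mathcal{T})=\varinjlim\widetilde{\alpha}(\mathbf{t})$, with the surjectivity result Theorem~\ref{thm:wideGeneratedCogen}(i), the restriction identity Lemma~\ref{prop:cosiltRestriction}(i), and the bijection Theorem~\ref{thm:CB-bij}. Wideness of $\alpha(\mathcal{T})$ is immediate from Lemma~\ref{lem:basicAlphaBeta}(ii).

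The heart of the argument is coreflectivity. By Lemma~\ref{coref}, it suffices to verify that $\alpha(\mathcal{T})$ is precovering, since closure under cokernels is already built into wideness. The identification $\alpha(\mathcal{T})=\varinjlim\widetilde{\alpha}(\mathbf{t})$ first shows that $\alpha(\mathcal{T})$ is closed under arbitrary coproducts: any coproduct decomposes as a direct limit of finite direct sums, and $\widetilde{\alpha}(\mathbf{t})$ is closed under finite direct sums. For $M\in\lMod R$, I would form the trace map
\[
\phi\colon P:=\bigoplus_{(S,f)} S\longrightarrow M,
\]
the sum running over (iso classes of) pairs with $S\in\widetilde{\alpha}(\mathbf{t})$ and $f\colon S\to M$. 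To see that the image $I:=\operatorname{Im}(\phi)$ lies in $\alpha(\mathcal{T})$, note that $\ker(\phi)\le P\in\alpha(\mathcal{T})\subseteq\mathcal{T}$ is a torsion submodule, and by Lemma~\ref{lem:basicAlphaBeta}(i) it also belongs to $\Alpha(\mathcal{T})$; hence $\ker(\phi)\in\alpha(\mathcal{T})$, so $I=P/\ker(\phi)$ is a cokernel inside the wide subcategory. The inclusion $\iota\colon I\hookrightarrow M$ is then an $\alpha(\mathcal{T})$-precover: any $X\in\alpha(\mathcal{T})$ can be written as $\varinjlim X_i$ with $X_i\in\widetilde{\alpha}(\mathbf{t})$, each composite $X_i\to X\to M$ is among the $(S,f)$'s indexing the sum, so its image lies in $I$, and therefore the image of any map $X\to M$ sits in $I$.

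The remaining assertions are essentially formal once coreflectivity is established. The image of $\alpha$ coincides with $\overline{\mathbf{wide}}(R)$ by combining $\alpha(\mathcal{T})=\varinjlim\widetilde{\alpha}(\mathbf{t})$ with surjectivity of $\widetilde{\alpha}$ onto $\mathbf{wide}(R)$. Commutativity of the diagram reduces to the identities $\alpha(\mathcal{T})\cap\lmod R=\widetilde{\alpha}(\mathbf{t})$ (Lemma~\ref{prop:cosiltRestriction}(i)) and $\alpha(\varinjlim\mathbf{t})=\varinjlim\widetilde{\alpha}(\mathbf{t})$ (Proposition~\ref{prop:alphaIsLimClosure}), together with the bijection from Theorem~\ref{thm:CB-bij}. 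For the ``in particular'' clause, given $\mathcal{W}\in\mathbf{wide}(R)$, Theorem~\ref{thm:wideGeneratedCogen}(i) supplies a torsion pair $(\mathbf{t},\mathbf{f})$ with $\widetilde{\alpha}(\mathbf{t})=\mathcal{W}$; passing to the associated cosilting torsion pair and invoking Proposition~\ref{prop:alphaIsLimClosure} yields $\varinjlim\mathcal{W}=\alpha(\mathcal{T})$, which is wide by Lemma~\ref{lem:basicAlphaBeta}(ii), while Lemma~\ref{prop:cosiltRestriction}(i) gives $\varinjlim\mathcal{W}\cap\lmod R=\mathcal{W}$. The main obstacle is the precover construction: since the target $M$ is not itself in $\alpha(\mathcal{T})$, one cannot simply appeal to wideness to argue that the image $I$ of the trace map stays in $\alpha(\mathcal{T})$, and the decisive ingredient is the torsion-submodule absorption property from Lemma~\ref{lem:basicAlphaBeta} that keeps $\ker(\phi)$---and hence $I$---inside the wide subcategory.
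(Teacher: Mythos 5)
The formal part of your argument (wideness, closure under coproducts, the identification of the image via Proposition~\ref{prop:alphaIsLimClosure} and Theorem~\ref{thm:wideGeneratedCogen}(i), commutativity of the diagram via Lemma~\ref{prop:cosiltRestriction}(i) and Theorem~\ref{thm:CB-bij}) is fine and agrees with the paper. The gap is exactly at the step you single out as decisive: the claim that $\ker(\phi)$, being a submodule of $P\in\alpha(\Tcal)\subseteq\Tcal$, is again torsion. Torsion classes are closed under quotients, extensions and coproducts, but not under submodules, so $\ker(\phi)\in\Tcal$ does not follow, and in fact the conclusion you draw from it --- that the trace $I=\operatorname{Im}(\phi)$ of $\widetilde{\alpha}(\mathbf{t})$ in $M$ lies in $\alpha(\Tcal)$ --- is false in general. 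Concretely, over the Kronecker algebra take the cosilting torsion pair $(\Tcal,\Fcal)=(\Gen\mathbf{t},\Fcal)$ generated by the regular modules (Example~\ref{betalpha}); here $\alpha(\Tcal)=\varinjlim\add\mathbf{t}={}^{\perp_{0,1}}G$ and $\widetilde{\alpha}(\mathbf{t}\cap\lmod\Lambda)=\add\mathbf{t}$ (Lemma~\ref{G}). For $M$ an indecomposable preinjective module, $M\in\Gen\mathbf{t}$, so the trace of $\add\mathbf{t}$ in $M$ is all of $M$; but $M\notin\varinjlim\add\mathbf{t}$, since $\varinjlim\add\mathbf{t}\cap\lmod\Lambda=\add\mathbf{t}$. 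So your candidate precover is not an object of the class. The underlying obstruction is that $\alpha(\Tcal)$, unlike a torsion class, is not closed under images, so precovers for it cannot in general be realized as inclusions of trace submodules; they are typically non-injective maps.

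The paper closes exactly this step by a different device: by El Bashir's theorem \cite{covers}, a subcategory of $\lMod R$ which is closed under coproducts and directed colimits and equals the direct limit closure of a \emph{set} of modules is covering. Since $\alpha(\Tcal)=\varinjlim\widetilde{\alpha}(\mathbf{t})$ with $\widetilde{\alpha}(\mathbf{t})$ skeletally small and closed under finite direct sums, this applies and yields that $\alpha(\Tcal)$ is (pre)covering, whence coreflective by Lemma~\ref{coref} together with closure under cokernels. If you want to keep your structure, replace the trace construction by an appeal to this result (or to some other existence theorem for covers/precovers for direct limit closures); the rest of your proof can then stand as written.
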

\begin{proof}
A result
of El Bashir \cite{covers}  states that a subcategory 
of a module category is a covering class if and only if it is closed under coproducts and directed colimits and  equals the direct limit closure of some set of modules. In particular this applies to any class of the form $\varinjlim\Wcal$ for some $\Wcal\in\mathbf{wide}(R)$. Recall from Theorem~\ref{thm:wideGeneratedCogen} that $\Wcal=\widetilde{\alpha}( \mathbf{t} )$ with $ \mathbf{t} =\widetilde{\mathbf{T}}(\mathcal{W})$. It follows from Proposition~\ref{prop:alphaIsLimClosure} that  $\varinjlim\Wcal=\alpha(\Tcal)$ for $\Tcal=\varinjlim\mathbf{t}$. Hence $\varinjlim\Wcal$  is a  wide subcategory of $\lMod R$ which is covering; in particular, it is precovering and closed under cokernels, that is, coreflective in $ \lMod{R} $ by Lemma~\ref{coref}. We conclude that the map  $\alpha : \mathbf{Cosilt}(R) \longrightarrow \mathbf{CWide}(R)$ is well-defined and that  $\overline{\mathbf{wide}}(R)$ is its image.
Now apply Lemma~\ref{prop:cosiltRestriction}
to see that every $ \mathcal{W} \in \mathbf{ wide }(R) $ satisfies $ \varinjlim\Wcal\cap\lmod R=\Wcal$ and that  the diagram has the stated properties.\end{proof}

In the artinian case, we can identify when $ \alpha(\mathcal{T}) $ is also closed under products.

\begin{cor}
\label{cor:alphaBireflective}
Let $ A $ be a left artinian ring, and let  $ (\mathcal{ T } ,\Fcal)$ be a cosilting torsion pair with restriction $(\mathbf{t} , \mathbf{f})$ in  $\lmod{A} $. 
Then $ \alpha(\mathcal{ T }) $ is closed under direct products  in 
$\lMod A$ if and only if $ \widetilde{\alpha}(\mathbf{ t }) $ is functorially finite in $ \lmod{A} $. 

In other words,  a  wide subcategory $ \mathcal{W} \in \mathbf{wide}(A) $ is functorially finite if and only if $ \varinjlim{\mathcal{W}} $ is bireflective.
\end{cor}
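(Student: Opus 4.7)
The plan is to identify both sides of the equivalence with the existence of a ring epimorphism $\psi: A \to B$ such that $\alpha(\mathcal{T}) = \lMod B$, invoking the Gabriel--de la Pe\~na characterization of bireflective subcategories recalled in the introduction.

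By Theorem~\ref{themapalpha}, $\alpha(\mathcal{T})$ is wide and coreflective, so already closed under coproducts, kernels, and cokernels. Adding closure under products upgrades it to a bireflective subcategory, which by Gabriel--de la Pe\~na has the form $\lMod B$ for some ring epimorphism $\psi : A \to B$. So the statement reduces to showing that such a $\psi$ exists if and only if $\widetilde{\alpha}(\mathbf{t})$ is functorially finite in $\lmod A$.

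For the direction $(\Rightarrow)$, starting from $\alpha(\mathcal{T}) = \lMod B$ I intersect with $\lmod A$ and use the identity $\varinjlim \mathcal{W} \cap \lmod A = \mathcal{W}$ from Theorem~\ref{themapalpha} to obtain $\widetilde{\alpha}(\mathbf{t}) = \lmod B$, after verifying that $B$ inherits left artinianness from $A$ via $\psi$ and that $B$-modules finitely presented over $A$ are already finitely presented over $B$. The adjoints $B \otimes_A -$ and $\Hom_A(B, -)$ to the inclusion $\lMod B \hookrightarrow \lMod A$ then restrict to left and right $\lmod B$-approximations of finitely presented $A$-modules, exhibiting $\lmod B$ as a functorially finite subcategory of $\lmod A$.

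For the direction $(\Leftarrow)$, assuming $\widetilde{\alpha}(\mathbf{t}) = \mathcal{W}$ is functorially finite in $\lmod A$, I need to produce a ring epimorphism $A \to B$ with $\lmod B = \mathcal{W}$. The main input here is the characterization of functorially finite wide subcategories of $\lmod A$ (for $A$ artin) as the essential images of $\lmod B$ under ring epimorphisms $A \to B$ with $B$ artin; this is the artinian counterpart of the correspondence between wide subcategories and localizations studied by Marks--\v{S}\v{t}ov\'\i\v{c}ek and Enomoto. Granting this, $\alpha(\mathcal{T}) = \varinjlim \mathcal{W} = \varinjlim \lmod B = \lMod B$ is bireflective. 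The main obstacle is pinning down the correct form of this characterization in a way that interfaces cleanly with the present setup; once it is in place the remainder is a diagram chase via Gabriel--de la Pe\~na and Theorem~\ref{themapalpha}.
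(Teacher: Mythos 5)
There are two genuine gaps, one in each direction. For ($\Rightarrow$), after applying Gabriel--de la Pe\~na to get a ring epimorphism $A\to B$ with $\alpha(\mathcal{T})\cong\lMod{B}$, you treat ``$B$ inherits left artinianness from $A$'' as a routine verification. It is not: the target of a ring epimorphism out of a left artinian ring need not be artinian, nor finitely generated as an $A$-module (universal localizations of the Kronecker algebra at a tube are infinite dimensional; the paper's open questions at the end of Section 5 presuppose exactly this failure). This is the crux of the whole statement, and without it your argument only yields $\widetilde{\alpha}(\mathbf{t})=\lMod{B}\cap\lmod{A}\subseteq\lmod{B}$, not equality, and the adjoints $B\otimes_A-$ and $\Hom_A(B,-)$ need not carry $\lmod{A}$ into $\lmod{A}$, so no approximations are produced. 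The missing argument is the one the paper supplies: the progenerator $B$ of $\alpha(\mathcal{T})$ lies in $\varinjlim\widetilde{\alpha}(\mathbf{t})$, so it is a quotient of a coproduct of modules from $\widetilde{\alpha}(\mathbf{t})$; being projective in the subcategory it is a direct summand of that coproduct, and being compact it is a summand of a finite subsum, hence finitely generated over $A$. Only from this does one get $B$ left artinian, $\widetilde{\alpha}(\mathbf{t})=\lmod{B}$, and the restriction of the adjoints to finitely presented modules. Your proposal uses the special shape $\alpha(\mathcal{T})=\varinjlim\widetilde{\alpha}(\mathbf{t})$ nowhere in this direction, and without it the claim is simply unavailable.

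For ($\Leftarrow$), you invoke as a black box a ``characterization of functorially finite wide subcategories of $\lmod{A}$ as essential images of $\lmod{B}$ under ring epimorphisms $A\to B$ with $B$ artin,'' and you concede you cannot pin down its correct form. That statement is not an off-the-shelf result in the generality at hand (the corollary is for left artinian rings, not artin algebras), and within this paper it is a \emph{consequence} of the corollary being proved (see the remark following Theorem~\ref{prop:tauFiniteWideCop}), so taking it as input is essentially circular. The paper's route for this direction is much lighter and avoids ring epimorphisms entirely: by Crawley--Boevey's criterion, $\widetilde{\alpha}(\mathbf{t})$ is covariantly finite in $\lmod{A}$ if and only if $\varinjlim\widetilde{\alpha}(\mathbf{t})=\alpha(\mathcal{T})$ is definable, and since $\alpha(\mathcal{T})$ is already closed under direct limits and pure submodules, definability amounts precisely to closure under products; contravariant finiteness is then obtained for free from the argument in the other direction. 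So the easy implication in your plan rests on the unproven heavy input, while the hard implication is where your ``routine verification'' hides the actual proof.
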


\begin{proof}
The subcategory $ \widetilde{\alpha}(\mathbf{ t }) $ is covariantly finite if and only if $ \varinjlim \widetilde{\alpha}(\mathbf{ t }) = \alpha(\mathcal{ T }) $ is a definable subcategory, see \cite[Section 4.2]{locallyFp}. 
In this case, we can show that it is also contravariantly finite. In fact, $ \widetilde{\alpha}(\mathbf{ t }) = \lmod{B} $ for some left artinian ring $ B $ which is finitely generated as an $ A-$module.  

Indeed, assume $ \alpha(\mathcal{ T }) $ is closed under products, and therefore  a bireflective subcategory of $\lMod A$. Then, by \cite[Theorem 1.2]{quotRepFinAlg}, there exists a ring epimorphism $ A \to B $ with $ \alpha(\mathcal{T}) \cong \lMod{B} $. Consider a small progenerator of $  \alpha(\mathcal{ T }) $, which we denote again by $ B $. Then $ B $ can be written as a direct limit of objects $ B_i $ in $ \widetilde{\alpha}(\mathbf{ t }) $, in particular, it is a quotient of $ \coprod B_i $. Since $ B $ is projective in the subcategory, we have that $ B $ is actually a direct summand   of $ \coprod B_i $. But $ B $ is also compact in the category, thus it is a summand of a finite direct sum of finitely generated modules. In particular $ B $ is finitely generated. 
This shows that $ \widetilde{\alpha}(\mathbf{ t }) = \lmod{B} $.
\end{proof}

\begin{rem} \label{prop:partialWideCoprodAlpha} 
{\rm We see in Theorem~\ref{themapalpha}
that the assignment $\Wcal\mapsto T(\Wcal)$ is a right inverse of 
the map
$\alpha : \mathbf{Cosilt}(R) \longrightarrow \mathbf{CWide}(R)$. In fact,
it is shown in \cite{tau-tilt}
 that   
$\alpha(\mathbf{T}(\mathcal{W})) = \mathcal{W}$ for any wide subcategory $\Wcal$ which is closed under coproducts.}
\end{rem}

We now turn to a description of the class $ \Beta(\Cogen C) $.

\begin{prop}
\label{thm:BetaC1Orthogonal}
If $ C $ is a cosilting module with approximation sequence (\ref{approxsequence})  and $ \Fcal=\Cogen C$, then
\[
\Beta(\Fcal) = {}^{\perp_0} C_1
\]
In particular, $ \Beta(\Fcal) $ is a torsion class in $ \lMod{R} $. \end{prop}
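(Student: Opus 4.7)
My plan is to prove the two inclusions separately, after which the ``torsion class'' assertion will be free. For the inclusion ${}^{\perp_0}C_1 \subseteq \Beta(\Fcal)$, I would rely on the description $\Fcal = {}^{\perp_1} C_1 \cap \lMod{R/\Ann(C)}$ from Lemma~\ref{lem:approxSeqInFactor}(3). Given $X \in {}^{\perp_0} C_1$ and a map $f : X \to F$ with $F \in \Fcal$, the goal is to show $\coker(f) \in \Fcal$. Applying $\Hom_R(-, C_1)$ to $0 \to \Im(f) \to F \to \coker(f) \to 0$ yields a fragment $\Hom(\Im(f), C_1) \to \Ext^1(\coker(f), C_1) \to \Ext^1(F, C_1)$. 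The left term vanishes because $\Im(f)$ is a quotient of $X$ and ${}^{\perp_0} C_1$ is easily checked to be closed under quotients; the right term vanishes by $F\in\Fcal\subseteq {}^{\perp_1}C_1$. Since $\coker(f)$ is annihilated by $\Ann(C)$ (being a quotient of $F$), this forces $\coker(f) \in \Fcal$.

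The reverse inclusion $\Beta(\Fcal) \subseteq {}^{\perp_0} C_1$ is the delicate direction, and it is here that I expect the minimality of the approximation sequence to become indispensable. For $X \in \Beta(\Fcal)$ and $f : X \to C_1$, the natural move is to consider the composition $\iota f : X \to C_0$ with the inclusion $\iota : C_1 \hookrightarrow C_0$, because the codomain then lies in $\Fcal$ and the definition of $\Beta(\Fcal)$ forces $Q := C_0 / \Im(\iota f)$ into $\Fcal$. The key observation is that $g \iota = 0$, so $g$ descends along the quotient map $\pi : C_0 \twoheadrightarrow Q$ to a morphism $\bar g : Q \to E(R)$ satisfying $\bar g \pi = g$.

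From there, the argument leverages the precover property of $g$ applied to $\bar g$ (valid since $Q \in \Fcal$), producing a lift $h : Q \to C_0$ with $g h = \bar g$. The endomorphism $s := h \pi$ of $C_0$ then satisfies $g s = g$, so right minimality of the cover $g$ makes $s$ an automorphism. This forces the surjection $\pi$ to be an isomorphism, giving $\Im(\iota f) = 0$ and hence $f = 0$. With both inclusions established, the final assertion is immediate: ${}^{\perp_0} C_1$ is automatically closed under quotients, extensions, and arbitrary coproducts, hence forms a torsion class in $\lMod R$.
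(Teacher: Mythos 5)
Your proof is correct and follows essentially the same route as the paper: the inclusion ${}^{\perp_0}C_1\subseteq \Beta(\Fcal)$ via Lemma~\ref{lem:approxSeqInFactor}(3) and the $\Ext$ long exact sequence, and the reverse inclusion via factoring through the cokernel $Q$ of $X\to C_0$, the precover property of $g$, and right minimality. The only cosmetic difference is that the paper first reduces to monomorphisms $B\hookrightarrow C_1$ and phrases the construction as a pushout, whereas you handle an arbitrary map $f:X\to C_1$ directly; the underlying argument is the same.
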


\begin{proof}
"$\subseteq$" : As $ \Beta(\Cogen(C)) $ is closed under quotients ( by Lemma \ref{lem:basicAlphaBeta}), it is enough to show that we can't have a non-zero monomorphism from some object $ B \in \Beta(\Cogen(C)) $ to $ C_1 $. 

Assume that we can find a monomorphism $ i : B \to C_1 $ and consider the following pushout diagram:
\[
\begin{tikzcd}
  &  B \arrow[r, equals, ] \arrow[d, hook, "i" ] & B \arrow[d, hook] \\
0 \arrow[r] & C_1 \arrow[r] \arrow[d, two heads] & C_0 \arrow[r, "f" ] \arrow[d, two heads, "l"] & E(R) \arrow[d, equals] \\
0 \arrow[r] & \mathrm{coker}(i) \arrow[r] & P \arrow[r, "m"] \arrow[u, dashed, bend left, "r"] & E(R)
\end{tikzcd}
\]
Since $ B \in \Beta(\Cogen(C)) $, we have that $ P \in \Cogen(C) $. Thus the map $ m : P \to E(R) $ must factor through $ f $ via some $ r : P \to C_0 $. Whence we obtain that $f = m \circ l = ( f \circ r  ) \circ l $. By right minimality of $ f $, the map $ r \circ l $ is an isomorphism. Thus $ l $ is a monomorphism. This implies that $ B = 0 $.

"$\supseteq$" : Let $ X \in {}^{\perp_0}C_1 $. We must show that every map $ X \to F $ for $ F \in \Cogen(C) $ has torsionfree cokernel. Since $ {}^{\perp_0}C_1 $ is closed under quotients, without loss of generality we consider only injective maps. 

So let $ 0 \to X \to F \to M \to 0 $ be a short exact sequence. The long exact sequence obtained applying the functor $ \Hom_R(-,C_1) $ shows that $ \Ext^1_R(M, C_1) = 0 $, as $ \Ext^1_R(F, C_1) = 0 $ by Lemma \ref{lem:approxSeqInFactor}. 
Moreover, as $ M $ is a quotient of $ F $, we have $ M \in \lMod{R/\Ann(C)} $. It follows that $ M $ belongs to $ {}^{\perp_1}C_1 \cap \lMod{R/\Ann(C)} $, which coincides with 
$\Cogen(C) $  again by Lemma \ref{lem:approxSeqInFactor}.
\end{proof}

The categories  $\alpha(\Tcal)$ and $\beta(\Fcal)$ can be regarded as generalized perpendicular categories; in fact, that's what they are in the cotilting case.
\begin{rem}
\label{perpendicular}
{\rm When $C$ has injective dimension at most one, we have $\Alpha(\Tcal)={}^{\perp_{1}}C_0$ and $\alpha(\Tcal) = {}^{\perp_{0,1}}C_0$. If $C$
 is a cotilting module, we also have $\Fcal={}^{\perp_{1}}C_1$,
thus  $\beta(\Fcal) = {}^{\perp_{0,1}}C_1$.}\end{rem}


Our second main result is devoted to the map $\beta$.

\begin{thm}
\label{prop:betaInj}
Let $ A $ be a left artinian ring. Then the assignment 
        $ \Tpair{T}{F} \mapsto \beta(\mathcal{F})$   
defines an injective map
\[
\beta : \mathbf{Cosilt}(A) \longrightarrow \mathbf{CWide}(A)
\]
Taking  the restriction to ${\rm res}: \Xcal\mapsto \Xcal\cap\lmod A$ we obtain  a commutative diagram
$$
\xymatrix{
\mathbf{Cosilt} (A)\,\ar@{^{(}->}[r]^{\beta}\ar[d]_-{\rm res}&\mathbf{CWide} (A)\ar@{->>}[d]^-{\rm res}\\
\mathbf{tors} (A)\ar@{->>}[r]^{\tilde{\beta}}\;\ar[u]_{\varinjlim}&\mathbf{wide}(A)
}
$$
\end{thm}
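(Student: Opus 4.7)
The theorem asserts three things, which I would tackle in turn: (i) well-definedness $\beta(\mathcal{F})\in\mathbf{CWide}(A)$, (ii) commutativity of the square, and (iii) injectivity of $\beta$.

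For (i), $\beta(\mathcal{F})$ is wide by the dual of Lemma~\ref{lem:basicAlphaBeta}(ii), so by Lemma~\ref{coref} coreflectivity reduces to being precovering (closure under cokernels follows from wideness). The idea is to combine two coreflections already at hand. By Proposition~\ref{thm:BetaC1Orthogonal}, $\Beta(\mathcal{F}) = {}^{\perp_0}C_1$ is a torsion class in $\lMod A$, and so is coreflective via its torsion radical $t'$; by Theorem~\ref{thm:cosiltCover}(2), $\mathcal{F}$ is covering, providing $\mathcal{F}$-covers $\pi_M\colon F_M\to M$ for every $M$. The composite $t'(F_M)\hookrightarrow F_M\xrightarrow{\pi_M}M$ is a $\beta(\mathcal{F})$-precover: its source lies in $\beta(\mathcal{F})=\mathcal{F}\cap\Beta(\mathcal{F})$ because $\mathcal{F}$ is closed under submodules; and any $g\colon X\to M$ with $X\in\beta(\mathcal{F})$ factors first through $\pi_M$ via some $h\colon X\to F_M$ (as $X\in\mathcal{F}$), while the image of $h$, being a quotient of the torsion object $X\in\Beta(\mathcal{F})$, sits inside $t'(F_M)$ by closure of $\Beta(\mathcal{F})$ under quotients.

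The commutativity (ii) is built from Lemma~\ref{prop:cosiltRestriction}(ii), which gives the right vertical identity $\beta(\mathcal{F})\cap\lmod A=\tilde\beta(\mathbf{f})$, together with the bijection $\mathbf{tors}(A)\leftrightarrow\mathbf{Cosilt}(A)$ of Theorem~\ref{thm:CB-bij} on the left vertical.

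The main obstacle is injectivity (iii). Restriction to $\lmod A$ cannot suffice alone, since $\tilde\beta\colon\mathbf{tors}(A)\to\mathbf{wide}(A)$ is typically far from injective; the needed information must live in the infinite-dimensional part of $\beta(\mathcal{F})$. My plan is to exploit the complete set of simple objects of $\beta(\mathcal{F})$, which by Proposition~\ref{prop:simpInalphabeta}(ii) coincides with the torsionfree, almost torsion modules with respect to $(\mathcal{T},\mathcal{F})$. By Remark~\ref{prop:finInf} the finitely generated simples are the minimal extending modules already encoded by $\tilde\beta(\mathbf{f})$, so the decisive new data are the infinite-dimensional ones. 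Invoking the reconstruction principle developed in \cite{simpleCotilting, mutation}, that the full collection of torsionfree, almost torsion modules recovers the cosilting torsion pair, one concludes that $\beta(\mathcal{F}_1)=\beta(\mathcal{F}_2)$ forces equality of the two simple collections, and hence of the cosilting torsion pairs themselves. The contrast with $\alpha$ is illuminating and serves as a sanity check: there the simples are torsion almost torsionfree and all finitely generated (again by Remark~\ref{prop:finInf}), so distinct cosilting pairs can share them, which is exactly what allows $\alpha$ to be surjective without being injective.
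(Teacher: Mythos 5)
Your proposal is correct in substance, and its interesting divergence from the paper lies in step (i). The paper establishes that $\beta(\Fcal)={}^{\perp_0}C_1\cap\Cogen(C)$ is precovering indirectly: it is closed under coproducts and under pure quotients (being an intersection of two classes with these closure properties), and then a theorem of Bazzoni on covering classes is invoked, after which Lemma~\ref{coref} gives coreflectivity. Your argument instead builds the $\beta(\Fcal)$-precover explicitly, as the composite $t'(F_M)\hookrightarrow F_M\xrightarrow{\pi_M}M$ of the $\Fcal$-cover with the torsion radical of the torsion class $\Beta(\Fcal)$ from Proposition~\ref{thm:BetaC1Orthogonal}; this checks out (the source lies in $\Fcal\cap\Beta(\Fcal)$ by closure of $\Fcal$ under submodules, and any map from $X\in\beta(\Fcal)$ factors through $\pi_M$ and then corestricts to $t'(F_M)$ since $\Beta(\Fcal)$ is closed under quotients), and it is more elementary, using only ingredients already proved in the paper rather than an external covering criterion. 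Parts (ii) and (iii) follow the paper's own route: commutativity via Lemma~\ref{prop:cosiltRestriction} and Theorem~\ref{thm:CB-bij}, and injectivity by passing to the simple objects of $\beta(\Fcal)$, i.e.\ the torsionfree, almost torsion modules (Proposition~\ref{prop:simpInalphabeta}), and invoking the fact that these determine the cosilting torsion pair. Two caveats: the paper is careful to source that reconstruction result precisely, namely \cite[Proposition 2.20]{nagoya} for finite-dimensional algebras together with an argument (via Enomoto's monobrick correspondence) that it persists over left artinian rings, whereas your appeal to \cite{simpleCotilting,mutation} is vaguer and you should make sure the statement is available at artinian generality, since this is exactly where the theorem's hypothesis sits; and you do not address the surjectivity decorations in the diagram, namely that $\tilde\beta$ is surjective (Theorem~\ref{thm:wideGeneratedCogen}(ii)) and that the right-hand restriction $\mathbf{CWide}(A)\to\mathbf{wide}(A)$ is surjective, which follows from commutativity because the left vertical map is a bijection.
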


\begin{rem}
{\rm
In the proof of this theorem, we will make use of the following fact: a cosilting torsion pair in the module category of a left artinian ring is uniquely determined by its torsionfree, almost torsion modules. In  \cite[Proposition 2.20]{nagoya} this  is stated for finite-dimensional algebras. However, the proof only relies on the compatibility results between small and large torsion pairs and on a result of Enomoto \cite{monobrick} giving a connection between bricks and torsionfree classes which is valid in an arbitrary abelian length category. Thus, the statement remains true in the context of left artinian rings.
}
\end{rem}

\begin{proof}
Given a cosilting class $\Fcal$, we know from Lemma \ref{lem:basicAlphaBeta} and Proposition~\ref{thm:BetaC1Orthogonal} that the class $ \beta(\Fcal) = {}^{\perp_0}C_1 \cap \Cogen(C) $
is  wide and closed under coproducts.  Moreover, it is also closed under pure quotients, as it is the intersection of two classes closed under such quotients. It follows from \cite[Theorem 2.5]{whenTheClassIsCovering} that $ \beta(\Fcal) $ is precovering. Since  $ \beta(\Fcal) $ is also closed under cokernels, we 
infer from Lemma~\ref{coref} 
 that is is coreflective. 

The map $\beta$ is thus well-defined. Moreover,  if $ \beta(\mathcal{F}) = \beta(\mathcal{F}') $, then $ \mathcal{F} $ and $ \mathcal{F}' $ have the same torsionfree, almost torsion modules by Proposition \ref{prop:simpInalphabeta}. Since $A$ is  left artinian, we conclude from  \cite[Proposition 2.20]{nagoya} that $ \mathcal{F} = \mathcal{F}' $. 
 Observe further that the diagram commutes  by Lemma~\ref{prop:cosiltRestriction}
and $\tilde{\beta}$ is surjective by Theorem~\ref{thm:wideGeneratedCogen}(ii). Finally, since the left vertical arrow is a bijection,  the right vertical arrow is a surjection.
\end{proof}

In the hereditary case, we can characterize when $\beta(\Fcal)$ is closed under direct products.

\begin{cor} A cosilting torsion pair $(\Tcal,\Fcal)$ in $ \lMod{R} $ over a left artinian
hereditary ring $R$ is widely generated if and only if  $\beta(\Fcal)$ is closed under direct products 
 in $\lMod R$.
\end{cor}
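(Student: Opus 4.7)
The strategy is to use the injectivity of $\beta$ from Theorem~\ref{prop:betaInj} together with the explicit formula $\beta(\Wcal^{\perp_0}) = \Wcal^{\perp_{0,1}}$ for $\Wcal \in \mathbf{wide}(R)$ supplied by Lemma~\ref{lem:betaCompHereditary}. For the forward implication, suppose $(\Tcal,\Fcal)$ is widely generated by some $\Wcal \in \mathbf{wide}(R)$, so that $\Fcal = \Wcal^{\perp_0}$ and hence $\beta(\Fcal) = \Wcal^{\perp_{0,1}}$. Since $R$ is left noetherian and hereditary, every $W \in \Wcal \subseteq \lmod R$ admits a finitely generated projective resolution of length at most one, so $\Hom_R(W,-)$ and $\Ext_R^1(W,-)$ commute with arbitrary direct products and $\Wcal^{\perp_{0,1}}$ is therefore closed under products.

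For the converse, set $\Wcal := \widetilde{\alpha}(\mathbf{t}) \in \mathbf{wide}(R)$ and, using injectivity of $\beta$ together with Lemma~\ref{lem:betaCompHereditary}, reduce the task to the identity $\beta(\Fcal) = \Wcal^{\perp_{0,1}}$. The inclusion $\beta(\Fcal) \subseteq \Wcal^{\perp_{0,1}}$ is established directly. Indeed, for $W \in \Wcal \subseteq \Tcal$ and $X \in \beta(\Fcal) \subseteq \Fcal$ one has $\Hom_R(W,X) = 0$ automatically. To see $\Ext_R^1(W,X) = 0$, take a short exact sequence $0 \to X \to Y \to W \to 0$; since $X \in \Fcal$ is torsionfree, $\operatorname{t}Y \cap X = 0$, hence $X$ embeds into $Y/\operatorname{t}Y \in \Fcal$. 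The hypothesis $X \in \Beta(\Fcal)$ then forces the cokernel of this embedding to lie in $\Fcal$, but this cokernel is also a quotient of $W \in \Tcal$, so it vanishes. Consequently $Y = X \oplus \operatorname{t}Y$ with $\operatorname{t}Y \cong W$, and the sequence splits.

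The reverse inclusion $\Wcal^{\perp_{0,1}} \subseteq \beta(\Fcal)$ is the main obstacle, and it is here that the product-closure hypothesis is essential. The plan is: being wide and coreflective by Theorem~\ref{prop:betaInj}, $\beta(\Fcal)$ is already closed under all colimits; the additional closure under products promotes it to a bireflective subcategory of $\lMod R$. By the Gabriel--de la Pe\~na theorem, $\beta(\Fcal) \simeq \lMod S$ for a homological ring epimorphism $R \to S$, and $S$ inherits from $R$ the property of being left artinian and hereditary (it is a cyclic $R$-module and the ring epi is homological). By Proposition~\ref{prop:simpInalphabeta} and Remark~\ref{prop:finInf}, the simple objects of $\beta(\Fcal) = \lMod S$ are precisely the (finitely generated) torsionfree almost torsion modules of $(\Tcal,\Fcal)$. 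Invoking the characterization of cosilting torsion pairs by their torsionfree almost torsion modules recalled in the proof of Theorem~\ref{prop:betaInj}, one matches these simples with the simple objects of the wide subcategory $\lmod S \subseteq \lmod R$ and identifies $\Wcal^{\perp_{0,1}}$ with $\lMod S = \beta(\Fcal)$. The delicate point is precisely this matching: one must arrange that the ring-epi description of $\beta(\Fcal)$ is compatible with the wide subcategory $\Wcal = \widetilde{\alpha}(\mathbf{t})$ obtained from the torsion side via Lemma~\ref{prop:cosiltRestriction}, after which injectivity of $\beta$ yields $\Fcal = \Wcal^{\perp_0}$, i.e.\ $(\Tcal,\Fcal)$ is widely generated.
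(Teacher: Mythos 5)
Your forward implication is correct and is essentially the paper's: widely generated means $\Fcal=\Wcal^{\perp_0}$ for some $\Wcal\in\mathbf{wide}(R)$, Lemma~\ref{lem:betaCompHereditary} gives $\beta(\Fcal)=\Wcal^{\perp_{0,1}}$, and this is product-closed because the modules in $\Wcal$ are finitely presented of projective dimension at most one. A small repair is needed, however, in your splitting argument for the inclusion $\beta(\Fcal)\subseteq\Wcal^{\perp_{0,1}}$ in the converse: $\operatorname{t}Y\cap X=0$ does \emph{not} follow merely from $X$ being torsionfree, since cosilting torsion pairs need not be hereditary (submodules of torsion modules need not be torsion). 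One should instead observe that $\operatorname{t}Y\cap X$ is the kernel of the composite $\operatorname{t}Y\to W$ and that $W\in\Alpha(\Tcal)$ (because $\widetilde{\alpha}(\mathbf{t})\subseteq\alpha(\Tcal)$ by Lemma~\ref{prop:cosiltRestriction}), so this kernel lies in $\Tcal\cap\Fcal=0$.

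The real problem is the converse, where there is a genuine gap: the entire content is the inclusion $\Wcal^{\perp_{0,1}}\subseteq\beta(\Fcal)$, i.e.\ the identification of $\beta(\Fcal)$ with $\Wcal^{\perp_{0,1}}$ for a wide subcategory $\Wcal$ of $\lmod R$, and you do not prove it -- you outline a plan and explicitly leave the ``delicate point'' unresolved. Note that this inclusion cannot be obtained by elementary orthogonality arguments with $\Wcal=\widetilde{\alpha}(\mathbf{t})$: for $X\in\Wcal^{\perp_{0,1}}$ even the membership $X\in\Fcal$ requires $\Hom_R(\mathbf{t},X)=0$, and deducing this from orthogonality to $\widetilde{\alpha}(\mathbf{t})$ amounts to knowing that $\mathbf{t}$ is generated by $\widetilde{\alpha}(\mathbf{t})$, which is precisely the statement being proved; so the product-closure hypothesis must be converted into structural information, and your sketch does not do this. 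Moreover, two auxiliary claims in the sketch are false in general: the target $S$ of a ring epimorphism need not be cyclic, or even finitely generated, as an $R$-module, so it does not inherit artinianity (over the Kronecker algebra the universal localizations at tubes are infinite dimensional); and torsionfree, almost torsion modules need not be finitely generated -- the generic module in Example~\ref{betalpha} is one -- while Remark~\ref{prop:finInf} guarantees finiteness only on the torsion, almost torsionfree side. The paper closes the gap in one stroke: since $\beta(\Fcal)$ is wide, closed under coproducts and, by hypothesis, under products, \cite[Theorem 8.1]{extPairs} produces \emph{some} $\Wcal\in\mathbf{wide}(R)$ with $\beta(\Fcal)=\Wcal^{\perp_{0,1}}$; by Lemma~\ref{lem:betaCompHereditary} this equals $\beta(\Wcal^{\perp_0})$, and the injectivity of $\beta$ from Theorem~\ref{prop:betaInj} yields $\Fcal=\Wcal^{\perp_0}$. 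In particular no identification of $\Wcal$ with $\widetilde{\alpha}(\mathbf{t})$, and no passage through a ring epimorphism or its simple modules, is needed.
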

\begin{proof}
If $\Fcal=\Wcal^{\perp_0}$ for some $\Wcal\in\mathbf{wide}(R)$, then it follows from Lemma~\ref{lem:betaCompHereditary} that $\beta(\Fcal)=\Wcal^{\perp_{0,1}}$ is closed under products.
Conversely, if $\beta(\Fcal)$ is closed under products, then by \cite[Theorem 8.1]{extPairs} there is $\Wcal\in\mathbf{wide}(R)$ such that $\beta(\Fcal)=\Wcal^{\perp_{0,1}}$, and the latter coincides with $\beta(\Wcal^{\perp_0})$ by Lemma~\ref{lem:betaCompHereditary}. Hence $\Fcal=\Wcal^{\perp_0}$ by the injectivity of $\beta$. 
\end{proof}

A cosilting class which is not widely generated will be exhibited  below. We will also see that in general  the map $\beta$ is not surjective and  the map $\tilde\beta$ is not injective.

\begin{exs}\label{betalpha}
{\rm 
 Let  $ \Lambda $ be the Kronecker algebra, i.e.~the path algebra of the quiver \begin{tikzcd}
0 \ar[r, bend left] \ar[r, bend right] & 1
\end{tikzcd} over an algebraically closed field $k$. We denote by $\mathbf{p}$, $\mathbf{t}$, and $\mathbf q$  the classes of all indecomposable  preprojective,  regular, and  preinjective modules, respectively. Recall that  $\mathbf{t}=\bigcup_{x\in\mathbb X}\mathbf{t}_x$ where $(\mathbf{t}_x)_{x\in\mathbb X}$ is a family of tubes.

 There is a complete classification of the cosilting torsion pairs in $\lMod \Lambda$. They are either generated by a finite dimensional module  $M\in \mathbf{p}\cup\mathbf{q}$, or by a set of the form $\mathbf{t}_P\cup\mathbf{q}$ determined by a subset of the tubes $\mathbf{t}_P=\bigcup_{x\in P}\mathbf{t}_x$ with $P\subseteq \mathbb X$. The torsion pair generated by $\mathbf{t}_P\cup\mathbf{q}$ with $P\ne\emptyset$ is of the form $(\Gen \mathbf{t}_P, \Fcal_P)$. In case $P=\mathbb X$ we just write $(\Gen \mathbf{t}, \Fcal)$. When $P=\emptyset$, we obtain the split  torsion pair   $(\Add \mathbf q, \Ccal)$  generated by  $\mathbf q$.  

Observe that $(\Add \mathbf q, \Ccal)$  is the only cosilting torsion pair which is not widely generated. 
In fact, $\beta(\Ccal)=  {}^{\perp_{0,1}}G$ is the perpendicular class to the generic module $ G $, and it is not closed under direct products.  For details we refer to \cite[Example 4.10 and Section 6.4]{paramTp}.

Observe further that $\alpha(\Add\mathbf{q})=\tilde\alpha(\add\mathbf{q})=0$. Indeed, if we  number the modules  $(Q_n)_{n\in\mathbb N}$ in $\mathbf{q}$  such that $\textrm{dim}_k\Hom_A (Q_{n+1},Q_{n}) = 2$, we see that every $Q_n$ is isomorphic to $Q_{n+1}/S$ for a simple regular module $S$, and therefore it can't belong to $\alpha(\Add \mathbf{q})$. This shows that $\alpha$ and $\tilde{\alpha}$ are not injective.

For a proper subset $P\subset \mathbb X$ consider now  the direct limit closure $\Wcal_P=\varinjlim\add\mathbf{t}_P$ of the wide subcategory $\add\mathbf{t}_P$ of $\lmod \Lambda$. It is a wide coreflective subcategory of $\lMod \Lambda$ by Theorem~\ref{themapalpha}, and it is not closed under direct products, because  $\add\mathbf{t}_P$ is not covariantly finite in $\lmod\Lambda$. Hence $\Wcal_P$ can't be of the form $\beta(\Fcal)$ for a widely generated torsion pair. Moreover, $\Wcal_P\ne\beta(\Ccal)$ because any simple regular $S\in\mathbf{t}_x$ with $x\in\mathbb X\setminus P$ lies in 
$\beta(\Ccal)\setminus \Wcal_P$.   Hence $\Wcal_P$ does not belong to the image of $\beta$.
This shows that the map 
$\beta:\Cosilt \Lambda\hookrightarrow\mathbf{CWide}(\Lambda)$ is not surjective.

Finally, we notice  that $(\Gen \mathbf{t},\Fcal)=({}^{\perp_0}G,\Cogen G)$ coincides with the torsion pair cogenerated by the generic module $G$.  Hence $G$ is its unique torsionfree almost torsion module, and $\beta(\Fcal)\ne 0$ while $\beta(\Fcal)\cap\lmod\Lambda=\tilde\beta(\add \mathbf p)=0$. So, the map 
${\tilde{\beta}}:\mathbf{tors}(\Lambda) \to\mathbf{wide}(\Lambda)$
is not injective. Moreover, we see that the image of $\beta$ is not contained in $\overline{\mathbf{wide}}(\Lambda)$.
}
\end{exs}




\section{Applications to $ \tau-$tilting infinite algebras}
Throughout this section we will assume  that  $ \Lambda $ is an artin algebra.  
Recall that 
there is a natural partial order on the collection of torsion classes $ \mathbf{tors}(\Lambda) $ of $ \lmod{\Lambda} $ given by inclusion. 
As shown in \cite{lattices}, the resulting poset has the structure of a complete lattice and enjoys several nice lattice-theoretic properties. 
The algebras for which this lattice is finite are called $ \tau-$\emph{tilting finite}.
A typical phenomenon in the $ \tau-$tilting infinite case is the presence of non-trivial locally maximal elements. 

\begin{de}{\rm
Given two  torsion classes  $ \mathbf{ u}$ and $\mathbf t $ in $\mathbf{tors}(\Lambda) $, we say that $\mathbf t$ \emph{covers} $\mathbf u$ if $\mathbf u\subset\mathbf t$ and there is no $\mathbf t' $ in $\mathbf{tors}(\Lambda) $ which properly contains $\mathbf u $ and is properly contained in $\mathbf t$. 

A torsion class $ \mathbf{ t } \in \mathbf{tors}(\Lambda) $ is said to be \emph{locally maximal} if there are no elements of $ \mathbf{ tors }(\Lambda) $ covering $ \mathbf{t} $. Moreover,  $  \mathbf{ t } $ is \emph{locally minimal} if there are no elements of $ \mathbf{ tors }(\Lambda) $ covered by $ \mathbf{t} $.}
\end{de}

It is shown in {\cite{minimIncl}}
 that  the torsion classes covering (respectively, covered by) $ \mathbf{t} $ are in bijection with the
 isoclasses of minimal (co)extending modules with respect to the torsion pair $\tpair{t}{f}$. Moreover, we know  from \cite{mutation} that a torsion pair covering $\mathbf{t} $ amounts to a mutation of the associated cosilting module, or more precisely, of the corresponding two-term cosilting complex. 
Rather than giving  
  further details on the concept of mutation for cosilting objects  introduced in \cite{mutation}, here we prefer to use the following equivalent characterisation from \cite[Theorem 8.8]{mutation}: 

\begin{de}[\cite{mutation}]
{\rm  Let $ \tpair{t}{f} $ and $ \tpair{u}{v} $ be  torsion pairs  in  $ \mathbf{tors}(\Lambda)$. We say that $  \tpair{t}{f} $ is a \emph{right mutation} of $ \tpair{u}{v} $, and  $ \tpair{u}{v} $  is a \emph{left mutation} of $ \tpair{t}{f} $,
if $ \mathbf{u} \subseteq \mathbf{t} $ and $ \mathbf{t} \cap \mathbf{v}\in\mathbf{wide}(\Lambda)$.}
\end{de} 


The classes $\widetilde\alpha(\mathbf t)$ and $\widetilde\beta(\mathbf f)$ control the existence of mutations of the torsion pair $ \tpair{t}{f} $. 

 
\begin{prop}
\label{cor:maximalMutation}
Let  $ \tpair{t}{f} $ be a torsion pair in $ \mathbf{tors}(\Lambda)$. 
The torsion class  $\mathbf{t}$ is locally minimal if and only if $\tilde\alpha(\mathbf{t})=0$, if and only if $ \tpair{t}{f} $  admits no proper left mutation. 
Moreover,  $\mathbf{t}$   is locally maximal if and only if $\tilde\beta(\mathbf{f})=0$, if and only if $ \tpair{t}{f} $  admits no proper right mutation. 
\end{prop}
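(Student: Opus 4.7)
The plan is to prove the first chain of equivalences involving $\tilde\alpha(\mathbf{t})$, local minimality, and left mutations; the second chain then follows by the obvious dualization (replacing Proposition~\ref{prop:simpInalphabeta}(i) with (ii), using the dual half of Remark~\ref{prop:finInf}, and swapping left for right mutations).

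First I would establish that $\tilde\alpha(\mathbf{t}) = 0$ iff $\mathbf{t}$ is locally minimal. Proposition~\ref{prop:simpInalphabeta}(i) identifies the simple objects of $\alpha(\mathcal{T})$ with the torsion, almost torsionfree modules for $(\mathcal{T}, \mathcal{F})$; by Remark~\ref{prop:finInf} these are all finitely generated and coincide with the minimal co-extending modules for $(\mathbf{t}, \mathbf{f})$, so they lie in $\tilde\alpha(\mathbf{t}) = \alpha(\mathcal{T}) \cap \lmod{\Lambda}$ and remain simple there because the subcategory is closed under torsion subobjects. Since $\tilde\alpha(\mathbf{t})$ is a wide subcategory of the length category $\lmod{\Lambda}$, it vanishes iff it has no simple objects, hence iff there are no torsion, almost torsionfree modules; by the bijection of \cite{minimIncl} between minimal co-extending modules and torsion classes covered by $\mathbf{t}$, this is in turn equivalent to $\mathbf{t}$ being locally minimal.

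Next I would show that $(\mathbf{t}, \mathbf{f})$ admits no proper left mutation iff $\tilde\alpha(\mathbf{t}) = 0$. For one direction, via contrapositive, suppose $(\mathbf{u}, \mathbf{v})$ is a proper left mutation, so that $\mathbf{u} \subsetneq \mathbf{t}$ and $\mathcal{W} := \mathbf{t} \cap \mathbf{v}$ is wide in $\lmod{\Lambda}$. Any $X \in \mathbf{t} \setminus \mathbf{u}$ has a nonzero $(\mathbf{u}, \mathbf{v})$-torsionfree quotient lying in $\mathcal{W}$ by closure of $\mathbf{t}$ under quotients, so $\mathcal{W}$ is nonzero and admits a simple object $B$. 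I claim $B \in \tilde\alpha(\mathbf{t})$: given $f : T \to B$ with $T \in \mathbf{t}$, the canonical sequence $0 \to T_u \to T \to T_v \to 0$ with $T_u \in \mathbf{u}$ and $T_v \in \mathbf{v}$ shows that $f$ factors through $T_v \to B$ (since $B \in \mathbf{v}$). Here $T_v$ is a quotient of $T$, hence in $\mathbf{t}$, and so $T_v \in \mathcal{W}$; since $\mathcal{W}$ is wide, the kernel $K$ of $T_v \to B$ lies in $\mathcal{W} \subseteq \mathbf{t}$, and the extension $0 \to T_u \to \ker f \to K \to 0$ with $T_u \in \mathbf{t}$ forces $\ker f \in \mathbf{t}$. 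For the converse, if $\tilde\alpha(\mathbf{t}) \neq 0$, the first equivalence produces a cover $\mathbf{t}' \subsetneq \mathbf{t}$ labelled by a brick $B$ in the sense of \cite{minimIncl}; then $\mathbf{t} \cap (\mathbf{t}')^{\perp_0} = \operatorname{filt}(B)$, which is wide since $B$ is a brick, so $(\mathbf{t}', (\mathbf{t}')^{\perp_0})$ is a proper left mutation.

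The hard part is the verification that $B \in \tilde\alpha(\mathbf{t})$ in the mutation argument: one really needs $\mathcal{W}$ to be closed under kernels, not merely a torsionfree class, to conclude that $K$ stays in $\mathbf{t}$. The parallel statements for $\tilde\beta(\mathbf{f})$, local maximality of $\mathbf{t}$, and right mutations are obtained by the evident dualization of the above arguments.
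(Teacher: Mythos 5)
Your argument is correct, but for the equivalence with mutations you take a genuinely different route from the paper. The first equivalence (local minimality $\Leftrightarrow\widetilde\alpha(\mathbf t)=0$) is the same as in the paper: both combine Proposition~\ref{prop:simpInalphabeta} and Remark~\ref{prop:finInf} with the correspondence of \cite{minimIncl} between covers and minimal (co)extending modules, the paper simply quoting \cite{minimIncl} directly; the dual half is handled in both treatments by the remark that only the finitely generated torsionfree, almost torsion modules are relevant. For the second equivalence the paper is essentially citational: by Lemma~\ref{lem:basicAlphaBeta}(iii) one has $\widetilde\alpha(\mathbf t)=0$ iff $\widetilde\Alpha(\mathbf t)=\mathbf f$, and by \cite[Corollary 9.9]{mutation} the class $\widetilde\Alpha(\mathbf t)$ is the largest torsionfree class occurring in a left mutation of $\tpair{t}{f}$, so a proper left mutation exists iff $\widetilde\Alpha(\mathbf t)\supsetneq\mathbf f$. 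You instead argue by hand, and your verification that a proper left mutation $\tpair{u}{v}$ forces $0\neq\mathbf t\cap\mathbf v\subseteq\widetilde\alpha(\mathbf t)$ is correct (in fact it shows the whole heart, not just a simple object, lies in $\widetilde\alpha(\mathbf t)$); this replaces the external reference by an elementary computation, which is a genuine gain in self-containedness.

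The one soft spot is your converse ($\widetilde\alpha(\mathbf t)\neq0\Rightarrow$ proper left mutation): you pass through a cover $\mathbf t'\subsetneq\mathbf t$ and assert $\mathbf t\cap(\mathbf t')^{\perp_0}=\operatorname{filt}(B)$ for the labelling brick, attributing this to \cite{minimIncl}. The equality (the heart of a Hasse arrow is the Filt-closure of its brick label, hence wide by the semibrick correspondence) is true, but it belongs to the brick-labelling results of \cite{lattices} (cf.\ also \cite{wideTorsion}) rather than to \cite{minimIncl}, so the citation should be adjusted or the fact proved. Alternatively you can avoid covers altogether: if $\widetilde\alpha(\mathbf t)\neq0$, then $\widetilde\Alpha(\mathbf t)$ is a torsionfree class in $\lmod\Lambda$ properly containing $\mathbf f$ (Lemmas~\ref{lem:basicAlphaBeta} and~\ref{cor:torsionNoetherianCat}), its associated torsion class ${}^{\perp_0}\widetilde\Alpha(\mathbf t)$ is contained in ${}^{\perp_0}\mathbf f=\mathbf t$, and $\mathbf t\cap\widetilde\Alpha(\mathbf t)=\widetilde\alpha(\mathbf t)$ is wide, so the pair $({}^{\perp_0}\widetilde\Alpha(\mathbf t),\widetilde\Alpha(\mathbf t))$ is itself a proper left mutation; this is exactly the content the paper extracts from \cite[Corollary 9.9]{mutation}.
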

\begin{proof}
We know from {\cite[Theorem 2.3.2]{minimIncl}}
that  $\mathbf{t}$ is locally minimal if and only if there are no (finitely generated) torsion, almost torsionfree modules. By Proposition~\ref{prop:simpInalphabeta} this means that $\widetilde\alpha(\mathbf t)=0$. Moreover, the latter is equivalent to $\mathbf f=\widetilde\Alpha(\mathbf{t}) $ by Lemma~\ref{lem:basicAlphaBeta}. This amounts to saying that there is  no proper left mutation. Indeed, it is shown in {\cite[Corollary 9.9]{mutation}}
that  $ \widehat{\mathbf v} = \widetilde\Alpha(\mathbf{t}) $ is the largest torsionfree class such that 
$  (\widehat{\mathbf u},\widehat{\mathbf v})$ is a left mutation of $ \tpair{t}{f} $. 

The second statement  is proven dually, since we know from {\cite[Theorem 1.0.2]{minimIncl}} that  $\mathbf{t}$ is locally maximal if and only if there are no   finitely generated torsionfree, almost torsion modules. \end{proof}


Our next aim is to exhibit a condition on a torsion pair which ensures the  existence of a mutation.
The following concept is introduced in \cite[Definition 4.12, Remark 4.18]{paramTp}.

\begin{de}{\rm
A cosilting module $ C $ over a ring $R$ with cosilting class $\Fcal=\Cogen(C)$ and approximation sequence (\ref{approxsequence})
is \emph{minimal} if 
\begin{itemize}
\item[(i)] $ \beta(\Fcal) 
$ is closed under direct products (and thus a bireflective subcategory) in $ 	\lMod{R} $, 
\item[(ii)] $ \Hom_R(C_0, C_1) = 0. $
\end{itemize}}
\end{de}

The interest in minimal cosilting modules stems from their connection with ring epimorphisms. For example, over a commutative noetherian or over a hereditary ring, minimal cosilting modules are in one-one-correspondence with  homological ring epimorphisms, up to equivalence. For details we refer to \cite{paramTp}.

In general, it is not easy to understand if a certain cosilting class is cogenerated by a minimal cosilting module, however over a hereditary algebra there is a handy criterion.

\begin{prop}
\label{prop:MinCosOverHeredIsWide}
Let $ A $ be a left artinian hereditary ring. Then a cosilting module $ C $ is equivalent to a minimal one if and only if the cosilting torsion pair cogenerated by $C$ is widely generated.
\end{prop}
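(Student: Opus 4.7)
The plan is to leverage the preceding corollary, which already equates being widely generated with the closure of $\beta(\mathcal{F})$ under direct products, i.e.\ with condition~(i) in the definition of minimality. Observe that since the data $C_0, C_1$ in the approximation sequence~(\ref{approxsequence}) depend only on the cosilting class $\mathcal{F} = \Cogen(C)$, both conditions in the definition of a minimal cosilting module are invariants of the equivalence class of $C$.

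For the forward implication, if $C$ is equivalent to a minimal cosilting module then condition~(i) holds, and the preceding corollary immediately yields that the cosilting torsion pair cogenerated by $C$ is widely generated.

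For the reverse implication, I would assume $\mathcal{F} = \mathcal{W}^{\perp_0}$ for some $\mathcal{W} \in \mathbf{wide}(A)$. The corollary delivers condition~(i). To obtain condition~(ii), namely $\Hom_A(C_0, C_1) = 0$, I would combine Proposition~\ref{thm:BetaC1Orthogonal} (giving $\beta(\mathcal{F}) = {}^{\perp_0}C_1$) with Lemma~\ref{lem:betaCompHereditary} (using the hereditary hypothesis to get $\beta(\mathcal{F}) = \mathcal{W}^{\perp_{0,1}}$). It then suffices to prove $C_0 \in \mathcal{W}^{\perp_{0,1}}$; the $\Hom$-vanishing is automatic since $C_0 \in \mathcal{F} = \mathcal{W}^{\perp_0}$.

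The substantive step, which I expect to be the main obstacle, is showing $\Ext^1_A(W, C_0) = 0$ for every $W \in \mathcal{W}$. The idea is to prove that every extension $0 \to C_0 \to X \to W \to 0$ splits, by exploiting the torsion decomposition $0 \to t(X) \to X \to X/t(X) \to 0$. Since $C_0 \cap t(X) \subseteq \mathcal{F} \cap \mathcal{T} = 0$, the natural map $t(X) \to W = X/C_0$ is injective, with cokernel identified with $(X/t(X))/C_0$; this cokernel is a quotient of $W \in \mathcal{W} \subseteq \mathcal{T}$, hence torsion. On the other hand, $C_0$ sits inside $X/t(X) \in \mathcal{F}$, and by the split-injectivity of $C_0$ in $\mathcal{F}$ (Lemma~\ref{lem:approxSeqInFactor}(1)) this inclusion splits, so the cokernel is a summand of $X/t(X) \in \mathcal{F}$ and therefore torsionfree. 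Being both torsion and torsionfree it must vanish, forcing $X = C_0 \oplus t(X)$ and splitting the original sequence, as desired.
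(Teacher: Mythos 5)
Your overall architecture is sound and is genuinely different from the paper's converse direction (the paper instead quotes the bijection between minimal cosilting modules and bireflective subcategories from \cite{paramTp} and then uses injectivity of $\beta$ from Theorem~\ref{prop:betaInj}); your reduction of condition~(ii) to $C_0\in\Wcal^{\perp_{0,1}}=\beta(\Fcal)\subseteq{}^{\perp_0}C_1$ via Proposition~\ref{thm:BetaC1Orthogonal} and Lemma~\ref{lem:betaCompHereditary} is correct, as is the observation that minimality only depends on $\Fcal$. However, there is a genuine gap in the key step: you claim $C_0\cap t(X)\subseteq \Fcal\cap\Tcal=0$, but $C_0\cap t(X)$ is only a \emph{submodule} of the torsion module $t(X)$, and torsion classes are closed under quotients and extensions, not under submodules. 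Membership in $\Fcal$ is fine (it is a submodule of $C_0$), but membership in $\Tcal$ is exactly what needs an argument; for a non-hereditary torsion pair a torsionfree module can very well sit inside the torsion radical (over the Kronecker algebra, for the torsion pair generated by the regular modules, the simple projective is a submodule of a torsion regular module of dimension vector $(1,1)$). So as written the splitting argument does not go through.

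The conclusion $C_0\cap t(X)=0$ is nevertheless true in your situation, but the missing ingredient is precisely the one the paper uses in Lemma~\ref{lem:betaCompHereditary}: since $\Fcal=\Wcal^{\perp_0}$, one has $\Tcal=\mathbf{T}(\Wcal)=\varinjlim\widetilde{\mathbf{T}}(\Wcal)$ and hence $\Wcal=\widetilde{\alpha}(\widetilde{\mathbf{T}}(\Wcal))\subseteq\alpha(\Tcal)\subseteq\Alpha(\Tcal)$ by Theorem~\ref{thm:wideGeneratedCogen}(i) and Lemma~\ref{prop:cosiltRestriction} (or Proposition~\ref{prop:alphaIsLimClosure}). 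Then the image $I$ of $t(X)$ in $W$ lies in $\Alpha(\Tcal)$, which is closed under subobjects by Lemma~\ref{lem:basicAlphaBeta}(i), so the kernel $C_0\cap t(X)$ of the surjection $t(X)\twoheadrightarrow I$ belongs to $\Tcal$, and being a submodule of $C_0\in\Fcal$ it vanishes; after this repair the rest of your splitting argument works. In fact, once you have $\Wcal\subseteq\alpha(\Tcal)$ you can skip the extension argument altogether: since $A$ is hereditary, Remark~\ref{perpendicular} gives $\alpha(\Tcal)={}^{\perp_{0,1}}C_0$, so $\Wcal\subseteq{}^{\perp_{0,1}}C_0$ is literally the statement $C_0\in\Wcal^{\perp_{0,1}}$, which is all you need.
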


\begin{proof}
If $ C $ is a minimal cosilting module, then $\beta(\Cogen C)$ is bireflective, hence
by \cite[Theorem 6.1]{extPairs} it is of the form $ \mathcal{W}^{\perp_{0,1}}$ for some $\Wcal\in\mathbf{wide}(A)$. By Lemma \ref{lem:betaCompHereditary} we infer that $\beta(\Cogen C)=\beta(\mathcal{W}^{\perp_0}) $,
and the claim follows
 immediately from the injectivity of $ \beta $ in Theorem \ref{prop:betaInj}.

Conversely, every torsion pair $ ( \mathbf{T}(\mathcal{W}) , \mathcal{W}^{\perp_0} ) $  with $\Wcal\in\mathbf{wide}(A)$ is cosilting, and 
$ \beta(\mathcal{W}^{\perp_0}) = \mathcal{W}^{\perp_{0,1}}  $ is a bireflective subcategory.  
Now \cite[Theorem 4.16 and Corollary 4.18]{paramTp} tell us that  in the hereditary case the map $\beta$ restricts to a bijection between minimal cosilting modules and bireflective subcategories. So there 
is a minimal cosilting module $C$ such that  $\beta(\Cogen C)=\beta(\mathcal{W}^{\perp_0}) $
and the claim follows applying  Theorem \ref{prop:betaInj} once again.
\end{proof}



We will prove that, over an artin algebra, all minimal cosilting modules (with the exception of injective cogenerators) admit some torsion, almost torsionfree module. The following preliminary result holds true over an arbitrary ring.

\begin{lem}
Let  $ C $ be a  cosilting module over a ring $R$
with approximation sequence (\ref{approxsequence}), and let $ (\Tcal,\Fcal)=({}^{\perp_0}C, \Cogen C) $ be the associated  torsion pair.
\begin{enumerate}
\item If $ \alpha(\Tcal) = 0 $, then $ C_0 $ is cosilting.
\item If $ C $ is cotilting, then $ C_0 $ is cotilting if and only if $ \alpha(\Tcal) = 0 $.
\item If $ C $ is a minimal cosilting module and the module $ C_0 $ is cosilting, then $ C_1 = 0 $.
\end{enumerate}
\end{lem}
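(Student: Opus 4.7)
For part (1), I plan to combine Lemma~\ref{lem:basicAlphaBeta}(iii) with Proposition~\ref{thm:AlphaCotilting}: the former gives $\alpha(\Tcal) = 0 \Leftrightarrow \Alpha(\Tcal) = \Fcal$, while the latter identifies $\Alpha(\Tcal) = \Ccal_\sigma$ for $\sigma$ a minimal injective copresentation of $C_0$. Under the hypothesis, $\Fcal = \Ccal_\sigma$, so the definable torsionfree class $\Fcal$ is realised by an injective copresentation whose kernel is exactly $C_0$. By standard cosilting theory (compare Theorem~\ref{thm:cosiltCover}), this configuration identifies $C_0$ as a cosilting module with class $\Fcal$; concretely, $\Cogen(C_0) \subseteq \Fcal$ holds since $C_0 \in \Prod(C) \subseteq \Fcal$, and the reverse $\Fcal = \Ccal_\sigma \subseteq \Cogen(C_0)$ is a direct consequence of $\sigma$ being an injective copresentation of $C_0$.

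For part (2), the cotilting hypothesis gives $C$ injective dimension at most one, and since $C_0, C_1 \in \Prod(C)$ by Lemma~\ref{lem:approxSeqInFactor}(1), the same bound passes to them; in particular the minimal injective copresentation of $C_0$ is short exact. By Remark~\ref{perpendicular} we then have $\Alpha(\Tcal) = {}^{\perp_1}C_0$ and $\alpha(\Tcal) = {}^{\perp_{0,1}}C_0$. Moreover, $C_0\oplus C_1$ is itself cotilting (inj. dim. $\le 1$ together with the identity $\Cogen(\ker\omega) = {}^{\perp_1}(\ker\omega)$ for a surjective copresentation $\omega$), so $\Fcal = {}^{\perp_1}C_0 \cap {}^{\perp_1}C_1$. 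For the forward implication, $C_0$ cotilting gives $\Cogen(C_0) = {}^{\perp_1}C_0 \supseteq \Fcal$, while $\Cogen(C_0) \subseteq \Fcal$ is automatic; hence $\Cogen(C_0) = \Fcal$, and therefore $\alpha(\Tcal) = \Tcal \cap {}^{\perp_1}C_0 = \Tcal \cap \Fcal = 0$. The reverse implication is immediate from~(1): $\alpha(\Tcal) = 0$ makes $C_0$ cosilting, and combined with injective dimension at most one this upgrades to cotilting.

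For part (3), the strategy is to first show $C_1 \in \Prod(C_0)$ and then derive a contradiction from the minimality conditions. The analysis of part~(1), run in reverse, shows that $C_0$ being cosilting forces $\Cogen(C_0) = \Ccal_\sigma = \Fcal$; hence both $C$ and $C_0$ are cosilting modules with the same cosilting class $\Fcal$. The minimal $\Fcal$-approximation of the injective cogenerator $E(R)$ depends only on $\Fcal$, so the approximation sequences of Lemma~\ref{lem:approxSeqInFactor}(1) for $C$ and for $C_0$ coincide up to isomorphism; computing the one for $C_0$ places $C_1$ in $\Prod(C_0)$. Thus $C_1$ is a direct summand of some $C_0^I$, providing a split epimorphism $p : C_0^I \to C_1$ which is nonzero whenever $C_1 \neq 0$. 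On the other hand, from $\Hom(C_0, C_1) = 0$ and $C_0 \in \Fcal$ one gets $C_0 \in \beta(\Fcal) = \Fcal \cap {}^{\perp_0}C_1$; the product-closure of $\beta(\Fcal)$ granted by minimality then yields $C_0^I \in \beta(\Fcal) \subseteq {}^{\perp_0}C_1$, so $p = 0$ and hence $C_1 = 0$.

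The main obstacle I anticipate is the step in~(3) upgrading $C_1 \in \Cogen(C_0)$ to $C_1 \in \Prod(C_0)$. This relies on the uniqueness of the minimal $\Fcal$-cover of the largest submodule of $E(R)$ lying in $\Fcal$ (namely $\operatorname{Im}(g)$) and on the fact that this cover is identified with $C_0$ regardless of which cosilting module one starts from; some care is needed to match the two approximation sequences arising from $C$ and from $C_0$, and to keep track of the non-surjective map $g : C_0 \to E(R)$.
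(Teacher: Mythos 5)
Your part (3) is essentially the paper's argument: the paper gets $C_1\in\Prod(C_0)$ from the fact that the equivalent cosilting modules $C$ and $C_0$ have the same $\Prod$-class, while you use uniqueness of the minimal $\Fcal$-cover of $E(R)$; both work, and the endgame $\Prod(C_0)\subseteq\beta(\Fcal)\subseteq{}^{\perp_0}C_1$ is Proposition~\ref{thm:BetaC1Orthogonal} plus minimality, exactly as in the paper. The forward direction of (2) is also fine. The trouble starts in (1): the inclusion $\Fcal=\Ccal_\sigma\subseteq\Cogen(C_0)$ is \emph{not} a direct consequence of $\sigma$ being an injective copresentation of $C_0$. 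For a module $M$ with minimal injective copresentation $\sigma$ one does not in general have $\Ccal_\sigma\subseteq\Cogen(M)$: take $R=k\times k$ and $M$ one of the two simples, so that $\sigma$ is $M\to 0$, $\Ccal_\sigma=\lMod{R}$, but $\Cogen(M)=\Add(M)$. What you actually need is $\Fcal=\Cogen(C_0)$, and this comes from the approximation sequence (\ref{approxsequence}): every map $F\to E(R)$ with $F\in\Fcal$ factors through the $\Fcal$-cover $g$, so $F$ embeds into a product of copies of $C_0$; this is how Lemma~\ref{lem:approxSeqInFactor}(1) enters the paper's proof, and it is also the fact you should quote in (3) instead of ``$C_0$ cosilting forces $\Cogen(C_0)=\Ccal_\sigma$''.

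The genuine gap is the reverse direction of (2): a cosilting module of injective dimension at most one need \emph{not} be cotilting, so (1) plus the dimension bound does not ``upgrade''. The same example refutes the principle: over $R=k\times k$ the simple $M$ is cosilting (with respect to the copresentation $M\oplus M'\to M'^2$, $M'$ the other simple, whose class $\Ccal_\omega={}^{\perp_0}M'=\Add M=\Cogen M$) and injective, yet ${}^{\perp_1}M=\lMod{R}\neq\Cogen(M)$; the simple injective over the Kronecker algebra gives a non-semisimple instance, dual to the fact that a silting module of projective dimension $\le 1$ need not be tilting. The correct argument is the paper's and uses only what you already have: $\alpha(\Tcal)=0$ gives $\Alpha(\Tcal)=\Fcal$ by Lemma~\ref{lem:basicAlphaBeta}(iii), Remark~\ref{perpendicular} gives $\Alpha(\Tcal)={}^{\perp_1}C_0$, and $\Fcal=\Cogen(C_0)$ as above, whence $\Cogen(C_0)={}^{\perp_1}C_0$, i.e.\ $C_0$ is cotilting. (Your parenthetical reason that $C_0\oplus C_1$ is cotilting suffers from the same flaw; the claim is true because equivalent cosilting modules share their $\Prod$-class, so ${}^{\perp_1}(C_0\oplus C_1)={}^{\perp_1}C$, but it is not needed: $\Fcal={}^{\perp_1}C\subseteq{}^{\perp_1}C_0$ already follows from $C_0\in\Prod(C)$.)
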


\begin{proof}
(1)  Let $ \sigma $ be the minimal injective copresentation of $ C_0 $. We know from Proposition~\ref{thm:AlphaCotilting} that $ \mathcal{C}_\sigma  = \Alpha(\Tcal) $, and by Lemma~\ref{lem:basicAlphaBeta} the latter class equals  $ \Fcal $, as $ \alpha( \Tcal ) = 0 $  by assumption.
Since $ \Fcal=\Cogen(C_0) $, we conclude that $ C_0 $ is cosilting with respect to $ \sigma $.
 
 (2) We know from Remark~\ref{perpendicular} that $ \alpha(\Tcal) = {}^{\perp_{0,1}}C_0 $. Thus $ \alpha({}\Tcal) = 0 $ if and only if $ \Cogen(C_0) = {}^{\perp_1}C_0 $, that is, if and only if $ C_0 $ is cotilting.
 
 (3) By Lemma~\ref{lem:approxSeqInFactor} we have that $ \Cogen(C_0) = \Cogen(C) $, thus the two cosilting modules are equivalent and $ \Prod(C) = \Prod(C_0) $. Therefore, $ C_1 \in \Prod(C_0) $. However, by assumption, $ \Prod(C_0) \subseteq \beta(\Cogen(C)) $. But by Proposition~\ref{thm:BetaC1Orthogonal} we have $ \beta(\Cogen(C)) \subseteq {}^{\perp_0}C_1 $, thus $ C_1 = 0 $.
\end{proof}


\begin{prop}
\label{prop:MinimalCosHaveTTF}
Let $ \Lambda $ be an artin algebra, and let $ C $ be a minimal cosilting module with associated torsion pair $(\Tcal, \Fcal)$. Assume that $ \Fcal \ne \lMod{\Lambda} $. 
Then $ \alpha(\Tcal) \ne 0 $, and the torsion pair has some torsion, almost torsionfree module. 
\end{prop}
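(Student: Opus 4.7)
The plan is to proceed by contradiction: assume $\alpha(\Tcal) = 0$, and aim to derive $\Fcal = \lMod\Lambda$.

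First, the preceding lemma sets up the structure. By part (1), $C_0$ is cosilting; combined with the minimality of $C$, part (3) then yields $C_1 = 0$. Consequently, Lemma~\ref{lem:approxSeqInFactor}(3) forces the cosilting class to collapse to $\Fcal = \lMod{\Lambda/\Ann C}$, and the approximation sequence degenerates so that the $\Fcal$-cover $g\colon C_0 \hookrightarrow E(\Lambda)$ is the inclusion identifying $C_0$ with $\{x \in E(\Lambda): Ix = 0\}$, where $I = \Ann C$.

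The crux is now to show $I = 0$. By Proposition~\ref{thm:AlphaCotilting}, the condition $\alpha(\Tcal) = 0$ is equivalent to $\mathcal{C}_\sigma = \Fcal$, where $\sigma$ is the minimal injective copresentation of $C_0$ in $\lMod\Lambda$. The plan is to exploit the minimality of $C$ to conclude that $C_0$ is in fact injective in $\lMod\Lambda$: in that case $\sigma$ degenerates to the zero map $C_0 \to 0$, so $\mathcal{C}_\sigma = \lMod\Lambda$, forcing $\Fcal = \lMod\Lambda$ and producing the contradiction. This is precisely the main obstacle, as it requires combining the bireflectivity of $\beta(\Fcal) = \lMod{\Lambda/I}$ (which, since $C_1 = 0$, reduces by Proposition~\ref{thm:BetaC1Orthogonal} to the bireflectivity of $\Fcal$ itself) with the description of $C_0$ as the $I$-annihilator in $E(\Lambda)$, and using that $\Lambda$ is an artin algebra to force $E(\Lambda)$ to split off $C_0$.

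Once $\alpha(\Tcal) \ne 0$ is secured, the existence of a torsion, almost torsionfree module follows routinely. By Proposition~\ref{prop:alphaIsLimClosure}, $\alpha(\Tcal) = \varinjlim \widetilde{\alpha}(\mathbf t)$, so the wide subcategory $\widetilde{\alpha}(\mathbf t)$ of the abelian length category $\lmod\Lambda$ is nonzero and hence contains a simple object. By Proposition~\ref{prop:simpInalphabeta} together with Remark~\ref{prop:finInf}, this simple object is a finitely generated torsion, almost torsionfree module with respect to $(\Tcal, \Fcal)$, as required.
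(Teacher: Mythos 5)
Your reduction and your closing paragraph match the paper: assuming $\alpha(\Tcal)=0$, the preceding lemma gives that $C_0$ is cosilting and, by minimality, $C_1=0$; and once $\alpha(\Tcal)\neq 0$ is secured, Proposition~\ref{prop:alphaIsLimClosure}, a simple object of $\widetilde{\alpha}(\mathbf{t})\neq 0$ and Proposition~\ref{prop:simpInalphabeta} finish the proof exactly as in the paper. The genuine gap is the middle step, which you yourself label ``the main obstacle'' and for which you offer only a plan, not an argument. Moreover, the plan cannot work as stated: once $C_1=0$, the minimality of $C$ becomes vacuous, since $\beta(\Fcal)={}^{\perp_0}C_1\cap\Fcal=\Fcal$ is closed under products simply because every torsionfree class is, and $\Hom(C_0,C_1)=0$ holds trivially. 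Likewise $\Fcal=\lMod{\Lambda/I}$ is bireflective for \emph{every} idempotent ideal $I$, so bireflectivity together with the identification of $C_0$ with the $I$-annihilator of $E(\Lambda)$ carries no leverage that could force $I=0$ or a splitting of $C_0$ off $E(\Lambda)$. The hypothesis that must actually be exploited is $\alpha(\Tcal)=0$ itself, i.e.\ $\widetilde{\alpha}(\mathbf{t})=0$ for $\mathbf{t}=\Tcal\cap\lmod{\Lambda}$, and turning that into $\mathbf{t}=0$ requires a substantive input from $\tau$-tilting theory which your sketch does not contain.

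The paper's argument at this point runs as follows: since $C_1=0$, the module $C_0$ embeds into the finitely generated injective cogenerator $E(\Lambda)=D\Lambda$, hence $C_0$ is a finitely generated cosilting (support $\tau^{-1}$-tilting) module and $\mathbf{t}$ is functorially finite (Remark~\ref{ff}); by Proposition~\ref{cor:maximalMutation}, $\widetilde{\alpha}(\mathbf{t})=0$ means $\mathbf{t}$ is locally minimal, while \cite[Theorem 3.1]{g-vectors} guarantees that every nonzero functorially finite torsion class covers some torsion class, so $\mathbf{t}=0$, i.e.\ $\cogen(C_0)=\lmod{\Lambda}$, contradicting $\Fcal\neq\lMod{\Lambda}$ via Theorem~\ref{thm:CB-bij}. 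Some ingredient of this strength is unavoidable: for large cosilting modules the implication ``$\alpha(\Tcal)=0\Rightarrow\Fcal=\lMod{\Lambda}$'' fails (over the Kronecker algebra, $(\Add\mathbf{q},\Ccal)$ has $\alpha(\Add\mathbf{q})=0$ but $\Ccal\neq\lMod{\Lambda}$), so the proof must genuinely use the finite generation of $C_0$ through a mutation-type theorem, and no amount of juggling with annihilators and injective envelopes of $\Lambda$ will by itself force $E(\Lambda)$ to split off $C_0$.
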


\begin{proof}
Consider again the approximation sequence (\ref{approxsequence}).
If $ \alpha(\Tcal) = 0 $, then $ C_0 $ is a cosilting module and thus $ C_1 = 0 $. This implies that $ C_0 $ is a finitely generated cosilting module, or in other words, $ C_0 $ is support $ \tau^{-1}-$tilting. 
Moreover, the (functorially finite) torsion class $ \mathbf{t}=\Tcal\cap \lmod{\Lambda}$ corresponding to the  torsionfree class cogenerated by $ C_0 $ in $ \lmod{\Lambda} $ must satisfy $\widetilde{\alpha}(\mathbf{t})=0$ and therefore be locally minimal by Proposition~\ref{cor:maximalMutation}. But if $ \mathbf{t}\ne 0$, then by \cite[Theorem 3.1]{g-vectors}  it is possible to find, by means of mutation, a torsion class $\mathbf{u} $ which is covered  by $ \mathbf{t}$.  Thus, we must have $ \lmod{\Lambda} = \operatorname{cogen}(C_0) $, which contradicts our hypothesis by Theorem \ref{thm:CB-bij}.

Hence $ \alpha(\Tcal) \ne 0 $, which by Proposition \ref{prop:alphaIsLimClosure} amounts to $ \widetilde{\alpha}(\mathbf t ) \ne 0 $. Now $ \widetilde{\alpha}(\mathbf t )  $ has some simple object, and
this is a torsion, almost torsionfree module for  $ (\Tcal, \Fcal)$ by Proposition~\ref{prop:simpInalphabeta}.\end{proof}
In other words,  minimal cosilting modules always admit left mutation.
\begin{cor}\label{locmin}
Let $\Lambda$ be an Artin algebra and $(\mathbf{t},\mathbf{f})$ be a non-trivial torsion pair in $\lmod \Lambda$. If the  associated cosilting torsion pair $(\Tcal,\Fcal)=(\varinjlim\mathbf{ t },\varinjlim\mathbf{ t })$ in $\lMod\Lambda$ is cogenerated by a minimal cosilting module, then $\mathbf{t}$ is not locally minimal. 
\end{cor}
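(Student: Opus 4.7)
The plan is to reduce this to Proposition~\ref{prop:MinimalCosHaveTTF}, which is essentially the statement we want but phrased for the large torsion pair rather than its restriction to $\lmod\Lambda$.

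First I would check that the hypothesis of Proposition~\ref{prop:MinimalCosHaveTTF} applies. Since $(\mathbf t,\mathbf f)$ is a non-trivial torsion pair in $\lmod\Lambda$, in particular $\mathbf t\ne 0$, so $\Tcal=\varinjlim\mathbf t\ne 0$ and hence $\Fcal\ne\lMod\Lambda$. By hypothesis $(\Tcal,\Fcal)$ is cogenerated by a minimal cosilting module, so Proposition~\ref{prop:MinimalCosHaveTTF} applies and yields $\alpha(\Tcal)\ne 0$.

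Next I would transfer this conclusion to $\lmod\Lambda$. By Proposition~\ref{prop:alphaIsLimClosure} we have $\alpha(\Tcal)=\varinjlim[\widetilde{\alpha}(\mathbf t)]$, so $\alpha(\Tcal)\ne 0$ forces $\widetilde{\alpha}(\mathbf t)\ne 0$. Finally, applying Proposition~\ref{cor:maximalMutation} (which characterises local minimality by the vanishing of $\widetilde\alpha(\mathbf t)$) we conclude that $\mathbf t$ is not locally minimal, as desired.

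There is no real obstacle: the corollary is a direct packaging of Proposition~\ref{prop:MinimalCosHaveTTF} together with the compatibility between $\alpha$ and $\widetilde\alpha$ (Proposition~\ref{prop:alphaIsLimClosure}) and the mutation-theoretic interpretation of $\widetilde\alpha(\mathbf t)$ (Proposition~\ref{cor:maximalMutation}). The only point deserving a line of care is the verification that the excluded case $\Fcal=\lMod\Lambda$ is ruled out by the non-triviality assumption, which as noted above is immediate.
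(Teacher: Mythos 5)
Your proposal is correct and follows exactly the route the paper intends: the corollary is just Proposition~\ref{prop:MinimalCosHaveTTF} (applicable since $\mathbf t\neq 0$ forces $\Fcal\neq\lMod\Lambda$) combined with $\alpha(\Tcal)=\varinjlim\widetilde\alpha(\mathbf t)$ from Proposition~\ref{prop:alphaIsLimClosure} and the characterisation of local minimality in Proposition~\ref{cor:maximalMutation}. Nothing is missing.
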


It is proved in \cite{nagoya} that over any $ \tau-$tilting infinite  artin algebra there exists a torsion class in $\mathbf{tors}(\Lambda)$ which is  locally maximal and not functorially finite, and dually, there exists one which is locally minimal and not functorially finite. From the discussion above we can see that this ``pathological'' behaviour of $\mathbf{tors}(\Lambda)$ is directly connected with pathological behaviour of the corresponding cosilting modules:  it ensures both the existence of large torsionfree, almost torsion modules (cf.~\cite[Lemma 3.13]{nagoya}) and of large non-minimal cosilting modules.
Let us  collect our findings in a number of new characterizations of  $ \tau-$tilting finite  algebras.

\begin{thm}
\label{prop:tauFiniteWideCop}
The following statements are equivalent for an artin algebra $ \Lambda $.
\begin{itemize}
\item[(i)] $ \Lambda $ is $ \tau$-tilting finite.
\item[(ii)] Every cosilting module which is not equivalent to a finitely generated one is minimal.
\item[(iii)] Every wide subcategory  of $ \lMod{\Lambda} $ closed under coproducts
is  the direct limit closure of a wide subcategory of $\lmod\Lambda $.
\item[(iv)]
For every $ \mathcal{W} \in\mathbf{wide}(\Lambda) $ there exists a unique wide subcategory $ \Xcal $ of $ \lMod{\Lambda} $ closed under coproducts such that $ \mathcal{W} =  \Xcal \cap \lmod{\Lambda} $. 
\item[(v)]  If $ \Xcal $ is a wide subcategory of $ \lMod{\Lambda} $ closed under coproducts, then $ \mathcal{X} \cap \,\lmod{\Lambda} = 0 $ if and only if $ \mathcal{X} = 0 $.
\item[(vi)] The class of wide subcategories of $ \lMod{\Lambda} $ closed under coproducts is a finite set.
\end{itemize}
\end{thm}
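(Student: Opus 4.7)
The plan is to prove (i)$\Leftrightarrow$(ii) separately using minimal cosilting modules, then close the cycle (i)$\Rightarrow$(iii)$\Rightarrow$(iv)$\Rightarrow$(v)$\Rightarrow$(i) using the compatibility and injectivity statements from Sections 3 and 4, and finally handle (i)$\Leftrightarrow$(vi) via the image of $\alpha$ and the injectivity of $\beta$.

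For (i)$\Leftrightarrow$(ii) I would first observe that, under (i), Remark~\ref{ff} together with Theorem~\ref{thm:CB-bij} shows every cosilting module is equivalent to a finitely generated one, so (ii) is vacuously satisfied. For the contrapositive (ii)$\Rightarrow$(i) I would invoke the result from \cite{nagoya} cited in the text to produce a non-zero locally minimal torsion class $\mathbf{t}\in\mathbf{tors}(\Lambda)$ which is not functorially finite; its associated cosilting module $C$ is then not equivalent to a finitely generated one by Remark~\ref{ff}, and by Corollary~\ref{locmin} it cannot be minimal, contradicting (ii).

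For the cycle, (i)$\Rightarrow$(iii) is handled by appealing to Remark~\ref{prop:partialWideCoprodAlpha}: under $\tau$-tilting finiteness, for any wide coproduct-closed $\Xcal\subseteq\lMod\Lambda$ the torsion class $\mathbf{T}(\Xcal)$ lies in $\mathbf{Cosilt}(\Lambda)$ and $\alpha(\mathbf{T}(\Xcal))=\Xcal$, so $\Xcal$ belongs to $\overline{\mathbf{wide}}(\Lambda)$, the image of $\alpha$ by Theorem~\ref{themapalpha}. For (iii)$\Rightarrow$(iv), given $\Wcal\in\mathbf{wide}(\Lambda)$, I take $\Xcal:=\varinjlim\Wcal$, which is wide and coproduct-closed with $\Xcal\cap\lmod\Lambda=\Wcal$ by Theorem~\ref{themapalpha}; uniqueness follows because any other candidate $\Xcal'$ must, by (iii), be of the form $\varinjlim\Wcal''$ for some $\Wcal''$ which is then forced to equal $\Wcal$ by the same compatibility. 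The implication (iv)$\Rightarrow$(v) is immediate: applying (iv) to $\Wcal=0$ forces $\Xcal=0$ whenever $\Xcal\cap\lmod\Lambda=0$. The key nontrivial step is (v)$\Rightarrow$(i), which I would prove by contraposition: if $\Lambda$ is $\tau$-tilting infinite, \cite{nagoya} supplies a locally maximal $\mathbf{t}\subsetneq\lmod\Lambda$ which is not functorially finite, and for the associated cosilting torsion pair $(\Tcal,\Fcal)$, Proposition~\ref{cor:maximalMutation} gives $\widetilde{\beta}(\mathbf{f})=0$, hence $\beta(\Fcal)\cap\lmod\Lambda=0$ by Lemma~\ref{prop:cosiltRestriction}; on the other hand $\beta(\Fcal)$ is wide and coreflective (hence coproduct-closed) by Theorem~\ref{prop:betaInj}, and $\Fcal\neq 0$ combined with the injectivity of $\beta$ (which vanishes only on the trivial cosilting pair $(\lMod\Lambda,0)$) forces $\beta(\Fcal)\neq 0$, contradicting (v).

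Finally, (i)$\Leftrightarrow$(vi) is settled as follows. The implication (vi)$\Rightarrow$(i) uses only the injectivity of $\beta:\mathbf{Cosilt}(\Lambda)\hookrightarrow\mathbf{CWide}(\Lambda)$ from Theorem~\ref{prop:betaInj} together with the inclusion $\mathbf{CWide}(\Lambda)\subseteq\{\text{wide coproduct-closed subcategories}\}$; finiteness of the latter forces $\mathbf{Cosilt}(\Lambda)$ finite, and Theorem~\ref{thm:CB-bij} then gives $\mathbf{tors}(\Lambda)$ finite. For (i)$\Rightarrow$(vi), I combine (i)$\Rightarrow$(iii) with Theorem~\ref{themapalpha}: the class of wide coproduct-closed subcategories equals $\overline{\mathbf{wide}}(\Lambda)$, which is the image of $\alpha$, and hence finite since $\mathbf{Cosilt}(\Lambda)$ is finite by Theorem~\ref{thm:CB-bij}.

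The principal technical obstacle is the step (i)$\Rightarrow$(iii), which rests on Remark~\ref{prop:partialWideCoprodAlpha}: one must ensure that, under $\tau$-tilting finiteness, the torsion class $\mathbf{T}(\Xcal)$ generated by an arbitrary wide coproduct-closed $\Xcal\subseteq\lMod\Lambda$ is cosilting, so that $\alpha$ is defined on it and returns $\Xcal$. All other implications are reductions to already established compatibility and (in)jectivity results: Proposition~\ref{cor:maximalMutation} (translating locally maximal/minimal torsion classes into vanishing of $\widetilde{\beta}$ and $\widetilde{\alpha}$), Lemma~\ref{prop:cosiltRestriction} (restriction to $\lmod\Lambda$), Theorems~\ref{themapalpha} and \ref{prop:betaInj} (structure of the maps $\alpha,\beta$), Corollary~\ref{locmin} (locally minimal torsion classes exclude minimal cosiltings), and the existence results of \cite{nagoya}.
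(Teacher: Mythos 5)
Your proposal is correct and follows essentially the same route as the paper: the same cycle (i)$\Rightarrow$(iii)$\Rightarrow$(iv)$\Rightarrow$(v)$\Rightarrow$(i) via Remark~\ref{prop:partialWideCoprodAlpha}, Theorems~\ref{themapalpha} and~\ref{prop:betaInj} and Proposition~\ref{cor:maximalMutation}, the same treatment of (i)$\Leftrightarrow$(ii) via Remark~\ref{ff}, Corollary~\ref{locmin} and the existence results of \cite{nagoya} (only phrased contrapositively), and the same use of the injectivity of $\beta$ together with Theorem~\ref{thm:CB-bij} for (vi). The fact you single out as the technical obstacle --- that under $\tau$-tilting finiteness the torsion pair $(\mathbf{T}(\Xcal),\Xcal^{\perp_0})$ is cosilting --- is exactly what the paper settles by recalling \cite[Theorem 4.8]{tau-tilt}, so no genuinely new argument is missing.
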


\begin{proof}
First of all, recall from \cite[Theorem 4.8]{tau-tilt} that over a  $ \tau$-tilting finite algebra, all torsion(free) classes in $\lMod\Lambda$ are given by finitely generated (co)silting modules.

(i) $\Rightarrow $ (ii) is then  trivial. 

(ii) $\Rightarrow $ (i): It follows from Corollary~\ref{locmin} and Remark~\ref{ff} that there cannot exist a torsion class in  $\mathbf{tors}(\Lambda)$ which is  both locally minimal and not functorially finite. Thus $\Lambda $ is $ \tau$-tilting finite by the dual version of \cite[Corollary 3.10]{nagoya}.

(ii) $ \Rightarrow $ (iii): If $ \Xcal $ is a  wide subcategory of $\lMod\Lambda$ which is closed under coproducts, then $\Xcal=\alpha\mathbf{T}( \Xcal)$ by Remark~\ref{prop:partialWideCoprodAlpha}. Moreover, since $ \Lambda $ is $ \tau$-tilting finite,  the torsion pair  $ (\mathbf{T}( \Xcal), \Xcal^{\perp_0}) $  is a cosilting torsion pair, so $\Xcal\in\overline{\mathbf{wide}}(\Lambda)$ by Theorem~\ref{themapalpha}. 


(iii) $ \Rightarrow $ (iv):  By Theorem~\ref{themapalpha},  restriction to $\lmod \Lambda$ induces a  bijection between the
wide subcategories  of $ \lMod{\Lambda} $ closed under coproducts  and ${\mathbf{wide}}(\Lambda)$.

(iv) $ \Rightarrow $ (v) is immediate.

(v) $ \Rightarrow $ (i): We apply Theorem~\ref{prop:betaInj}. 
Given a torsion pair  $\tpair{t}{f}$ in $\mathbf{tors}(\Lambda)$, we know that $\beta$ maps  $\Fcal= \varinjlim \mathbf{f}$ to a wide subcategory closed under coproducts which restricts to $\tilde\beta(\mathbf{f})$.
Our assumption  then tells us  that $\tilde\beta(\mathbf{f}) = 0 $ implies $ \beta(\Fcal) = 0 $, hence $\Fcal=0$ by the injectivity of $\beta$. From Proposition~\ref{cor:maximalMutation} and \cite[Corollary 3.10]{nagoya} 
we deduce that
 $ \Lambda $ is $ \tau$-tilting finite.
 
(i) $\Rightarrow$ (vi): When $ \Lambda $ is $ \tau$-tilting finite, the map $\tilde\alpha$ induces a bijection between $\mathbf{tors}(\Lambda)$ 
and $\mathbf{wide}(\Lambda)$, as proved in  \cite[Corollary 3.11]{wideTorsion}. The statement then follows from condition (iii).

(vi) $\Rightarrow$ (i): By the injectivity of the map $\beta: \mathbf{Cosilt}(\Lambda) \longrightarrow \mathbf{CWide}(\Lambda)$ in Theorem~\ref{prop:betaInj} we see that (vi)  implies finiteness of $ \mathbf{Cosilt}(\Lambda)$. Now use Theorem \ref{thm:CB-bij} to conclude.
 \end{proof}

In \cite[Corollary 3.11]{wideTorsion} it is also shown  that over a $ \tau-$tilting finite algebra every wide subcategory of $\lmod\Lambda$ is functorially finite. 
As observed in Corollary~\ref{cor:alphaBireflective} and its proof,  this means that every category $\Xcal$ in $\overline{\mathbf{wide}}(\Lambda)$ is closed under products, and in fact  there even exists a ring epimorphism $ \Lambda \to \Gamma $ to an artin algebra $\Gamma$ such that $ \Xcal \cong \lMod{\Gamma} $.
We can then restate the equivalence of (i) and (iii) in Theorem~\ref{prop:tauFiniteWideCop} as follows: 

\begin{cor}\label{modulecat}
An artin algebra $ \Lambda $ is $ \tau-$tilting finite if and only if every wide subcategory  of $ \lMod{\Lambda} $ closed under coproducts is equivalent to the category of modules over some artin algebra.
\end{cor}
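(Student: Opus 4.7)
The plan is to deduce Corollary~\ref{modulecat} as a reformulation of the equivalence (i) $\Leftrightarrow$ (iii) in Theorem~\ref{prop:tauFiniteWideCop}, inserting the module-category refinement of wide coreflective subcategories spelled out in the paragraph preceding the statement.

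For the forward direction, assume $\Lambda$ is $\tau$-tilting finite and let $\Xcal$ be a wide subcategory of $\lMod\Lambda$ closed under coproducts. Theorem~\ref{prop:tauFiniteWideCop}, (i) $\Rightarrow$ (iii), gives $\Xcal = \varinjlim \Wcal$ for some $\Wcal \in \mathbf{wide}(\Lambda)$. By \cite[Corollary 3.11]{wideTorsion}, $\Wcal$ is functorially finite in $\lmod\Lambda$, and then Corollary~\ref{cor:alphaBireflective} asserts that $\Xcal$ is bireflective in $\lMod\Lambda$. The proof of that corollary further produces, via \cite[Theorem 1.2]{quotRepFinAlg}, a ring epimorphism $\Lambda \to \Gamma$ such that $\Gamma$ is finitely generated as a left $\Lambda$-module and $\Xcal \cong \lMod\Gamma$. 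Since $\Lambda$ is artin and $\Gamma$ is a finitely generated $\Lambda$-module, $\Gamma$ is an artin algebra.

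For the backward direction I verify condition (v) of Theorem~\ref{prop:tauFiniteWideCop}: if $\Xcal$ is wide and closed under coproducts with $\Xcal \cap \lmod\Lambda = 0$, then $\Xcal = 0$. By hypothesis there is an artin algebra $\Gamma$ and an equivalence $\Phi: \lMod\Gamma \to \Xcal$, and it suffices to show $\Gamma = 0$. Supposing instead $\Gamma \ne 0$, a simple $\Gamma$-module $S$ produces, via $\Phi$, a simple object $X_0 = \Phi(S) \ne 0$ in $\Xcal$, and the plan is to argue that $X_0$ lies in $\lmod\Lambda$, which contradicts $\Xcal \cap \lmod\Lambda = 0$.

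The main obstacle is this last step. One has to transport the finitary content of $\lMod\Gamma$ (the fact that $S$ is compact, noetherian, finitely presented) through the abstract equivalence $\Phi$ into finite presentation of $X_0$ in the ambient category $\lMod\Lambda$. The key fact to be used is that $\Xcal$, being wide and closed under coproducts, is closed under all small colimits in $\lMod\Lambda$, so that directed colimits in $\Xcal$ coincide with those in $\lMod\Lambda$; combined with $\Phi(\Gamma)$ being a compact projective generator of $\Xcal$, one aims to extract a finite $\Lambda$-generating set of $X_0$ and place it in $\lmod\Lambda$, completing the argument via Theorem~\ref{prop:tauFiniteWideCop}(v).
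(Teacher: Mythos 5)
Your forward direction is exactly the paper's: Theorem~\ref{prop:tauFiniteWideCop} (i)$\Rightarrow$(iii) together with \cite[Corollary 3.11]{wideTorsion} and Corollary~\ref{cor:alphaBireflective} (whose proof supplies the ring epimorphism $\Lambda\to\Gamma$ with $\Gamma$ module-finite over $\Lambda$, hence an artin algebra). The backward direction, however, has a genuine gap, and the route you sketch cannot be repaired. You reduce to a \emph{pointwise} claim: if $\Xcal$ is wide, closed under coproducts, $\Xcal\cap\lmod\Lambda=0$ and $\Xcal\simeq\lMod\Gamma$ for some artin algebra $\Gamma$, then $\Gamma=0$, to be shown by proving that the image $X_0$ of a simple $\Gamma$-module lies in $\lmod\Lambda$. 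This claim is false. Over the Kronecker algebra take $\Xcal=(\add\mathbf{t})^{\perp_{0,1}}=\beta(\Cogen G)=\Add G$ (see Example~\ref{betalpha} and the last section of the paper, together with Lemma~\ref{lem:betaCompHereditary}): it is wide, closed under coproducts (even under products), nonzero, and satisfies $\Xcal\cap\lmod\Lambda=0$; since $G$ is endofinite with $\End_\Lambda(G)\cong k(T)$ a division ring, every object of $\Add G$ is a direct sum of copies of $G$ and $\Hom_\Lambda(G,-)$ gives an equivalence $\Xcal\simeq\lMod{k(T)}$, a module category over the artin algebra $k(T)$. Its unique simple object is the infinite-dimensional brick $G$. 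This also pinpoints why the compactness-transfer plan in your last paragraph fails: $G$ is a compact projective generator of $\Xcal$, and directed colimits of systems \emph{inside} $\Xcal$ are indeed computed in $\lMod\Lambda$, but compactness in $\Xcal$ only tests against such systems and says nothing about the directed systems of $\lMod\Lambda$ that do not lie in $\Xcal$; it therefore does not yield finite presentation of $X_0$ over $\Lambda$.

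The consequence is that the backward implication cannot be proved one subcategory at a time: the hypothesis ``every wide coproduct-closed subcategory is equivalent to a module category over an artin algebra'' must be used globally. For the Kronecker algebra, for instance, the hypothesis fails not at $\Add G$ but at $\varinjlim\add\mathbf{t}$, which is wide and closed under coproducts by Theorem~\ref{themapalpha} and has infinitely many pairwise non-isomorphic simple objects (the simple regular modules), so it cannot be equivalent to $\lMod\Gamma$ for any artin algebra $\Gamma$. This is why the paper derives the corollary by restating the equivalence (i)$\Leftrightarrow$(iii) of Theorem~\ref{prop:tauFiniteWideCop} (i.e.\ via the maps $\alpha$, $\beta$ and restriction applied to the whole class of such subcategories), rather than via a pointwise version of condition (v). As it stands, your backward direction is incomplete, and its key intermediate step is contradicted by the example above.
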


We close this section with some open questions.

\begin{ques} {\rm The following is a list of necessary conditions which are satisfied
when $ \Lambda $ is a $ \tau-$tilting finite  artin algebra (see the discussion above and \cite[Theorem 4.8]{tau-tilt}). 
Is any of them a sufficient condition?
\begin{enumerate}
\item Every wide subcategory of $ \lmod{\Lambda} $ is functorially finite.
\item Every wide subcategory closed under coproducts of $ \lMod{\Lambda} $ is closed under products.
\item The target of any ring epimorphism $ \Lambda \to \Gamma $ with $\Tor_1^\Lambda(\Gamma,\Gamma)=0$ is an artin algebra.
\end{enumerate}
 Note that  (2) implies (1). Moreover, (2) and (3) imply that  $ \Lambda $ is $ \tau-$tilting finite by Corollary~\ref{modulecat}. }\end{ques}


\section{Torsion pairs and Ext-orthogonal pairs}

In this section we give some applications to Ext-orthogonal pairs over a hereditary ring.

\begin{de}[{\cite[Def. 2.1 ]{extPairs}}]
{\rm Let $ R $ be a ring. A pair $ \Tpair{X}{Y} $  of full subcategories of $ \lMod{R} $ is said to be an \emph{Ext-orthogonal pair} if:
\begin{align*}
X \in \mathcal{X} & \iff \forall n \in \mathbb{Z}\  \Ext^{n}(X,\mathcal{Y}) = 0 \\
Y \in \mathcal{Y} & \iff \forall n \in \mathbb{Z}\  \Ext^{n}(\mathcal{X}, Y) = 0 \\
\end{align*}
An Ext-orthogonal pair is \emph{complete} if for all $ M \in \lMod{R} $ we have an exact sequence:
\[
\begin{tikzcd}
0 \arrow[r] & Y_M \arrow[r] & X_M \arrow[r] & M \arrow[r] & Y^M \arrow[r] & X^M \arrow[r] & 0
\end{tikzcd}
\]
with $ X_M, X^M \in \mathcal{ X } $ and $ Y_M, Y^M \in \mathcal{Y} $.}
\end{de}






As noticed in \cite{extPairs}, every complete Ext-orthogonal pair over a hereditary ring gives rise to a torsion pair (and a cotorsion pair) from which it can be recovered. 
\begin{prop}
\label{prop:ExtOrthEmbedding}
Let $ A $ be a hereditary ring, $ \Tpair{X}{Y} $ a complete Ext-orthogonal pair. Then there is a (uniquely determined) torsion pair $ \Tpair{T}{F} $ in $ \lMod{A} $ such that $ \Tpair{X}{Y} = ( \alpha(\mathcal{T}), \beta(\mathcal{F})  ) $. 
\end{prop}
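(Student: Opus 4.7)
The plan is to define $(\mathcal{T},\mathcal{F}) := (\mathcal{X}, \mathcal{X}^{\perp_0})$ and to verify that this is a torsion pair with $(\alpha(\mathcal{T}), \beta(\mathcal{F})) = (\mathcal{X},\mathcal{Y})$. The central point is that, over a hereditary ring, the completeness of the Ext-orthogonal pair forces $\mathcal{X}$ itself to be a torsion class.

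First I would establish the closure properties of $\mathcal{X}$. Extension closure is immediate from the long exact sequence. For quotient closure, given $X \in \mathcal{X}$ and a surjection $\pi : X \twoheadrightarrow X''$, the six-term approximation for $X''$ forces $Y^{X''} = X^{X''} = 0$: the composition $X \to X'' \to Y^{X''}$ vanishes by Ext-orthogonality, and since $\pi$ is surjective, $X'' \to Y^{X''}$ itself must vanish. Comparing the original short exact sequence $0 \to K \to X \to X'' \to 0$ with the approximation sequence $0 \to Y_{X''} \to X_{X''} \to X'' \to 0$ via the hereditary long exact sequence (which yields $\Hom(K,\mathcal{Y}) \cong \Ext^1(X'',\mathcal{Y}) \cong \Hom(Y_{X''},\mathcal{Y})$) together with the precover property of $X_{X''} \to X''$ then yields $X'' \in \mathcal{X}$. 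Subobject closure follows from the long exact sequence once quotient closure is in hand. Finally, for any $M$, the module $T_M := \mathrm{im}(X_M \to M)$ lies in $\mathcal{X}$ by quotient closure, and $M/T_M$ embeds into $Y^M \in \mathcal{Y}$, so has no nonzero $\mathcal{X}$-subobject by Ext-orthogonality; hence $T_M$ is the largest $\mathcal{X}$-subobject of $M$, and $(\mathcal{X}, \mathcal{X}^{\perp_0})$ is indeed a torsion pair.

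With the torsion pair in hand, $\alpha(\mathcal{T}) = \mathcal{X}$ is immediate from the sub-closure of $\mathcal{X}$. The inclusion $\mathcal{Y} \subseteq \beta(\mathcal{F})$ follows by splitting any $f : Y \to F$ with $Y \in \mathcal{Y}$ and $F \in \mathcal{F}$ as $Y \twoheadrightarrow \mathrm{im}(f) \hookrightarrow F$ and noting that $\Hom(X, \mathrm{coker}(f))$ vanishes for $X \in \mathcal{X}$, since $\Ext^1(X, \mathrm{im}(f))$ is a quotient of $\Ext^1(X, Y) = 0$ over a hereditary ring. Conversely, for $M \in \beta(\mathcal{F})$, the approximation map $X_M \to M$ vanishes (as $\Hom(\mathcal{X}, M) = 0$ because $M \in \mathcal{F} = \mathcal{X}^{\perp_0}$), so $M \hookrightarrow Y^M$ with cokernel $X^M \in \mathcal{X}$; the $\beta$-condition applied to this monomorphism forces $X^M \in \mathcal{F}$, hence $X^M \in \mathcal{X} \cap \mathcal{F} = 0$, and $M \cong Y^M \in \mathcal{Y}$.

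For uniqueness, any torsion pair $(\mathcal{T}',\mathcal{F}')$ with $\alpha(\mathcal{T}') = \mathcal{X}$ and $\beta(\mathcal{F}') = \mathcal{Y}$ satisfies $\mathcal{X} \subseteq \mathcal{T}'$ and $\mathcal{Y} \subseteq \mathcal{F}'$; since $\mathcal{X}$ is already a torsion class, Lemma~\ref{lem:basicAlphaBeta}(iii) applied to both pairs shows that the canonical $(\mathcal{X},\mathcal{X}^{\perp_0})$-decomposition of every module already matches the $(\mathcal{T}',\mathcal{F}')$-decomposition, pinning $(\mathcal{T}',\mathcal{F}') = (\mathcal{X}, \mathcal{X}^{\perp_0})$. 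I expect the main obstacle to be the quotient closure of $\mathcal{X}$: the hereditary long exact sequence only shows sub- and quotient-closure are equivalent, so completeness of the Ext-orthogonal pair must be used essentially via the six-term sequence to break this circularity, and the identification $\Hom(K,\mathcal{Y}) \cong \Hom(Y_{X''},\mathcal{Y})$ coming from the precover comparison is the crux of the argument.
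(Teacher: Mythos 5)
Your proposal has a genuine gap at its central claim: completeness of the Ext-orthogonal pair does \emph{not} force $\mathcal{X}$ to be a torsion class. Concretely, let $\Lambda$ be the path algebra of the quiver $1\to 2$ and $P$ the indecomposable projective of length two, with simple top $S$ and simple socle $S'$. Then $(\mathcal{X},\mathcal{Y})=(\Add P,\Add S')$ is a complete Ext-orthogonal pair (it is the pair $(\varinjlim\mathcal{W},\mathcal{W}^{\perp_{0,1}})$ attached to $\mathcal{W}=\add P$ as in Theorem~\ref{wide and extorth}), yet the quotient $S$ of $P$ is not in $\mathcal{X}$, since $\Ext^1_\Lambda(S,S')\neq 0$; likewise $S'\le P$ shows $\mathcal{X}$ is not closed under submodules either. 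The same phenomenon occurs for the pair $(\varinjlim\add\mathbf{t},(\add\mathbf{t})^{\perp_{0,1}})$ over the Kronecker algebra, where regular modules have preinjective quotients. The precise step of your quotient-closure argument that fails is the last one: from $\Hom(K,\mathcal{Y})\cong\Ext^1(X'',\mathcal{Y})\cong\Hom(Y_{X''},\mathcal{Y})$, membership $X''\in\mathcal{X}$ would require $\Ext^1(X'',\mathcal{Y})=0$, i.e.\ $\Hom(Y_{X''},\mathcal{Y})=0$, which (as $Y_{X''}\in\mathcal{Y}$) means $Y_{X''}=0$ --- and nothing, in particular not the precover property of $X_{X''}\to X''$, forces this. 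In the example, for $P\twoheadrightarrow S$ one has $K=Y_{S}=S'\neq 0$. Everything downstream that leans on $\mathcal{T}=\mathcal{X}$ (the claim that $\alpha(\mathcal{T})=\mathcal{X}$ ``by sub-closure'' and the uniqueness argument via Lemma~\ref{lem:basicAlphaBeta}(iii)) collapses with it; note also that any torsion pair with torsionfree class $\mathcal{X}^{\perp_0}$ must have torsion class ${}^{\perp_0}(\mathcal{X}^{\perp_0})=\mathbf{T}(\mathcal{X})$, which in general strictly contains $\mathcal{X}$.

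The viable route, which is the paper's, keeps your torsionfree class but takes the torsion pair \emph{generated} by $\mathcal{X}$, namely $(\mathcal{T},\mathcal{F})=(\mathbf{T}(\mathcal{X}),\mathcal{X}^{\perp_0})$: the six-term sequences give $\mathcal{F}=\Cogen(\mathcal{Y})$ and $\mathbf{T}(\mathcal{X})=\Gen(\mathcal{X})$, and $\mathcal{X}$ is recovered not as $\mathcal{T}$ but as $\alpha(\mathcal{T})$, using that $\mathcal{X}$ is wide and closed under coproducts so that $\alpha(\mathbf{T}(\mathcal{X}))=\mathcal{X}$ (Remark~\ref{prop:partialWideCoprodAlpha}); the operator $\alpha$ carves the smaller class $\mathcal{X}$ out of the strictly larger torsion class. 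Your treatment of $\beta$ is essentially sound, since it only uses $\mathcal{F}=\mathcal{X}^{\perp_0}$ and the approximation sequence, and uniqueness can be repaired as in the paper: if $(\alpha(\mathcal{T}'),\beta(\mathcal{F}'))=(\mathcal{X},\mathcal{Y})$ then $\mathbf{T}(\mathcal{X})\subseteq\mathcal{T}'$ and $\Cogen(\mathcal{Y})\subseteq\mathcal{F}'$, and since $\Cogen(\mathcal{Y})=\mathcal{X}^{\perp_0}$ these containments are opposite to one another and force equality. As written, however, the proof of quotient closure --- and hence your whole construction of the torsion pair --- does not go through.
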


\begin{proof}
As in \cite{extPairs}, we consider the torsion pair  $ ( \mathbf{T}(\mathcal{X}), \mathcal{X}^{\perp_0} ) $  generated by $ \mathcal{X} $. 

We have that $ \Cogen(\mathcal{Y}) \subseteq \mathcal{X}^{\perp_0} $, since the latter is a torsionfree class containing $ \mathcal{Y} $. 
Moreover, if $ L \in \mathcal{X}^{\perp_0} $, the approximation sequence of the Ext-orthogonal pair gives an embedding $ L \to Y^{L} $ with $ Y^{L} \in \mathcal{Y} $. 
Thus $ \Cogen(\mathcal{Y}) = \mathcal{X}^{\perp_0} $. In a similar way we can obtain that $ \mathbf{T}(\mathcal{X}) = \Gen(\mathcal{X}) $.

Now, being the left part of an Ext-orthogonal pair, $ \mathcal{ X } $ is a wide subcategory closed under coproducts. Thus, Remark \ref{prop:partialWideCoprodAlpha} gives $ \alpha( \mathbf{T}(\mathcal{X}) ) = \mathcal{ X } $.

Moreover, $ \mathcal{Y} $ is a wide subcategory closed under products. We prove that $\beta(\Cogen(\mathcal{Y})) = \mathcal{Y} $. 

 "$\subseteq$" : If $ B \in \beta(\Cogen(\mathcal{Y})) $,  there is some element $ Y \in \mathcal{Y} $ and a short exact sequence $ 0 \to B \to Y \to F \to 0 $ with $ F \in \Cogen(\mathcal{Y}) $. In particular, as $ F $  can also be embedded in some $ Y' \in\Ycal$, the module $ B $ can be realized as the kernel of a map in $ \mathcal{Y}$. 

"$\supseteq$" : Let $ Y \in \mathcal{Y} $ and $ Y \to F \to M \to 0 $ a short exact sequence with $ F \in \Cogen(\mathcal{Y}) $. Once again, we can embed $ F $ in some $ Y' \in \mathcal{Y} $. The cokernel $ C $ of the composite $ Y \to F \to Y' $ is then a module in $ \mathcal{Y} $.
Applying the snake lemma to the diagram
\[
\begin{tikzcd}
& Y \arrow[r, "f"] \arrow[d, two heads] & F \arrow[d, hook, "i"] \arrow[r] & M \arrow[d] \arrow[r] & 0 \\
0 \arrow[r] & \operatorname{Im}(i \circ f)  \arrow[r] & Y' \arrow[r] & C \arrow[r] & 0 
\end{tikzcd}
\]
we can see that $ M $ embeds in $ C $. Thus $ M \in \Cogen(\mathcal{Y}) $ and $ Y \in \beta(\Cogen(\mathcal{Y})) $.

For uniqueness, let $ (\mathcal{T}, \mathcal{F}) $ be a torsion pair with $ \Tpair{X}{Y} = ( \alpha(\mathcal{T}), \beta(\mathcal{F})  ) $. Then, obviously $ \mathbf{T}(\mathcal{X}) \subseteq \mathcal{T} $ and $ \Cogen(\mathcal{Y}) \subseteq \mathcal{F} $. But the inclusion of the torsion classes is equivalent to the reverse containment for the torsionfree classes, thus the two torsion pairs must coincide.
\end{proof}

\begin{ex} {\rm Not every torsion pair gives rise to an Ext-orthogonal pair. As an example, recall from Example~\ref{betalpha} that the cosilting torsion pair $(\Add\mathbf{q},\Ccal)$ over the Kronecker algebra satisfies $(\alpha(\Add\mathbf{q}),\beta(\Ccal))=(0,{}^{\perp_{0,1}}G)$. }
\end{ex} 

We can characterize the torsion pairs associated to certain Ext-orthogonal pairs with distinguished properties.

\begin{cor}
\label{cor:CosExt}
Let $ A $ be a  left artinian hereditary ring, and  $ \Tpair{X}{Y} $ a complete Ext-orthogonal pair. Then the corresponding torsion pair is cogenerated by a (minimal) cosilting module if and only if 
$ \mathcal{Y} $ is a bireflective subcategory of $ \lMod{A} $.
\end{cor}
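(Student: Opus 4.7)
The plan is to translate both sides of the equivalence through Proposition~\ref{prop:ExtOrthEmbedding}, which identifies $\mathcal{Y}$ with $\beta(\mathcal{F})$ for the associated torsion pair $(\mathcal{T}, \mathcal{F}) = (\Gen(\mathcal{X}), \Cogen(\mathcal{Y}))$. For the forward implication, if $\mathcal{F}$ is cogenerated by a minimal cosilting module $C$, then by condition~(i) of the definition of minimality $\mathcal{Y} = \beta(\Cogen(C))$ is closed under direct products; since Theorem~\ref{prop:betaInj} already places $\mathcal{Y}$ in $\mathbf{CWide}(A)$, it is wide and coreflective, and adding closure under products promotes this to bireflectivity.

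For the reverse implication I would invoke \cite[Theorem 6.1]{extPairs}, which tells us that over a hereditary ring any bireflective wide subcategory has the form $\mathcal{Y} = \mathcal{W}^{\perp_{0,1}}$ for some $\mathcal{W} \in \mathbf{wide}(A)$. Then the widely generated cosilting torsion pair $(\mathbf{T}(\mathcal{W}), \mathcal{W}^{\perp_0})$ is, by Proposition~\ref{prop:MinCosOverHeredIsWide}, cogenerated by a minimal cosilting module, and Lemma~\ref{lem:betaCompHereditary} yields $\beta(\mathcal{W}^{\perp_0}) = \mathcal{W}^{\perp_{0,1}} = \mathcal{Y} = \beta(\mathcal{F})$. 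So once I know that $\mathcal{F}$ is itself a cosilting class, the injectivity of $\beta$ on $\mathbf{Cosilt}(A)$ in Theorem~\ref{prop:betaInj} will force $\mathcal{F} = \mathcal{W}^{\perp_0}$, identifying $(\mathcal{T}, \mathcal{F})$ with the desired widely generated cosilting torsion pair.

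The main obstacle is therefore the verification that $\mathcal{F} = \Cogen(\mathcal{Y})$ is definable, concretely, closed under direct limits. My plan is to exploit the completeness of the Ext-orthogonal pair: for any $M \in \mathcal{F}$, the five-term approximation sequence $0 \to Y_M \to X_M \to M \to Y^M \to X^M \to 0$ collapses, because the image of $X_M \to M$ is at once a quotient of $X_M \in \mathcal{X} \subseteq \mathcal{T}$ and a submodule of the torsionfree module $M$, hence zero, which then forces $X_M = Y_M \in \mathcal{X} \cap \mathcal{Y} = 0$. What remains is a short exact sequence $0 \to M \to Y^M \to X^M \to 0$ with $Y^M \in \mathcal{Y}$. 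Applying this functorially to a directed system $\{M_i\}$ in $\mathcal{F}$ and using exactness of filtered colimits, I obtain an embedding $\varinjlim M_i \hookrightarrow \varinjlim Y^{M_i}$; since bireflectivity of $\mathcal{Y}$ entails closure under direct limits, the right-hand side lies in $\mathcal{Y}$, so $\varinjlim M_i \in \Cogen(\mathcal{Y}) = \mathcal{F}$ as needed.
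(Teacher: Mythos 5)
Your proof establishes the equivalence ``cogenerated by a \emph{minimal} cosilting module $\iff$ $\Ycal$ bireflective'', but the statement (and the paper's own proof of the converse) covers more: cogeneration by an \emph{arbitrary} cosilting module already forces $\Ycal$ to be bireflective, which is exactly the content of the parenthetical ``(minimal)'' --- for torsion pairs attached to complete Ext-orthogonal pairs, cosilting automatically implies equivalent-to-minimal. Your forward implication extracts product-closure of $\Ycal=\beta(\Fcal)$ from condition (i) of minimality, so it cannot handle this case, and the method does not extend: for a general cosilting class $\beta(\Fcal)$ need not be closed under products (cf.\ Example~\ref{betalpha}, where $\beta(\Ccal)={}^{\perp_{0,1}}G$ over the Kronecker algebra), so one must genuinely use the hypothesis that $(\Xcal,\Ycal)$ is a complete Ext-orthogonal pair. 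The paper does so: if $(\Tcal,\Fcal)$ is cosilting, then $\Xcal=\alpha(\Tcal)=\varinjlim(\Xcal\cap\lmod{A})$ by Proposition~\ref{prop:alphaIsLimClosure}, and \cite[Theorem 5.1]{extPairs} then gives that $\Ycal$ is closed under coproducts, hence bireflective. This implication is missing from your proposal and is the main gap.

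In the direction ``bireflective $\Rightarrow$ minimal cosilting'' your route (definability of $\Fcal$ plus injectivity of $\beta$) is viable but differs from the paper, which avoids definability altogether: once $\Ycal=\Wcal^{\perp_{0,1}}$, the pair $(\varinjlim\Wcal,\Wcal^{\perp_{0,1}})$ is a complete Ext-orthogonal pair, either half determines the other, and the uniqueness in Proposition~\ref{prop:ExtOrthEmbedding} identifies the associated torsion pair with $(\mathbf{T}(\Wcal),\Wcal^{\perp_0})$, which is minimal cosilting by Proposition~\ref{prop:MinCosOverHeredIsWide}. Within your argument, the phrase ``applying this functorially'' is not justified: completeness only asserts the existence of the five-term sequences, so the $Y^{M_i}$ do not a priori form a directed system. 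This is repairable: bireflectivity provides a left adjoint $\ell$ to the inclusion $\Ycal\hookrightarrow\lMod{A}$; the unit $\eta_M\colon M\to\ell(M)$ is natural, $\ell$ preserves direct limits, and $\eta_M$ is injective for every $M\in\Cogen(\Ycal)$ because any embedding of $M$ into an object of $\Ycal$ (your collapsed sequence, or simply a product of copies of objects of $\Ycal$, which lies in $\Ycal$) factors through $\eta_M$. Exactness of filtered colimits then gives the embedding $\varinjlim M_i\hookrightarrow\ell(\varinjlim M_i)\in\Ycal$, and your conclusion follows. With that repair, and with the missing implication ``cosilting $\Rightarrow$ bireflective'' supplied as above, the proof is complete.
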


\begin{proof}
$\Ycal$ is bireflective if and only if it is of the form $\Ycal=\Wcal^{\perp_{0,1}}$ for some $\Wcal\in\mathbf{wide}(A)$. But then the associated torsion pair must coincide with $(T(\Wcal),\Wcal^{\perp_0})$ by uniqueness and Lemma~\ref{lem:betaCompHereditary}, and it is therefore cogenerated by a minimal cosilting module by Proposition~\ref{prop:MinCosOverHeredIsWide}.

Conversely, if the torsion pair is cosilting,  then $ \mathcal{X} = \varinjlim ( \mathcal{X} \cap \lmod{A}) $ by Proposition \ref{prop:alphaIsLimClosure}. By \cite[Theorem 5.1]{extPairs}, this means that $ \mathcal{Y} $ is closed under coproducts and is thus bireflective.
\end{proof}


There is a  dual concept for minimal cosilting modules. Minimal silting modules are defined for general rings, but here we will use the following, more accessible, definition:

\begin{de}[{\cite[Definition 5.4]{siltEpi} }]{\rm
Let $ A $ be a hereditary ring. A silting $ A-$module $ T $ is \emph{minimal silting} if $ A $ admits an $ \Add(T)$-envelope.}
\end{de}
Recall that for a  cosilting torsion pair $(\Tcal,\Fcal)$ with approximation sequence (\ref{approxsequence}), we have $\alpha(\Tcal)={}^{\perp_{0,1}}C_0$ when $A$  is hereditary, cf. Remark~\ref{perpendicular}. Dually, we can consider the silting torsion pair $(\Gen(T), T^{\perp_{0}})$ and show the following.

\begin{lem}
\label{lem:betaOrthMinSilt}
Let $ T $ be a minimal silting module over a hereditary ring $ A $. Let $ A \to T_0 \to T_1 \to 0 $ be the exact sequence induced by the $ \Add(T)$-envelope. 
Then $ \beta(T^{\perp_0}) = T_0\,^{\perp_{0,1}} $. 
\end{lem}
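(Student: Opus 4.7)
The plan is to mirror the cosilting computation $\alpha(\Tcal)={}^{\perp_{0,1}}C_0$ from Proposition~\ref{thm:AlphaCotilting} and Remark~\ref{perpendicular}. The sequence $A\to T_0\to T_1\to 0$ plays the role dual to a minimal injective copresentation of $C_0$, and since $A$ is hereditary all projective dimensions are at most~$1$, which should yield the analogous clean formula.

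A preparatory step is to verify $\Gen T=\Gen T_0$, so that $\Fcal=T^{\perp_0}=T_0^{\perp_0}$. The inclusion $\Gen T_0\subseteq \Gen T$ is immediate from $T_0\in \Add(T)$. For the reverse, each element of $T$ determines a morphism $A\to T$ which factors through the $\Add(T)$-preenvelope $A\to T_0$; collecting these factorizations produces an epimorphism $T_0^{(\Hom(T_0,T))}\twoheadrightarrow T$, so $T\in \Gen T_0$. Using this, the inclusion $T_0^{\perp_{0,1}}\subseteq \beta(\Fcal)$ is a diagram chase: for $M\in T_0^{\perp_{0,1}}$ one has $M\in T_0^{\perp_0}=\Fcal$, and for any $f:M\to F$ with $F\in\Fcal$, hereditarity combined with $\Ext^1(T_0,M)=0$ applied to $0\to \ker f\to M\to \operatorname{Im}f\to 0$ propagates to $\Ext^1(T_0,\operatorname{Im}f)=0$, so applying $\Hom(T_0,-)$ to $0\to \operatorname{Im}f\to F\to \operatorname{coker}f\to 0$ yields $\Hom(T_0,\operatorname{coker}f)=0$, i.e.\ $\operatorname{coker}f\in\Fcal$.

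The essential inclusion is $\beta(\Fcal)\subseteq T_0^{\perp_{0,1}}$. Given $M\in \beta(\Fcal)$, the vanishing $\Hom(T_0,M)=0$ follows automatically from $M\in \Fcal=T_0^{\perp_0}$, so the task reduces to showing that every short exact sequence $\xi:0\to M\to E\to T_0\to 0$ splits. Since $M\in \Fcal$ is torsionfree, $\operatorname{t}E\cap M=0$, so the restriction of $E\twoheadrightarrow T_0$ to $\operatorname{t}E$ is injective and embeds $\operatorname{t}E$ as a submodule of $T_0$. Dividing $\xi$ by this torsion part yields an exact sequence
\[
0\to M\to E/\operatorname{t}E\to T_0/\operatorname{t}E\to 0
\]
with $E/\operatorname{t}E\in \Fcal$. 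Because $M\in \Beta(\Fcal)$, the cokernel $T_0/\operatorname{t}E$ lies in $\Fcal$; but it is also a quotient of $T_0\in \Gen T$, hence $T_0/\operatorname{t}E\in \Gen T\cap \Fcal=0$. Therefore $\operatorname{t}E\hookrightarrow T_0$ is an isomorphism and its inverse provides a section $T_0\to E$ of $\xi$. The main obstacle I expect is setting up this torsion-part decomposition as the genuine dual of the torsionfree quotient $M/\operatorname{t}M$ used in the proof of Proposition~\ref{thm:AlphaCotilting}; once the embedding $\operatorname{t}E\hookrightarrow T_0$ is in place, the torsion-versus-torsionfree clash forces the splitting.
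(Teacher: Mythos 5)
Your preliminary steps are sound: the verification that $\Gen T_0=\Gen T$ (hence $\Fcal=T^{\perp_0}=T_0^{\perp_0}$) and the inclusion $T_0^{\perp_{0,1}}\subseteq\beta(\Fcal)$ via right exactness of $\Ext^1(T_0,-)$ over a hereditary ring are correct. The gap sits in the essential inclusion, at the very first claim ``since $M\in\Fcal$ is torsionfree, $\operatorname{t}E\cap M=0$''. The module $\operatorname{t}E\cap M$ is a submodule of the torsion module $\operatorname{t}E$, but torsion classes are closed under quotients and extensions, not under submodules, so there is no reason for $\operatorname{t}E\cap M$ to lie in $\Tcal=\Gen T$; as a submodule of $M$ it lies in $\Fcal$, and torsionfreeness of $M$ only excludes \emph{torsion} submodules, which is not what you need. (A posteriori $\operatorname{t}E\cap M=0$ does hold, but only because $E\cong M\oplus T_0$ forces $\operatorname{t}E=\operatorname{t}M\oplus T_0$ --- i.e.\ it is equivalent to the splitting you are trying to prove.) Consequently the injectivity of $\operatorname{t}E\to T_0$ is unjustified, and your final step, inverting this map to obtain a section, does not go through. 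Note also that your argument never uses the minimality of the $\Add(T)$-envelope, which is a warning sign, since the statement is the dual of Proposition~\ref{thm:AlphaCotilting} and there the minimal approximation sequence is used essentially.

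What survives of your torsion-part computation is the surjectivity: writing $\pi\colon E\to T_0$ for the epimorphism and replacing $M$ by $M/(\operatorname{t}E\cap M)$, which still lies in $\Beta(\Fcal)$ because $\Beta(\Fcal)$ is closed under quotients (Lemma~\ref{lem:basicAlphaBeta}), the induced sequence $0\to M/(\operatorname{t}E\cap M)\to E/\operatorname{t}E\to T_0/\pi(\operatorname{t}E)\to 0$ with torsionfree middle term forces $T_0/\pi(\operatorname{t}E)\in\Fcal\cap\Tcal=0$, so $\pi$ restricts to an epimorphism $\operatorname{t}E\twoheadrightarrow T_0$. But its kernel $\operatorname{t}E\cap M$ lies in $\Fcal$, so you cannot split it by Ext-projectivity of $T_0$ in $\Gen T$ either. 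The missing ingredient is exactly the dual of the step in the proof of Proposition~\ref{thm:AlphaCotilting} where split-injectivity of $C_0$ in $\Cogen C$ (Lemma~\ref{lem:approxSeqInFactor}(1)) is invoked: one needs that $T_0$ is split projective in $\Gen T$, i.e.\ every epimorphism $g\colon E'\to T_0$ with $E'\in\Gen T$ splits. This is where minimality of the envelope $\phi\colon A\to T_0$ enters: by projectivity of $A$ the map $\phi$ lifts along $g$ to a map $A\to E'$, which factors as $h\phi$ because $\phi$ is also a $\Gen T$-preenvelope of $A$; then $gh\phi=\phi$, so $gh$ is an automorphism by left minimality and $g$ splits. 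With this in hand, the epimorphism $\operatorname{t}E\twoheadrightarrow T_0$ splits, and composing the section with $\operatorname{t}E\hookrightarrow E$ splits your sequence $\xi$, completing the intended dualization.
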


It is shown in \cite[Theorem 5.8]{siltEpi} that minimal silting modules over  hereditary rings are in one-one-correspondence with homological ring epimorphisms via the map $\alpha$. More precisely, given
 a minimal silting module $ T $ over a hereditary ring $A$, the wide subcategory $ \alpha(\Gen(T)) $ is bireflective and thus there is a ring epimorphism $ \lambda : A \to B $ such that $ \lambda^*(\lMod{B}) = \alpha(\Gen(T)) $.  Then $\Gen (T)=\Gen (_AB)$, 
and the induced $ A-$module map $ A \to {}_AB $ is a $ \Gen(B)-$envelope,
 thus $\beta(T^{\perp_0})= B^{\perp_{0,1}} $.

\begin{cor}
Let $ A $ be a hereditary ring,
 and  $ \Tpair{X}{Y} $ a complete Ext-orthogonal pair. Then the corresponding torsion pair is generated by a minimal silting module if and only if 
$ \mathcal{X} $ is a bireflective subcategory of $ \lMod{A} $.
\end{cor}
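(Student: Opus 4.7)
The plan is to mirror the proof of Corollary \ref{cor:CosExt}, replacing the cosilting/$\beta$ picture by the silting/$\alpha$ picture and invoking the correspondence between minimal silting modules and homological ring epimorphisms that is recalled in the paragraph preceding the statement. By Proposition \ref{prop:ExtOrthEmbedding}, the Ext-orthogonal pair $(\mathcal{X},\mathcal{Y})$ determines a unique torsion pair $(\mathcal{T},\mathcal{F})$ with $\mathcal{T}=\Gen(\mathcal{X})$, $\mathcal{F}=\Cogen(\mathcal{Y})$, and $(\mathcal{X},\mathcal{Y})=(\alpha(\mathcal{T}),\beta(\mathcal{F}))$; this lets me phrase both implications in terms of the identity $\mathcal{X}=\alpha(\mathcal{T})$.

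For the ``only if'' direction, assume $(\mathcal{T},\mathcal{F})$ is generated by a minimal silting module $T$, so $\mathcal{T}=\Gen(T)$ and $\mathcal{X}=\alpha(\Gen T)$. The discussion recalled just before the corollary states that $\alpha(\Gen T)=\lambda^{*}(\lMod B)$ for some homological ring epimorphism $\lambda:A\to B$. Hence $\mathcal{X}$ is bireflective.

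For the converse, suppose $\mathcal{X}$ is bireflective. Being also a wide subcategory of $\lMod A$ (as the left term of an Ext-orthogonal pair), the Gabriel--de la Pe\~na theorem supplies a ring epimorphism $\mu:A\to B'$ with $\mathcal{X}=\mu^{*}(\lMod B')$. Over a hereditary ring, closure of this essential image under extensions forces $\mathrm{Tor}_{1}^{A}(B',B')=0$, so $\mu$ is homological. Applying the bijection \cite[Theorem 5.8]{siltEpi} in reverse produces a minimal silting module $T$ such that $\alpha(\Gen T)=\mathcal{X}$ and $\Gen T=\Gen({}_{A}B')$. I then identify the torsion pair generated by $T$ with $(\mathcal{T},\mathcal{F})$: since $B'\in\mathcal{X}\subseteq\Gen(\mathcal{X})=\mathcal{T}$ and $\mathcal{T}$ is closed under quotients and coproducts, $\Gen T=\Gen({}_{A}B')\subseteq \mathcal{T}$, while the opposite inclusion follows from $\mathcal{X}\subseteq \Gen T$ by applying $\Gen(-)$. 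Uniqueness in Proposition \ref{prop:ExtOrthEmbedding} now forces the torsion pair generated by $T$ to equal $(\mathcal{T},\mathcal{F})$.

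The main obstacle I anticipate is the hereditary step that turns $\mu$ into a homological ring epimorphism, since the silting/epimorphism dictionary of \cite{siltEpi} is stated for homological $\mu$. This rests on the well-known identification of homological ring epimorphisms over a hereditary ring with those $\mu$ whose essential image is closed under extensions in $\lMod A$; if a detail of this identification proves delicate (for instance when $\mu$ is not injective), one can instead invoke the surjectivity part of \cite[Theorem 5.8]{siltEpi} directly, which already packages this verification.
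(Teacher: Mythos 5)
Your proof is correct and takes essentially the same route as the paper's: both directions rest on Proposition~\ref{prop:ExtOrthEmbedding} together with the minimal-silting/homological-ring-epimorphism dictionary of \cite[Theorem 5.8]{siltEpi}, with the passage from a bireflective (wide, hence extension-closed) $\mathcal{X}$ to a homological ring epimorphism over a hereditary ring that the paper compresses into its citations of \cite{extPairs}. The additional details you spell out — checking $\Tor_1^A(B',B')=0$ and verifying $\mathcal{T}=\Gen(T)$ directly rather than quoting uniqueness — are just explicit versions of what the paper leaves to those references.
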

\begin{proof}
By the  observations above, we can assign to each minimal silting module $T$ a complete Ext-orthogonal pair $  (\alpha(\Gen(T)), \beta(T^{\perp_0})) = ( \lambda^*(\lMod{B}), B^{\perp_{0,1}}  ) $  with the required property, see \cite[Proposition 3.1]{extPairs}.
Conversely, 
if $ \Tpair{X}{Y} $ is any complete Ext-orthogonal pair with $ \mathcal{X} $ bireflective, then $ \mathcal{X} = \lambda^*(\lMod{B}) $ for some ring epimorphism.
Thus, once again by uniqueness and \cite[Proposition 3.1]{extPairs}, this pair is obtained as above from the minimal silting torsion pair $ (\Gen(B), B^{\perp_0} ) $.
\end{proof}

We can now combine these observations with the fact that the homological ring epimorphisms starting at a hereditary ring $A$ are precisely the universal localizations $A\to A_\Wcal$ of $A$ at wide subcategories $\Wcal\in\mathbf{wide}(A)$, see \cite[Theorem 6.1]{extPairs} for details. The following result is a variation of
\cite[Theorem 8.1]{extPairs}.

\begin{thm} 
\label{wide and extorth}
If $ A$ is a hereditary ring, there are bijections between
\begin{enumerate}
\item wide subcategories of $\lmod A$;
\item (complete) Ext-orthogonal pairs $ \Tpair{X}{Y} $ with $ \mathcal{Y} $  bireflective.
\item (complete) Ext-orthogonal pairs $ \Tpair{X}{Y} $ with $ \mathcal{X} $  bireflective.
\item  minimal cosilting torsion pairs;
\item minimal silting torsion pairs.
\end{enumerate}
\end{thm}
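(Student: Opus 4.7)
The plan is to use wide subcategories of $\lmod A$ as a hub and prove bijections (1)$\leftrightarrow$(4), (1)$\leftrightarrow$(5), (2)$\leftrightarrow$(4), (3)$\leftrightarrow$(5), leaving (2) and (3) matched with (1) by transitivity.

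First, I would establish the bijection (1)$\leftrightarrow$(4). By Theorem~\ref{thm:wideGeneratedCogen}(i) the assignment $\Wcal \mapsto (\widetilde{\mathbf{T}}(\Wcal), \Wcal^{\perp_0}\cap \lmod A)$ identifies $\mathbf{wide}(A)$ with the widely generated torsion pairs in $\mathbf{tors}(A)$, and by Theorem~\ref{thm:CB-bij} these correspond to the widely generated cosilting torsion pairs in $\lMod A$. By Proposition~\ref{prop:MinCosOverHeredIsWide}, a cosilting torsion pair is widely generated precisely when it is cogenerated by a minimal cosilting module, i.e.~when it belongs to class (4).

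Second, I would derive (1)$\leftrightarrow$(5) by concatenating two  bijections: minimal silting modules correspond bijectively to homological ring epimorphisms starting at $A$ by \cite[Theorem 5.8]{siltEpi}, and the latter, over a hereditary ring, are exactly the universal localizations at wide subcategories of $\lmod A$ by \cite[Theorem 6.1]{extPairs}. This sends $\Wcal$ to the minimal silting torsion pair $(\Gen(A_\Wcal), A_\Wcal^{\perp_0})$.

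Third, I would address (2)$\leftrightarrow$(4) and (3)$\leftrightarrow$(5) via the Ext-orthogonal pair machinery from Section~6. By Proposition~\ref{prop:ExtOrthEmbedding}, every complete Ext-orthogonal pair $(\Xcal,\Ycal)$ is of the form $(\alpha(\Tcal), \beta(\Fcal))$ for a uniquely determined torsion pair $(\Tcal,\Fcal) = (\Gen(\Xcal), \Cogen(\Ycal))$. Corollary~\ref{cor:CosExt} states that $\Ycal$ is bireflective if and only if $(\Tcal,\Fcal)$ is cogenerated by a minimal cosilting module, giving (2)$\leftrightarrow$(4). The corollary immediately preceding the theorem provides the analogous equivalence "$\Xcal$ bireflective $\iff$ $(\Tcal,\Fcal)$ generated by a minimal silting module", giving (3)$\leftrightarrow$(5).

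The main obstacle is verifying that all these bijections compose consistently: starting from $\Wcal \in \mathbf{wide}(A)$, one must check that the associated Ext-orthogonal pair in (2) is indeed $(\varinjlim \Wcal, \Wcal^{\perp_{0,1}})$, which follows from combining Proposition~\ref{prop:alphaIsLimClosure} with Lemma~\ref{lem:betaCompHereditary}; and that its counterpart in (3) is $(\lambda^*(\lMod B), B^{\perp_{0,1}})$ for $\lambda:A\to B=A_\Wcal$, which follows from the discussion preceding the theorem together with Lemma~\ref{lem:betaOrthMinSilt}. Once these identifications are in place, everything else is a diagram chase through results already established in the paper.
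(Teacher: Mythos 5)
Your overall strategy is the same as the paper's: wide subcategories of $\lmod{A}$ serve as the hub, (1)$\leftrightarrow$(4) is obtained from Theorem~\ref{thm:wideGeneratedCogen}, Theorem~\ref{thm:CB-bij} and Proposition~\ref{prop:MinCosOverHeredIsWide}, (1)$\leftrightarrow$(5) from the minimal silting/ring epimorphism correspondence, and (2), (3) are handled through Proposition~\ref{prop:ExtOrthEmbedding}, Corollary~\ref{cor:CosExt} and the corollary preceding the theorem, with the component identifications via Proposition~\ref{prop:alphaIsLimClosure}, Lemma~\ref{lem:betaCompHereditary} and Lemma~\ref{lem:betaOrthMinSilt}. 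For (3)$\leftrightarrow$(5) this is complete, since the proof of the corollary preceding the theorem also supplies the existence direction: every minimal silting module gives rise to the complete Ext-orthogonal pair $(\lambda^*(\lMod{B}), B^{\perp_{0,1}})$ via \cite[Proposition 3.1]{extPairs}.

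The gap is in (2)$\leftrightarrow$(4). Proposition~\ref{prop:ExtOrthEmbedding} together with Corollary~\ref{cor:CosExt} only yields an injection of the class (2) into the class (4): it characterizes, among pairs already known to be complete Ext-orthogonal pairs, those whose associated torsion pair is minimal cosilting. To get a bijection you must still show that every minimal cosilting torsion pair $(\mathbf{T}(\Wcal),\Wcal^{\perp_0})$ actually arises, i.e.\ that $(\varinjlim\Wcal,\Wcal^{\perp_{0,1}})$ is a \emph{complete Ext-orthogonal pair}. This is not delivered by Proposition~\ref{prop:alphaIsLimClosure} and Lemma~\ref{lem:betaCompHereditary}: those identify $\alpha(\mathbf{T}(\Wcal))$ and $\beta(\Wcal^{\perp_0})$, but applying $(\alpha,\beta)$ to a torsion pair does not automatically produce an Ext-orthogonal pair --- the paper's own example after Proposition~\ref{prop:ExtOrthEmbedding} (the pair $(\Add\mathbf{q},\Ccal)$ over the Kronecker algebra, with $(\alpha,\beta)=(0,{}^{\perp_{0,1}}G)$) shows this can fail. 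The missing input is exactly the Krause--\v{S}\v{t}ov\'{i}\v{c}ek completeness result: the theorem is declared a variation of \cite[Theorem 8.1]{extPairs}, and it is that theorem (or, inside the paper, Theorem~\ref{themapalpha} saying $\varinjlim\Wcal$ is wide coreflective combined with Proposition~\ref{widecoref} identifying wide coreflective subcategories with left halves of complete Ext-orthogonal pairs) which guarantees completeness of $(\varinjlim\Wcal,\Wcal^{\perp_{0,1}})$. Once this is added your argument closes up; the noetherian/artinian hypotheses hidden in Theorems~\ref{thm:CB-bij}, \ref{thm:wideGeneratedCogen} and Proposition~\ref{prop:MinCosOverHeredIsWide} are the same ones the paper itself relies on, so they are not an additional defect of your proposal.
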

\begin{proof}
The bijections are given as follows.
$$\begin{array}{ccccl}
\text{Bijection} &&&& \text{Assignment}\\
\hline
 (1)\rightarrow (2) &&&& \Wcal\mapsto\; (\varinjlim\Wcal,\Wcal^{\perp_{0,1}})\\
 (1)\rightarrow (3) &&&& \Wcal\mapsto\; (\Wcal^{\perp_{0,1}}, A_\Wcal)\\
 (1)\rightarrow (4) &&&&  \Wcal\mapsto\; (T(\Wcal),\Wcal^{\perp_{0}})\\
 (1)\rightarrow (5) &&&& \Wcal\mapsto\; (\Gen A_\Wcal=\Wcal^{\perp_{1}}, A_\Wcal\,^{\perp_{0,1}})
 \end{array}$$
\end{proof}


\section{Wide subcategories arising from pure-injectives}\label{tame}

In Theorem~\ref{wide and extorth} we have described the Ext-orthogonal pairs of the form $(\Xcal,\Ycal)=(\alpha(\Tcal), \beta(\Fcal))$ for some  widely generated torsion pair $(\Tcal,\Fcal)$ over a hereditary ring. Over a finite-dimensional tame hereditary algebra,
 the classes $\alpha(\Tcal)$ and $\beta(\Fcal)$ admit a further description. It turns out that they are precisely the wide coreflective subcategories of $\lMod\Lambda$  which are obtained as perpendicular categories to a collection of pure-injective modules. 

\begin{thm}\label{tamethm}
Let $(\Xcal, \Ycal)$ be a complete Ext-orthogonal pair over a finite-dimensional tame hereditary algebra  $ \Lambda $. The following statements are equivalent.
\begin{enumerate}
\item There exists a wide subcategory $ \mathcal{W}$ of $\lmod{\Lambda} $ such that $ \mathcal{X} = \varinjlim \mathcal{W} $ or $ \mathcal{X} = \mathcal{W}^{\perp_{0,1}}$.
\item There exists a set of indecomposable pure-injective $\Lambda$-modules $ \mathcal{P} $ such that $ \mathcal{X} = {}^{\perp_{0,1}}\mathcal{P}$.
\item  $\Xcal$ or $\Ycal$ is bireflective.
\end{enumerate}
\end{thm}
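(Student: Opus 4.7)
The plan splits the proof into three parts. The equivalence $(1)\Leftrightarrow(3)$ follows immediately from Theorem~\ref{wide and extorth}, whose bijections identify wide subcategories of $\lmod\Lambda$ with complete Ext-orthogonal pairs having one side bireflective: the assignment $\Wcal\mapsto(\varinjlim\Wcal,\Wcal^{\perp_{0,1}})$ yields the case ``$\Ycal$ bireflective'' and $\Wcal\mapsto(\Wcal^{\perp_{0,1}},A_\Wcal)$ the case ``$\Xcal$ bireflective''. For $(2)\Rightarrow(3)$, a left perpendicular class $\Xcal={}^{\perp_{0,1}}\Pcal$ is automatically closed under products, and also under coproducts since $\Hom$ and $\Ext^{1}$ turn coproducts in the first variable into products; together with closure under kernels and cokernels from wideness, this gives bireflectivity by the Gabriel--de la Pe\~na criterion.

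The substantive work is $(3)\Rightarrow(2)$, which I treat in two cases. Assume first that $\Ycal$ is bireflective, so $\Xcal=\varinjlim\Wcal$ and $\Ycal=\Fcal$ is the torsionfree class of the corresponding cosilting torsion pair. By completeness of the Ext-orthogonal pair, $\Xcal={}^{\perp_{0,1}}\Fcal$. Since $\Fcal$ is a cosilting class, it is definable (Theorem~\ref{thm:cosiltCover}(2)), hence determined by the set $\Pcal$ of indecomposable pure-injective modules it contains. The tame hereditary hypothesis enters decisively here: by classical results of Ringel and Crawley--Boevey, the Ziegler spectrum of $\Lambda$ has no superdecomposable points, so every $Y\in\Fcal$ embeds purely into a product $\prod_{i\in I_Y}P_{i}$ with $P_{i}\in\Pcal$. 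From the pure-exact sequence $0\to Y\to\prod_{i}P_{i}\to Q\to 0$, whose cokernel $Q$ remains in $\Fcal$ by closure of definable classes under pure quotients, a diagram chase applying $\Hom(X,-)$ for $X\in{}^{\perp_{0,1}}\Pcal$ yields $\Hom(X,Y)=0$ and, via the long exact sequence (using that $\Hom(X,Q)=0$ by the same argument applied to $Q$, and $\Ext^{1}(X,\prod_{i}P_{i})=\prod_{i}\Ext^{1}(X,P_{i})=0$), also $\Ext^{1}(X,Y)=0$. Thus $\Xcal={}^{\perp_{0,1}}\Pcal$.

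The complementary case is $\Xcal$ bireflective, so $\Xcal=\Wcal^{\perp_{0,1}}=\lMod{\Lambda_\Wcal}$. Here the plan is to invoke Auslander--Reiten duality over the finite-dimensional hereditary algebra $\Lambda$: the identity $D\Ext^{1}_{\Lambda}(W,Y)\cong\Hom_{\Lambda}(Y,\tau W)$ for indecomposable non-projective $W\in\lmod\Lambda$ yields $\Wcal^{\perp_{1}}={}^{\perp_{0}}(\tau\Wcal_{\mathrm{np}})$, and a companion formula via the Nakayama functor handles $\Wcal^{\perp_{0}}$, while projective summands of modules in $\Wcal$ can be encoded through their injective envelopes (again pure-injective). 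Combining these rewrites $\Wcal^{\perp_{0,1}}$ as ${}^{\perp_{0,1}}\Pcal$ for a set $\Pcal$ of finite-dimensional indecomposable modules, which are pure-injective since $\Lambda$ is an artin algebra. I expect the main obstacle to lie in the first case, namely in guaranteeing the pure embedding $Y\hookrightarrow\prod_{i}P_{i}$ for arbitrary $Y\in\Fcal$; this rests essentially on the tame hereditary hypothesis through the Ziegler-spectrum classification, and is the point at which the structural theory of pure-injectives over tame hereditary algebras is indispensable.
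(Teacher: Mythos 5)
The fatal problem is your implication $(2)\Rightarrow(3)$. You claim that $\Xcal={}^{\perp_{0,1}}\Pcal$ is ``automatically closed under products'', but $\Hom$ and $\Ext^1$ only turn \emph{coproducts} in the first variable into products; products in the first variable do not decompose, and closure under products genuinely fails: by the paper's own Example~\ref{betalpha} and Lemma~\ref{G}, ${}^{\perp_{0,1}}G=\varinjlim\add\mathbf{t}$ is not closed under direct products (equivalently, $\add\mathbf{t}$ is not covariantly finite in $\lmod\Lambda$), and likewise ${}^{\perp_{0,1}}\Qcal=\varinjlim\add\mathbf{t}_P$ for a set $\Qcal$ of Pr\"ufer modules. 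So your two-line argument only gives closure under coproducts, and $(2)\Rightarrow(3)$ is not proved. Since this is your only implication out of $(2)$, the equivalence collapses: you have $(1)\Leftrightarrow(3)\Rightarrow(2)$ and nothing back. What is missing is exactly the hard half of the paper's proof, namely $(2)\Rightarrow(1)$ for an \emph{arbitrary} set $\Pcal$ of indecomposable pure-injectives: one has to use the classification of indecomposable pure-injectives (finite-dimensional, adic, Pr\"ufer, generic), compute ${}^{\perp_{0,1}}S[\infty]$ and ${}^{\perp_{0,1}}S[-\infty]$ via the minimal cotilting modules $C^{\pm}_S$ and their critical/special summands, handle sets of Pr\"ufer modules by Ringel's decomposition of modules in $\varinjlim\add\mathbf{t}$ together with the structure of wide subcategories of a tube, invoke Schofield's theorem for sets of finite-dimensional and adic modules, and finally treat mixed sets. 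No substitute for any of this appears in your proposal.

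Your $(3)\Rightarrow(2)$ is closer to the mark. In the case ``$\Ycal$ bireflective'' your purity argument is an attractive alternative to the paper's route (which uses $\alpha(\Tcal)={}^{\perp_{0,1}}C_0$ and the decomposition of pure-injectives from \cite{BK}), but as written it rests on a misidentification: $\Ycal=\Wcal^{\perp_{0,1}}$ is \emph{not} the cosilting torsionfree class $\Wcal^{\perp_0}$, and with your literal reading the identity $\Xcal={}^{\perp_{0,1}}\Fcal$ is false. For $\Wcal=\add\mathbf{t}$ over the Kronecker algebra, $\Xcal=\varinjlim\add\mathbf{t}$ contains all regular modules, whereas ${}^{\perp_{0,1}}(\Cogen G)$ contains none of them, since every preprojective module lies in $\Cogen G$ and $\Ext^1_\Lambda(S,p)\cong D\Hom_\Lambda(p,\tau S)\neq 0$ for suitable preprojective $p$. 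The repair is to work with $\Ycal$ itself: it is definable because $\Wcal$ consists of finitely presented modules of projective dimension at most one (not because it is a cosilting class), and then your dimension-shift over a pure embedding of each $Y\in\Ycal$ into a product of indecomposable pure-injectives from $\Ycal$ --- which over a tame hereditary algebra exists since every pure-injective is the pure-injective hull of a direct sum of indecomposables --- correctly gives $\Xcal={}^{\perp_{0,1}}\Pcal$. Your case ``$\Xcal$ bireflective'' via the AR-formula (with the injective $\nu P$ rather than the injective envelope for projective summands) is essentially the paper's argument and is fine in outline. But even with these repairs, the theorem remains unproved without the missing direction described above.
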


The equivalence of (1) and (3) follows immediately from Theorem~\ref{wide and extorth}. The proof of the equivalence of (1) and (2) will be divided into  several steps to improve the overall clarity. We begin with the easy implication (1)$\Rightarrow$ (2):

\begin{lem}
Let $ \Lambda $ be a  tame hereditary algebra. Let $ \mathcal{W} $ be a wide subcategory of $ \lmod{\Lambda} $. Then there exist two families of indecomposable pure-injective $\Lambda$-modules $ \mathcal{P}, \mathcal{Q} $ such that $ \mathcal{W}^{\perp_{0,1}} = {}^{\perp_{0,1}}\mathcal{P} $ and $ \varinjlim \mathcal{W} = {}^{\perp_{0,1}}\mathcal{Q} $.
\end{lem}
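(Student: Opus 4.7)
The plan is to reduce the statement to a decomposition property of pure-injective modules and then invoke the classification of pure-injectives over tame hereditary algebras. By Theorem~\ref{wide and extorth}, the wide subcategory $\Wcal$ is associated to a minimal cosilting module $C$ with $\Cogen(C)=\Wcal^{\perp_0}$ and approximation sequence $0\to C_1\to C_0\to E(\Lambda)$. Since $\Lambda$ is hereditary, $C$ is in fact cotilting, so Remark~\ref{perpendicular} yields
\[
\varinjlim\Wcal=\alpha(\Tcal)={}^{\perp_{0,1}}C_0\qquad\text{and}\qquad\Wcal^{\perp_{0,1}}=\beta(\Fcal)={}^{\perp_{0,1}}C_1.
\]
Both $C_0$ and $C_1$ are pure-injective, being direct summands of $C_0\oplus C_1$, which is equivalent to $C$ as a cosilting module by Lemma~\ref{lem:approxSeqInFactor}(1) and therefore pure-injective by Theorem~\ref{thm:cosiltCover}(1).

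It thus suffices to show that for every pure-injective $\Lambda$-module $M$ there exists a set $\Pcal(M)$ of indecomposable pure-injectives with ${}^{\perp_{0,1}}M={}^{\perp_{0,1}}\Pcal(M)$. I would take $\Pcal(M)$ to be the set of isomorphism classes of indecomposable pure-injective direct summands of $M$. Granting the containment $M\in\Prod(\Pcal(M))$, the identity of perpendicular categories is a formal consequence of the fact that $\Hom$ and $\Ext^1$ convert products in the second argument into products of abelian groups: each $P\in\Pcal(M)$ being a summand of $M$ yields ${}^{\perp_{0,1}}M\subseteq{}^{\perp_{0,1}}\Pcal(M)$, while $M$ being a summand of a product of modules from $\Pcal(M)$ yields the reverse inclusion.

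The crucial ingredient is therefore the decomposition $M\in\Prod(\Pcal(M))$, and here is where the tame hereditary hypothesis enters: over a tame hereditary algebra there are no superdecomposable pure-injective modules, a classical result going back to Ringel in the Kronecker case and extended in the Ziegler spectrum literature (see e.g.~\cite{purity}). A Zorn-type argument then extracts a maximal pairwise orthogonal family of indecomposable pure-injective summands $(P_i)_{i\in I}$ of $M$, producing a splitting $M=E\bigl(\bigoplus_{i\in I}P_i\bigr)\oplus M'$ in which $M'$ has no indecomposable summand, and the absence of superdecomposables forces $M'=0$. Since the pure-injective hull $E(\bigoplus_{i\in I}P_i)$ is a direct summand of the product $\prod_{i\in I}P_i$, we conclude $M\in\Prod(\Pcal(M))$. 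Applying this to $C_0$ and $C_1$ yields the required families $\Qcal=\Pcal(C_0)$ and $\Pcal=\Pcal(C_1)$. The main obstacle is this decomposition result; it fails for wild hereditary algebras, which is precisely why the tame assumption is essential.
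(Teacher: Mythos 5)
Your argument for the second identity is sound and is essentially the one in the paper: over a hereditary algebra the associated cosilting module automatically has injective dimension at most one, so Proposition~\ref{prop:alphaIsLimClosure} and Remark~\ref{perpendicular} give $\varinjlim\Wcal=\alpha(\Tcal)={}^{\perp_{0,1}}C_0$, and one then decomposes the pure-injective module $C_0$: over a tame hereditary algebra every pure-injective is the pure-injective hull of the direct sum of its indecomposable summands (the paper simply cites \cite[Proposition 2.1]{BK} rather than arguing via absence of superdecomposables), and the summand/product argument you give finishes this half.

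The first identity, however, contains a genuine gap: the claim that the cosilting module $C$ associated with $(\mathbf{T}(\Wcal),\Wcal^{\perp_0})$ is cotilting because $\Lambda$ is hereditary is false. A cotilting class contains all projectives, whereas $\Wcal^{\perp_0}$ contains $\Lambda$ only if $\Hom_\Lambda(\Wcal,\Lambda)=0$. Concretely, over the Kronecker algebra take $\Wcal=\add S$ with $S$ the simple projective; then $\mathbf{T}(\Wcal)=\Add S$, $\Wcal^{\perp_0}=\Add S'$ with $S'$ the simple injective, and the associated (even minimal) cosilting module is $C=S'$, which is not cotilting. In the approximation sequence one finds $C_0=S'$ and $C_1=0$, so ${}^{\perp_{0,1}}C_1=\lMod\Lambda$ while $\Wcal^{\perp_{0,1}}=S^{\perp_0}=\Add S'$: your identity $\Wcal^{\perp_{0,1}}={}^{\perp_{0,1}}C_1$ fails. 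Without the cotilting hypothesis one only has $\Beta(\Fcal)={}^{\perp_0}C_1$ (Proposition~\ref{thm:BetaC1Orthogonal}), hence $\Wcal^{\perp_{0,1}}=\beta(\Fcal)={}^{\perp_0}C_1\cap\Cogen(C)$, equivalently ${}^{\perp_{0,1}}C_1\cap\lMod{\Lambda/\Ann(C)}$ by Lemma~\ref{lem:approxSeqInFactor}(3), and this is in general not the perpendicular category of $C_1$ alone. The paper proves this half by a different, elementary route: write $\Wcal=\operatorname{filt}(\Bcal)$ for a semibrick $\Bcal$, so $\Wcal^{\perp_{0,1}}=\Bcal^{\perp_{0,1}}$, and apply the Auslander--Reiten formula brick by brick, namely $B^{\perp_{0,1}}={}^{\perp_{0,1}}\tau B$ for non-projective bricks and $P^{\perp_{0,1}}=P^{\perp_0}={}^{\perp_0}I={}^{\perp_{0,1}}I$ for a projective brick $P$ with corresponding indecomposable injective $I$; this yields a family of finite-dimensional, hence pure-injective, indecomposables. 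You need to replace your first paragraph by an argument of this kind (or restrict your cotilting-based argument to the cases where $\Wcal^{\perp_0}$ actually contains the projectives).
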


\begin{proof}


We obtain the result by using cosilting theory and the AR-formula.

For a category of the form $ \mathcal{W}^{\perp_{0,1}} $, recall that we have $ \mathcal{W} = \operatorname{filt}(\mathcal{B}) $ for a semibrick $ \mathcal{B} $ and $  \mathcal{W}^{\perp_{0,1}} =  \mathcal{B}^{\perp_{0,1}} $. Then if $ \mathcal{B} $ contains any indecomposable projective module $ P $, we have a corresponding indecomposable injective  $ I $ such that $ P^{\perp_{0,1}} = P^{\perp_{0}} = {}^{\perp_{0}} I = {}^{\perp_{0,1}} I $. For all the non-projective bricks $ B $ we have that $ B^{\perp_{0,1}} = {}^{\perp_{0,1} }\tau B $. In conclusion, we can find a set of indecomposable finite-dimensional modules $ \mathcal{B}' $ such that $ \mathcal{W}^{\perp_{0,1}} = {}^{\perp_{0,1}} \mathcal{B}' $.   

 For the case $ \varinjlim \mathcal{W} $, consider $ \Tpair{T}{F} = (\mathbf{T}(\mathcal{W}), \mathcal{W}^{\perp_0}) $ the torsion pair in $ \lMod{\Lambda} $ generated by $\Wcal$; this is a cosilting torsion pair. In particular, we have that $ \alpha(\mathcal{T}) = \varinjlim \mathcal{W} $ by Proposition \ref{prop:alphaIsLimClosure}.

Consider now an approximation sequence $ 0 \to C_1 \to C_0 \to D\Lambda  $   as in (\ref{approxsequence}) with $C_0$ and $C_1$ in $\Prod C$ for the associated cosilting module $C$. Then  $ \varinjlim \mathcal{W} = {}^{\perp_{0,1}}C_0 $ by Remark \ref{perpendicular}.

Now recall that $C$ is pure-injective, hence so are $C_0$ and $C_1$. Moreover, it is well known that over a tame hereditary algebra every pure-injective module $E$ is the pure-injective hull of  $\coprod E_i$, where the  $E_i$ are, up to isomorphism, precisely the indecomposable direct summands of $E$, see e.g.~\cite[Proposition 2.1]{BK}. Therefore the perpendicular category of $E$ is determined by a family of indecomposable pure-injective $\Lambda$-modules. This completes the proof.
\qedhere

\end{proof}

For the converse implication  we will need some classification results over tame hereditary algebras. Let us first recall the shape of the Auslander-Reiten quiver of
$\Lambda$. It consists, as in  Example~\ref{betalpha}, 
of a preprojective and a preinjective component, denoted by  $\mathbf{p}$ and $\mathbf{q}$, respectively, and    a  family of orthogonal tubes
$\mathbf{t}=\bigcup_{\lambda\in\mathbb{X}}\mathbf{t}_\lambda$ containing the  regular modules. 

Almost all tubes have rank 1. 
Given an exceptional tube $\mathbf{t}_{\lambda}$ of rank $r> 1$ and a module $X=U[m]\in\mathbf{t}_{\lambda}$ of regular length $m< r$, we may consider the full subquiver ${W}_{X}$ of $\mathbf{t}_{\lambda}$ which is isomorphic to the Auslander-Reiten-quiver  of the linearly oriented quiver of type $\mathbb{A}_{m} $ with $X$ corresponding to the projective-injective vertex.  The set ${W}_{X}$   is called a {\it wing} of $\mathbf{t}_{\lambda}$ \emph{of size $m$ with  vertex} $X$.

Next, we recall that there is a complete classification of  the indecomposable pure-injective $\Lambda$-modules: they are the indecomposable finite dimensional modules, the adic modules $S[-\infty]$ and  Pr\"ufer modules $S[\infty]$ corresponding to  simple regular modules $S$, and the generic module $G$.

The infinite-dimensional  cotilting modules were classified in    \cite{BK}. They are parametrized by pairs $(Y,P)$ where $Y$ is a branch module and $P$ is a subset of $\mathbb X$. More specifically, the following modules {form} a complete irredundant list of all large cotilting $\Lambda$-modules, up to equivalence:
$$C_{(Y,P)}=Y \oplus \coprod_{\mu\in P} \{\text{all}~S[\infty]~ \text{in}~ {^{\perp_{1}}Y}~\text{from}~\mathbf{t}_{\mu}\}  \oplus G \oplus
  {\prod_{\mu\notin P} \{\text{all}~S[-\infty]~ \text{in}~ {Y^{\perp_{1}}}~\text{from}~\mathbf{t}_{\mu}\}}$$
 
 Rather than giving the precise definition of a branch module, let us focus on the two 
particular cases which will be relevant for our discussion. We will choose the branch module $Y$ to consist of modules on a fixed ray or on a fixed coray in an exceptional tube $ \mathbf{t}_\lambda $. Given a quasi-simple module $S$, let us denote by $S[i]$ the  module of regular length $i$ on the ray starting in $S$, and by $ S[-i] $  the  module of regular length $i$ on the coray ending at $S$.
Setting $P$ to be the singleton containing $\lambda$, or its complement, we obtain the following cotilting modules.

\begin{prop}
Let $ \Lambda $ be a tame hereditary algebra.  Given an index $ \lambda \in \mathbb{X} $ such that the tube $ \mathbf{t}_\lambda $ has rank $ r $, and a quasi-simple $ S \in \mathbf{t}_\lambda $, the following modules are cotilting:
\begin{enumerate}
\item $ C_{S}^{+} := S[1] \oplus \dots \oplus S[r - 1] \oplus S[\infty] \oplus G \oplus \prod_{ \mu \neq \lambda} M_\mu $, where  each $ M_\mu $ is the direct sum of all the adic modules from the tube $ \mathbf{t}_\mu $.

\item $ C_{S}^{-} := S[-\infty] \oplus S[-r + 1] \oplus \dots \oplus S[-1] \oplus G \oplus \coprod_{ \mu \neq \lambda} N_\mu $, where each $ N_\mu $ is the direct sum of all the Pr\"{u}fer modules from the tube $ \mathbf{t}_\mu $.
\end{enumerate}
\end{prop}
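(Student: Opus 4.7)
The plan is to exhibit each of the modules $C_S^+$ and $C_S^-$ as an instance of the Buan--Krause classification $C_{(Y,P)}$ recalled above, for a suitable choice of branch module $Y$ and subset $P\subseteq \mathbb{X}$. Concretely, for $C_S^+$ I would take
\[
Y^+ \;=\; S[1] \oplus S[2] \oplus \cdots \oplus S[r-1], \qquad P^+ = \{\lambda\},
\]
and for $C_S^-$ I would take
\[
Y^- \;=\; S[-1] \oplus S[-2] \oplus \cdots \oplus S[-r+1], \qquad P^- = \mathbb{X}\setminus\{\lambda\}.
\]

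First I would check that $Y^{\pm}$ is a branch module. Since $S[1],\dots,S[r-1]$ sit on the ray starting at $S$ inside the wing $W_{S[r-1]}$ of size $r-1$ with vertex $S[r-1]$, they form a complete slice of that wing; analogously, $S[-1],\dots,S[-r+1]$ form a complete slice of the wing $W_{S[-r+1]}$ on the coray ending at $S$. This is the standard situation producing a branch module in the sense of Ringel, used in \cite{BK}.

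Next I would verify the Prüfer/adic summands arising in the Buan--Krause formula. For $C_S^+$ the summands from tubes $\mathbf{t}_\mu$ with $\mu\neq\lambda$ are adic, and since $Y^+$ is supported in the single tube $\mathbf{t}_\lambda$ while distinct tubes are mutually Hom- and Ext-orthogonal, every adic module from such a $\mathbf{t}_\mu$ lies in $(Y^+)^{\perp_1}$; this accounts for the product $\prod_{\mu\neq\lambda} M_\mu$. The summands from tubes in $P^+=\{\lambda\}$ are Prüfer modules in ${}^{\perp_1}Y^+$ from the tube $\mathbf{t}_\lambda$ itself. Using the Auslander--Reiten formula $\Ext^1_\Lambda(A,B)\cong D\overline{\Hom}_\Lambda(B,\tau A)$, together with the fact that on the ray starting at $S$ the Prüfer module $S[\infty]$ is the direct limit $\varinjlim S[i]$, one sees that $S[\infty]\in {}^{\perp_1}Y^+$, whereas for any other quasi-simple $S'$ in $\mathbf{t}_\lambda$ one finds some $S[i]$ with $\Ext^1_\Lambda(S'[\infty],S[i])\neq 0$. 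So the only Prüfer module from $\mathbf{t}_\lambda$ contributing is $S[\infty]$. A dual computation handles $C_S^-$: the adic $S[-\infty]$ is the unique adic module from $\mathbf{t}_\lambda$ in $(Y^-)^{\perp_1}$, and every Prüfer module from the other tubes lies in ${}^{\perp_1}Y^-$ by orthogonality of distinct tubes.

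With these identifications, $C_S^\pm$ coincides with $C_{(Y^\pm,P^\pm)}$ in the BK classification, hence is a (large) cotilting $\Lambda$-module. The main obstacle I expect is the bookkeeping in the middle step: proving that among the $r$ Prüfer modules of $\mathbf{t}_\lambda$ exactly the one sitting on top of the chosen ray is in ${}^{\perp_1}Y^+$, and dually for adic modules and $Y^-$. This is essentially a calculation in the AR-quiver of the tube via the AR-formula, and it amounts to tracking how the regular composition factors of $S[i]$ (resp.~$S[-i]$) interact with $\tau$ of the Prüfer (resp.~adic) modules; once organized inside the wing $W_{S[r-1]}$ (resp.~$W_{S[-r+1]}$) this reduces to the analogous orthogonality statement for the canonical slice of the linearly oriented $\mathbb{A}_{r-1}$-quiver.
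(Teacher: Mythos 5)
Your proposal is correct and follows essentially the same route as the paper: the paper obtains the proposition precisely by specializing the Buan--Krause classification $C_{(Y,P)}$ to the branch module consisting of the modules $S[1],\dots,S[r-1]$ on the ray (resp.\ $S[-1],\dots,S[-r+1]$ on the coray) and $P=\{\lambda\}$ (resp.\ $P=\mathbb{X}\setminus\{\lambda\}$), leaving the verification implicit. Your identification of which Pr\"ufer, respectively adic, modules of $\mathbf{t}_\lambda$ lie in ${}^{\perp_1}Y^+$, respectively $(Y^-)^{\perp_1}$, via the AR-formula and tube orthogonality just makes that verification explicit, and it checks out.
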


\begin{rem}\label{details}
{\rm
(1) By \cite[Theorem 3.16]{AC}, the modules $ C^+_S $ and $ C^-_S $ are \emph{minimal} cotilting modules, and  the corresponding torsion pairs are widely generated by Proposition~\ref{prop:MinCosOverHeredIsWide}.

\smallskip

(2)
In Figure \ref{figure:TpForCplus} we sketch the torsion pair $(\Tcal,\Fcal)$ cogenerated by the cotilting module $ C^+_S $.

\begin{figure}
\label{figure:TpForCplus}
\centering
\begin{tikzpicture}[scale=0.70]
\fill [black!50, rounded corners] (5.6, -1.5) -- (5.6, 6) -- (15,5) -- (17.2,-1.5) -- cycle;
\fill [blue!30, rounded corners] ( -3.2, -1.5  ) -- (-3.2, 2) -- ( 2.4, 3  ) -- (2.4, -1.5 ) -- cycle;
\fill [black!50, rounded corners] ( 3.1, -1.2 ) -- (4, -0.5) -- (4.9, -1.2) -- (4.3, -1.4) -- (4, -1.5) -- (3.7, -1.4) -- cycle;

\draw (-3,0.5) to [in=180, out=90 ] (1.5,1.75);
\draw (-3,0.5) to [in=180, out=-90] (1.5,-0.75);
\draw ( 2, 0.5 ) node{$ \dots$};

\draw (3, -1) node[anchor=north]{$ S $};

\draw (3, -1) -- (3, 5);
\draw (5, -1 ) -- (5, 5);
\draw (4, -1) ellipse(28pt and 8pt);

\draw[ultra thick, blue!30] (3, -1) to [in=-90, out=30 ] (5, 2);
\draw[thick, dotted, blue!30] (5,2) to [out=90, in=-30] (3,5);

\draw (6,-1) -- (6,5);
\draw (6.5,-1) -- (6.5, 5);
\draw (6.25, -1) ellipse(7pt and 2pt);
\draw (7,-1) -- (7,5);
\draw (7.5,-1) -- (7.5, 5);
\draw (7.25, -1) ellipse(7pt and 2pt);
\draw (8, 0.5) node{$\dots$};
\draw (8.5,-1) -- (8.5,5);
\draw (9,-1) -- (9, 5);
\draw (8.75, -1) ellipse(7pt and 2pt);
\draw (10, 0.5) node{$\dots$};

\draw (11,1.75) to [in=90, out=0 ] (15.5,0.5);
\draw (11,-0.75) to [in=-90, out=0] (15.5,0.5);

\draw (-1, 0.5) node{$ \mathbf{p} $};
\draw (13.5, 0.5) node{$ \mathbf{q} $};

\end{tikzpicture}
\caption{The torsion pair for the cotilting module $ C^+_S $. Torsionfree modules in blue, torsion modules in gray}
\end{figure}
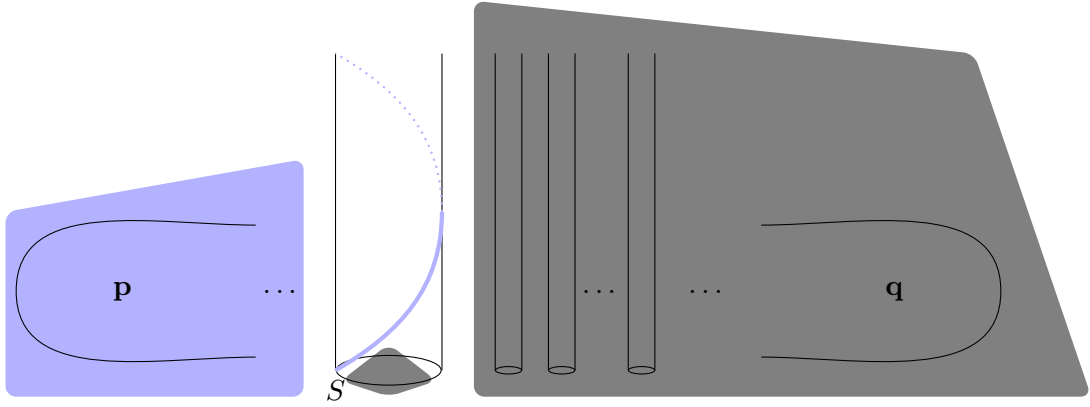

 Every preprojective module is torsionfree, while every preinjective module and every regular module in a tube different from  $ \mathbf{t}_\lambda $ is torsion. 
In the tube $ \mathbf{t}_\lambda $ we have that all  modules on the ray starting at $ S $ are torsionfree. The wide subcategory generated by the semibrick \mbox{$\Scal^+=\{ \tau^- S, \dots, \tau^{-r + 1} S = \tau S\}$} is contained in $\Tcal$. The other modules in the tube are neither torsion, nor torsionfree.  

Now it is easy to check that $\Wcal=\alpha(\Tcal)\cap\lmod{\Lambda}$  is given by the non-preinjective torsion modules. Therefore the torsion, almost torsionfree modules are precisely the quasi-simples in the tubes different from  $ \mathbf{t}_\lambda $  together with the modules in the semibrick $\Scal^+$, cf.~Proposition~\ref{prop:simpInalphabeta}.

Next, we claim that the brick $S[r]$ is torsionfree, almost torsion. Indeed, it is torsionfree, and all its proper quotients are  torsion: they are either preinjective, or they are modules on the coray ending at $ \tau S $ of regular length at most $ r - 1 $. 
Consider a short exact sequence with torsionfree middle term
$
0 \to S[r] \to F \to N \to 0 
$.
By Remark~\ref{prop:finInf}, we may assume $ F $ is finite-dimensional.
In fact, since $ S[r] $ doesn't have any non-zero map to a preprojective module or to a regular module in a different tube,  we may assume that $ F $ is a direct sum of modules of the form $ S[i + r] $ for some $ i \in \mathbb{N} $.

As  $\add \mathbf{t}_\lambda $ is closed under cokernels, the module $ N $ decomposes as a direct sum of  modules in $ \mathbf{t}_\lambda $. Suppose $N$ is not torsionfree. By taking a pullback, we can reduce to the case where $ N $ is torsion and indecomposable,
 that is,  it lies in the wing determined by the semibrick $\Scal^+$. However, none of these modules is obtained as a quotient of a torsionfree module by $S[r]$, as $ \Ext^1(N, S[r]) = 0 $ for all these $ N $. Hence $N$ must be torsionfree, and our claim is proven.
 
In fact, 
$S[r]$ is the unique torsionfree, almost torsion module: any such module must be a brick and must belong to $\beta(\Fcal)$ by Proposition~\ref{prop:simpInalphabeta}; however,  the  torsionfree bricks different from $S[r]$  are either of the form  $ S[i] $ with $ i < r  $, 
or they are preprojective, and in both cases they admit a non-zero map to $ S[r] $ whose cokernel can't be torsionfree, cf.~\cite[XII, Lemma 3.6]{SS}.  


\smallskip

(3)
For the torsion pair cogenerated by $ C^-_S $ we have again that every preprojective module is torsionfree and that every preinjective module is torsion; moreover every regular module in a tube different from  $ \mathbf{t}_\lambda $ is torsionfree, while in  $ \mathbf{t}_\lambda $ the modules on the coray ending at $ \tau^-S $ are torsion and the modules in the wide subcategory generated by the semibrick $\Scal^-=\{ S, \tau S, \dots \tau^{r - 2} S \}$ are torsionfree. The other modules in the tube are neither torsion, nor torsionfree.

With similar arguments to (2) we see that the torsionfree, almost torsion modules are precisely the quasi-simples in the tubes different from  $ \mathbf{t}_\lambda $  together with the modules in the semibrick $\Scal^-$, and that 
there is a unique torsion, almost torsionfree module, namely the module $ \tau^- S [-r] $.
}
\end{rem}

We start the proof of the implication (2)$\Rightarrow$(1) by computing the perpendicular categories of the indecomposable pure-injective modules.

\begin{lem}
Let $ M $ be a finite-dimensional indecomposable module. Then $ {}^{\perp_{0,1}}M = \mathcal{W}^{\perp_{0,1}} $ for some $ \mathcal{W}$ in  $\mathbf{wide}(\Lambda)$.
\end{lem}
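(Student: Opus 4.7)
The strategy is to show that ${}^{\perp_{0,1}} M$ is a bireflective wide subcategory of $\lMod\Lambda$; once this is done, the bijection $(1)\leftrightarrow(2)$ in Theorem~\ref{wide and extorth} produces the desired wide subcategory $\mathcal{W}$ of $\lmod\Lambda$ satisfying $\mathcal{W}^{\perp_{0,1}} = {}^{\perp_{0,1}} M$.

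That ${}^{\perp_{0,1}} M$ is wide in $\lMod\Lambda$ is a consequence of the hereditariness of $\Lambda$: closure under submodules follows from the long exact sequence together with $\Ext^2=0$; closure under extensions is standard; and for closure under cokernels, given $0\to A\to B\to C\to 0$ with $A,B\in{}^{\perp_{0,1}}M$, the sequence $\Hom(A,M)\to\Ext^1(C,M)\to\Ext^1(B,M)$ forces $\Ext^1(C,M)=0$ (using $A\in{}^{\perp_0}M$ and $B\in{}^{\perp_1}M$), while $\Hom(C,M)=0$ is immediate. Closure under coproducts is a direct consequence of the adjunction defining $\Hom$ and $\Ext^1$.

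The crux is closure under direct products. For $\Ext^1(-,M)$: since $M$ is pure-injective as a finite-dimensional module over the artin algebra $\Lambda$, Lemma~\ref{lem:CsigmaClosedDirLim}(2) applied to a minimal injective copresentation $\sigma$ of $M$ shows that $\mathcal{C}_\sigma$ is a cosilting class, hence closed under products; under hereditariness one checks that $\mathcal{C}_\sigma={}^{\perp_1}M$. For $\Hom(-,M)$: one uses that $M$, having finite length over an artin algebra, is $\Sigma$-pure-injective, which yields the definability (and in particular closure under products) of ${}^{\perp_0}M$.

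The main obstacle is precisely the closure of ${}^{\perp_0}M$ under products: the subtlety is that this closure is not automatic even for finite-length $M$, and the argument requires exploiting $\Sigma$-pure-injectivity rather than merely pure-injectivity. A more explicit alternative, which sidesteps the abstract bireflectivity route, is a case analysis using the classification of indecomposable finite-dimensional modules over a tame hereditary algebra: for a non-projective brick $M$ (covering all preprojectives, preinjectives, and regulars of quasi-length strictly less than the rank of their tube), one sets $\mathcal{W}=\operatorname{filt}(\tau^{-1}M)$ and invokes the AR-translate identity $B^{\perp_{0,1}}={}^{\perp_{0,1}}\tau B$ from the discussion preceding the lemma; for $M$ an indecomposable injective, one sets $\mathcal{W}=\add(P)$ where $P$ is the corresponding indecomposable projective, again using the discussion preceding the lemma; the remaining case $M=S[n]$ regular with $n\geq r$ in a tube of rank $r$ is the delicate one, since here $M$ is not a brick and $\tau^{-1}M$ is not a brick either, so $\mathcal{W}$ must be chosen from carefully selected bricks within the wing of the exceptional tube determined by $M$, with the identity $\mathcal{W}^{\perp_{0,1}}={}^{\perp_{0,1}}M$ verified by combining AR-duality inside the tube with the orthogonality between different tubes.
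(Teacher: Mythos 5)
Your proposal takes a genuinely different route from the paper, and its crux step is not correctly justified. The paper disposes of the lemma in two lines: if $M$ is injective, ${}^{\perp_{0,1}}M={}^{\perp_0}M=P^{\perp_0}$ for the corresponding indecomposable projective $P$ and one takes $\mathcal{W}=\operatorname{filt}(P)$; if $M$ is non-injective, the AR-formula gives ${}^{\perp_{0,1}}M=(\tau^{-}M)^{\perp_{0,1}}$ and one takes $\mathcal{W}$ to be the smallest wide subcategory of $\lmod\Lambda$ containing $\tau^{-}M$ -- no brick hypothesis and no bireflectivity are needed. Your main route instead aims to show that ${}^{\perp_{0,1}}M$ is an extension-closed bireflective subcategory and then to appeal to the correspondence with $\mathbf{wide}(\Lambda)$ (for which the right citation is the Schofield/Krause--\v{S}\v{t}ov\'{\i}\v{c}ek bijection between $\mathbf{wide}(\Lambda)$ and extension-closed bireflective subcategories, used by the paper two lemmas later, rather than Theorem~\ref{wide and extorth} itself). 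The wideness, coproduct closure, and the $\Ext$-part of product closure (via Lemma~\ref{lem:CsigmaClosedDirLim}(2) and $\mathcal{C}_\sigma={}^{\perp_1}M$ over a hereditary algebra) are fine. The gap is the $\Hom$-part: $\Sigma$-pure-injectivity of $M$ does \emph{not} imply that ${}^{\perp_0}M$ is definable or even product-closed. Over $\mathbb{Z}$, the module $M=\mathbb{Z}(p^{\infty})$ is $\Sigma$-pure-injective, each $\mathbb{Z}/q\mathbb{Z}$ ($q\neq p$) lies in ${}^{\perp_0}M$, yet $\prod_q \mathbb{Z}/q\mathbb{Z}$ contains an element of infinite order and, $M$ being injective, admits a non-zero map to $M$. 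The same failure occurs over the Kronecker algebra with $M$ a Pr\"ufer module (which is $\Sigma$-pure-injective): a product of simple regular modules from the other tubes maps non-trivially to $M$, since the quotient modulo their direct sum is a direct sum of copies of $G$ and $\Hom(G,M)\neq 0$. What actually saves your argument is finite length, not $\Sigma$-pure-injectivity: $M\cong D(DM)$ with $DM$ finitely presented, so $\Hom_\Lambda(-,M)\cong D(DM\otimes_\Lambda -)$ and ${}^{\perp_0}M=\{X\mid DM\otimes_\Lambda X=0\}$ is definable, in particular closed under products. With that correction your bireflectivity route does go through.

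Your fallback ``explicit alternative'' is also incomplete exactly where it matters: for $M$ a non-injective brick it reproduces the paper's AR-translate argument, but for the non-brick regular modules $M=S[n]$ (quasi-length exceeding the rank of the tube) you only assert that a suitable $\mathcal{W}$ ``must be chosen from carefully selected bricks'' without producing it. Note that this case causes no difficulty in the paper's formulation: one simply takes $\mathcal{W}$ to be the wide closure of $\tau^{-}M$ in $\lmod\Lambda$, and over a hereditary algebra $(\tau^{-}M)^{\perp_{0,1}}=\mathcal{W}^{\perp_{0,1}}$ because the left-hand side is wide and contains $\tau^{-}M$ upon intersecting with $\lmod\Lambda$. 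So as written, neither of your two routes is complete; the first becomes correct once the product-closure of ${}^{\perp_0}M$ is justified by finite-dimensionality (duality with the finitely presented module $DM$) instead of $\Sigma$-pure-injectivity.
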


\begin{proof}
We distinguish two cases. If $ M $ is injective, then $ {}^{\perp_{0,1}}M = {}^{\perp_0}M $ is a Serre subcategory and there exists an indecomposable projective $ P $ such that $ P^{\perp_0} = {}^{\perp_0}M $ so that we can take $ \mathcal{W} = \operatorname{filt}(P) $. 
If $ M $ is not injective, we use the AR-formulae to obtain
$
{}^{\perp_{0,1}}M = \tau^{-}M^{\perp_{0,1}}
$
Thus we can take $ \mathcal{W} $ to be the smallest wide subcategory containing $ \tau^{-}M $. 
\end{proof}

\begin{lem}\label{G}
For the generic module $ G $ we have
$
{}^{\perp_{0,1}}G = \varinjlim \Wcal
$
where $\Wcal=\add \mathbf{t} $ is the wide subcategory of $ \lmod{\Lambda} $ spanned by the regular modules.
\end{lem}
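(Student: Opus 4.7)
The strategy is to identify $ {}^{\perp_{0,1}}G $ with the torsion class $ \Tcal := {}^{\perp_0}G $ of the cosilting torsion pair $ (\Tcal, \Fcal) := ({}^{\perp_0}G, \Cogen G) $ cogenerated by the generic module $G$, and then invoke Theorem~\ref{thm:CB-bij} to obtain $ \Tcal = \varinjlim \Wcal $.

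First, generalising Example~\ref{betalpha} (where the Kronecker case is treated) to an arbitrary tame hereditary algebra, I will verify that $(\Tcal, \Fcal)$ is a cosilting torsion pair whose restriction to $ \lmod \Lambda $ has torsion class $\mathbf t$. The crucial input is the well known fact that a finite dimensional indecomposable $\Lambda$-module $M$ admits a nonzero morphism to $G$ if and only if $M$ is preprojective or preinjective: preprojective and preinjective modules map nontrivially into $G$ because $G$ can be presented as a direct limit of preprojective modules $\tau^{-n}P$ or of preinjective modules $Q_n$, whereas $\Hom(R, G) = 0$ for every regular $R$ since any nonzero morphism $R \to G$ would produce a finite dimensional regular submodule of $G$, contradicting the generic nature of $G$ (all nonzero finite dimensional submodules of $G$ are preinjective). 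Hence $\Tcal \cap \lmod\Lambda = \mathbf t = \Wcal$, and Theorem~\ref{thm:CB-bij} yields $\Tcal = \varinjlim\mathbf t = \varinjlim\Wcal$.

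Next, I will establish $\Tcal = {}^{\perp_{0,1}}G$. The inclusion ${}^{\perp_{0,1}}G \subseteq {}^{\perp_0}G = \Tcal$ is immediate. For the reverse, since $G$ is pure-injective the class ${}^{\perp_1}G$ is closed under direct limits, so it suffices to show $\Ext^1_\Lambda(R, G) = 0$ for every finite dimensional regular $R$. Here I use a cotilting argument: $G$ is a direct summand of the cotilting module $C_{(0, \mathbb X)} = G \oplus \coprod_{\mu \in \mathbb X}\{\text{all Pr\"ufer modules in }\mathbf t_\mu\}$ from the classification recalled earlier. Since every finite dimensional regular module embeds in a Pr\"ufer module from its own tube, $\mathbf t \subseteq \Cogen C_{(0, \mathbb X)} = {}^{\perp_1}C_{(0, \mathbb X)}$, and in particular $\Ext^1(R, G) = 0$. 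Closure of ${}^{\perp_1}G$ under direct limits then yields $\Tcal = \varinjlim\mathbf t \subseteq {}^{\perp_1}G$, whence $\Tcal \subseteq {}^{\perp_{0,1}}G$ as desired.

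The main obstacle I anticipate is the first step, namely the identification of $(\Tcal, \Fcal) = ({}^{\perp_0}G, \Cogen G)$ as a cosilting torsion pair with $\Tcal \cap \lmod\Lambda = \mathbf t$. This requires genuine input from the structure theory of generic modules over tame hereditary algebras (Crawley-Boevey, Ringel), notably the characterization of $G$ as the unique generic indecomposable pure-injective together with the description of its morphisms to and from the three Auslander--Reiten components. Once this identification is in hand, the remainder of the argument is essentially formal, combining Theorem~\ref{thm:CB-bij} with the cotilting classification already recalled and the pure-injectivity of $G$.
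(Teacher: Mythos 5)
Your strategy collapses at its first step, and the error is not repairable within the route you chose. The ``crucial input'' you invoke --- that a finite dimensional indecomposable $M$ satisfies $\Hom_\Lambda(M,G)\neq 0$ if and only if $M\in\mathbf{p}\cup\mathbf{q}$ --- is false for the preinjective part: since $G$ lies in $\varinjlim\add\mathbf{p}$ and $\Hom(\mathbf{q},\mathbf{p})=\Hom(\mathbf{t},\mathbf{p})=0$ with preinjectives and regulars finitely presented, one has $\Hom_\Lambda(Q,G)=0$ for \emph{every} preinjective $Q$ (your justification is also off: $G$ is not a direct limit of preinjectives, as $\varinjlim\add\mathbf{q}=\Add\mathbf{q}$, and the nonzero finite dimensional submodules of $G$ are preprojective, not preinjective --- cf.\ Lemma~\ref{largebricks}(1)). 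Consequently the restriction of $\Tcal={}^{\perp_0}G$ to $\lmod\Lambda$ is $\add(\mathbf{t}\cup\mathbf{q})$, not $\add\mathbf{t}$, and Theorem~\ref{thm:CB-bij} gives $\Tcal=\Gen\mathbf{t}$, which strictly contains $\varinjlim\add\mathbf{t}$.

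This wrong identification is precisely what makes your second step appear to close the argument: the equality ${}^{\perp_{0,1}}G={}^{\perp_0}G$ you aim for is simply not true, because preinjectives lie in ${}^{\perp_0}G$ but not in ${}^{\perp_1}G$. For instance, over the Kronecker algebra the simple injective $S_0$ has $\Hom_\Lambda(S_0,G)=0$ while a projective resolution $0\to P_1^2\to P_0\to S_0\to 0$ yields $\Ext^1_\Lambda(S_0,G)\cong k(T)\neq 0$; so the $\Ext$-condition is doing real work, namely cutting the $\Gen$-closure coming from $\mathbf{q}$ back down to $\varinjlim\add\mathbf{t}$. Your cotilting argument with $C_{(0,\mathbb{X})}$ only shows $\mathbf{t}\subseteq{}^{\perp_1}G$, hence (with closure under direct limits) the easy inclusion $\varinjlim\add\mathbf{t}\subseteq{}^{\perp_{0,1}}G$; it does not give $\Gen\mathbf{t}\subseteq{}^{\perp_1}G$, which is false. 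What is missing is the reverse inclusion ${}^{\perp_{0,1}}G\subseteq\varinjlim\add\mathbf{t}$, and this needs genuine information about the class ${}^{\perp_1}G$: in the paper it is obtained by identifying ${}^{\perp_0}G$ with $\Gen\mathbf{t}$ (via the cotilting module $C_{(0,\emptyset)}$) and ${}^{\perp_1}G$ with the cotilting class $\mathbf{q}^{\perp_0}$ of $C_{(0,\mathbb{X})}$ --- using \cite[Theorem 7.1]{RR}, which says that the kernel term $C_1$ in the approximation sequence (\ref{approxsequence}) for $C_{(0,\mathbb{X})}$ is a direct sum of copies of $G$ --- and then computing $\Gen\mathbf{t}\cap\mathbf{q}^{\perp_0}=\varinjlim\add\mathbf{t}$ by \cite[\S 3.4]{RR}. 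Some input of this kind (or an explicit argument that every module in ${}^{\perp_{0,1}}G$ is a direct limit of regulars) has to replace your first step.
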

\begin{proof}
We consider the cotilting torsion pairs $(\Gen\mathbf{t},\mathbf{t}^{\perp_0})$ 
 and $(\Gen\mathbf{q},\mathbf{q}^{\perp_0})$ generated by the regular and by the preinjective modules, respectively. The first one is given by the cotilting module   $C_{(0,\emptyset)}=G \oplus
  {\prod_{\mu\in \mathbb{X}} \{\text{all}~S[-\infty]~\text{from}~\mathbf{t}_{\mu}\}}$, and we infer that 
$(\Gen\mathbf{t},\mathbf{t}^{\perp_0})=({}^{\perp_{0}}G,\Cogen G)$. The second one is given by $C_{(0,\mathbb{X})}=G \oplus
  {\prod_{\mu\in \mathbb{X}} \{\text{all}~S[\infty]~\text{from}~\mathbf{t}_{\mu}\}}$. If  $ 0 \to C_1 \to C_0 \to D\Lambda \to 0 $   is  a minimal approximation sequence as in (\ref{approxsequence}), then we know from  \cite[Theorem 7.1]{RR} that  $C_1$ is a direct sum of copies of $G$, thus the cotilting class  $\Ccal=\mathbf{q}^{\perp_0}={}^{\perp_{1}}C_1$ equals ${}^{\perp_{1}}G$.  

We conclude that ${}^{\perp_{0,1}}G= \Gen\mathbf{t}\cap \Ccal$, which has the stated shape by \cite[\S 3.4]{RR}.
\end{proof}

For the Pr\"{u}fer and adic modules we will need some results from 
\cite{simpleCotilting} which allow us to locate  $S[\infty]$ and $S[-\infty]$ inside the minimal approximation sequences  given by the cotilting modules  $ C_S^+$ and $C_S^-$.

\begin{de}
Let $C$ be a cotilting module with associated  torsion pair $  \Tpair{T}{F} $. 
A module $ E \in \Prod C $ is called 
\emph{critical} if there exists a 
 short exact sequence
$0 \to F \to E \xrightarrow{b} M \to 0 $
where  $ F $ is a torsionfree, almost torsion module with respect to $\Tpair{T}{F}$, and $ E $ is the $ \Prod(C)-$envelope of $ F $. 
Moreover, a module $ E \in \Prod C $ is called \emph{special} if there exists a
 short exact sequence:
$
0 \to E \xrightarrow{b} N \to T \to 0 
$
where  $ T $ is a torsion, almost torsionfree module with respect to $ \Tpair{T}{F} $, and $N$ is the $ \mathcal{F}-$cover of $ T $. 
\end{de}

\begin{rem}
{\rm
This definition differs from, but is equivalent to the original definition in \cite{simpleCotilting}. There critical and special modules are defined in terms of the existence of certain (strong) left almost split maps in $ \mathcal{F} $. As shown in \cite[Lemma 4.3]{simpleCotilting}, such left almost split maps are either injective or surjective. The critical modules  are the modules in $\Prod C$ which are source of a left almost split epimorphism in $\Fcal$, and the special modules are the modules in $\Prod C$ which are source of a left almost split monomorphism in $\Fcal$, see \cite[Corollaries 5.18 and 5.22]{simpleCotilting}. By \cite[Theorem 4.2]{simpleCotilting} these are precisely the modules defined above. In fact, the maps $b$ in the definition are the required left almost split morphisms. 
}
\end{rem}

\begin{prop}[{\cite[Corollary 5.23 and Lemma 6.10]{simpleCotilting}}]
Let $ C $ be a cotilting module with associated  torsion pair $  \Tpair{T}{F} $ and minimal approximation sequence
$
0 \to C_1 \to C_0 \to D\Lambda \to 0
$
as in (\ref{approxsequence}).
Then every special  module is a direct summand of $ C_1 $ and every critical  module is a direct summand of $ C_0 $.
Moreover, if $C$ is a minimal cotilting module, then an indecomposable module lies in $\Prod (C)$ if and only if it lies in $\Prod (C_0)$ or $\Prod (C_1)$, and not in both.
\end{prop}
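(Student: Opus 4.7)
The plan is to exploit the cotilting approximation sequence $0\to C_1\to C_0\to \operatorname{Im}(g)\to 0$ from Lemma~\ref{lem:approxSeqInFactor} together with the Krull--Remak--Schmidt property for pure-injective modules over an artin algebra.

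For the first assertion, suppose $E$ is critical, fitting into $0\to F\to E\to M\to 0$ with $F$ torsionfree almost torsion and $E$ the $\Prod(C)$-envelope of $F$. The first step is to embed $F$ into a power of $C_0$. Since $F\in\Fcal\subseteq\lMod{\Lambda/\Ann(C)}$, Lemma~\ref{lem:approxSeqInFactor}(2) yields an embedding $F\hookrightarrow \operatorname{Im}(g)^X$ for some set $X$. Pulling back the $X$-th power of the (cotilting, hence short exact) approximation sequence along this embedding produces $0\to C_1^X\to P\to F\to 0$, which splits because $F\in{}^{\perp_1}C\subseteq{}^{\perp_1}C_1^X$. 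Composing gives an embedding $F\hookrightarrow C_0^X$, which by the universal property of the envelope factors through $F\hookrightarrow E$; left minimality of the envelope forces the resulting map $E\to C_0^X$ to be a split monomorphism, and Krull--Remak--Schmidt for pure-injectives then identifies the indecomposable $E$ as a summand of $C_0$ itself. The argument for special modules proceeds dually: starting from a torsion almost torsionfree $T$ with $\Fcal$-cover $N$, one realises $E$ as the kernel of $N\to T$ (which lies in $\Prod(C)$ by cotilting approximation theory) and exhibits it as a summand of $C_1$ through the mirror image of the pullback argument.

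For the second assertion, the cotilting analogue of Lemma~\ref{lem:approxSeqInFactor}(1) guarantees that $C$ is equivalent to $C_0\oplus C_1$, so $\Prod(C)=\Prod(C_0\oplus C_1)$. An indecomposable $X\in\Prod(C)$ is therefore a summand of $C_0^I\oplus C_1^I$ for some $I$, and because $X$ is pure-injective with local endomorphism ring, Krull--Remak--Schmidt forces $X$ to lie in $\Prod(C_0)$ or $\Prod(C_1)$. The minimality condition $\Hom(C_0,C_1)=0$ excludes simultaneous membership: a common summand would yield a nonzero composite $C_0\twoheadrightarrow X\hookrightarrow C_1$. The hardest step in the plan will be to verify that the factor map $E\to C_0^X$ in the first part is actually a split monomorphism rather than merely a monomorphism; the cleanest route is probably to identify $E$ with the pure-injective hull of $F$ inside $\Prod(C)$, whereupon the splitting follows from uniqueness of such hulls and the pure-injectivity of $C_0$.
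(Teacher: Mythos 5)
This proposition is quoted by the paper from the literature (it is not proved here), so your attempt has to stand on its own; as written it has two genuine gaps. The more serious one is the final step of the first part: from ``$E$ is a direct summand of $C_0^X$'' you cannot conclude ``$E$ is a direct summand of $C_0$'' by ``Krull--Remak--Schmidt for pure-injectives''. Indecomposable direct summands of an \emph{infinite} product need not be summands of the factor: over the Kronecker algebra the generic module $G$ lies in $\Prod(S[\infty])$ without being a summand of $S[\infty]$, and adic modules are summands of products of finite-dimensional modules. Since your set $X$ is infinite, this is exactly the situation at hand, and the whole point of the statement -- that critical modules sit inside the \emph{single} module $C_0$ appearing in the minimal sequence $0\to C_1\to C_0\to D\Lambda\to 0$ -- is lost once you pass to a power and forget minimality of the cover of one copy of $D\Lambda$; the actual proof goes through the left-almost-split-map machinery of the cited reference. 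Moreover, the earlier step ``left minimality of the envelope forces $E\to C_0^X$ to be a split monomorphism'' is not a consequence of minimality alone: you need the embedding $F\hookrightarrow C_0^X$ to be a $\Prod(C)$-preenvelope. This is in fact true, but for a reason you do not give: by condition (2) in the definition of a torsionfree, almost torsion module, the cokernel of any embedding of $F$ into a torsionfree module is again torsionfree, hence lies in ${}^{\perp_1}C'$ for every $C'\in\Prod(C)$, so every map $F\to C'$ extends along the embedding. Your proposed repair via pure-injective hulls is wrong: the $\Prod(C)$-envelope is usually not the pure-injective hull -- $S[r]$ is already pure-injective, yet its $\Prod(C^+_S)$-envelope is the Pr\"ufer module $S[\infty]$. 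The ``mirror image'' argument for special modules is only a sketch and inherits the same problems.

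In the second part, the exchange-property argument is fine (an indecomposable pure-injective has local endomorphism ring, hence a summand of $C_0^I\oplus C_1^I$ is a summand of $C_0^I$ or of $C_1^I$), but the exclusion of simultaneous membership is broken: a common indecomposable summand only produces a nonzero map $C_0^I\to C_1$, and $\Hom(C_0,C_1)=0$ does \emph{not} imply $\Hom(C_0^I,C_1)=0$ for infinite $I$ -- again $\Hom(S[\infty],G)=0$ while $G\in\Prod(S[\infty])$ gives a nonzero map from a power of $S[\infty]$ to $G$. To close this you must use the half of minimality you never invoke: condition (i) says $\beta(\Fcal)=\Cogen(C)\cap{}^{\perp_0}C_1$ is closed under products, and condition (ii) places $C_0$ in $\beta(\Fcal)$; hence $C_0^I\in{}^{\perp_0}C_1$ for every set $I$, and only then does your contradiction go through. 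Finally, you use indecomposability of critical and special modules without justification; it holds because they are sources of left almost split morphisms in $\Fcal$, but it is not part of the definition given in the paper.
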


Now we turn to the minimal cotilting modules  $ C_S^+$ and $C_S^-$. We will see that the Pr\"ufer module $S[\infty]$ is the only  critical  summand of $ C_S^+$, and the adic module $S[-\infty]$ is the only special  summand of $ C_S^-$. This facts will be exploited to compute the perpendicular categories of  $S[\infty]$ and $S[-\infty]$.

\begin{lem}
Let $ S $ be a quasi-simple module in a tube of rank $ r $, and let $ S[\infty] $ be the corresponding Pr\"{u}fer module. Then there exists a wide subcategory $ \mathcal{W}$ in $\mathbf{wide}(\Lambda)$ which consists of regular modules and satisfies
$
{}^{\perp_{0,1}} S[\infty] = \varinjlim \mathcal{W} 
$
\end{lem}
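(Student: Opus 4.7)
My plan is to exploit the minimal cotilting module $C_S^+$ from Remark~\ref{details}(2). Let $(\Tcal,\Fcal)$ denote the associated cotilting torsion pair and consider the minimal approximation sequence $0\to C_1\to C_0\to D\Lambda\to 0$ in the sense of (\ref{approxsequence}). Since $C_S^+$ is cotilting, Remark~\ref{perpendicular} gives $\alpha(\Tcal)={}^{\perp_{0,1}}C_0$. The first step is therefore to identify the indecomposable summands of $C_0$, and the strategy is to show that $C_0$ is $\Prod$-equivalent to $S[\infty]$, from which ${}^{\perp_{0,1}}S[\infty]={}^{\perp_{0,1}}C_0=\alpha(\Tcal)$ follows immediately.

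To carry out this identification, I would invoke the results of \cite{simpleCotilting} recalled above the lemma. Because $C_S^+$ is minimal (Remark~\ref{details}(1)), every indecomposable module in $\Prod(C_S^+)$ belongs to either $\Prod(C_0)$ or $\Prod(C_1)$ but not both, and the indecomposable summands of $C_0$ are precisely the critical modules. By Remark~\ref{details}(2) the torsion pair $(\Tcal,\Fcal)$ has a unique torsionfree, almost torsion module, namely $S[r]$, so there is exactly one indecomposable critical module. Thus I only need to check that $S[\infty]$ is critical: by construction $S[\infty]$ is the $\Prod(C_S^+)$-envelope of $S[r]$ (use the canonical inclusion $S[r]\hookrightarrow S[\infty]$ on the ray through $S$, noting that any other indecomposable summand of $C_S^+$ admits no monomorphism from $S[r]$, while the composition with any map $S[\infty]\to E\in\Prod(C_S^+)$ factors through $S[\infty]$ because Hom from $S[r]$ into the preprojective, preinjective, generic, or off-tube adic summands vanishes), and the cokernel sits in a short exact sequence $0\to S[r]\to S[\infty]\to S[\infty]\to 0$ identifying it with $S[\infty]$ itself. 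Hence $S[\infty]$ is critical, and therefore $\Prod(C_0)=\Prod(S[\infty])$.

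Having established ${}^{\perp_{0,1}}S[\infty]=\alpha(\Tcal)$, the second step is to rewrite this class as a direct limit closure. By Proposition~\ref{prop:alphaIsLimClosure}, $\alpha(\Tcal)=\varinjlim\widetilde{\alpha}(\mathbf t)$, where $\mathbf t=\Tcal\cap\lmod\Lambda$ is the restricted torsion class. From the explicit description of $(\Tcal,\Fcal)$ given in Remark~\ref{details}(2), the finitely presented objects of $\widetilde{\alpha}(\mathbf t)$ are exactly the non-preinjective torsion modules; these consist of all modules in the tubes $\mathbf t_\mu$ with $\mu\neq\lambda$ together with the wide subcategory of $\mathbf t_\lambda$ generated by the semibrick $\Scal^+=\{\tau^-S,\dots,\tau S\}$. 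Calling this wide subcategory $\Wcal$, every object is regular, and we obtain ${}^{\perp_{0,1}}S[\infty]=\varinjlim\Wcal$, as required.

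The main obstacle, and the step where real care is required, is the identification of $S[\infty]$ as the unique critical summand of $C_0$; all the rest is a packaging of results already assembled in the paper. One must verify both that $S[\infty]\in\Prod(C_S^+)$ (it is a displayed summand) and that the inclusion $S[r]\hookrightarrow S[\infty]$ is a $\Prod(C_S^+)$-envelope, which amounts to checking minimality and the factorisation property against each of the other indecomposable summands of $C_S^+$ by a small Hom-calculation in $\lMod\Lambda$.
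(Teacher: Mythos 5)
Your overall strategy is the paper's: work with the minimal cotilting module $C^+_S$, show that in the approximation sequence $0\to C_1\to C_0\to D\Lambda\to 0$ one has $\Prod(C_0)=\Prod(S[\infty])$, and then conclude via Remark~\ref{perpendicular}, Proposition~\ref{prop:alphaIsLimClosure} and Remark~\ref{details}. Your verification that $S[\infty]$ is critical (the sequence $0\to S[r]\to S[\infty]\to S[\infty]\to 0$ and the Hom-vanishing from $S[r]$ into the other summands) and the final identification of $\Wcal$ also agree with the paper.

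However, there is a genuine gap in the middle step. You derive $\Prod(C_0)=\Prod(S[\infty])$ from the assertion that \emph{the indecomposable summands of $C_0$ are precisely the critical modules}, so that uniqueness of the torsionfree, almost torsion module $S[r]$ forces $C_0$ to be $\Prod$-equivalent to its envelope. But the cited results from \cite{simpleCotilting} only give one inclusion: every critical module is a summand of $C_0$ (and every special one a summand of $C_1$), together with the disjointness of $\Prod(C_0)$ and $\Prod(C_1)$ on indecomposables for minimal cotilting modules. They do not say that every indecomposable in $\Prod(C_0)$ is critical, and indeed this fails here: the generic module $G$ lies in $\Prod(C_0)$ (being in $\Prod(S[\infty])$) but is not the $\Prod(C^+_S)$-envelope of any torsionfree, almost torsion module. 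More importantly, your argument never rules out that the remaining indecomposable summands of $C^+_S$ --- the finite-dimensional modules $S[1],\dots,S[r-1]$ and the adic modules from the tubes $\mathbf{t}_\mu$, $\mu\neq\lambda$ --- lie in $\Prod(C_0)$, which would destroy the identity ${}^{\perp_{0,1}}C_0={}^{\perp_{0,1}}S[\infty]$. Excluding this is exactly the substance of the paper's proof: one shows each of these summands is \emph{special}, by exhibiting it inside the $\mathcal{F}$-cover of a torsion, almost torsionfree module (the sequences $0\to S[i]\to S[i+1]\to\tau^{-i}S\to 0$ and $0\to S'[-\infty]\to\tau^-S'[-\infty]\to\tau^-S'\to 0$, using Ext-injectivity in $\mathcal{F}$ to get minimality), hence it lies in $\Prod(C_1)$ and, by minimality of $C^+_S$, not in $\Prod(C_0)$. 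Without this verification (or some substitute), the equality $\Prod(C_0)=\Prod(S[\infty])$, and with it your proof, is not established.
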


\begin{proof}
Denote by $(\Tcal,\Fcal)$ the torsion pair associated to $C_S^+ $.
We will show that $ {}^{\perp_{0,1}} S[\infty] = \alpha(\Tcal) $. To this aim, it is enough to show that in the minimal approximation sequence
$
0 \to C_1 \to C_0 \to D\Lambda \to 0 
$
for $C_S^+ $ we have $ \Prod(C_0) = \Prod(S[\infty]) $. 
This amounts to show that $ S[\infty] $ is the unique critical module and that all the other summands of $ C_S^+ $ are either special or occur as summands in products of copies of $ S[\infty] $. 

The latter is the case for the generic module $ G $, which is known to lie in $ \Prod(S[\infty] ) $ and thus satisfies $ {}^{\perp_{0,1}}S[\infty] = {}^{\perp_{0,1}} ( S[\infty] \oplus G ) $. 
 

Next, we show that $ S[\infty] $ is critical. We have seen in Remark~\ref{details} that $S[r]$ is the (unique) torsionfree, almost torsion module, so we have to show that $ S[\infty] $  is its  $ \Prod(C^+_S)-$envelope. This is immediate, in fact $ S[\infty] $ is the only indecomposable module in $ \Prod(C^+_S) $ with a non-zero morphism from $ S[r] $, and
the envelope is given by the short exact sequence
\[
0 \to S[r] \to S[\infty] \xrightarrow{b} S[\infty] \to 0.
\]

It remains to show  that all the other summands of $ C^+_S $ are special. For the finite-dimensional ones, this is witnessed by the short exact sequences $$ 0\to S[i] \to S[i + 1] \xrightarrow{g_i} \tau^{-i} S \to 0 , \: 1\le i<r.$$ Notice that $g_i$ is an $\Fcal$-cover of the torsion, almost torsionfree module  $\tau^{-i} S$, 
because  $S[i]$ lies in $\Prod(C^+_S)$ and is therefore Ext-injective in $\Fcal$.
For the adic summands, we use the fact that every quasi-simple module in a tube different from $\mathbf{t}_\lambda $ is  torsion, almost torsionfree,
and that every adic summand appears as the kernel of the $ \mathcal{F}-$cover of one of these modules. This is witnessed by the sequence
\[
0 \to S'[-\infty] \to \tau^- S'[-\infty] \xrightarrow{g} \tau^- S' \to 0
\]
where  $g$ is again an $\Fcal$-cover
because $  S'[-\infty] $ is Ext-injective in $ \mathcal{F}$.

We have shown that $ \Prod(C_0) = \Prod(S[\infty]) $, and we can conclude by Proposition \ref{prop:alphaIsLimClosure} and Remark~\ref{perpendicular} that $$ {}^{\perp_{0,1}}{S[\infty]} = {}^{\perp_{0,1}}C_0=\alpha(\Tcal)=\varinjlim\Wcal $$ where $ \mathcal{W} = \alpha(\Tcal)\cap\lmod{\Lambda}={}^{\perp_{0,1}}S[\infty] \cap \lmod{\Lambda} $ consists of regular modules, as already observed in  Remark~\ref{details}.
\end{proof}

\begin{lem}
Let $ S $ be a quasi-simple module in a tube of rank $ r $, and let $ S[-\infty] $ be the corresponding adic module. Then there exists a wide subcategory $ \mathcal{W}$ in $\mathbf{wide}(\Lambda)$ which consists of regular modules and satisfies 
$
{}^{\perp_{0,1}} S[-\infty] =  \mathcal{W}^{\perp_{0,1}} 
$
\end{lem}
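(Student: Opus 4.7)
The plan is to dualize the proof of the preceding lemma by using the minimal cotilting module $C_S^-$ from Remark~\ref{details}, with associated torsion pair $(\Tcal,\Fcal)$. Since $C_S^-$ is minimal cotilting (Remark~\ref{details}(1)), Proposition~\ref{prop:MinCosOverHeredIsWide} ensures that $(\Tcal,\Fcal)$ is widely generated, so there exists a wide subcategory $\mathcal{W}$ in $\mathbf{wide}(\Lambda)$ with $\Fcal = \mathcal{W}^{\perp_0}$, and Lemma~\ref{lem:betaCompHereditary} yields $\beta(\Fcal) = \mathcal{W}^{\perp_{0,1}}$. On the other hand, since $C_S^-$ is cotilting, Remark~\ref{perpendicular} gives $\beta(\Fcal) = {}^{\perp_{0,1}}C_1$ for the minimal approximation sequence $0 \to C_1 \to C_0 \to D\Lambda \to 0$ of $C_S^-$. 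The statement of the lemma therefore reduces to proving the equality $\Prod(C_1) = \Prod(S[-\infty])$.

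To establish this, I will invoke the structural result for minimal cotilting modules asserting that each indecomposable summand of $C_S^-$ lies in exactly one of $\Prod(C_0)$ or $\Prod(C_1)$, according to whether it is critical or special. By Remark~\ref{details}(3), the unique torsion, almost torsionfree module for $(\Tcal,\Fcal)$ is $\tau^{-} S[-r]$, so up to isomorphism there is at most one special summand. I claim $S[-\infty]$ is this summand, as witnessed by a short exact sequence
$$0 \to S[-\infty] \to N \to \tau^{-} S[-r] \to 0$$
with $N \in \Fcal$ being the $\Fcal$-cover of $\tau^{-} S[-r]$; its existence dualizes the Pr\"ufer-case sequence $0 \to S[r] \to S[\infty] \to S[\infty] \to 0$ via the coray structure at $\tau^{-} S$ and the cyclic identification $\tau^{-r} S \cong S$ inside the tube $\mathbf{t}_\lambda$.

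The remaining indecomposable summands of $C_S^-$ are then either critical or absorbed into $\Prod(S[-\infty])$. The torsionfree, almost torsion modules are the quasi-simples $S' \in \mathbf{t}_\mu$ with $\mu \ne \lambda$ together with the semibrick $\Scal^- = \{S, \tau S, \ldots, \tau^{r-2}S\}$; each admits a critical $\Prod(C_S^-)$-envelope. Specifically, each $S[-i]$ with $1 \le i \le r-1$ is critical for $\tau^{i-1}S$ via the coray sequence $0 \to \tau^{i-1}S \to S[-i] \to S[-i+1] \to 0$ (with $S[-i]$ Ext-injective in $\Fcal$ as a summand of $C_S^-$), and each Pr\"ufer module $S'[\infty]$ in $N_\mu$ with $\mu \ne \lambda$ is critical for $S'$ via $0 \to S' \to S'[\infty] \to S'[\infty] \to 0$. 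The generic module $G$ lies in $\Prod(S[-\infty])$ by the tame-hereditary fact dual to the one used in the preceding lemma. These classifications yield $\Prod(C_1) = \Prod(S[-\infty])$, and consequently ${}^{\perp_{0,1}}S[-\infty] = \beta(\Fcal) = \mathcal{W}^{\perp_{0,1}}$. Finally, taking $\mathcal{W} = \widetilde\alpha(\Tcal \cap \lmod\Lambda)$, which by Theorem~\ref{thm:wideGeneratedCogen}(i) satisfies $\Fcal = \mathcal{W}^{\perp_0}$, Proposition~\ref{prop:simpInalphabeta} together with Remark~\ref{details}(3) identifies the unique simple object of $\mathcal{W}$ as $\tau^{-} S[-r]$, a regular module in $\mathbf{t}_\lambda$; hence $\mathcal{W}$ consists entirely of regular modules.

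The main obstacle is the concrete identification of $S[-\infty]$ as the unique special summand of $C_S^-$, which requires exhibiting the $\Fcal$-cover sequence of $\tau^{-} S[-r]$ and verifying the Ext-orthogonality relations that prevent any other summand from being special. These checks are combinatorially routine from the ray/coray description of $\mathbf{t}_\lambda$ and the Ext-injectivity of summands of $C_S^-$ in $\Fcal$, but they must be carried out as a careful dualization of the Pr\"ufer-case argument.
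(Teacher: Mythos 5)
Your proposal follows the paper's strategy quite closely (reduce to $\Prod(C_1)=\Prod(S[-\infty])$ for the minimal cotilting module $C_S^-$, exhibit $S[-\infty]$ as the unique special summand via the $\Fcal$-cover of the unique torsion, almost torsionfree module $\tau^-S[-r]$, then conclude with Remark~\ref{perpendicular} and Lemma~\ref{lem:betaCompHereditary}), but one step is genuinely wrong: the claim that the generic module $G$ lies in $\Prod(S[-\infty])$ ``by the dual tame-hereditary fact''. The dual fact fails: $\Hom_\Lambda(G,S[-\infty])=0$ (the generic module is divisible, while the adic module -- and hence any product of copies of it -- is reduced), so $G$ cannot be a direct summand of a product of copies of $S[-\infty]$. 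This is not a harmless slip, because $G$ is a summand of $C_S^-$ and must be placed in $\Prod(C_0)$: if $G$ were allowed into $\Prod(C_1)$, the conclusion ${}^{\perp_{0,1}}C_1={}^{\perp_{0,1}}S[-\infty]$ would break down, since ${}^{\perp_{0,1}}G=\varinjlim\add\mathbf{t}$ misses modules (for instance the adic modules from tubes $\mu\neq\lambda$) that do belong to ${}^{\perp_{0,1}}S[-\infty]$. The paper handles this by noting that $G$ is a direct summand of a product of copies of any Pr\"ufer module, and the Pr\"ufer summands of $C_S^-$ already lie in $\Prod(C_0)$; that is the argument you need here.

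A second, smaller point: the existence and shape of the $\Fcal$-cover sequence of $\tau^-S[-r]$, which you defer as a routine dualization, is in fact not a formal dual of $0\to S[r]\to S[\infty]\to S[\infty]\to 0$. The cover is decomposable: the paper obtains, via a pullback along the canonical projection $\tau^-S[-\infty]\to\tau^-S[-r]$ and the irreducible epimorphism $S[-r+1]\to\tau^-S[-r]$, a sequence $0\to S[-\infty]\to\tau^-S[-\infty]\oplus S[-r+1]\to\tau^-S[-r]\to 0$, checks it is a precover using Ext-injectivity of $S[-\infty]$ in $\Fcal$, and checks minimality because both components are non-trivial. This identification is the real content of the lemma and should be carried out. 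Finally, your plan to show that the finite-dimensional and Pr\"ufer summands are critical is workable but more laborious than necessary: the paper simply observes that $C_0$ cogenerates $\Fcal$ while none of these summands is cogenerated by the remaining summands of $C_S^-$, so they all lie in $\Prod(C_0)$. Your identification of $\Wcal$ (unique simple object $\tau^-S[-r]$, hence $\Wcal=\operatorname{filt}(\tau^-S[-r])$ consists of regular modules) agrees with the paper and is fine.
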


\begin{proof}
We proceed dually to the Pr\"{u}fer case; here we want to show that $ S[-\infty] $ is the unique special summand of the cotilting module $ C_S^- $. 

We consider  the torsion pair $(\Tcal,\Fcal)$  cogenerated by $C_S^-$ and a minimal approximation sequence
$
0 \to C_1 \to C_0 \to D\Lambda \to 0 
$.
First, we notice that all the finite-dimensional summands  and all the Pr\"{u}fer summands of $ C_S^- $ are contained in $ \Prod(C_0) $. In fact $ C_0 $ cogenerates the torsionfree class $\Fcal$, but none of these modules is cogenerated by the other summands of $ C_S^- $.  
Then also the generic module $ G $, being  a direct summand in a direct product of copies of any Pr\"{u}fer module, lies in $ \Prod(C_0) $.

In order to prove that $ S[-\infty] $ is special, we consider the (unique) torsion, almost torsionfree module $ \tau^- S [-r] $.
From the pullback diagram
\[
\begin{tikzcd}
0 \ar[r] & \tau^- S[-\infty] \ar[r] \ar[d, equals] & P \ar[r] \ar[d, hook] & S[-r + 1] \ar[d, hook] \ar[r] & 0 \\
0 \ar[r] & \tau^- S[-\infty] \ar[r] & \tau^- S[-\infty] \ar[d, two heads] \ar[r] & \tau^- S[-r] \ar[d, two heads] \ar[r] & 0 \\
& & \tau^- S \ar[r, equals] & \tau^- S
\end{tikzcd}
\]
we obtain that $ P \simeq S[-\infty] $. Thus we have a short exact sequence
\[
0 \to S[-\infty] \to \tau^- S [-\infty] \oplus S[-r + 1] \xrightarrow{g} \tau^- S [-r] \to 0 
\]
that represents the adic as kernel of the $ \mathcal{F}$-cover of a torsion, almost torsionfree module. Here $g$ is the map induced by the canonical projection $ \tau^- S [-\infty] \to \tau^- S[-r] $ and by the irreducible morphism $ S[-r + 1] \to \tau^- S[-r] $. It is indeed   an $\Fcal$-precover because  $S[-\infty]$ is Ext-injective in $\Fcal$, and it minimal as its components are non-trivial.

In conclusion, recalling that $(\Tcal,\Fcal)$ is widely generated, and combining Remark~\ref{perpendicular} with  Lemma \ref{lem:betaCompHereditary}, we obtain
\[
{}^{\perp_{0,1}}S[-\infty] = {}^{\perp_{0,1}}C_1 = \beta(\Fcal) =\mathcal{W}^{\perp_{0,1}} 
\]
where $ \mathcal{W} =  \alpha(\Tcal)\cap\lmod{\Lambda}=\operatorname{filt}(\tau^-S[-r]) ) $ consists of regular modules.
\end{proof}

For the general case, when $\Pcal$ is  a collection of indecomposable pure-injective modules, we will make use of the following decomposition result, consequence of  work by  Ringel.

\begin{prop}[{\cite[Theorem in Section G, Theorem 4.4, Proposition 4.8  ]{Ringel}}]\label{R}
Every module $ M \in \varinjlim \add \mathbf{t} $ has a decomposition $ M = \coprod_{ \lambda \in \mathbb{X} } M_\lambda $ where $ M_\lambda $ is the largest submodule of $ M $ belonging to $ \varinjlim \add \mathbf{t}_\lambda $. Moreover, for each $ \lambda \in \mathbb{X} $ there exists a pure exact sequence
\[
0 \to A_\lambda \to M_\lambda \to Z_\lambda \to 0  
\]
where $ A_\lambda $ is a direct sum of finite-dimensional modules in $ \mathbf{t}_\lambda $, while $ Z_\lambda $ is a direct sum of Pr\"{u}fer modules from $ \mathbf{t}_\lambda $ .
\end{prop}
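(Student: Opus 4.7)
The plan is to reduce the statement to one tube at a time by exploiting the orthogonality of distinct tubes, and then to exploit the uniserial structure within each tube. Write $M=\varinjlim_{i\in I} M_i$ with $M_i\in\add\mathbf{t}$ and decompose $M_i=\bigoplus_{\lambda\in\mathbb{X}} M_{i,\lambda}$ with $M_{i,\lambda}\in\add\mathbf{t}_\lambda$. Since $\Hom_\Lambda(\mathbf{t}_\lambda,\mathbf{t}_\mu)=0$ for $\lambda\neq\mu$, every transition map $M_i\to M_j$ is block-diagonal with respect to the tube index, so the directed system splits as a coproduct of directed systems indexed by $\lambda$. Taking colimits yields $M=\coprod_{\lambda\in\mathbb{X}} M_\lambda$ with $M_\lambda=\varinjlim_i M_{i,\lambda}\in\varinjlim\add\mathbf{t}_\lambda$. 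For the maximality claim, any submodule $N\subseteq M$ lying in $\varinjlim\add\mathbf{t}_\lambda$ can be written $N=\varinjlim_k N_k$ with $N_k\in\add\mathbf{t}_\lambda$ finite dimensional; each composition $N_k\hookrightarrow M\twoheadrightarrow M_\mu$ vanishes for $\mu\neq\lambda$ by tube orthogonality, and passing to the colimit gives $N\subseteq M_\lambda$.

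Next I would fix a tube $\mathbf{t}_\lambda$ of rank $r$ and analyze $M_\lambda\in\varinjlim\add\mathbf{t}_\lambda$. The category $\add\mathbf{t}_\lambda$ is equivalent to the additive category of finite-dimensional nilpotent representations of the cyclic quiver $\widetilde{\mathbb{A}}_{r-1}$, and its indecomposable objects are the uniserials $S[n]$ indexed by a quasi-simple $S\in\mathbf{t}_\lambda$ and $n\in\mathbb{N}$. Within $\varinjlim\add\mathbf{t}_\lambda$, the only additional indecomposables one can produce are the Prüfer modules $S[\infty]=\varinjlim_n S[n]$. The key construction is to choose $Z_\lambda$ to be a maximal direct sum of Prüfer submodules of $M_\lambda$ whose inclusion into $M_\lambda$ is pure, obtained via a Zorn argument on the poset of such pure embeddings, using the Hom-orthogonality between Prüfers of distinct quasi-simples together with orthogonality of Prüfers $S[\infty],S'[\infty]$ corresponding to non-isomorphic $S,S'$ in the same tube.

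Since Prüfer modules over tame hereditary algebras are $\Sigma$-pure-injective, arbitrary coproducts of Prüfers in the tube $\mathbf{t}_\lambda$ remain pure-injective; in particular $Z_\lambda$ is pure-injective, so its pure embedding into $M_\lambda$ splits, yielding a direct sum decomposition $M_\lambda\cong A_\lambda\oplus Z_\lambda$ for some $A_\lambda\in\varinjlim\add\mathbf{t}_\lambda$. By maximality of $Z_\lambda$, the summand $A_\lambda$ admits no Prüfer summand. The remaining task is to show that any module in $\varinjlim\add\mathbf{t}_\lambda$ without Prüfer summands is a direct sum of finite-dimensional tube modules. Once this is established, the sequence $0\to A_\lambda\to M_\lambda\to Z_\lambda\to 0$ is split and therefore pure, completing the proof.

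The main obstacle is the last step: the decomposition of $A_\lambda$ into finite-dimensional indecomposables. This is essentially a Kulikov-type theorem for the serial category $\varinjlim\add\mathbf{t}_\lambda$, and must be proved by showing that every element of $A_\lambda$ lies in a finite-dimensional submodule (i.e.\ has bounded regular length), followed by a transfinite induction assembling $A_\lambda$ as a direct sum of uniserials. The absence of a Prüfer summand is precisely what forces boundedness: an element of unbounded regular length would generate a submodule mapping onto a Prüfer, which by pure-injectivity would split off a Prüfer summand of $A_\lambda$, contradicting maximality of $Z_\lambda$. This detailed analysis of tame hereditary tubes is exactly the content of Ringel's results cited in the statement, and I would invoke them for this final step rather than redeveloping the full serial structure theory.
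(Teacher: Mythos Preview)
The paper does not prove this proposition; it is simply cited from Ringel. Your tube-by-tube decomposition via Hom-orthogonality in the first paragraph is correct and is the natural opening move. However, your strategy within a single tube contains a genuine error. You propose to split off a maximal pure Pr\"ufer summand $Z_\lambda$ using $\Sigma$-pure-injectivity and then claim that the complement $A_\lambda$, having no Pr\"ufer summand, must be a direct sum of finite-dimensional modules. This claim is false. For a rank-one tube, $\varinjlim\add\mathbf{t}_\lambda$ is equivalent to the category of torsion modules over a complete discrete valuation ring, and your claim translates to the assertion that every reduced abelian $p$-group is a direct sum of cyclic groups, which fails: the torsion subgroup of $\prod_{n\ge 1}\mathbb{Z}/p^n\mathbb{Z}$ is reduced but not a direct sum of cyclics. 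Your justification conflates pure submodules with quotients --- pure-injectivity of $S[\infty]$ splits pure monomorphisms \emph{from} $S[\infty]$, not epimorphisms \emph{onto} it. Note also that your construction would produce a \emph{split} sequence, whereas Ringel's pure-exact sequence is not split in general, which is another indication that the approach cannot succeed.

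The route Ringel actually takes is dual to yours: one first constructs $A_\lambda$ as a pure submodule that is a direct sum of finite-dimensional uniserials (the analogue of a basic subgroup in abelian $p$-group theory) and then shows that the quotient $M_\lambda/A_\lambda$ is divisible in the tube, hence a direct sum of Pr\"ufer modules. The ``Kulikov-type'' input you would need is the existence of basic submodules, not the (false) statement that reduced implies direct-sum-of-cyclics.
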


We will also need the following structure result for the wide subcategories lying in the additive closure of a tube.

\begin{lem}
\label{lem:wideStructureInTube}
Let $ \mathcal{A} = \add\mathbf{t}_\lambda $ be the additive closure of  a tube of rank $ r $, which we view as an abelian length category with simple objects  denoted by $ S_1, \, S_2=\tau^- S_1, ... ,  S_r = \tau^{-r + 1} S $. The non-trivial wide subcategories of $ \mathcal{A} $ have the shape $ \add(\mathcal{F} \cup \mathcal{T}) $ where $ \mathcal{F} $ is a subset of a wing of size at most  $r-1$, and $ \mathcal{T}$ is either zero or  $ \mathcal{T} = \{ S_i[nr] \mid n \in \mathbb{N}  \} $.
\end{lem}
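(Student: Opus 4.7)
The plan is to reduce the classification of wide subcategories of $\mathcal{A}$ to that of their semibricks of simple objects, exploiting the uniserial structure of modules in the tube and the Auslander-Reiten translate $\tau S_j[m]=S_{j-1}[m]$. First I would pin down the bricks: a direct socle-and-top calculation shows $S_j[m]$ is a brick exactly when $m\le r$, since for $m>r$ the socle and some strictly higher composition factor coincide, producing a nonscalar loop endomorphism $S_j[m]\twoheadrightarrow S_j[m-r]\hookrightarrow S_j[m]$. The length-$r$ bricks $S_i[r]$ have a distinguished role: an AR-formula calculation gives $\Ext^1(S_i[r],S_i[r])=D\Hom(S_i[r],S_{i-1}[r])\ne 0$, and iterated self-extensions of $S_i[r]$ produce exactly the set $\mathcal{T}_i=\{S_i[nr]\mid n\ge 1\}$; the resulting additive closure is itself a wide subcategory whose unique simple object is $S_i[r]$.

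Next, I would invoke the standard fact that in an abelian length category every wide subcategory $W$ has the form $\operatorname{filt}(\mathcal{S})$ for $\mathcal{S}$ its semibrick of simples, so the classification reduces to classifying semibricks of $\mathcal{A}$ together with the structure of their filtration closures. I would split into cases according to whether $\mathcal{S}$ contains a length-$r$ brick. In the case without such a brick, I would translate Hom-orthogonality of $S_j[m]$ and $S_{j'}[m']$, via the AR-formula $\Ext^1(X,Y)\cong D\Hom(Y,\tau X)$, into a non-crossing condition on the arcs $[j,j+m-1]$ that the bricks determine on the cyclic set $\mathbb{Z}/r$ of quasi-simples. An elementary combinatorial argument then shows that a pairwise non-crossing family of arcs of length $<r$ collectively avoids a quasi-simple, so fits into a common wing of size at most $r-1$; the filtration closure inherits this, and since the wing is equivalent to $\operatorname{mod}k\mathbb{A}_m$ for $m\le r-1$, one concludes $W=\add\mathcal{F}$ for $\mathcal{F}$ the finitely many indecomposables of that wide sub-subcategory.

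In the case where $\mathcal{S}$ contains a length-$r$ brick, I would first observe that $\Hom(S_i[r],S_j[r])\ne 0$ whenever $i\ne j$, by exhibiting the factorisation through a common quasi-simple quotient/subobject, so $\mathcal{S}$ contains a unique such brick, say $S_i[r]$, giving $\operatorname{filt}(\{S_i[r]\})=\add\mathcal{T}_i$. The remaining members of $\mathcal{S}$ are length-$<r$ bricks Hom-orthogonal to $S_i[r]$, and another Hom computation at the socle $S_i$ and top $S_{i-1}$ of $S_i[r]$ confines them to the complementary wing of size at most $r-1$. The resulting mutual Hom- and $\Ext^1$-orthogonality between $\mathcal{T}_i$ and that wing, which follows from the arc-complementarity together with the AR-formula, ensures that the filtration closure splits as $W=\add(\mathcal{F}\cup\mathcal{T}_i)$ where $\mathcal{F}$ is the set of indecomposables produced by the sub-wide-subcategory inside the wing.

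The main obstacle I expect is the combinatorial core of the first case, namely showing that a pairwise non-crossing family of arcs of length $<r$ on $\mathbb{Z}/r$ avoids a common quasi-simple. I would handle it by induction on the number of arcs, using a maximal arc in the family to cut out the interval containing all the others and to translate the non-crossing condition on the residual arcs into a non-crossing condition on a linear segment, where the result is classical. A secondary technical point is the verification of $\Ext^1$-orthogonality between the wing and $\mathcal{T}_i$, which reduces to $\tau S_i[r]=S_{i-1}[r]$ together with the explicit description of $\Hom$-spaces between bricks of lengths $<r$ and $r$ obtained in the first step.
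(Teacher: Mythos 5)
Your plan follows the same route as the paper's proof: reduce to semibricks via Ringel's correspondence, note that a semibrick can contain at most one brick of quasi-length $r$, treat that case through the wing of bricks Hom-orthogonal to $S_i[r]$ together with the self-extensions $S_i[nr]$, and then confine the remaining semibricks to a wing. Your treatment of the case containing a length-$r$ brick is essentially the paper's argument and is fine.

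The genuine gap is the combinatorial core of your second case. The claim that a pairwise non-crossing family of arcs of length $<r$ on $\mathbb{Z}/r$ avoids a common quasi-simple is false: disjoint arcs are non-crossing, and the corresponding bricks are automatically Hom-orthogonal (a nonzero image would have a common composition factor), yet disjoint arcs can cover the whole mouth of the tube. Concretely, in a tube of rank $r=4$ the bricks $S_1[2]$ and $S_3[2]$ form a semibrick whose arcs $\{1,2\}$ and $\{3,4\}$ cover all quasi-simples; its filtration closure is a wide subcategory equivalent to a tube of rank $2$, with indecomposables $S_1[2m]$ and $S_3[2m]$ for all $m\ge 1$ (for instance $S_1[4]$ is a non-split extension of $S_3[2]$ by $S_1[2]$). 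This subcategory is contained in no wing and is not of the form $\add(\mathcal{F}\cup\mathcal{T})$ with a single ray $\{S_i[nr]\}$, so your induction on a maximal arc cannot go through. In fairness, this is exactly the step the paper's own proof also asserts without argument (``the semibrick, and thus the whole wide subcategory, must be contained in a wing of size at most $r-1$''), and the example shows the statement itself needs amending for tubes of rank $\ge 4$: semibricks whose arcs partition the mouth generate wide subcategories which are themselves tubes of smaller rank. What does survive, and is all that is used later in the paper, is the dichotomy that a non-trivial wide subcategory of $\mathcal{A}$ is either contained in a wing of size at most $r-1$ or contains all modules $S_i[nr]$, $n\in\mathbb{N}$, on some ray: if the arcs of the semibrick are not contained in a proper arc of the mouth, then (since overlapping, non-nested arcs are never Hom-orthogonal) its maximal arcs are pairwise disjoint and partition the mouth, and filtering once around the tube produces some $S_i[r]$ and hence all $S_i[nr]$. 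If you rewrite your proof, aim at this corrected dichotomy rather than at the stated normal form.
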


\begin{proof}
Recall that  wide subcategories in an abelian length category are in bijection with semibricks. 
Thus, we consider the bricks in the category $ \mathcal{A} $. These are precisely the modules $ S_i[k] $ for $ 1 \le i,k \le r $.

Notice that  every brick $ S_i[r] $ has non-zero morphisms to any   other brick $ S_j[r] $. In particular, a given semibrick can only contain a single brick of regular length $ r $.
Moreover, using the structure of the tube, we see that the  bricks in $\Acal$ which are Hom-orthogonal to a given $ S_i[r] $ are contained in the wing with vertex $ \tau^- S_i[r - 2] $ (for $ r \ge 3 $ ). 
However, all the modules in this wing are Ext-orthogonal to $ S_i[r] $. Therefore a wide subcategory of $\Acal$ containing $ S_i[r] $ is the additive closure of some collection of indecomposables from the wing with vertex $ \tau^- S_i[r - 2] $ and of the self-extensions of $ S_i[r] $. The latter are precisely the modules $ S_i[nr] $. 

It remains to consider the wide subcategories whose semibricks do not contain bricks of regular length $r$. In this case, the semibrick, and thus the whole wide subcategory, must be  contained in a wing of size at most $r-1$.
\end{proof}

\begin{lem}
Let $ \mathcal{P} $ be a collection of indecomposable pure-injectives consisting of Pr\"{u}fer modules and possibly also of the generic module. Then
$
{}^{\perp_{0,1}} \mathcal{P} = \varinjlim \mathcal{W} 
$
for some wide subcategory in $\mathbf{wide}(\Lambda)$ consisting of regular modules.
\end{lem}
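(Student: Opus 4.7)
Plan. Set $\mathcal{W} := {}^{\perp_{0,1}}\mathcal{P}\cap\lmod\Lambda$; as an intersection of wide subcategories, $\mathcal{W}$ is wide in $\lmod\Lambda$, and I aim to show ${}^{\perp_{0,1}}\mathcal{P}=\varinjlim\mathcal{W}$ together with $\mathcal{W}\subseteq\add\mathbf{t}$. When $\mathcal{P}=\{G\}$ the statement is exactly Lemma~\ref{G}. Otherwise $\mathcal{P}$ contains a Pr\"ufer module $S[\infty]$, and $G\in\Prod(S[\infty])$ (recalled in the proof of Lemma~\ref{G}) yields ${}^{\perp_{0,1}}\mathcal{P}\subseteq{}^{\perp_{0,1}}G=\varinjlim\add\mathbf{t}$, so $\mathcal{W}$ consists of regular modules. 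The containment $\varinjlim\mathcal{W}\subseteq{}^{\perp_{0,1}}\mathcal{P}$ is then automatic, since ${}^{\perp_{0,1}}N$ is closed under direct limits for any pure-injective $N$ (as already used in Lemma~\ref{lem:CsigmaClosedDirLim}).

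For the reverse containment let $M\in{}^{\perp_{0,1}}\mathcal{P}$. Apply Proposition~\ref{R} to decompose $M=\coprod_{\lambda\in\mathbb{X}}M_\lambda$ with $M_\lambda\in\varinjlim\add\mathbf{t}_\lambda$. Each $M_\lambda$ is a direct summand of $M$ and so lies in ${}^{\perp_{0,1}}\mathcal{P}$; as $\varinjlim\mathcal{W}$ is wide and coreflective by Theorem~\ref{themapalpha}, hence closed under coproducts, it suffices to prove $M_\lambda\in\varinjlim\mathcal{W}$ for every $\lambda$. Write $\mathcal{P}_\lambda\subseteq\mathcal{P}$ for the Pr\"ufers arising from $\mathbf{t}_\lambda$. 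The Hom/Ext-orthogonality between distinct tubes (via the Auslander--Reiten formula as in the two preceding lemmas) ensures that every finite-dimensional object of $\add\mathbf{t}_\lambda$ belongs automatically to ${}^{\perp_{0,1}}P$ for each $P\in\mathcal{P}\setminus\mathcal{P}_\lambda$. Consequently $\mathcal{W}\cap\add\mathbf{t}_\lambda$ equals $\mathcal{W}_\lambda:=\add\mathbf{t}_\lambda\cap\bigcap_{P\in\mathcal{P}_\lambda}{}^{\perp_{0,1}}P$, a wide subcategory of $\add\mathbf{t}_\lambda$. When $\mathcal{P}_\lambda=\emptyset$ this is all of $\add\mathbf{t}_\lambda$, so $M_\lambda\in\varinjlim\add\mathbf{t}_\lambda=\varinjlim\mathcal{W}_\lambda\subseteq\varinjlim\mathcal{W}$.

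If $\mathcal{P}_\lambda\neq\emptyset$, I iterate the single-Pr\"ufer lemma just proved. For each $P\in\mathcal{P}_\lambda$ one has $M_\lambda\in\varinjlim\mathcal{W}_P$, and the vanishing of all morphisms between distinct tubes forces Ringel's decomposition to be preserved in these direct limits, so in fact $M_\lambda\in\varinjlim(\mathcal{W}_P\cap\add\mathbf{t}_\lambda)$. To then conclude $M_\lambda\in\varinjlim\mathcal{W}_\lambda$, I apply Proposition~\ref{R} to $M_\lambda$ within the tube, obtaining a pure-exact sequence $0\to A_\lambda\to M_\lambda\to Z_\lambda\to 0$ whose outer terms are respectively a coproduct of finite-dimensional bricks from $\mathbf{t}_\lambda$ and a coproduct of Pr\"ufers from $\mathbf{t}_\lambda$; the orthogonality of $M_\lambda$ to every $P\in\mathcal{P}_\lambda$ then pins down precisely which bricks can appear in $A_\lambda$ and which Pr\"ufer indices in $Z_\lambda$, and this must match the explicit shape of $\mathcal{W}_\lambda$ given by Lemma~\ref{lem:wideStructureInTube} (additive closure of wing-modules together with possibly a self-extending column).

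The main obstacle I anticipate is this final commutation of direct limit closure with the (possibly infinite) intersection $\bigcap_{P\in\mathcal{P}_\lambda}\varinjlim(\mathcal{W}_P\cap\add\mathbf{t}_\lambda)$ inside a single tube; the proof will have to combine Ringel's structure result with the complete classification of wide subcategories of $\add\mathbf{t}_\lambda$ to translate the infinitely many orthogonality conditions into finitely many combinatorial conditions on the summands of $A_\lambda$ and on the admissible Pr\"ufer indices of $Z_\lambda$.
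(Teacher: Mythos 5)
Your outline follows the same route as the paper (reduce via Proposition~\ref{R} to the tube components, then to the pure-exact sequence $0\to A_\lambda\to M_\lambda\to Z_\lambda\to 0$, and control the finite-dimensional and Pr\"ufer parts using Lemma~\ref{lem:wideStructureInTube}), but the decisive step is not actually carried out: you only ``anticipate'' the commutation of $\varinjlim$ with the intersection $\bigcap_{P\in\mathcal{P}_\lambda}{}^{\perp_{0,1}}P$ inside a tube, and that commutation is precisely where the mathematical content of the lemma sits. Note also that your intermediate claim $M_\lambda\in\varinjlim(\mathcal{W}_P\cap\add\mathbf{t}_\lambda)$ is itself an instance of the same unproved commutation, so the argument as written is circularly deferred to the ``main obstacle''. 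Concretely, after the reduction everything hinges on a single statement about the Pr\"ufer part: if $S[\infty]$ lies in ${}^{\perp_{0,1}}P$ for every $P\in\mathcal{P}$, then the modules $S[nr]$, $n\in\mathbb{N}$, already lie in $\mathcal{W}={}^{\perp_{0,1}}\mathcal{P}\cap\lmod\Lambda$, so that $S[\infty]=\varinjlim S[nr]\in\varinjlim\mathcal{W}$. This is what the paper proves: by the single-Pr\"ufer lemma ${}^{\perp_{0,1}}P=\varinjlim\mathcal{W}_P$ with $\mathcal{W}_P$ wide and regular; a presentation of $S[\infty]$ as a direct limit of finite-dimensional regular modules must use infinitely many modules from the ray starting at $S$, so by Lemma~\ref{lem:wideStructureInTube} the trace $\mathcal{W}_P\cap\add\mathbf{t}_\lambda$ cannot sit inside a wing (whose objects have bounded regular length) and must contain the whole column $\{S[nr]\mid n\in\mathbb{N}\}$. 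Since this holds for each $P$ separately, it holds for the intersection, and there is no need to convert ``infinitely many orthogonality conditions into finitely many combinatorial ones''--- the criterion is uniform in $P$ and arbitrary intersections cause no difficulty.

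Two smaller points. First, to transfer membership through the pure-exact sequence you should not appeal vaguely to orthogonality ``pinning down the summands'': either argue, as the paper does, that all categories in play are closed under kernels, pure-epimorphic images and extensions, or use pure-injectivity of $P$ together with the purity of $A_\lambda\hookrightarrow M_\lambda$ to see that $\Hom(M_\lambda,P)=\Ext^1(M_\lambda,P)=0$ forces the same vanishing for $A_\lambda$ and $Z_\lambda$; for the coproduct of finite-dimensional modules $A_\lambda$ one then uses $\varinjlim\mathcal{W}\cap\lmod\Lambda=\mathcal{W}$ (Theorem~\ref{themapalpha}) and closure under summands. Second, Proposition~\ref{R} gives that $A_\lambda$ is a coproduct of finite-dimensional modules from $\mathbf{t}_\lambda$, not of bricks. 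The set-up of your proof (including $\mathcal{W}\subseteq\add\mathbf{t}$ via $G\in\Prod S[\infty]$, the tube-wise reduction, and the easy inclusion $\varinjlim\mathcal{W}\subseteq{}^{\perp_{0,1}}\mathcal{P}$) is fine; what is missing is exactly the argument above.
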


\begin{proof}
If the set $ \mathcal{P} $ consists only of the generic module, then we are done by Lemma~\ref{G}. If it contains at least a Pr\"{u}fer module, then the generic doesn't contribute to the computation of the perpendicular category, thus we may assume it doesn't belong to $ \mathcal{P} $. 

We want to show that for two wide subcategories consisting of regular modules $ \mathcal{U} $ and $ \mathcal{V} $ we have $ \varinjlim \mathcal{U} \cap \varinjlim \mathcal{V} = \varinjlim (\mathcal{U} \cap \mathcal{V} ) $. 
The inclusion $\supseteq$ is immediate. For the reverse inclusion, pick $ X \in \varinjlim \mathcal{U} \cap \varinjlim \mathcal{V} $. Without loss of generality, we may assume that $ X \in \varinjlim \add \mathbf{t}_\lambda $ for some index $ \lambda $. Then by Proposition~\ref{R} this module fits into a pure short exact sequence 
\[
0 \to A_\lambda \to X \to Z_\lambda \to 0
\]
and since all the three subcategories involved are closed under kernels, pure-epimorphic images and extensions, $ X $ lies in one category if and only if so do $ A_\lambda $ and $ Z_\lambda $. 

But now, $ A_\lambda $ is a coproduct of finite-dimensional modules, thus $ A_\lambda \in \varinjlim \mathcal{U} \cap \varinjlim \mathcal{V} $ if and only if $ A_\lambda \in \Add(\mathcal{U} \cap \mathcal{V}) \subseteq \varinjlim (\mathcal{U} \cap \mathcal{V}) $.
As for $ Z_\lambda $,  it is a coproduct of Pr\"{u}fer modules, so once again, we may restrict to the case of a single Pr\"{u}fer module.
 But $ S[\infty] $ can only be written as a direct limit of regular modules if we use infinitely many modules from the ray starting at $ S $. However, using Lemma \ref{lem:wideStructureInTube}, we see that the intersection of a wide subcategory of $ \lmod{\Lambda} $ with a tube is either contained in some wing of size strictly smaller than the rank of the tube, or it contains all modules  of regular length $nr$, $n\in\mathbb{N}$, on a certain ray in that tube. This consideration yields that $ S[\infty] \in \varinjlim \mathcal{U} \cap \varinjlim \mathcal{V} $ if and only if the modules $ S[nr], n \in \mathbb{N}$, are in $ \mathcal{U} \cap \mathcal{V} $, so that $ S[\infty] \in \varinjlim (\mathcal{U} \cap \mathcal{V}) $.
\end{proof}

For collections of adic and finite-dimensional modules we  use a result due to Schofield.

\begin{lem}
Let $ \mathcal{P} $ be a collection of finite-dimensional or adic modules. Then there exists a wide subcategory $ \mathcal{W} $ in $\mathbf{wide}(\Lambda)$ such that
$
{}^{\perp_{0,1}}\mathcal{P} = \mathcal{W}^{\perp_{0,1}}.$
\end{lem}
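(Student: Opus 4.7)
The plan is to reduce to the single-module cases handled in the two immediately preceding lemmas and then aggregate them via a wide-closure argument that exploits the hereditariness of $\Lambda$.

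For each $P \in \mathcal{P}$, the hypothesis ensures that $P$ is either finite-dimensional or adic, so the appropriate preceding lemma (applied to each indecomposable summand when $P$ is finite-dimensional and decomposable) yields a wide subcategory $\mathcal{W}_P \in \mathbf{wide}(\Lambda)$ such that ${}^{\perp_{0,1}}P = \mathcal{W}_P^{\perp_{0,1}}$. I would then let $\mathcal{W}$ denote the smallest wide subcategory of $\lmod\Lambda$ containing $\bigcup_{P \in \mathcal{P}} \mathcal{W}_P$; this exists because arbitrary intersections of wide subcategories are wide. The inclusion $\mathcal{W}^{\perp_{0,1}} \subseteq {}^{\perp_{0,1}}\mathcal{P}$ is then immediate from $\mathcal{W}_P \subseteq \mathcal{W}$.

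The reverse inclusion is the heart of the argument. The key observation is that because $\Lambda$ is hereditary, for any $X \in \lMod\Lambda$ the subcategory
$$ \mathcal{V}_X = \bigl\{ W \in \lmod\Lambda \;\bigm|\; \Hom_\Lambda(W,X) = 0 = \Ext^1_\Lambda(W,X) \bigr\} $$
is itself a wide subcategory of $\lmod\Lambda$: applying $\Hom_\Lambda(-,X)$ to a short exact sequence $0 \to W' \to W \to W'' \to 0$ in $\lmod\Lambda$ produces a six-term long exact sequence that terminates at $\Ext^1_\Lambda(W',X)$ thanks to the vanishing of $\Ext^2_\Lambda$, and this readily forces closure of $\mathcal{V}_X$ under kernels, cokernels, and extensions. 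Consequently, if $X \in {}^{\perp_{0,1}}\mathcal{P} = \bigcap_P \mathcal{W}_P^{\perp_{0,1}}$, then $\mathcal{V}_X$ contains each $\mathcal{W}_P$, hence it contains their wide closure $\mathcal{W}$, and therefore $X \in \mathcal{W}^{\perp_{0,1}}$, as required.

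The only real obstacle is making sure every $P \in \mathcal{P}$ falls into the scope of one of the single-module lemmas, but this is precisely the hypothesis. Everything else is formal: the Schofield-type input (the existence of $\mathcal{W}_P$ with ${}^{\perp_{0,1}}P = \mathcal{W}_P^{\perp_{0,1}}$ for individual adic modules) has been established in the preceding lemma, and hereditariness does all the remaining work in passing from pointwise orthogonality conditions to a single wide subcategory.
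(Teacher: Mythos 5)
Your proposal is correct, but the aggregation step is genuinely different from the paper's. Both arguments begin the same way, reducing to the two preceding single-module lemmas to write $^{\perp_{0,1}}\mathcal{P}=\bigcap_{P\in\mathcal{P}}\mathcal{W}_P\,^{\perp_{0,1}}$ with each $\mathcal{W}_P\in\mathbf{wide}(\Lambda)$. The paper then finishes abstractly: it invokes the Schofield/Krause--\v{S}tov\'{\i}\v{c}ek bijection between $\mathbf{wide}(\Lambda)$ and extension-closed bireflective subcategories of $\lMod\Lambda$, observes that the intersection of such subcategories is again extension-closed bireflective, and concludes it must be of the form $\mathcal{W}^{\perp_{0,1}}$ (without exhibiting $\mathcal{W}$ concretely). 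You instead name an explicit candidate, the wide closure $\mathcal{W}$ of $\bigcup_P\mathcal{W}_P$ in $\lmod\Lambda$, and verify the equality by hand via the observation that, since $\Lambda$ is hereditary, $\mathcal{V}_X=\{W\in\lmod\Lambda\mid \Hom_\Lambda(W,X)=\Ext^1_\Lambda(W,X)=0\}$ is wide for every $X$ (your kernel argument needs the small intermediate step of first checking the image of a map lies in $\mathcal{V}_X$, but that follows from the same vanishing of $\Ext^2$). Your route is more elementary and self-contained -- it avoids the universal-localization classification entirely and in fact shows the aggregation works for any family of modules whose individual perpendicular categories are of the form $\mathcal{W}_P\,^{\perp_{0,1}}$ -- while the paper's route is shorter given the cited machinery and yields as a by-product that $^{\perp_{0,1}}\mathcal{P}$ is bireflective, which ties in with condition (3) of Theorem~\ref{tamethm}.
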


\begin{proof}
By
\cite[Theorem 2.3]{uniLocHered}, \cite[Theorem 6.1]{extPairs}, the assignment $\Wcal\mapsto \mathcal{W}^{\perp_{0,1}} $ defines a bijection between  $\mathbf{wide}(\Lambda)$  and the extension-closed bireflective subcategories of $ \lMod{\Lambda} $.
Notice that $ {}^{\perp_{0,1}} \mathcal{P} $ is of the form $ \bigcap_{P \in \mathcal{P}} \mathcal{W}_P\,^{\perp_{0,1}} $ with each $ \mathcal{W}_P $  in $\mathbf{wide}(\Lambda)$. This is an extension-closed bireflective subcategory of $ \lMod{\Lambda} $, as so are all $\mathcal{W}_P\,^{\perp_{0,1}}$. Hence it has the required shape.
\end{proof}

Now we are ready for the final step.

\begin{lem}
Let $ \mathcal{P} $ be a collection of pure-injective modules. Then there exists a wide subcategory $ \mathcal{W} $ of $\lmod{\Lambda}$ such that $ {}^{\perp_{0,1}}\mathcal{P} = \varinjlim \mathcal{W} $ or $ {}^{\perp_{0,1}} \mathcal{P} = \mathcal{W}^{\perp_{0,1}} $.
\end{lem}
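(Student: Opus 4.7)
The plan is to reduce the statement to the preceding four lemmas by decomposing $\Pcal$ according to the classification of indecomposable pure-injectives over a tame hereditary algebra. Write $\Pcal = \Pcal_1 \cup \Pcal_2$, where $\Pcal_1$ collects the finite-dimensional and adic modules in $\Pcal$, and $\Pcal_2$ collects the Pr\"ufer modules in $\Pcal$ together with the generic module $G$ (when the latter lies in $\Pcal$). Since perpendicular categories turn unions into intersections,
\[
{}^{\perp_{0,1}}\Pcal \;=\; {}^{\perp_{0,1}}\Pcal_1 \,\cap\, {}^{\perp_{0,1}}\Pcal_2.
\]
The two preceding lemmas furnish $\Wcal_1 \in \mathbf{wide}(\Lambda)$ and a wide subcategory $\Wcal_2$ of $\lmod\Lambda$ consisting of regular modules such that ${}^{\perp_{0,1}}\Pcal_1 = \Wcal_1^{\perp_{0,1}}$ and ${}^{\perp_{0,1}}\Pcal_2 = \varinjlim \Wcal_2$.

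If $\Pcal_2 = \emptyset$ then ${}^{\perp_{0,1}}\Pcal = \Wcal_1^{\perp_{0,1}}$, and if $\Pcal_1 = \emptyset$ then ${}^{\perp_{0,1}}\Pcal = \varinjlim \Wcal_2$, so the two degenerate cases are immediate. In the genuinely mixed case, I claim that
\[
\Wcal_1^{\perp_{0,1}} \,\cap\, \varinjlim \Wcal_2 \;=\; \varinjlim \Wcal,
\]
where $\Wcal := \Wcal_2 \,\cap\, \bigl(\Wcal_1^{\perp_{0,1}} \cap \lmod\Lambda\bigr)$, a wide subcategory of $\lmod\Lambda$ consisting of regular modules. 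The inclusion $\supseteq$ is immediate once we recall that $\Wcal_1^{\perp_{0,1}}$ is closed under direct limits, since by Theorem~\ref{wide and extorth} it is the essential image of the restriction-of-scalars functor along the universal localization $\Lambda \to \Lambda_{\Wcal_1}$.

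For the reverse inclusion I would mimic the argument in the proof of the Pr\"ufer lemma. Given $X \in \Wcal_1^{\perp_{0,1}} \cap \varinjlim \Wcal_2$, Proposition~\ref{R} provides a decomposition $X = \coprod_{\lambda \in \mathbb{X}} X_\lambda$ together with pure exact sequences $0 \to A_\lambda \to X_\lambda \to Z_\lambda \to 0$ in which $A_\lambda$ is a coproduct of finite-dimensional modules from $\mathbf{t}_\lambda$ and $Z_\lambda$ is a coproduct of Pr\"ufer modules from $\mathbf{t}_\lambda$. Exactly as in the Pr\"ufer lemma, the closure properties under kernels, pure-epimorphic images, and extensions enjoyed by both $\Wcal_1^{\perp_{0,1}}$ and $\varinjlim \Wcal_2$ force $A_\lambda$ and $Z_\lambda$ to remain in both classes. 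Lemma~\ref{lem:wideStructureInTube} then pins down exactly which Pr\"ufer modules can occur in $Z_\lambda$, namely those $S[\infty]$ whose associated modules $S[nr]$ (with $r$ the rank of the tube) all lie in $\Wcal$. Writing each such Pr\"ufer as the direct limit $\varinjlim_n S[nr]$ and each $A_\lambda$ as a coproduct of finite-dimensional modules in $\Wcal$ then yields the required presentation $X \in \varinjlim \Wcal$.

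The main obstacle is justifying the pure-subobject and pure-quotient closure of $\Wcal_1^{\perp_{0,1}}$ that lets us pass membership of $X_\lambda$ down to $A_\lambda$ and $Z_\lambda$. This rests on the fact that over a hereditary ring the class $\Wcal_1^{\perp_{0,1}}$ is definable, being a module category over the universal localization $\Lambda_{\Wcal_1}$; closure under products, pure submodules, and direct limits is then automatic, and the adaptation of the Pr\"ufer lemma argument goes through as a bookkeeping exercise inside each tube.
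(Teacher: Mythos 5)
Your overall strategy is the same as the paper's: split $\Pcal$ into the finite-dimensional/adic part and the Pr\"ufer/generic part, invoke the two preceding lemmas to write ${}^{\perp_{0,1}}\Pcal=\Wcal_1^{\perp_{0,1}}\cap\varinjlim\Wcal_2$, and then prove that this intersection equals $\varinjlim\bigl(\Wcal_1^{\perp_{0,1}}\cap\Wcal_2\bigr)$ by decomposing a module of $\varinjlim\add\mathbf{t}$ via Proposition~\ref{R} and reducing to coproducts of finite-dimensional regular modules and of Pr\"ufer modules. The closure properties you use for the reduction are fine: over a hereditary algebra $\Wcal_1^{\perp_{0,1}}$ is indeed closed under direct limits, pure submodules and pure-epimorphic images, so membership of $X_\lambda$ does pass to $A_\lambda$ and $Z_\lambda$, and the finite-dimensional case is handled exactly as in the paper.

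However, there is a genuine gap at the decisive step, the single Pr\"ufer module $S[\infty]$. From $S[\infty]\in\varinjlim\Wcal_2$ and Lemma~\ref{lem:wideStructureInTube} you correctly get $S[nr]\in\Wcal_2$ for all $n$, but you then assert that these modules ``all lie in $\Wcal$'', i.e.\ also in $\Wcal_1^{\perp_{0,1}}$, without any argument. This does not follow from anything you have established: the inclusion $S[nr]\hookrightarrow S[\infty]$ is \emph{not} a pure monomorphism (a pure exact sequence starting at the pure-injective module $S[nr]$ would split, while $S[\infty]$ is indecomposable), so the definability of $\Wcal_1^{\perp_{0,1}}$ is of no use here; and $\Wcal_1^{\perp_{0,1}}$ is not closed under arbitrary submodules, because $\Wcal_1^{\perp_1}$ is only closed under quotients and extensions. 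The paper spends the second half of its proof on exactly this point: $S[nr]\in\Wcal_1^{\perp_0}$ because $\Wcal_1^{\perp_0}$ is closed under submodules, and $S[nr]\in\Wcal_1^{\perp_1}$ by an Auslander--Reiten formula argument, namely every indecomposable $U\in\Wcal_1$ is Hom-orthogonal to $S[\infty]$, hence regular (neither preprojective nor preinjective), and if $\Ext^1(U,S[r])\cong D\Hom(\tau^-S[r],U)\neq 0$ then $U$ would have to sit on the tube of $S$ in a position forcing $\Hom(U,S[\infty])\neq 0$, a contradiction; closure of $\Wcal_1^{\perp_1}$ under extensions then gives all $S[nr]$. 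Without this (or an equivalent) argument your final sentence, exhibiting $S[\infty]$ as a direct limit of modules in $\Wcal$, is unjustified, so the proof is incomplete as written.
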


\begin{proof}
By the discussion above, we only have to treat the  case when $ \mathcal{P} = \mathcal{A} \cup \mathcal{D} $ where $ \mathcal{A} $ consists of finite-dimensional or adic modules, and $ \mathcal{D}$ consists of Pr\"{u}fer modules (and possibly the generic). 
By our previous computations, we have that $ {}^{\perp_{0,1}}\mathcal{A} = \mathcal{U}^{\perp_{0,1}} $ and $ {}^{\perp_{0,1}} \mathcal{D} = \varinjlim \mathcal{W} $ where $\Ucal$ and $\Wcal$ are in $\mathbf{wide}(\Lambda)$ and $ \mathcal{W} $ consists of regular modules. Then  $ {}^{\perp_{0,1}} \mathcal{P} = \mathcal{U}^{\perp_{0,1}} \cap \varinjlim \mathcal{W} $, and 
we want to show that the latter coincides with the direct limit closure of the wide subcategory $ \mathcal{U}^{\perp_{0,1}} \cap \mathcal{W}$.

The inclusion  $ \varinjlim (\mathcal{U}^{\perp_{0,1}} \cap \mathcal{W} ) \,\subseteq \,\mathcal{U}^{\perp_{0,1}} \cap \varinjlim \mathcal{W} $ is clear as $ \mathcal{U}^{\perp_{0,1}} $ is closed under direct limits.
For the other inclusion, assume $ X \in  \mathcal{U}^{\perp_{0,1}} \cap \varinjlim \mathcal{W} $. As this module is in $ \varinjlim \mathcal{W} \subseteq \varinjlim \add \mathbf{t}$, we can again apply Proposition~\ref{R}  to reduce to the cases when $ X \in \Add(\lmod{\Lambda}) $ or $ X $ is a coproduct of Pr\"{u}fer modules.
The first case is immediate; in the second case, we may further reduce to  $ X = S[\infty] $ by using that all the subcategories we are considering are closed under summands and coproducts. 
Then, as observed before, $ \mathcal{W} $ must contain the modules $ S[nr] $ for all $ n \in \mathbb{N} $.

Since $ S[\infty] \in \mathcal{U}^{\perp_0} $, the same holds true for  its submodules $ S[nr] $. 
We claim that the modules $ S[nr] $ also lie in $\mathcal{U}^{\perp_1}$.
To this end, we pick $ U \in \mathcal{U} $, which we may assume indecomposable.
Notice that $ U $ is neither preprojective, nor preinjective, because  it is Hom-orthogonal to $ S[\infty] $, see \cite[XII, Lemma 3.6]{SS}. Assume
  that $ \Ext^1(U, S[r]) = D\Hom(\tau^-S[r], U) \ne 0 $. Then the regular module $ U $  must   be of the form $U=\tau^i S[k]$ with $0 \le i \le r - 1$ and $k - i \ge 1$.
But for any such choice, we  have $ \Hom(U, S[\infty]) \ne 0 $, a contradiction. So we have proved our claim for $ S[r] $, and consequently, also for all $ S[nr] $, as  $ \mathcal{U}^{\perp_1} $ is closed under extensions.  
This shows that $ S[\infty] $ is a direct limit of modules in $\mathcal{U}^{\perp_{0,1}} \cap \mathcal{W}$, concluding the proof.
\end{proof}

\section{Wide coreflective subcategories over the Kronecker algebra}

In the previous section we have determined the wide coreflective subcategories which are perpendicular to collections of pure-injective modules.  The existence of further  wide coreflective subcategories seems to be an intriguing question. In fact, it is related to the problem of  classifying the localizing subcategories in the unbounded derived category $\Dcal(\lMod A)$. Let us start by recalling some terminology.

\begin{de}{\rm
Let $ \mathcal{ T } $ be a triangulated category with suspension functor $ \Sigma $.
 
(1)  Two subcategories $ \mathcal{U}, \mathcal{V} $ of $\Tcal$ closed under direct summands  form a \emph{torsion pair} $ ( \mathcal{U}, \mathcal{V}) $ if:
\begin{itemize}
\item[(i)] $ \Hom_{\mathcal{T}}(\mathcal{U}, \mathcal{V}) = 0 $
\item[(ii)] For all objects $ T \in \mathcal{ T } $ we can find a triangle
\[
\begin{tikzcd}
U_T \arrow[r] & T \arrow[r] & V_T \arrow[r] & \Sigma U_T
\end{tikzcd}
\]
with $ U_T \in \mathcal{U} $ and $ V_T \in \mathcal{V} $. 
\end{itemize}

If in addition $\mathcal{U}$ and $  \mathcal{V} $ are both closed under suspension, that is, they are triangulated subcategories of $\Tcal$, then  $ ( \mathcal{U}, \mathcal{V}) $ is called a \emph{stable t-structure} (or a \emph{semiorthogonal decomposition}).

(2) A full subcategory $\Ucal$ of $\Tcal$ is \emph{localizing} if it is a triangulated subcategory closed under taking coproducts,
and it is  \emph{strictly localizing} if it can be completed to a stable t-structure $ ( \mathcal{U}, \mathcal{V}) $ in $\Tcal$.}
\end{de}

Observe that taking zero cohomology yields a bijection between the localizing subcategories of 
$\Dcal(\lMod A)$ and the wide subcategories of $\lMod A$ that are closed under coproducts. The inverse map assigns to $\Xcal$ the category $\Dcal_\Xcal(\lMod A)$ consisting of the complexes in $\Dcal(\lMod A)$ with all cohomologies in $\Xcal$, cf.~\cite[Proposition 2.4]{extPairs}. These  correspondences restrict as follows.

\begin{prop} \cite{extPairs,Nakamura}
\label{widecoref}
Let $A$ be a hereditary ring. There are bijections between 
\begin{enumerate}
\item strictly localizing subcategories of $\Dcal(\lMod A)$;
\item complete Ext-orthogonal pairs in $\lMod A$;
\item wide coreflective subcategories of $\lMod A$.
\end{enumerate}
\end{prop}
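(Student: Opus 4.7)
The plan is to establish the two bijections (1) $\Leftrightarrow$ (2) and (2) $\Leftrightarrow$ (3) separately, exploiting two crucial features of the hereditary setting: every complex $C^\bullet \in \Dcal(\lMod A)$ splits as $\bigoplus_{n\in\mathbb{Z}} H^n(C^\bullet)[-n]$, and Ext-orthogonality between two classes reduces to vanishing of $\Hom$ and $\Ext^1$. Throughout I view modules as stalk complexes in degree $0$.

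For (1) $\Leftrightarrow$ (2), to a stable t-structure $(\Ucal, \Vcal)$ I assign $(\Xcal, \Ycal) := (\Ucal \cap \lMod A,\; \Vcal \cap \lMod A)$. Ext-orthogonality is automatic from $\Ext^n_A(X, Y) = \Hom_\Dcal(X, Y[n])$ and the closure of $\Vcal$ under shifts. For completeness, applying the triangle $U_M \to M \to V_M \to U_M[1]$ to a module $M$ and using that $\Ucal$, $\Vcal$ are thick (a localizing subcategory closed under coproducts is automatically thick), the hereditary splittings $U_M \simeq \bigoplus_n H^n(U_M)[-n]$ and similarly for $V_M$ have every summand in $\Ucal$, respectively $\Vcal$, so each cohomology lies in $\Xcal$ or $\Ycal$. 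The long exact sequence combined with $\Xcal \cap \Ycal = 0$ then forces $H^n(U_M) = 0$ for $n \notin \{0,1\}$ and $H^n(V_M) = 0$ for $n \notin \{-1, 0\}$, yielding the six-term sequence. The inverse assignment $(\Xcal, \Ycal) \mapsto (\Dcal_\Xcal(\lMod A), \Dcal_\Ycal(\lMod A))$ is verified by splitting any complex into its cohomology and assembling the triangle degree by degree from the six-term sequences of the individual $H^n$.

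For (2) $\Leftrightarrow$ (3), the class $\Xcal$ is already wide and closed under coproducts (as in the proof of Proposition~\ref{prop:ExtOrthEmbedding}); by Lemma~\ref{coref} it remains to produce an $\Xcal$-precover of every module. Splitting the six-term sequence into its three short exact pieces and applying $\Hom(X', -)$ for $X' \in \Xcal$, the Ext-orthogonality kills all intermediate terms involving $Y_M$ and $Y^M$, yielding $\Hom(X', X_M) \cong \Hom(X', M)$, so $X_M \to M$ is an $\Xcal$-precover. Conversely, given a wide coreflective $\Xcal$ with right adjoint $\rho$, I set $\Ycal := \Xcal^{\perp_{0,1}}$, $X_M := \rho M$, and $Y_M := \ker(\rho M \to M)$. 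The universal property of $\rho$ forces $\Hom(X', Y_M) = 0$, and hereditariness together with wideness of $\Xcal$ yields $\Ext^1(X', Y_M) = 0$, placing $Y_M$ in $\Ycal$. The right half $M \to Y^M \to X^M$ will be produced by iterating the construction on the cokernel $\operatorname{coker}(\rho M \to M)$, exploiting that $\Xcal$ is closed under cokernels.

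The main obstacle will be the right half of the six-term sequence in the direction (3) $\to$ (2): while the coreflection delivers the left half $Y_M \to X_M \to M$ naturally, manufacturing $Y^M \in \Ycal$ receiving $M$ with cokernel $X^M \in \Xcal$ requires producing a kind of $\Ycal$-envelope of the cokernel of the counit, which is not immediate from the coreflective hypothesis alone. To get around this I would follow the approach of \cite{extPairs, Nakamura}: either transport the coreflection to a Bousfield localization of $\Dcal(\lMod A)$ along $\Dcal_\Xcal(\lMod A)$, producing the stable t-structure directly and then recovering (2) via (1) $\Leftrightarrow$ (2), or exploit the hereditary decomposition to reduce everything to a single cohomological degree where the missing approximation can be built by hand using the closure properties of $\Xcal$.
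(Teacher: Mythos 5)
Your handling of (1)$\Leftrightarrow$(2) and of the direction (2)$\Rightarrow$(3) is fine and in fact more self-contained than the paper's proof, which at those points simply quotes \cite[Proposition 2.7]{extPairs} and \cite[Proposition 1.6]{AJS}: splitting objects of $\Dcal(\lMod A)$ into shifted cohomologies, cutting the approximation triangle of a module down to the six-term sequence, and reading off the $\Xcal$-precover $X_M\to M$ from that sequence (then invoking Lemma~\ref{coref}) are exactly the right arguments. The genuine gap is the direction (3)$\Rightarrow$(2) (equivalently (3)$\Rightarrow$(1)), and it cannot be waved away, because it is the substantive content of the proposition. Already your left half is incomplete: for $Y_M=\ker(\varrho M\to M)$, the adjunction does give $\Hom(\Xcal,Y_M)=0$, but the assertion that ``hereditariness together with wideness'' yields $\Ext^1(\Xcal,Y_M)=0$ is not proved and is not obvious -- the evident exact sequences only embed $\Ext^1(X',Y_M)$ into $\Ext^1(X',\varrho M)$, which has no reason to vanish, and the counit need not be the map occurring in a six-term sequence. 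For the right half you acknowledge the problem, but your second fallback (``reduce to a single cohomological degree where the missing approximation can be built by hand'') does not help: the obstruction is already a purely module-level approximation problem in a single degree, namely producing for every module a map into $\Ycal=\Xcal^{\perp_{0,1}}$ with cokernel in $\Xcal$ from nothing but a right adjoint to $\Xcal\hookrightarrow\lMod A$, and no elementary closure-property argument is available for this.

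The paper closes precisely this gap by the route your first fallback gestures at but does not carry out. It uses the bijection between localizing subcategories of $\Dcal(\lMod A)$ and coproduct-closed wide subcategories of $\lMod A$ given by $H^0$ and $\Xcal\mapsto\Dcal_\Xcal(\lMod A)$, then: by \cite[Proposition 2.7]{extPairs}, $\Xcal$ embeds into a complete Ext-orthogonal pair if and only if $\Dcal_\Xcal(\lMod A)$ is the kernel of a localization functor; by \cite[Proposition 1.6]{AJS} this is equivalent to the inclusion $\Dcal_\Xcal(\lMod A)\hookrightarrow\Dcal(\lMod A)$ admitting a right adjoint, i.e.\ to $\Dcal_\Xcal(\lMod A)$ fitting into a stable t-structure; and the bridge to (3) is Nakamura's theorem \cite{Nakamura} that this derived-level right adjoint exists if and only if the inclusion $\Xcal\hookrightarrow\lMod A$ admits one. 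That last equivalence is an unpublished result cited as a private communication, not something recoverable from the closure properties of $\Xcal$ in a few lines; unless you prove it (or an equivalent statement), your argument establishes only that every complete Ext-orthogonal pair gives a wide coreflective subcategory and a stable t-structure, not the converse, so the asserted bijections with (3) remain unproven.
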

\begin{proof}
It is shown in \cite[Proposition 2.7]{extPairs} that a full subcategory $\Xcal$ of $\lMod A$ can be completed to a complete Ext-orthogonal pair $(\Xcal,\Ycal)$ if and only if the category $\Dcal_\Xcal(\lMod A)$ is the kernel of a localization functor. By \cite[Proposition 1.6]{AJS} this amounts to the existence of a right adjoint for the inclusion functor $\textrm{inc}:\Dcal_\Xcal(\lMod A)\hookrightarrow\Dcal(\lMod A)$, or in other words, to the fact that $\Ucal=\Dcal_\Xcal(\lMod A)$ can be completed to a stable t-structure $(\Ucal,\Vcal)$. 
Furthermore, by a result of Nakamura \cite{Nakamura}, the functor $\textrm{inc}:\Dcal_\Xcal(\lMod A)\hookrightarrow\Dcal(\lMod A)$ admits a right adjoint if and only if so does $\textrm{inc}:\Xcal\hookrightarrow\lMod A$. This shows that the correspondences above restrict to a bijection between (1) and (3), and that there is a natural bijection between (2) and (3).
\end{proof}


The  localizing subcategories of the derived category of a commutative noetherian ring were completely classified in work of Hopkins and Neeman \cite{N}; they are parametrized by subsets of the prime spectrum. But already for  the simplest non-affine case,  the derived category $\Dcal({\rm Qcoh}\,\mathbb P^1_k)$ of  the category of quasicoherent sheaves on the projective line $\mathbb P^1_k$ over an algebraically closed field $k$, the situation appears to be rather intricate. 
In \cite{KSte}, Krause and Stevenson address the problem of classifying  the strictly localizing subcategories of $\Dcal({\rm Qcoh}\,\mathbb P^1_k)$, that is, the localizing subcategories $\Lcal$  appearing in semiorthogonal decompositions $(\Lcal,\Mcal)$ of $\Dcal({\rm Qcoh}\,\mathbb P^1_k)$. Combining  the classifications of smashing subcategories and tensor ideals, they obtain a class of strictly localizing subcategories of $\Dcal({\rm Qcoh}\,\mathbb P^1_k)$ which are parametrized by a copy of $\mathbb Z$ and the powerset of $\mathbb P^1$, and they ask  whether all strictly localizing subcategories arise in this way.

This problem can be phrased inside the derived category of the Kronecker algebra $\Lambda$ via the well-known derived equivalence between the Kronecker quiver and the projective line. Krause and Stevenson have given an intrinsic description of  their class of strictly localizing subcategories in terms of perpendicular categories of pure-injective sheaves. The question then becomes whether there are strictly localizing subcategories in $\Dcal(\lMod \Lambda)$ which are not of the form ${}^{\perp_{\mathbb{Z}}}\Pcal=\{X\in\Dcal(\lMod \Lambda)\mid \Hom_{\Dcal(\lMod \Lambda)}(X, Y[i])=0 \text{ for all } i\in\mathbb{Z} \text{ and all } Y\in\Pcal\}$ for a collection $\Pcal$ of indecomposable pure-injective $\Lambda$-modules. 
In virtue of Proposition~\ref{widecoref}, this amounts to asking 
\begin{que}{\rm 
Are there wide coreflective subcategories of $\lMod\Lambda$  which are not 
of the form ${}^{\perp_{0,1}}\Pcal$ for a collection $\Pcal$ of indecomposable pure-injective $\Lambda$-modules?}
\end{que}
Indeed, since any complex $X\in\Dcal(\lMod\Lambda)$ can be written as $X=\coprod_{n\in\mathbb{Z}} H^n(X)[-n]$,  we have that $X$ is in $^{\perp_{\mathbb{Z}}}\Pcal$ if and only if so are all its cohomologies, or equivalently, 
all its cohomologies belong to ${}^{\perp_{0,1}}\Pcal$. 
Hence $^{\perp_{\mathbb{Z}}}\Pcal$ and  ${}^{\perp_{0,1}}\Pcal$ correspond to each other under the bijection in Proposition~\ref{widecoref}.

\medskip

From now on $\Lambda$ denotes the Kronecker algebra, and we use the notation from Example~\ref{betalpha} and Section~\ref{tame}. 
In the Kronecker case, the proof of Theorem~\ref{tamethm} is much easier, as 
the relevant classes can be computed directly. We present this alternative proof for the reader's convenience.
As usual, $\tau$  denotes the Auslander-Reiten translation, and  $\Gamma_\Lambda$ is the Auslander-Reiten quiver of $\Lambda$.

\begin{lem}
\label{prinj}
Let $M$ be a module in $\mathbf{p}\cup\mathbf{q}$. Then ${}^{\perp_{0,1}}M=\Add N$ where $N$ is the successor of $M$ in $\Gamma_\Lambda$ or $N$ is simple projective. Moreover, $\Wcal=\add M$ is a wide subcategory of $\lmod\Lambda$ with
\begin{enumerate}
\item $\Wcal^{\perp_{0,1}}=\Add L={}^{\perp_{0,1}}K$ where $L$ is the predeccessor of $M$ in $\Gamma_\Lambda$ or $L$ is simple injective, and $K=\tau M$ or $K$ is indecomposable injective.

\item $\varinjlim\Wcal=\Add M={}^{\perp_{0,1}}L$ where $L$ is the predeccessor of $M$ in $\Gamma_\Lambda$ or $L$ is simple injective.
\end{enumerate} 
\end{lem}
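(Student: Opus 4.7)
The plan is to combine the Auslander--Reiten formula over the hereditary algebra $\Lambda$ with the classification of indecomposable pure-injective $\Lambda$-modules recalled in Section~\ref{tame}. The preprojective and preinjective components of $\Gamma_\Lambda$ are each linearly ordered, so every vertex has a unique immediate successor and predecessor. Since every $M \in \mathbf{p}\cup\mathbf{q}$ is an exceptional brick ($\End_\Lambda M = k$ and $\Ext^1_\Lambda(M,M)=0$), the subcategory $\Wcal = \add M$ is automatically wide and consists of all finite direct sums of copies of $M$; and since $M$ is $\Sigma$-pure-injective (being finite-dimensional over an artin algebra), $\Add M$ is closed under directed colimits and equals $\varinjlim\Wcal$.

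For claim (1), the inclusion $\Add N \subseteq {}^{\perp_{0,1}}M$ is a direct AR-computation: $\Hom_\Lambda(N, M) = 0$ follows from the linear ordering of the components, and $\Ext^1_\Lambda(N, M) \cong D\Hom_\Lambda(M, \tau N) = 0$ when $N$ is non-projective (the remaining cases vanish trivially because $N$ is then projective). For the reverse inclusion, I would exploit that since $M$ is pure-injective and $\Lambda$ is hereditary, ${}^{\perp_{0,1}}M$ is closed under coproducts, directed colimits, and pure submodules, and is therefore determined by the indecomposable pure-injectives it contains. A case analysis using the classification of indecomposable pure-injectives over $\Lambda$ then shows that $N$ is the only one in ${}^{\perp_{0,1}}M$: finite-dimensional indecomposables are ruled out by Hom/Ext arithmetic in $\Gamma_\Lambda$; the generic module $G$ is excluded via Lemma~\ref{G} (since $M \notin \varinjlim\add\mathbf{t}$); and the Pr\"ufer and adic modules $S[\pm\infty]$ are excluded using the perpendicular descriptions established earlier in Section~\ref{tame}. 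Since $N$ is an exceptional brick, this forces ${}^{\perp_{0,1}}M = \Add N$.

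For claim (2), the key observation is that in $\Gamma_\Lambda$ the module $L$ is the successor of $K$ and $M$ is the successor of $L$; applying (1) to $K$ and to $L$ therefore yields ${}^{\perp_{0,1}}K = \Add L$ and ${}^{\perp_{0,1}}L = \Add M$, respectively. Combined with $\varinjlim\Wcal = \Add M$ from the first paragraph, this establishes $\varinjlim\Wcal = \Add M = {}^{\perp_{0,1}}L$. The remaining identity $\Wcal^{\perp_{0,1}} = M^{\perp_{0,1}} = \Add L$ follows from a parallel AR-computation: the formula $\Ext^1_\Lambda(M, X) \cong D\overline{\Hom}_\Lambda(X, \tau M)$ together with the Kronecker-specific vanishings of $\Hom$ from injectives to preprojective/preinjective modules shows that $M^{\perp_1} = {}^{\perp_0}\tau M = {}^{\perp_0}K$ in the non-projective case, and an analogous pure-injective case analysis then yields $M^{\perp_{0,1}} = \Add L$; the projective cases $M = P_0, P_1$ are handled separately with $K$ taken to be the appropriate indecomposable injective. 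The hard part will be the reverse inclusion in (1): excluding the infinite-dimensional pure-injective indecomposables from ${}^{\perp_{0,1}}M$ is not purely AR-theoretic and relies on the cosilting- and perpendicular-theoretic machinery from Example~\ref{betalpha} and Section~\ref{tame}.
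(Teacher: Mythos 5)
Your overall strategy is viable and amounts to an expanded version of what the paper dismisses as ``easy observations obtained from the shape of $\Gamma_\Lambda$'': wideness of $\add M$ because $M$ is a stone, $\varinjlim\add M=\Add M$ because $M$ is finite dimensional (hence $\Sigma$-pure-injective, indeed endofinite), and the identification of the perpendicular classes by deciding which indecomposable pure-injectives they contain. Two steps, however, do not work as you state them. First, and most seriously, your exclusions of $G$, $S[\infty]$ and $S[-\infty]$ from ${}^{\perp_{0,1}}M$ cite results of the wrong orientation: Lemma~\ref{G} and the lemmas of Section~\ref{tame} compute ${}^{\perp_{0,1}}E$ for a pure-injective $E$, i.e.\ they tell you which modules $X$ satisfy $\Hom(X,E)=0=\Ext^1(X,E)$, whereas to decide whether $E\in{}^{\perp_{0,1}}M$ you must show $\Hom(E,M)\neq 0$ or $\Ext^1(E,M)\neq 0$. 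In particular ``$M\notin\varinjlim\add\mathbf{t}={}^{\perp_{0,1}}G$'' only says that $M$ is not left-perpendicular to $G$; it says nothing about whether $G$ is left-perpendicular to $M$, which is what you need. The exclusion is still easy, but has to be done on the correct side: for $M\in\mathbf{q}$ one has $\Hom(E,M)\neq 0$ for $E\in\{G,\,S[\infty],\,S[-\infty]\}$ (each of these modules maps nontrivially onto/into preinjectives), and for $M\in\mathbf{p}$ one has $\Ext^1(E,M)\cong D\overline{\Hom}(\tau^{-}M,E)\neq 0$, since $\Hom(\text{preprojective},E)\neq 0$ and no such map factors through an injective because $\Hom(\text{preinjective},E)=0$.

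Second, the inference ``closed under coproducts, directed colimits and pure submodules, hence determined by the indecomposable pure-injectives it contains'' is not valid at that level of generality: determination by indecomposable pure-injectives is a property of definable classes, so you also need closure under products. It does hold here, but only because $M$ is finite dimensional: $D\Hom_\Lambda(-,M)\cong DM\otimes_\Lambda-$ and $D\Ext^1_\Lambda(-,M)\cong\Tor_1^\Lambda(DM,-)$ with $DM$ a finitely presented right module, so both vanishing classes are closed under products and ${}^{\perp_{0,1}}M$ is definable (and similarly $M^{\perp_{0,1}}$ on the other side). Finally, make sure your case analysis covers the module with no successor, namely $M$ the simple injective, where ${}^{\perp_{0,1}}M={}^{\perp_{0}}M=\Add(\text{simple projective})$ — this is exactly the ``$N$ simple projective'' clause of the statement. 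With these repairs your argument is correct; it is simply a more machinery-laden route (definability and the Ziegler-spectrum principle) to facts the paper records as immediate Hom/Ext computations in $\Gamma_\Lambda$.
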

\begin{proof} $\Wcal=\add M$ is a wide subcategory because $M$ is a stone, i.e.~a brick without self-extensions. The remaining statements are easy observations obtained from the shape of  $\Gamma_\Lambda$.
\end{proof}

\begin{lem}
\label{reg}
Let $\emptyset\neq P\subset \mathbb X$ and $Q=\mathbb X\setminus P$.    Moreover, let $\Pcal$ be the set of adic modules corresponding to the simple regulars in $\mathbf{t}_P$, and let $\Qcal$ be the set consisting of the generic module $G$ and the Pr\"ufer modules corresponding to the simple regulars in $\mathbf{t}_Q$.  Then  $\Wcal=\add\mathbf{t}_P$ is a wide subcategory of $\lmod\Lambda$ with
 $\Wcal^{\perp_{0,1}}={}^{\perp_{0,1}}\Pcal$ and $\varinjlim\Wcal={}^{\perp_{0,1}}\Qcal$.
\end{lem}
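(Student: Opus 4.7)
The plan is to split the lemma into its three assertions and reduce each to the per-tube identities already established in Section~\ref{tame}.

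Wideness of $\Wcal = \add\mathbf{t}_P$ is formal: each $\add\mathbf{t}_\lambda$ is a wide abelian subcategory of $\lmod\Lambda$, and distinct tubes are Hom- and Ext-orthogonal, so $\Wcal$ is closed under kernels, cokernels, and extensions.

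For the identity $\Wcal^{\perp_{0,1}} = {}^{\perp_{0,1}}\Pcal$, since every tube of the Kronecker algebra has rank one we have $\Pcal = \{S_\lambda[-\infty] \mid \lambda \in P\}$. The adic lemma of Section~\ref{tame}, specialised to rank one, yields ${}^{\perp_{0,1}} S_\lambda[-\infty] = (\add\mathbf{t}_\lambda)^{\perp_{0,1}}$ for every $\lambda \in P$; intersecting over $\lambda \in P$ via the identity $\bigcap_i \Ucal_i^{\perp_{0,1}} = (\bigcup_i \Ucal_i)^{\perp_{0,1}}$ gives the claim.

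The substantive part is $\varinjlim\Wcal = {}^{\perp_{0,1}}\Qcal$. The inclusion ``$\subseteq$'' is immediate: $\add\mathbf{t}_P \subseteq {}^{\perp_{0,1}} G$ by Lemma~\ref{G}, indecomposables in $\mathbf{t}_P$ are Hom- and Ext-orthogonal to the Prüfers indexed by $Q$, and ${}^{\perp_{0,1}}\Qcal$ is closed under direct limits. For ``$\supseteq$'' I would invoke the Prüfer lemma of Section~\ref{tame}: in rank one the description of $\widetilde{\alpha}(\mathbf{t})$ in Remark~\ref{details}(2) degenerates (the semibrick $\Scal^+$ is empty) to give ${}^{\perp_{0,1}} S_\mu[\infty] = \varinjlim\add\bigl(\bigcup_{\nu\neq\mu}\mathbf{t}_\nu\bigr)$ for every $\mu \in Q$.

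The main obstacle is to extract membership in $\varinjlim\add\mathbf{t}_P$ from orthogonality conditions potentially indexed by the infinite set $Q$. My approach is via Ringel's decomposition (Proposition~\ref{R}): any $M \in {}^{\perp_{0,1}} G = \varinjlim\add\mathbf{t}$ splits canonically as $M = \coprod_\lambda M_\lambda$ with $M_\lambda \in \varinjlim\add\mathbf{t}_\lambda$. For each $\mu \in Q$, the condition $M \in {}^{\perp_{0,1}} S_\mu[\infty]$ forces $M_\mu$ to lie in $\varinjlim\add\mathbf{t}_\mu \cap \varinjlim\add\bigl(\bigcup_{\nu\neq\mu}\mathbf{t}_\nu\bigr)$; the intersection formula $\varinjlim\Ucal \cap \varinjlim\Vcal = \varinjlim(\Ucal \cap \Vcal)$ for wide subcategories of regulars (established inside the proof of the Prüfer case in Section~\ref{tame}) identifies this intersection with $\varinjlim\bigl(\add\mathbf{t}_\mu \cap \add\bigl(\bigcup_{\nu\neq\mu}\mathbf{t}_\nu\bigr)\bigr) = 0$, forcing $M_\mu = 0$. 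Hence $M = \coprod_{\lambda\in P} M_\lambda$ lies in $\varinjlim\add\mathbf{t}_P$.
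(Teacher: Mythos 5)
Your proof is correct, but it takes a genuinely different route from the paper's. The paper's own argument is a two-line application of the general machinery: the torsion pair $(\Gen\mathbf{t}_P,\Wcal^{\perp_0})$ is a cotilting torsion pair whose minimal approximation sequence $0\to C_1\to C_0\to E(\Lambda)\to 0$ has $\Prod C_1=\Prod\Pcal$ and $\Prod C_0=\Prod\Qcal$ (this comes from the classification of cotilting modules recalled in Section~\ref{tame}), and then Remark~\ref{perpendicular}, Proposition~\ref{prop:alphaIsLimClosure} and Lemma~\ref{lem:betaCompHereditary} immediately give $\varinjlim\Wcal=\alpha(\Tcal)={}^{\perp_{0,1}}C_0={}^{\perp_{0,1}}\Qcal$ and $\Wcal^{\perp_{0,1}}=\beta(\Fcal)={}^{\perp_{0,1}}C_1={}^{\perp_{0,1}}\Pcal$. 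You instead specialize the per-tube computations of Section~\ref{tame} to rank-one tubes (${}^{\perp_{0,1}}S_\mu[\infty]=\varinjlim\add\bigl(\bigcup_{\nu\neq\mu}\mathbf{t}_\nu\bigr)$, ${}^{\perp_{0,1}}S_\lambda[-\infty]=(\add\mathbf{t}_\lambda)^{\perp_{0,1}}$ since $\operatorname{filt}(S_\lambda)=\add\mathbf{t}_\lambda$) and glue them using Ringel's decomposition (Proposition~\ref{R}) and the intersection formula $\varinjlim\Ucal\cap\varinjlim\Vcal=\varinjlim(\Ucal\cap\Vcal)$; all steps check out, since the classes $\varinjlim\add\mathbf{t}_P$ are closed under direct summands and coproducts, so the componentwise argument does close. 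Two caveats. First, you are using the Section~\ref{tame} lemmas beyond their statements: the identifications you need (the wide subcategory in the Pr\"ufer lemma being the non-preinjective torsion part of Remark~\ref{details}(2), the one in the adic lemma being $\operatorname{filt}(\tau^-S[-r])$, and the intersection formula) are established inside their proofs, so they must be cited as such. Second, Lemma~\ref{reg} is presented in the paper precisely as part of a direct, ``much easier'' Kronecker-only proof of Theorem~\ref{tamethm}, independent of the heavier tame computations of Section~\ref{tame}; re-importing those computations is logically unobjectionable (they do not depend on Lemma~\ref{reg}, so there is no circularity), but it forfeits the self-contained character that motivates the lemma, whereas the paper's proof needs only the cotilting classification and the general results on $\alpha$ and $\beta$ from Sections 3--4.
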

\begin{proof}
Consider the cosilting torsion pair $(\Tcal,\Fcal)=(\Gen\mathbf{t}_P,\Fcal_P)=(T(\Wcal),\Wcal^{\perp_0})$. It is in fact a cotilting torsion pair  with minimal approximation sequence $0\to C_1\to C_0\to E(\Lambda)\to 0$ where $\Prod C_1=\Prod\Pcal$, and $\Prod C_0=\Prod\Qcal$.
Combining Remark~\ref{perpendicular} with Proposition~\ref{prop:alphaIsLimClosure} and Lemma~\ref{lem:betaCompHereditary} we obtain  $\varinjlim\Wcal=\alpha(\Tcal)={}^{\perp_{0,1}}C_0={}^{\perp_{0,1}}\Qcal$ and $\Wcal^{\perp_{0,1}}=\beta(\Fcal)={}^{\perp_{0,1}}C_1={}^{\perp_{0,1}}\Pcal$. 
\end{proof}

\noindent\emph{\bf Proof of Theorem~\ref{tamethm} in the Kronecker case.}
In order to show (1)$\Rightarrow$(2), we use the following table which summarizes Lemma~\ref{prinj} and~\ref{reg}. 

$$\begin{array}{cccccccccl}
\Wcal &&&& \Wcal^{\perp_{0,1}}&&&&\varinjlim\Wcal\\
\hline
0&&&& \lMod\Lambda={}^{\perp_{0,1}}\emptyset &&&&0={}^{\perp_{0,1}}\{\text{all indec.~pure-inj.}\}\\
\lmod\Lambda &&&& 0={}^{\perp_{0,1}}\{\text{all indec.~pure-inj.}\}&&&&\lMod\Lambda={}^{\perp_{0,1}}\emptyset \\
\add M,\, M\in\mathbf{p}\cup\mathbf{q}&&&& ^{\perp_{0,1}}K \;\text{for suitable}\; K &&&&^{\perp_{0,1}}L \;\text{for suitable}\; L\\
\add\mathbf{t}_P,\,\emptyset\neq P\subset\mathbb X &&&& ^{\perp_{0,1}}\{\text{adics  from}\; P\} &&&&^{\perp_{0,1}}\{G,\;\text{Pr\"ufer  from}\; Q=\mathbb X\setminus P\}
\end{array}$$
\\

\noindent
For the implication (2)$\Rightarrow$(1), we start by collecting the basic situations in the following table.

$$\begin{array}{cccccccccl}
\Pcal &&&& \Xcal={}^{\perp_{0,1}}\Pcal&&&&\Wcal\;\text{with}\;\Xcal=\Wcal^{\perp_{0,1}}\,\text{or}\,\Xcal=\varinjlim\Wcal\\
\hline
\emptyset &&&& \lMod\Lambda&&&&\lmod\Lambda\\
\text{all indec.~pure-inj.} &&&& 0&&&&0\\
M\in\mathbf{p}\cup\mathbf{q}&&&& \Add N \;\text{for suitable}\; N\in\mathbf{p}\cup\mathbf{q}&&&&\add N\\
M\in \mathbf{t}_x &&&& \mathbf{t}_x\,^{\perp_{0,1}} &&&&\add\mathbf{t}_x\\
\text{adics from}\; P &&&& \mathbf{t}_P\,^{\perp_{0,1}}&&&&\add\mathbf{t}_P \\\
 
\text{Pr\"ufer from}\; Q &&&& \varinjlim\add\mathbf{t}_P \;\text{with}\; P=\mathbb X\setminus Q&&&&\add\mathbf{t}_P\\
G &&&& \varinjlim\add\mathbf{t} &&&&\add\mathbf{t}
\end{array}$$
\\

We only need to  explain line 4, since the other cases follow immediately from Lemma~\ref{G},\ref{prinj} and~\ref{reg}. For line 4, we note that if $\Pcal=\{M\}$ with $M\in\mathbf{t}_x$, then $\Ycal$ contains the wide closure of $M$, that is $\add\mathbf{t}_x$, and thus $\Xcal={}^{\perp_{0,1}}\mathbf{t}_x$, which coincides with $\mathbf{t}_x\,^{\perp_{0,1}}$ by the Auslander-Reiten formula. 

Now, let us consider an arbitrary set of indecomposable pure-injectives $\Pcal$. First, we see that $\Xcal=0$ provided that $\Pcal$ contains more than one finite dimensional module. Similarly, $\Xcal=0$ whenever $\Pcal$ contains a module $M\in\mathbf{p}\cup\mathbf{q}$ together with an infinite dimensional  indecomposable pure-injective, because ${}^{\perp_{0,1}}M=\Add N$ for a suitable $N\in\mathbf{p}\cup\mathbf{q}$, and $N$ does neither belong to $\varinjlim \add\mathbf{t}_P$ nor to $ \mathbf{t}_P\,^{\perp_{0,1}}$ for any $P\subset\mathbb X$. 
For the remaining cases, it is enough to observe that $ \mathbf{t}_P\,^{\perp_{0,1}}\cap\,\varinjlim\add\mathbf{t}_Q=\varinjlim\add\mathbf{t}_{Q\setminus P}$. 
Altogether, we can conclude that in all cases $\Xcal$ arises as $\Xcal=\Wcal^{\perp_{0,1}}$ or $\Xcal=\varinjlim \Wcal$ for some $\Wcal\in\mathbf{wide}(\Lambda)$.
\hfill$\Box$

\bigskip


In \cite{tameWild}, Ringel constructs a family of bricks $P(I)$ over $\Lambda$ indexed by the subsets of the ground field $k$. Recall that the generic module $G$ corresponds to the representation 
$$\begin{tikzcd}
k(T) \ar[r, shift left=1] \ar[r, shift right=1] & k(T)
\end{tikzcd}$$ 
given by the field of fractions $k(T)$ together with the identity map and the multiplication $T\cdot$ by the element $T$. The module $P(I)$ is constructed as  the subrepresentation 
$$\begin{tikzcd}
V(I) \ar[r, shift left=1] \ar[r, shift right=1] & V(I)+ k\cdot 1\end{tikzcd} $$
where $V(I)$ is the vectorspace with basis $\{\frac{1}{T-\lambda}\mid\,\lambda \in I\}$, and $k\cdot 1$ is the one-dimensional vectorspace generated by the element $1\in k(T)$. 
We collect some properties of these modules. 

\begin{lem}\cite{tameWild}\label{largebricks}
Let  $I$ be a subset of $k$ and  $S(I)=\bigoplus\limits_{\lambda\in I} S_\lambda$ where $S_\lambda$ is the simple regular in  ${\mathbf t}_\lambda$.
\begin{enumerate}
\item When $I$ is a set of cardinality $n$, then $P(I)$ is  indecomposable preprojective of dimension vector $(n,n+1)$. In particular, $P:=P(\emptyset)$ is the simple projective $\Lambda$-module.
\item When $I$ is an infinite set, $P(I)$ is an infinite dimensional brick.
\item $\Hom_\Lambda(P(I),P(J))=0$ whenever $I,J$ are two infinite disjoint sets.
\item For  any subset $J\subset I$ there is a short exact sequence $0\to P(J)\to P(I)\to S(I\setminus J)\to 0$.
\end{enumerate}
\end{lem}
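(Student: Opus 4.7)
The plan is to reduce all four statements to explicit computations inside $k(T)$. For any subsets $I,J\subseteq k$, a morphism $f\colon P(I)\to P(J)$ amounts to a pair of $k$-linear maps $f_0\colon V(I)\to V(J)$ and $f_1\colon V(I)+k\cdot 1\to V(J)+k\cdot 1$ making the two structure squares commute. Compatibility with the inclusion says $f_1|_{V(I)}=f_0$, while compatibility with multiplication by $T$, evaluated at $v=\tfrac{1}{T-\lambda}$ and using $T\cdot\tfrac{1}{T-\lambda}=1+\lambda\cdot\tfrac{1}{T-\lambda}$, yields the key identity
\[
f_1(1)\;=\;(T-\lambda)\,f_0\!\bigl(\tfrac{1}{T-\lambda}\bigr)\qquad\text{for every }\lambda\in I.
\]

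For (4) I first observe that $V(J)\subseteq V(I)$ and $V(J)+k\cdot 1\subseteq V(I)+k\cdot 1$ define a monomorphism $P(J)\hookrightarrow P(I)$. To identify the cokernel I use that $1\notin V(I)$ (clearing denominators in a putative relation $1=\sum c_i\tfrac{1}{T-\lambda_i}$ produces a degree contradiction), hence $k\cdot 1\cap V(I)=0$ and the second isomorphism theorem gives $(V(I)+k\cdot 1)/(V(J)+k\cdot 1)\cong V(I)/V(J)$. Under this identification the inclusion structure map of the cokernel is the identity, while multiplication by $T$ sends $\tfrac{1}{T-\lambda}+V(J)$ to $\lambda\cdot\tfrac{1}{T-\lambda}+V(J)$ for $\lambda\in I\setminus J$ (the constant $1$ projects to zero). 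The cokernel thus diagonalises over the basis indexed by $I\setminus J$ and decomposes as $\bigoplus_{\lambda\in I\setminus J}S_\lambda=S(I\setminus J)$.

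For (1)--(3) I write $f_0(\tfrac{1}{T-\lambda})=\sum_\mu c_{\lambda,\mu}\tfrac{1}{T-\mu}$ and expand the key identity using $(T-\lambda)\tfrac{1}{T-\mu}=1+(\mu-\lambda)\tfrac{1}{T-\mu}$, obtaining
\[
f_1(1)\;=\;\Bigl(\textstyle\sum_\mu c_{\lambda,\mu}\Bigr)\cdot 1\;+\;\textstyle\sum_\mu c_{\lambda,\mu}(\mu-\lambda)\tfrac{1}{T-\mu}.
\]
Since the left-hand side is independent of $\lambda$, both the scalar $\sum_\mu c_{\lambda,\mu}$ and, for each fixed $\mu\in J$, the scalar $c_{\lambda,\mu}(\mu-\lambda)$ must be independent of $\lambda\in I$. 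When $J=I$, specialising $\lambda=\mu$ in the second condition forces that scalar to vanish, so $c_{\lambda,\mu}=0$ for $\lambda\ne\mu$; the first condition then says the diagonal entries $c_{\lambda,\lambda}$ are a single scalar $c$, and $f$ is scalar multiplication by $c$. This gives $\operatorname{End}P(I)\cong k$ for every $I$: combined with the direct dimension count $\dim V(I)=|I|$ and $\dim(V(I)+k\cdot 1)=|I|+1$ and the uniqueness of the indecomposable preprojective of dimension vector $(n,n+1)$ this yields (1), while (2) is immediate since $P(I)$ is infinite-dimensional whenever $I$ is infinite. For (3), with $I$ and $J$ disjoint and both infinite one has $\mu\ne\lambda$ automatically, so $c_{\lambda,\mu}=a_\mu/(\mu-\lambda)$ for constants $a_\mu$; since each $f_0(\tfrac{1}{T-\lambda})$ is a finite sum, only finitely many $\mu$, say $\mu_1,\dots,\mu_k$, have $a_\mu\ne 0$, and the first independence condition becomes
\[
\sum_{i=1}^k \frac{a_{\mu_i}}{\mu_i-\lambda}\;=\;c\qquad\text{for every }\lambda\in I.
\]
Clearing denominators produces a polynomial identity of degrees $k-1$ and $k$ valid on the infinite set $I$, hence identically; comparing top-degree coefficients forces $c=0$, and evaluating the resulting identity at $\lambda=\mu_l$ yields $a_{\mu_l}=0$ for every $l$. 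Thus $f_0=0$, and the key identity together with $f_1|_{V(I)}=f_0$ gives $f_1=0$.

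The main technical hurdle will be the rational-function bookkeeping that converts the key identity into the two independence conditions and, for (3), the polynomial-identity argument that rules out non-trivial finite supports $\{\mu_1,\dots,\mu_k\}\subseteq J$; once these are carried out cleanly, items (1), (2) and (4) follow essentially by inspection.
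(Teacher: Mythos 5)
Your proposal is correct. Note that the paper itself gives no argument for this lemma: it is quoted from Ringel's \emph{Tame algebras are WILD} \cite{tameWild}, so there is no in-paper proof to compare against. Your explicit verification is a sound, self-contained substitute and is in the spirit of Ringel's original construction: reading off from the two commuting squares the identity $f_1(1)=(T-\lambda)f_0\bigl(\tfrac{1}{T-\lambda}\bigr)$ for all $\lambda\in I$, and then exploiting the $k$-linear independence of $\{1\}\cup\{\tfrac{1}{T-\mu}\}$ (partial fractions) to compare coefficients, is exactly the right mechanism. All four items check out: the specialisation $\lambda=\mu$ kills the off-diagonal coefficients and gives $\operatorname{End}P(I)=k$, which together with the dimension vector $(n,n+1)$ (a real root, hence a unique indecomposable, necessarily preprojective) yields (1) and (2); the polynomial-degree argument on $\sum_i a_{\mu_i}/(\mu_i-\lambda)=c$ over the infinite set $I$ correctly forces $c=0$ and then $a_{\mu_i}=0$, giving (3) (in fact your argument only needs $I$ infinite and disjoint from $J$, slightly more than stated); and in (4) the computation $V(I)\cap(V(J)+k\cdot 1)=V(J)$, which follows from $1\notin V(I)$, legitimises the second isomorphism theorem and the diagonalisation of the quotient into the simples $S_\lambda$, $\lambda\in I\setminus J$.
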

 
We now use these large bricks to construct a wide subcategory of $\lMod\Lambda$ which might not fit in the classification from Theorem~\ref{tamethm}.

\begin{prop}\label{perpB}
Let  $B=P(I)$ be constructed as above from an infinite subset $I\subset k$, and  let $(\Tcal,\Fcal)=({}^{\perp_0}B,F(B))$ be the torsion pair cogenerated by $B$.
Then
\begin{enumerate}
\item $\Hom_\Lambda(B,S_\lambda)\ne 0$ for all $\lambda\in k\cup\{\infty\}$.
\item $\alpha(\Tcal)={}^{\perp_{0,1}}B$ has no nonzero finite dimensional modules.
\end{enumerate}
\end{prop}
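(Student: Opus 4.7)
For part (1), I plan to construct explicit representation morphisms $B \to S_\lambda$ in each case. When $\lambda \in I$, Lemma~\ref{largebricks}(4) with $J = I \setminus \{\lambda\}$ immediately yields a surjection $B = P(I) \twoheadrightarrow S_\lambda$. When $\lambda \in k \setminus I$, I would define $(\phi_0, \phi_1): B \to S_\lambda$ by $\phi_0((T - \mu)^{-1}) = (\mu - \lambda)^{-1}$ for each $\mu \in I$, $\phi_1|_{V(I)} = \phi_0$, and $\phi_1(1) = 1$; compatibility with the two arrows of the Kronecker quiver reduces to the partial-fractions identity $(T - \lambda)/(T - \mu) = 1 + (\mu - \lambda)/(T - \mu)$. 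For $\lambda = \infty$, I would instead set $\phi_0((T - \mu)^{-1}) = 1$ for all $\mu \in I$, $\phi_1|_{V(I)} = 0$, and $\phi_1(1) = 1$, and verify the analogous compatibility for $S_\infty$.

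Part (2) has two components. First, the equality $\alpha(\Tcal) = {}^{\perp_{0,1}}B$. The inclusion ${}^{\perp_{0,1}}B \subseteq \alpha(\Tcal)$ follows from the hereditary hypothesis by a standard long exact sequence argument: for $X \in {}^{\perp_{0,1}}B$ and $f: T \to X$ with $T \in \Tcal$, closure of ${}^{\perp_1}B$ under subobjects (automatic over a hereditary ring) gives $\Ext^1(\operatorname{im} f, B) = 0$ as a quotient of $\Ext^1(X,B)=0$, and combined with $\Hom(T, B) = 0$ this forces $\Hom(\ker f, B) = 0$. For the reverse inclusion, given $X \in \alpha(\Tcal)$ and an extension $\xi: 0 \to B \to E \to X \to 0$, the torsion decomposition $0 \to tE \to E \to \bar E \to 0$ together with $tE \cap B = 0$ (from $\Hom(\Tcal,\Fcal)=0$) produces an injection $tE \hookrightarrow X$ and a derived sequence $0 \to B \to \bar E \to X/tE \to 0$ with $\bar E \in \Fcal$. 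When $X/tE = 0$ one checks that $E = B \oplus tE \cong B \oplus X$, so $\xi$ splits. Otherwise, I would argue that any nonzero $\bar E \to B$ would restrict via $\End B = k$ to a nonzero scalar on the subobject $B$, retracting the derived sequence and forcing $X/tE \in \Fcal \cap \Tcal = 0$; hence $\Hom(\bar E, B) = 0$, and the long exact sequence for $\Hom(-,B)$ combined with the filtration of $\bar E \in F(B)$ by objects in $\Cogen B$ leads to a contradiction.

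The second component is that $\alpha(\Tcal) = {}^{\perp_{0,1}}B$ contains no nonzero finite-dimensional module. By Krull--Schmidt it suffices to rule out each indecomposable type. If $X$ is preprojective of dimension vector $(m, m+1)$, choose $J' \subset I$ with $|J'| = m$ (possible since $I$ is infinite); then Lemma~\ref{largebricks}(1) gives $X \cong P(J') \subseteq P(I) = B$, so $\Hom(X, B) \ne 0$, contradicting $X \in {}^{\perp_0}B$. If $X = S_\lambda[n]$ is regular, then $\tau X \cong X$ in the rank-one tube and the Auslander--Reiten formula gives $\Ext^1(X, B) \cong D\Hom(B, S_\lambda[n])$, which is nonzero because the surjection $B \twoheadrightarrow S_\lambda$ from part (1) composed with the socle inclusion $S_\lambda \hookrightarrow S_\lambda[n]$ is nonzero. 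If $X$ is preinjective, I would first show $\Hom(B, Q) \ne 0$ for every preinjective indecomposable $Q$ of dimension vector $(n+1, n)$ by a pencil argument analogous to part (1): parametrising $(\phi_0, \phi_1): B \to Q$ by $u = \phi_1(1) \in k^n$ and $\phi_0((T - \lambda)^{-1}) \in k^{n+1}$, the compatibility reduces to $u = (f_b - \lambda f_a) \phi_0((T - \lambda)^{-1})$ for each $\lambda \in I$, and the nontriviality of $\ker(f_b - \lambda f_a)$ at every $\lambda$ (forced by the dimensions $n+1 > n$) provides a nonzero solution. Applying this to $\tau X$ (again preinjective) and using the AR formula once more yields $\Ext^1(X, B) \ne 0$.

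The main obstacle is the reverse inclusion $\alpha(\Tcal) \subseteq {}^{\perp_{0,1}}B$: the torsion pair $({}^{\perp_0}B, F(B))$ is not cosilting in general (since $F(B)$ need not be closed under direct limits for an infinite-dimensional brick), so Proposition~\ref{thm:AlphaCotilting} cannot be applied directly, and the argument must exploit the specific rigidity of the brick $B = P(I) \subset G$ together with the structure of $F(B)$.
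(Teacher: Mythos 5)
Your part (1) and the finite-dimensional exclusion at the end of part (2) are essentially in line with the paper (which treats $\lambda\in k\setminus I$ via the non-split sequence $0\to B\to P(I\cup\{\lambda\})\to S_\lambda\to 0$ and the AR formula instead of an explicit map; if you do build the map explicitly, note that with the paper's convention for $G$ you need $\phi_0((T-\mu)^{-1})=(\lambda-\mu)^{-1}$, i.e.\ evaluation at $\lambda$ --- with $(\mu-\lambda)^{-1}$ the second compatibility fails --- and that the AR formula really gives $\Ext^1(X,B)\cong D\overline{\Hom}(B,\tau X)$ modulo injectives, which is harmless here but deserves a word). The genuine gap is in the reverse inclusion $\alpha(\Tcal)\subseteq{}^{\perp_1}B$, at its very first step: the claim that $tE\cap B=0$ ``from $\Hom(\Tcal,\Fcal)=0$'' is not a valid deduction. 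The module $tE\cap B$ is only a submodule of the torsion module $tE$, and $\Tcal={}^{\perp_0}B$ is not closed under submodules (every preinjective module of dimension vector $(n+1,n)$, $n\ge 1$, lies in $\Tcal$ yet contains the simple projective $P\subseteq B$), so general torsion-pair yoga gives nothing. Worse, as written your argument for the reverse inclusion only ever uses $X\in\Tcal$ and never the defining condition of $\Alpha(\Tcal)$; if it were correct it would prove $\Tcal\subseteq{}^{\perp_1}B$, which is false precisely because preinjective modules lie in $\Tcal$ but not in ${}^{\perp_1}B$ --- the very fact you establish in your last paragraph. The repair is to use the hypothesis at exactly this point: the kernel of the composite $tE\hookrightarrow E\to X$ is the kernel of a map from a torsion module to $X\in\Alpha(\Tcal)$, hence lies in $\Tcal$, and being a submodule of $B\in\Fcal$ it vanishes. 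With that fixed, your case analysis can be completed: the unproved assertion that a nonzero $h:\bar E\to B$ restricts to a nonzero scalar on $B$ holds because otherwise $h$ factors through $X/g(tE)\in\Tcal$, giving a nonzero map from a torsion module to $B$; and the vague final step (``filtration of $F(B)$ by objects of $\Cogen B$'') should simply read: $\Hom(\bar E,B)=0$ forces $\bar E\in\Tcal\cap\Fcal=0$, contradicting $B\hookrightarrow\bar E$.

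For comparison, the paper's argument avoids the torsion-part bookkeeping altogether: $E\notin\Tcal$, since otherwise $B=\ker(E\to X)$ would lie in $\Tcal$ by the $\Alpha$-condition while $B\in\Fcal$; hence $\bar E=E/tE\ne 0$ admits a nonzero map $h$ to $B$, the composite $B\to E\to\bar E\xrightarrow{h}B$ is nonzero (otherwise $h\nu$ would factor through $X$, giving a nonzero map $X\to B$ against $X\in\Tcal$), and being an endomorphism of the brick $B$ it is an isomorphism, so the sequence splits --- no case distinction needed. Your treatment of the finite-dimensional modules (preprojectives as $P(J')\subseteq B$, regulars and preinjectives via $\Hom(B,S_\lambda)\ne 0$ resp.\ a pencil argument plus the AR formula) matches the paper's intent and is fine once the $\overline{\Hom}$ point above is noted.
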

\begin{proof}
(1)  The sequence $0\to P\to B\to S(I)\to 0$ from Lemma~\ref{largebricks}(4)  shows that  statement for $\lambda\in I$. For $\lambda\in k\setminus I$ we use the non-split exact sequence $0\to B\to P(I\cup\{\lambda\})\to S_\lambda\to 0$  to see that $\Hom_\Lambda(B,S_\lambda)\cong\Ext^1_\Lambda(S_\lambda,B)\not=0$. 
It remains to show $\Hom_\Lambda(B,S_\infty)\ne 0$. To this end, we regard $S_\infty$ as representation 
$\begin{tikzcd}
k\cdot 1 \ar[r, shift left=2] \ar[r] & k\cdot 1\end{tikzcd}$
given by the linear map $0$ and the identity map $\textrm{id}_{k\cdot 1}$, 
and we define a linear map $f:V(I)\longrightarrow k\cdot 1$ on the basis $\{\frac{1}{T-\lambda}\mid\,\lambda \in I\}$ of $V(I)$ by setting $f(\frac{1}{T-\lambda})=1$ for all $\lambda\in I$. Since the elements $\frac{1}{T-\lambda},\,\lambda \in I,$ and $1$ are linearly independent in $k(T)$,  we can further define a linear map $g: V(I)+ k\cdot 1\longrightarrow k\cdot 1$ by setting $g\mid_{V(I)}=0$ and $g(1)=1$. Now we have  $g\circ\textrm{id}\mid_{V(I)}=0 \circ f$,  and $g(T\cdot \frac{1}{T-\lambda})= g(\lambda \cdot \frac{1}{T-\lambda} + 1)= g(1)=f(\frac{1}{T-\lambda})$ for all $\lambda\in I$, that is, $g\circ{T\cdot\mid_{V(I)}}= \textrm{id}_{k\cdot 1}\circ f$. Thus $f$ and $g$ define a non-zero morphism of representations  $B\to S_\infty$.

(2) We show that ${}^{\perp_{0,1}}B\subseteq\alpha(\Tcal)$. If $g:T\to X$ is a morphism with $T\in\Tcal$ and $X\in{}^{\perp_{0,1}}B$, then its image obviously lies in $\Tcal$, and even in ${}^{\perp_{0,1}}B$, as ${}^{\perp_1}B$ is closed under submodules. Thus we can assume without loss of generality that $g$ is surjective. Now applying $\Hom_\Lambda(-,B)$ on the exact sequence $0\to K\to T\stackrel{g}{\rightarrow} X\to 0$ and using that  $\Hom_\Lambda(T,B)=\Ext^1_\Lambda(X,B)=0$ we conclude that $K\in\Tcal$.

For the reverse inclusion we have to show that every $X\in\alpha(\Tcal)$ satisfies $\Ext^1_\Lambda(X,B)=0$. Consider a short exact sequence $0\to B\stackrel{f}{\rightarrow}  E\stackrel{g}{\rightarrow} X\to 0$. The middle term $E$ cannot belong to $\Tcal$, otherwise $B\in\Tcal\cap\Fcal=0$. So its torsion-free part $\overline{E}=E/t(E)$ is non-zero and thus admits a non-zero map $h:\overline{E}\to B$. Then the composition $h\nu$ of $h$ with the canonical epimorphism $\nu:E\to \overline{E}$ is also non-zero. Now suppose that $h\nu f=0$. Then $h\nu$ factors through $g$, that is, $h\nu=\overline{h} g$ for some non-zero map $\overline{h}:X\to B$, contradicting the hypothesis $X\in \Tcal$. We infer that $h\nu f:B\to B$ is non-zero and therefore an isomorphism. This shows that our exact sequence splits, as desired.


Now recall that $B$ is an infinite dimensional brick, and in particular, $B$ has no direct summands in 
$\textbf{p}\cup\textbf{q}$. Together with (1), this  shows that no preprojective module belongs to ${}^{\perp_0}B$ and that neither preinjective nor regular modules can belong to ${}^{\perp_1}B$. Hence $\alpha(\Tcal)\cap\lmod \Lambda=0$. 
\end{proof}


\begin{prop}\label{sets} 
Let  $B=P(I)$ be constructed as above from an infinite subset $I\subset k$ which is not cofinite, and  let $(\Tcal,\Fcal)=({}^{\perp_0}B,F(B))$ be the torsion pair cogenerated by $B$. Assume that  $\alpha(\Tcal)\not=0$. Then there is   a wide and coreflective subcategory $\Xcal$ of  $\lMod \Lambda$ which is contained in $\alpha(\Tcal)$ and is not of the form $\Xcal=\Wcal^{\perp_{0,1}}$ nor $\Xcal=\varinjlim \Wcal$ for some wide subcategory $\Wcal\in\mathbf{wide}(\Lambda)$.
\end{prop}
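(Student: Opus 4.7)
My plan is to take $\Xcal := \alpha(\Tcal) = {}^{\perp_{0,1}} B$ (the identification coming from Proposition~\ref{perpB}) and to argue that (a) it is wide and coreflective, while (b) it cannot be written in either of the shapes $\varinjlim \Wcal$ or $\Wcal^{\perp_{0,1}}$ for any $\Wcal \in \mathbf{wide}(\Lambda)$. Step (b) combines Theorems~\ref{themapalpha} and~\ref{tamethm} with Proposition~\ref{perpB}.

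For (a), wideness is immediate from Lemma~\ref{lem:basicAlphaBeta}, and closure under coproducts follows because $\Hom_\Lambda(-,B)$ and $\Ext^1_\Lambda(-,B)$ convert coproducts into products. By Lemma~\ref{coref}, coreflectivity reduces to precoveringness, which I would verify via El~Bashir's theorem: one needs closure of $\Xcal$ under direct limits and the fact that $\Xcal$ equals the direct-limit closure of a (set-indexed) subclass of $\kappa$-presentable objects for some large enough cardinal $\kappa$. Closure under direct limits is the main delicate point, because $B$ is not pure-injective; I would handle it by using the pure-exact sequence $0 \to K \to \coprod X_i \to \varinjlim X_i \to 0$ associated to a directed system, together with closure of ${}^{\perp_0} B$ under direct limits (automatic, since $\Hom_\Lambda(-,B)$ sends colimits to limits) and the embedding $B \hookrightarrow G$ to control the $\Ext^1$-groups.

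For (b), if $\Xcal = \varinjlim \Wcal$ for some $\Wcal \in \mathbf{wide}(\Lambda)$, then Theorem~\ref{themapalpha} gives $\Wcal = \Xcal \cap \lmod{\Lambda}$, which equals $0$ by Proposition~\ref{perpB}(2); hence $\Xcal = 0$, contradicting the standing hypothesis $\alpha(\Tcal) \neq 0$. If instead $\Xcal = \Wcal^{\perp_{0,1}}$, then by Theorem~\ref{tamethm} one has $\Xcal = {}^{\perp_{0,1}} \Pcal$ for some set $\Pcal$ of indecomposable pure-injectives, so $\Xcal$ is a non-zero definable subcategory of $\lMod\Lambda$ and therefore contains at least one indecomposable pure-injective $E \in {}^{\perp_{0,1}} B$. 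Proposition~\ref{perpB}(2) rules out $E$ being finite-dimensional, so $E$ is an adic, a Pr\"ufer or the generic.

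To close off (b), I would combine Proposition~\ref{perpB}(1) (the non-zero morphisms $B \to S_\lambda$ for every $\lambda \in \mathbb{X}$), the direct-limit presentation $S_\lambda[\infty] = \varinjlim S_\lambda[n]$ and its dual for adics, the Auslander--Reiten formula $\Ext^1_\Lambda(X,B) \cong D\Hom_\Lambda(\tau^{-}X, B)$ applied in the appropriate range, and the short exact sequence $0 \to B \to P(I \cup \{\lambda\}) \to S_\lambda \to 0$ from Lemma~\ref{largebricks}(4), to produce in each case a non-zero Hom or Ext from $E$ to $B$, yielding the final contradiction. The principal obstacle is coreflectivity in step (a), since ${}^{\perp_{0,1}} B$ is not the perpendicular class of any pure-injective module and none of the standard coreflectivity tools applies verbatim; should this direct route fail, one can alternatively choose $\Xcal$ as the image under $\alpha$ of a well-chosen cosilting refinement of $(\Tcal,\Fcal)$, obtaining coreflectivity for free from Theorem~\ref{themapalpha} at the cost of re-proving $\Xcal \neq 0$, which is precisely the open classification question flagged by the authors at the end of the introduction.
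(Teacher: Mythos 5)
There are two genuine gaps, both at the points you yourself flag as delicate. First, your choice $\Xcal=\alpha(\Tcal)={}^{\perp_{0,1}}B$ forces you to prove that this whole class is coreflective, and the route you sketch does not work: El Bashir's criterion needs closure under direct limits (and a presentation of $\Xcal$ as the direct limit closure of a \emph{set} of modules), but since $B$ is an infinite-dimensional brick which is not pure-injective, there is no reason for ${}^{\perp_1}B$ to be closed under direct limits. From the pure-exact sequence $0\to K\to\coprod X_i\to\varinjlim X_i\to 0$ you only learn that $\Ext^1_\Lambda(\varinjlim X_i,B)$ is a quotient of $\Hom_\Lambda(K,B)$; to kill it you would need every map $K\to B$ to extend along the pure monomorphism, which is precisely pure-injectivity of $B$, and the embedding $B\hookrightarrow G$ gives no control over the cokernel of the connecting map. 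The paper avoids this entirely: the statement only asks for \emph{some} wide coreflective subcategory inside $\alpha(\Tcal)$, so one picks a nonzero object $X\in\alpha(\Tcal)$ and takes $\Xcal$ to be the smallest wide subcategory of $\lMod\Lambda$ closed under coproducts containing $X$; coreflectivity is then automatic from \cite[Theorem 2.2]{extPairs}, and $\Xcal\subseteq\alpha(\Tcal)$ because ${}^{\perp_{0,1}}B$ is wide and closed under coproducts. This generated-subcategory trick is the missing idea.

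Second, your step (b) is incomplete where it matters most: the generic module. To exclude $\Xcal=\Wcal^{\perp_{0,1}}$ you must in particular exclude $\Xcal=(\add\mathbf{t})^{\perp_{0,1}}=\Add G$, i.e.\ show $G\notin{}^{\perp_{0,1}}B$. Since a nonzero map $G\to B\subseteq G$ would be an invertible endomorphism of the brick $G$, forcing $B=G$, one has $\Hom_\Lambda(G,B)=0$, so you need $\Ext^1_\Lambda(G,B)\ne 0$ -- and none of the tools you list (AR formula for finite-dimensional modules, the sequences of Lemma~\ref{largebricks}(4), $\Hom_\Lambda(B,S_\lambda)\ne 0$) produces this. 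This is exactly where the paper's argument is essentially different and where the hypothesis that $I$ is not cofinite, which you never invoke, is used: one shows ${}^{\perp_{0,1}}B\subseteq\Cogen G$, deduces that if $G$ belonged to $\alpha(\Tcal)$ it would be a simple object there, hence torsion, almost torsionfree by Proposition~\ref{prop:simpInalphabeta}, and then contradicts condition (1) of Definition~\ref{de:TtF} using the proper submodule $P(J)\subset G$ with $J$ infinite and disjoint from $I$, which lies in $\Tcal$ by Lemma~\ref{largebricks}(3). The Pr\"ufer and adic cases are likewise handled in the paper indirectly, via wideness (kernels/cokernels of their endomorphisms are finite dimensional, while $\Xcal\cap\lmod\Lambda=0$), rather than by your proposed inverse-limit computation of $\Ext^1(S[\infty],B)$, whose vanishing or non-vanishing is not controlled by the $\varprojlim$-sequence you would write down. (Your appeal to definability of ${}^{\perp_{0,1}}\Pcal$ is also not justified for arbitrary $\Pcal$; it is cleaner to use that $\Wcal^{\perp_{0,1}}$ is definable for $\Wcal\subseteq\lmod\Lambda$.) As it stands, the proposal does not close either of these gaps, and your own fallback option concedes the point.
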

\begin{proof}
We  claim that $\alpha(\Tcal)$ does not contain any indecomposable pure-injective module. We already know from Proposition~\ref{perpB} that $\alpha(\Tcal)$ does not contain indecomposable finite dimensional modules. Since every simple regular module occurs as kernel of an endomorphism  of the associated Pr\"ufer module, $\alpha(\Tcal)$ cannot contain any Pr\"ufer module, and dually, it cannot contain any adic module.
We now show that it cannot contain the generic module $G$. 

To see this, we first observe that
every  $X\in{}^{\perp_{1}}B$ lies in $\Cogen G$. Indeed, $X$ can be  written as a direct limit of its finitely generated submodules, which lie again in ${}^{\perp_{1}}B$ and are therefore  preprojective. Thus $X\in\varinjlim\add\mathbf{p}=\Cogen G$. 
In particular, $\alpha(\Tcal)={}^{\perp_{0,1}}B$ is contained in $\Cogen G$.

Next, we assume that $G$ belongs to $\alpha(\Tcal)$ and take a nonzero subobject $X$ in $\alpha(\Tcal)$. Then $G/X$ is in $\alpha(\Tcal)$ and therefore admits an embedding in a product of copies of $G$. Since $G$ is a brick and $G\to G/X$ is a proper epimorphism, we conclude that $X=G$. This shows that $G$ must be  a simple object in $\alpha(\Tcal)$. It follows from Proposition~\ref{prop:simpInalphabeta} that $G$ is  torsion, almost torsion-free with respect to $(\Tcal,\Fcal)$. But this is not possible, because any module of the form $P(J)$ with $J$ an infinite subset of $k$ disjoint from $I$ is a proper submodule of $G$ which lies in $\Tcal$ by Lemma~\ref{largebricks}(3). This concludes the proof of our claim.

Now we assume there is an object $0\not=X\in\alpha(\Tcal)$. We set $\Xcal$ to be the smallest wide subcategory  of  $\lMod \Lambda$ which is closed under coproducts and contains $X$. Then $\Xcal$ is coreflective by \cite[Theorem 2.2]{extPairs}, and it is a subcategory of $\alpha(\Tcal)$ by construction. 

Of course, $\Xcal$ does not contain any indecomposable pure-injective module. 
In particular, $\Xcal\cap\lmod \Lambda=0$, and we immediately see that  $\Xcal$ is  not of the form $\Wcal^{\perp_{0,1}}$ for some $\Wcal=\add M$ with $M\in\mathbf{p}\cup\mathbf{q}$, nor for some $\Wcal=\add\mathbf{t}_P$ arising from a proper subset $P$ of $\mathbb X$. 
Moreover, we can also exclude that $\Xcal=(\add\mathbf{t})^{\perp_{0,1}}=\Add G$. 
Finally, $\Xcal$ can't be of the form $\varinjlim\Wcal$ for some $\Wcal\in\mathbf{wide}(\Lambda)$, because 
this would imply $\Wcal=0$ by Theorem~\ref{themapalpha}, contradicting the hypothesis $\Xcal\not=0$.
\end{proof}

\end{document}